\newtheorem{prop}{Proposition}[section]
\newtheorem{lem}[prop]{Lemma}
\newtheorem{cor}[prop]{Corollary}
\newtheorem{thm}[prop]{Theorem}
\newtheorem{defn}[prop]{Definition}
\newtheorem{exmp}[prop]{Example}
\newtheorem{rem}[prop]{Remark}
\newtheorem{notation}[prop]{Notation}
\numberwithin{equation}{subsection}
 \def\mm{\mathfrak{m}} \def\nn{\mathfrak{n}} \def\aa{\mathfrak{A}}
\def\Disc{\mathrm{Disc}} \def\Res{\mathrm{Res}} \def\Dc{\mathcal{D}}
\def\ZZ{\mathbb{Z}}\def\NN{\mathbb{N}}  \def\aa{\mathfrak{A}} \def\TF{\mathrm{TF}}
\def\EE{\mathcal{E}} 
 \def\pp{\mathfrak{P}}
\def\supp{\mathrm{supp}}
\def\C{\mathrm{C}} \def\Proj{\mathrm{Proj}} \def\Spec{\mathrm{Spec}} \def\Red{\mathrm{Red}}
\def\Hess{\mathrm{Hess}} \def\HH{{\mathbb{H}}}
\def\dd{{\mathrm{d}}}
\def\Rees{{\mathrm{Rees}}}
\def\Frac{{\mathrm{Frac}}}
\def\CC{{\mathbb{C}}}
\newcommand\quotient[2]{
        \mathchoice
            {% \displaystyle
                \text{\raise1ex\hbox{$#1$}\big/\lower1ex\hbox{$#2$}}%
            }
            {% \textstyle
                #1\,/\,#2
            }
            {% \scriptstyle
                #1\,/\,#2
            }
            {% \scriptscriptstyle  
                #1\,/\,#2
            }
    }
\title[The discriminant of homogeneous polynomials]{A computational approach to the discriminant of homogeneous polynomials}
\date{\today}
\author{Laurent Bus\'e}
\address{INRIA Sophia Antipolis - M\'editerran\'ee,
 2004 routes des Lucioles, B.P. 93, 
06902 Sophia Antipolis, France.}
\email{laurent.buse@inria.fr}
\urladdr{www-sop.inria.fr/members/Laurent.Buse/} % Delete if not wanted.
\author{Jean-Pierre Jouanolou}
\address{Universit\'e Louis Pasteur, 7 rue Ren\'e Descartes, 67084 Strasbourg Cedex, France.}
\email{jean-pierre.jouanolou@math.unistra.fr}
\begin{document}

\maketitle

\begin{abstract}
In this paper, the discriminant of homogeneous polynomials is studied in two particular cases: a single homogeneous polynomial and a collection of $n-1$ homogeneous polynomials in $n\geq 2$ variables. In these two cases, the discriminant is defined over a large class of coefficient rings by means of the resultant. Many formal properties and computational rules are provided and the geometric interpretation of the discriminant is investigated over a general coefficient ring, typically a domain.
\end{abstract}

% \tableofcontents

% \newpage

%%%%%%%%%%%%%%%%%%%%%%%%%%%%%%%%%%%%%%%%%%%%%%%%%%%%%%%%%

\section{Introduction}

The discriminant of a collection of polynomials gives information about the nature of the common roots of these polynomials. Following the example of the very classical discriminant of a single univariate polynomial, it is a fundamental tool in algebraic geometry which is very useful and has many applications. Several definitions of the discriminant can be found in the literature, but they are not all equivalent. Recall briefly the most standard one (\cite{GKZ}) for polynomials with coefficients in the field of complex numbers $\CC$: given integers $1\leq c \leq n$ and $1\leq d_1,\ldots,d_c$, denote by $S$ the set of all $c$-uples of homogeneous polynomials $f_1,\ldots,f_c$ in the polynomial ring $\CC[X_1,\ldots,X_n]$ of respective degrees $d_1,\ldots,d_c$. The subset $D$ of $S$ corresponding to the $c$-uples $f_1,\ldots,f_c$ such that $\{f_1=f_2=\ldots=f_c=0\}$ is not smooth and of codimension $c$ is called the discriminant locus. It is well-known that $D$ is an irreducible algebraic variety of codimension one providing $d_i\geq 2$ for some $i\in\{1,\ldots,c\}$ or $c=n$. The discriminant is then defined as an equation of $D$ (and set to be 1 if $D$ is not of codimension one).

\medskip

As far as we know, the literature on the theory of the discriminant goes back to an outstanding paper by Sylvester \cite{Sylvester,SylvesterBis} where among others, an explicit formula for the degree of the discriminant is given. Then, one find the works by Mertens \cite{Mertens1886,Mertens1892}, where the concept of inertia forms is already used, and some other works by K\"onig \cite{Koenig}, by Kronecker \cite{Kronecker}, by Ostrowki \cite{Ostrowski} and also by Henrici \cite{He1868}. There is also an important contribution by Krull \cite{Krull1,Krull2} who studied Jacobian ideals and some properties of the discriminant, especially  in the case $c=n-1$. An extensive study of the case $c=1$ can be found in a Bourbaki manuscript by Demazure \cite{Bou} that was unfortunately left unpublished until very recently \cite{Dem}.

For the past twenty years, one can observe a regain of interest, in particular regarding properties with respect to the shape (total degree, partial degrees, Newton polyhedron, etc) of the discriminant. Unlike the previously mentioned works, the techniques are here more advanced and uses homological methods. The book by Gelfand, Kapranov and Zelevinsky \cite{GKZ} was definitely a turning point  in this modern approach. One can also mention the paper by Scheja and Storch \cite{ScSt} and the more recent one by Esterov \cite{Esterov} that deals with more general grading of the polynomials (they correspond to anisotropic projective spaces and more general toric varieties respectively). It is as well worth mentioning the recent paper by Benoist \cite{Benoist} where the degree of the discriminant is carefully studied (see also \cite{Sylvester,SylvesterBis}).

\medskip

There are drawbacks to the modern above-mentioned definition of the discriminant. First, this definition is not stable under specialization (or change of basis). In other words, the discriminant is a polynomial in the coefficients of the polynomials $f_1,\ldots,f_c$ and a given specialization of these coefficients does not always commutes with this construction of the discriminant. Such a property is however a natural request for the discriminant. Notice that it is actually well satisfied when defining the discriminant of a single univariate polynomial $f$ as the determinant of the Sylvester matrix associated to $f$ and its first derivative. Second, the discriminant is defined up to multiplication by a nonzero constant. This is not satisfactory when the value of the discriminant is important, and not only its vanishing, as this is for instance the case for some applications in the fields of arithmetic geometry and number theory (see for instance the recent paper \cite{SaSe}). Finally, this definition is only valid under the hypothesis that the ground ring is a field, often assumed to be algebraically closed and of characteristic zero. But for many applications, it is very useful to understand the behavior of the discriminant under general ground rings. 

These three drawbacks are important obstructions that prevent the discriminant from having a well developed formalism, in particular some properties and formulas that allow to handle it as a computational tool. In many situations such a formalism is actually more important than the value of the discriminant itself, even more important because this value is often unreachable by direct computations. Moreover, the discriminant gives more insights if it is defined without ambiguity (in particular not up to a nonzero constant multiplicative factor) over a general coefficient ring (see for instance \cite{BM}). As a first stage, the goal of this paper is to provide such a theory of the discriminant in the two cases $c=1$ and $c=n-1$. To this aim, we will define the discriminant as a particular instance of the resultant. In this way, we will take advantage of the existing formalism of the resultant as developed by Jouanolou \cite{J91} and will be able to rigorously state that the discriminant is stable under a change of basis. As a consequence of this approach, we will provide a detailed analysis of the geometric behavior of our definition of the discriminant.

\medskip 

After some reminders and preliminaries on the resultant in Section \ref{prem}, Section \ref{dim0} deals with the case $c=n-1$, that is to say  the discriminant of a finite set of points in complete intersection in a projective space. Such a discriminant already appeared in two papers by Krull \cite{Krull1,Krull2}. Based on them, we give a general and universal definition of the discriminant and develop further its formalism. In particular, we provide a full description of the base change formula. Then, if the ground ring $k$ is assumed to be a domain, we show that the discriminant is a prime polynomial if $2\neq 0$ in $k$ and is the square of a prime polynomial otherwise. 

In Section \ref{codim1}, we will study the case $c=1$, that is to say the discriminant of a hypersurface in a projective space. This case is the more classical and it already appeared in \cite[chapter 12.B]{GKZ}, in \cite{Bou} and more recently in \cite{SaSe}. Our contribution is here on the extension of the study of the discriminant to an arbitrary commutative ground ring $k$, as well as several formal properties. If $k$ is a domain, we show that if $2\neq 0$ and $n$ is odd then the discriminant is a prime polynomial. Otherwise, if $2=0$ in $k$ and $n$ is even, the discriminant is the square of a prime polynomial. In addition, we also provide a detailed study of the birationality of the canonical projection of the incidence variety onto the discriminant locus.  

Finally, we end this paper with an appendix where we give rigorous proofs of two remarkable formulas that are due to F.~Mertens \cite{Mertens1892,Mertens1886}. We will use these formulas at some point in text, but these formulas are definitely interesting on their own. 

\medskip

All rings are assumed to be commutative and with unity.

\tableofcontents

\section{Preliminaries}\label{prem}

We recall here the basic definitions and properties of inertia forms
and the resultant that we will use in the rest of this paper to study the
discriminant of homogeneous polynomials. Our main source is  the
monograph \cite{J91}
where a detailed exposition can be found.

\medskip

Suppose given $r\geq 1$ homogeneous polynomials
of positive degrees $d_1,\ldots,d_{r}$, respectively, in the
variables $X_1,\ldots,X_n$, all assumed to have weight 1,
$$f_i(X_1,\ldots,X_n)=\sum_{|\alpha|=d_i}U_{i,\alpha}X^\alpha, \ \ 
i=1,\ldots,r.$$
Let $k$ be a commutative ring and set ${{}_kA}:=k[U_{i,\alpha}\,|\, i=1,\ldots,r, |\alpha|=d_i]$
the universal coefficient ring over $k$. Then $f_i \in
{{}_kC}:={{}_kA}[X_1,\ldots,X_n]$ for all $i=1,\ldots,r$. We define the ideal 
$I:=(f_1,\ldots,f_r) \subset {{}_kC}$ and the graded quotient ring ${{}_kB}:={{}_kC}/I$. The main purpose of
elimination theory is the study of the image of the canonical 
projection
$$\mathrm{Proj}({{}_kB}) \rightarrow \mathrm{Spec}({{}_kA})$$
which corresponds to the elimination of the variables $X_1,\ldots,X_n$ in the
polynomial system $f_1=\cdots=f_r=0$.  It turns out that this image is
closed (the latter  projection is a projective morphism) and its defining
ideal, that we will denote by ${{}_k\aa}$ and which is usually called the
\emph{resultant} (or \emph{eliminant}) \emph{ideal},
consists of the elements of
${{}_kA}$ which are contained in $I$ after multiplication by some power of
the maximal ideal $\mm:=(X_1,\ldots,X_n) \subset {{}_kC}$. In other words, ${{}_k\aa}$ is the
degree 0 part of the 0th local cohomology module of ${{}_kB}$ with respect to $\mm$,
i.e.~${{}_k\aa}=H^0_\mm({{}_kB})_0$. 

\subsection{Inertia forms}\label{in-form} First introduced by Hurwitz,  \emph{inertia
  forms} reveal a powerful tool to study
the resultant ideals, notably in the case $r=n$ corresponding
to the theory of the resultant, and more generally elimination theory.

\begin{defn} The ideal of inertia forms of the ideal $I$ with respect
  to the ideal $\mm$ is the ideal of ${{}_kC}$
  $${}_k\mathrm{TF}_\mm(I):=\pi^{-1}(H^0_\mm({{}_kB}))=\{f\in {{}_kC} : \exists \nu
  \in \mathbb{N} \ \mm^\nu f \subset I\} \subset {{}_kC}$$
  where $\pi$ denotes the canonical projection ${{}_kC}\rightarrow {{}_kB}={{}_kC}/I$.
\end{defn}
Observe that the inertia forms ideal is naturally graded and  that ${}_k\aa={{}_k\TF}_\mm(I)_0$. We recall two useful
other descriptions of this ideal.

Let us distinguish, for all $i=1,\ldots,r$, the particular coefficient
$\mathcal{E}_i:=U_{i,(0,\ldots,0,d_i)}$ of the polynomial $f_i$ which can 
be rewritten in ${{}_kC}[X_n^{-1}]$
$$f_i=X_n^{d_i} (\mathcal{E}_i + \sum_{ 
      \alpha\neq (0,\ldots,0,d_i)} U_{i,\alpha}X^\alpha X_n^{-d_i} ).$$
Then we get  the isomorphism of $k$-algebras 
\begin{eqnarray}\label{BX}
{{}_kB}_{X_n} & \xrightarrow{\sim} & k[U_{j,\alpha} : U_{j,\alpha}\neq
\mathcal{E}_i][X_1,\ldots,X_n][X_n^{-1}] \\
\mathcal{E}_i & \mapsto & \mathcal{E}_i - \frac{f_i}{X_n^{d_i}}=-\sum_{ 
      \alpha\neq (0,\ldots,0,d_i)} U_{i,\alpha}X^\alpha X_n^{-d_i} \nonumber
  \end{eqnarray}
and of course similar isomorphisms for all the ${{{}_kB}_{X_i}}'s$. They show
that $X_i$  is a nonzero divisor in ${{}_kB}_{X_j}$ for all couple $(i,j) \in
\{1,\ldots,n\}^2$, and by the way that, for all $i \in \{1,\ldots,n\}$,
\begin{align}\label{TF}
  {}_k\mathrm{TF}_\mm(I)=\{ f \in {{}_kC} : \exists \nu
  \in \mathbb{N} \ X_i^\nu f \subset I \}=\mathrm{Ker}({{}_kC}\rightarrow
  {{}_kB}_{X_i}).
\end{align}
In particular, if the commutative ring $k$ is a domain, it follows
that the ${{}_kB}_{X_i}$'s are also domains and thus that
${{}_k\TF}_\mm(I)$ is a \emph{prime} ideal of ${}_kC$, as well as
${}_k\aa$. 
Note also that, as a simple consequence, we obtain the equality   
\begin{equation}\label{form1}
{{}_k\aa}={}_k\mathrm{TF}_\mm(I)_0={{}_kA}\cap (\tilde{f}_1,\ldots,\tilde{f}_r)
\end{equation}
where $\tilde{f}_i(X_1,\ldots,X_{n-1})=f_i(X_1,\ldots,X_{n-1},1) \in
{{}_kA}[X_1,\ldots,X_{n-1}].$
 
The combination of  \eqref{TF} and \eqref{BX} also 
gives another  interesting description of ${{}_k\TF}_\mm(I)$. Indeed,
similarly  to \eqref{BX}, we define the morphism
$$\tau:{{}_kC} \rightarrow k[U_{i,\alpha} \,|\,  U_{i,\alpha}\neq
\mathcal{E}_i][X_1,\ldots,X_n][X_n^{-1}] : 
\mathcal{E}_i  \mapsto  \mathcal{E}_i - \frac{f_i}{X_n^{d_i}}$$
which is sometimes called the Kronecker substitution. Then, it follows 
that
\begin{equation}\label{perron}
 {}_k\mathrm{TF}_\mm(I)=\{
 f \in {{}_kC} : \tau(f) =0 \}.
\end{equation}
In other words, an inertia form is a polynomial in ${{}_kC}$ that vanishes
after  the substitution of  each $\mathcal{E}_i$ by  $\mathcal{E}_i -
f_i/X_n^{d_i}$ for all  $i=1,\ldots,r$. This property yields in
particular the following refinement of \eqref{form1}:
\begin{equation}\label{form2}
{{}_k\aa}={}_k\mathrm{TF}_\mm(I)_0={{}_kA}\cap \sum_{i=1}^r \tilde{f}_i.{{}_kA}[\tilde{f}_1,\ldots,\tilde{f}_r].
\end{equation}

\subsection{The resultant}
We now turn to the particular case $r=n$, usually
called the \emph{principal case of elimination}.
As we are going to recall, in this situation the resultant ideal ${{}_k\aa}$ is principal and the
resultant is one of its generator. We will need the 
\begin{notation}\label{not}
Let $k$ be a commutative ring. Suppose given a $k$-algebra $R$ and, for all integer $i\in
 \{1,\ldots,n\}$, a homogeneous polynomial of degree $d_i$ in the
    variables $X_1,\ldots,X_n$
    $$g_i=\sum_{|\alpha|=d_i}u_{i,\alpha}X^\alpha \in
    R[X_1,\ldots,X_n]_{d_i}.$$
    We denote by $\theta$ the k-algebra morphism $\theta:{{}_kA}\rightarrow R:
    U_{j,\alpha} \mapsto u_{j,\alpha}$ corresponding to the
    \emph{specialization} of the polynomials $f_i$ to the polynomials
    $g_i$.  Then, for any element $a \in
    {{}_kA}$ we set $a(g_1,\ldots,g_n):=\theta(a).$ In particular, if
    $R={{}_kA}$ and $\theta$ is the identity (i.e.~$g_i=f_i$ for all $i$),
    then $a=a(f_1,\ldots,f_n)$.  
\end{notation}

\begin{prop}[{\cite[\S2]{J91}}]\label{prop:resultant}
The ideal  ${{}_\ZZ\aa}$ of ${}_\ZZ A$ is principal and has a
\emph{unique generator}, denoted  ${}_\ZZ\Res$,
which satisfies 
  \begin{equation}\label{normres}
 {}_\ZZ \Res(X_1^{d_1},\ldots,X_n^{d_n})=1.
 \end{equation}
Moreover, for any commutative ring $k$, the 
  ideal ${{}_k\aa}$ of ${{}_kA}$ is also principal and generated by
  ${}_k\Res:=\lambda({}_\ZZ\Res)$, 
where $\lambda$ denotes the  canonical morphism
  $$\lambda : {{}_\ZZ A}:=\mathbb{Z}[U_{i,\alpha}]\rightarrow
{{}_kA}=k[U_{i,\alpha}] :    U_{j,\alpha} \mapsto U_{j,\alpha}.$$
In addition, ${}_k\Res$ is a nonzero divisor in ${{}_kA}$.
\end{prop}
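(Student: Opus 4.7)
My plan is to first establish that ${}_\ZZ\aa$ is a nonzero principal prime of the UFD ${}_\ZZ A=\ZZ[U_{i,\alpha}]$, then pin down a canonical generator by an explicit construction, and finally descend to an arbitrary commutative ring $k$. Primeness of ${}_\ZZ\aa$ has already been observed: by \eqref{TF}, since $\ZZ$ is a domain, ${}_\ZZ\TF_\mm(I)$ is prime in ${}_\ZZ C$, so its degree-zero part ${}_\ZZ\aa$ is prime in ${}_\ZZ A$. Principality then follows once one knows that ${}_\ZZ\aa$ has height one, which I would obtain by a dimension count on the projective elimination morphism $\Proj({}_\ZZ B)\to\Spec({}_\ZZ A)$: its image $V({}_\ZZ\aa)$ is classically a hypersurface, the non-triviality coming from the existence of some nonzero inertia form of degree zero (for instance via iterated univariate resultants). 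In the UFD ${}_\ZZ A$, a height-one prime is principal, so ${}_\ZZ\aa=(R_0)$ for some $R_0$.

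For the normalization, I would use Macaulay's explicit formula (or iterated Sylvester resultants in one variable at a time) to produce a specific element $R^\star\in{}_\ZZ\aa$, given as a ratio of determinants, whose value at the diagonal specialization $\theta\colon f_i\mapsto X_i^{d_i}$ is directly computable and equals $1$. Since $R_0$ divides $R^\star$ in ${}_\ZZ A$, the image $\theta(R_0)\in\ZZ$ divides $\theta(R^\star)=1$, hence $\theta(R_0)\in\{\pm 1\}$; replacing $R_0$ by $\pm R_0$ if necessary yields the generator ${}_\ZZ\Res$ satisfying \eqref{normres}. Uniqueness is then automatic: two generators of a principal ideal in the domain ${}_\ZZ A$ differ by a unit, here $\pm 1$, and the normalization fixes the sign.

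For the descent to an arbitrary commutative ring $k$, set ${}_k\Res:=\lambda({}_\ZZ\Res)$. The containment ${}_k\Res\in{}_k\aa$ is immediate, since a witness relation $\mm^\nu\cdot{}_\ZZ\Res\subset I$ in ${}_\ZZ C$ is preserved by the base change $-\otimes_\ZZ k$. The reverse inclusion ${}_k\aa\subseteq({}_k\Res)$ is the main obstacle of the proof; I would address it via a Macaulay-type presentation of ${}_\ZZ B_\nu$ for $\nu$ above the Macaulay bound $\sum_{i=1}^n(d_i-1)+1$, which expresses ${}_\ZZ\aa$ as the image of an explicit determinantal map whose formation commutes with arbitrary base change, thereby transporting the identity ${}_\ZZ\aa=({}_\ZZ\Res)$ to ${}_k\aa=({}_k\Res)$. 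Finally, the nonzero-divisor assertion for ${}_k\Res$ follows from \eqref{normres}: the identity ${}_\ZZ\Res(X_1^{d_1},\ldots,X_n^{d_n})=1$ expresses $1$ as a $\{0,1\}$-linear combination of the integer coefficients of ${}_\ZZ\Res$, so $\lambda({}_\ZZ\Res)$ has content generating the unit ideal in $k$; McCoy's theorem then gives that ${}_k\Res$ is a nonzero divisor in ${}_k A=k[U_{i,\alpha}]$.
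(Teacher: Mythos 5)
The paper gives no proof of this proposition: it is recalled as known and cited to \cite[\S 2]{J91}, so there is no internal argument to compare yours against. Judged on its own, your outline reproduces the standard route of that source and is sound: primeness of ${}_\ZZ\aa$ from \eqref{TF}, principality from height one in the UFD $\ZZ[U_{i,\alpha}]$, normalization by exhibiting an explicit element of the ideal (Macaulay determinant or iterated Sylvester resultants) whose value at $f_i\mapsto X_i^{d_i}$ is $\pm 1$ and invoking divisibility, and primitivity of ${}_k\Res$ (the specialization \eqref{normres} forces the coefficient of $\prod_i U_{i,(0,\ldots,d_i,\ldots,0)}^{\,d_1\cdots d_n/d_i}$ to be $1$) combined with Dedekind--Mertens/McCoy for the nonzero-divisor claim, a device the paper itself uses repeatedly. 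The two load-bearing steps are the ones you defer: (a) that the eliminant locus over $\ZZ$ is nonempty of codimension exactly one, and (b) that for the \emph{canonical} morphism $\lambda$ the equality ${}_k\aa=({}_k\Res)$ holds on the nose and not merely up to radical --- note that Proposition~\ref{prop:resbasechange} shows the latter failure mode is real for general specializations, so (b) genuinely needs the free-resolution/determinant-of-a-complex argument in degrees above the critical degree, whose terms are free $\ZZ$-modules and hence commute with $-\otimes_\ZZ k$. You point at exactly this machinery without carrying it out; that is acceptable for a result the paper itself outsources, but those are the places where the real content of \cite[\S 2]{J91} lives.
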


In view of Notation \ref{not}, we have defined the resultant of
any set of homogeneous polynomials of positive degrees $f_1,\ldots,f_n 
\in  k[X_1,\ldots,X_n]$, where $k$ denotes any commutative ring; 
we will denote it by $\Res(f_1,\ldots,f_n)$ without any possible
confusion. Indeed, this resultant is by definition obtained as a specialization of
the corresponding resultant in the generic case over $\mathbb{Z}$,
that is to say ${}_\ZZ\Res$ (with the corresponding choice of degrees
for the input polynomials). Therefore, the resultant has the property to be stable under specialization whereas this is not the case of the inertia forms ideal in general. Nevertheless, we have the following property.

\begin{prop}\label{prop:resbasechange} The ideal of inertia forms is stable under specialization up to radical. More precisely, let $R$ be a commutative ring and $\rho:{}_\ZZ A \rightarrow R$ be a specialization morphism. Then, the ideals $\rho({}_\ZZ \TF_\mm(I)_0).R=(\rho({}_\ZZ\Res))$ and $\TF_\mm(\rho(I).R)_0$ are two ideals in $R$ that have the same radical.        \end{prop}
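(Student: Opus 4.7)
The plan is to prove the two inclusions of radicals separately. For one direction, the inclusion of ideals themselves is immediate from the definition of inertia forms; for the other direction, a prime-by-prime argument reduces matters to the classical geometric situation over an algebraically closed field.

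To begin, Proposition \ref{prop:resultant} gives ${}_\ZZ \TF_\mm(I)_0 = ({}_\ZZ \Res)$, so the equality $\rho({}_\ZZ \TF_\mm(I)_0) \cdot R = (\rho({}_\ZZ \Res))$ is formal. For the containment $(\rho({}_\ZZ \Res)) \subseteq \TF_\mm(\rho(I) \cdot R)_0$, observe that since ${}_\ZZ \Res \in {}_\ZZ \TF_\mm(I)$, there exists $\nu \geq 0$ with $\mm^\nu \cdot {}_\ZZ \Res \subset I$ in ${}_\ZZ C$. Applying the canonically extended specialization ${}_\ZZ C \to R[X_1,\ldots,X_n]$ (which fixes each $X_i$, hence fixes $\mm$) yields $\mm^\nu \cdot \rho({}_\ZZ \Res) \subset \rho(I) \cdot R$, which is exactly the inertia form condition on $\rho({}_\ZZ \Res)$. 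This delivers one inclusion of radicals at once.

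For the reverse inclusion up to radical, I would show that every prime ideal $\mathfrak{p} \subset R$ containing $\rho({}_\ZZ \Res)$ also contains $\TF_\mm(\rho(I) \cdot R)_0$. Fix such a $\mathfrak{p}$ and let $K$ be an algebraic closure of $\Frac(R/\mathfrak{p})$. Composing $\rho$ with $R \to K$ gives a specialization $\bar\rho : {}_\ZZ A \to K$; write $\bar g_j \in K[X_1,\ldots,X_n]$ for the images of the $f_j$. By the stability of the resultant under specialization (Proposition \ref{prop:resultant}), $\Res(\bar g_1, \ldots, \bar g_n) = \bar\rho({}_\ZZ \Res) = 0$ in $K$. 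The classical geometric characterization of resultant vanishing over an algebraically closed field (see e.g.\ \cite{J91}) then produces a nontrivial common projective root $\xi = (\xi_1,\ldots,\xi_n) \in K^n \setminus \{0\}$ of the $\bar g_j$. Now let $h \in \TF_\mm(\rho(I) \cdot R)_0 \subset R$; by \eqref{TF} applied to $\rho(I) \cdot R$, there exists $\mu \geq 0$ with $X_i^\mu h \in \rho(I) \cdot R$ for every $i$. Picking $i$ so that $\xi_i \neq 0$, writing $X_i^\mu h = \sum_j a_j g_j$ with $g_j = \rho(f_j)$, and evaluating the image of this identity in $K[X_1,\ldots,X_n]$ at $X = \xi$, we obtain $\xi_i^\mu \bar h = 0$ in $K$, hence $\bar h = 0$, i.e.\ $h \in \mathfrak{p}$. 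This gives $\TF_\mm(\rho(I) \cdot R)_0 \subseteq \sqrt{(\rho({}_\ZZ \Res))}$ and hence the desired equality of radicals.

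The principal conceptual input is the geometric meaning of the vanishing of the resultant over an algebraically closed field, which the paper treats as a standard ingredient from \cite{J91}; all other steps are direct manipulations of the definition of inertia forms via \eqref{TF} together with the compatibility of ${}_\ZZ \Res$ with specialization from Proposition \ref{prop:resultant}. The one subtle point, which is handled by the choice of index $i$ with $\xi_i \neq 0$, is that the relation $\mm^\mu h \subset \rho(I) \cdot R$ may not specialize to a useful identity at a projective root along a particular coordinate axis, but is guaranteed to do so along at least one coordinate.
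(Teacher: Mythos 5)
Your proof is correct, but it takes a different route from the paper's own (second, elementary) argument for the nontrivial inclusion. Where you argue prime by prime — passing to an algebraic closure $K$ of $\Frac(R/\mathfrak{p})$, invoking the geometric criterion that $\Res(\bar g_1,\ldots,\bar g_n)=0$ over an algebraically closed field forces a nontrivial common root $\xi$, and then evaluating the inertia-form relation $X_i^\mu h=\sum_j a_jg_j$ at $\xi$ with $\xi_i\neq 0$ — the paper stays entirely inside $R$: from $X_i^Na\in(\rho(f_1),\ldots,\rho(f_n))$ for all $i$ it deduces, via the divisibility property of the resultant \cite[\S 5.6]{J91}, that $\Res(\rho(f_1),\ldots,\rho(f_n))$ divides $\Res(X_1^Na,\ldots,X_n^Na)$, and then computes the latter as $a^{nN^{n-1}}$ by multi-homogeneity. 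The paper's version is purely formal (no field extensions, no Nullstellensatz-type input) and yields an explicit exponent, namely $a^{nN^{n-1}}\in(\rho({}_\ZZ\Res))$; yours is in effect a concrete, fibrewise implementation of the paper's first proof (stability of the closed image of a proper morphism under base change) and produces only membership in the radical, at the cost of importing the classical vanishing criterion for the resultant over an algebraically closed field. Your handling of the one delicate point — choosing the index $i$ with $\xi_i\neq 0$ so that the specialized relation is not vacuous — is exactly right, and the degenerate case where no prime contains $\rho({}_\ZZ\Res)$ is harmless since the radical is then all of $R$.
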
 
\begin{proof} This result corresponds to a general property of proper morphisms under change of basis. As we already said, the canonical projection $\mathrm{Proj}({{}_\ZZ B}) \rightarrow \mathrm{Spec}({{}_\ZZ A})$ is a projective, hence proper, morphism whose image is closed and defined by the ideal ${}_\ZZ \TF_\mm(I)_0 \subset {}_\ZZ A$. The specialization $\rho$ corresponds to a change of basis from $\Spec(R)$ to $\mathrm{Spec}({{}_\ZZ A})$. Since the support of the closed image of a proper morphism is stable under change of basis, we deduce that, as claimed, the support of the inverse image of the closed image of $\mathrm{Proj}({{}_\ZZ B}) \rightarrow \mathrm{Spec}({{}_\ZZ A})$ is equal to the support of the closed image of 
	$$\mathrm{Proj}({{}_\ZZ B})\times_{\mathrm{Spec}({{}_\ZZ A})} \Spec(R) \rightarrow \Spec(R).$$  
	
We can give another proof, somehow more elementary, of this proposition. Indeed, by specialization it is clear that 
$$\rho({}_\ZZ \TF_\mm(I)_0).R=(\rho({}_\ZZ\Res))=(\Res(\rho(f_1),\ldots,\rho(f_n))) \subset \TF_\mm(\rho(I).R)_0.$$
Let $a\in \TF_\mm(\rho(I).R)_0$, so that there exists an integer $N$ such that for all $i=1,\ldots,n$  
$$X_i^{N}a \in (\rho(f_1),\ldots,\rho(f_n))\subset R[X_1,\ldots,X_n].$$ 
It follows that 
$$(X_1^{N}a, X_2^{N}a,\ldots, X_n^{N}a) \subset (\rho(f_1),\ldots,\rho(f_n)) \subset R[X_1,\ldots,X_n]$$
and hence that $\Res(\rho(f_1),\ldots,\rho(f_n))$ divides $\Res(X_1^{N}a, \ldots, X_n^{N}a)$ in $R$ by \cite[\S 5.6]{J91}. Now, using \cite[Proposition 2.3(ii)]{J91}, we obtain that
$$\Res(X_1^{N}a, \ldots, X_n^{N}a)=a^{nN^{n-1}}\Res(X_1^N,\ldots,X_n^N)=a^{nN^{n-1}} \in R.$$
Therefore, $\Res(\rho(f_1),\ldots,\rho(f_n))$ divides $a^{nN^{n-1}}$ in $R$ and hence $\TF_\mm(\rho(I).R)_0$ is contained in the radical of the ideal $(\Res(\rho(f_1),\ldots,\rho(f_n)))\subset R$.
\end{proof}

The resultant have a lot of interesting properties that we are going to
use all along this paper; we refer the reader to \cite[\S5]{J91} and 
 each time we will need one of these properties we will quote a precise reference from this source (as we have just done in the proof of the previous proposition).

\medskip

We end this paragraph by recalling the old-fashion way, still
very useful in some cases, to define the resultant (see for instance \cite{Zar37}). To do
this, let us introduce $n$ new indeterminates $T_1,\ldots,T_n$. From 
\eqref{form2} we deduce easily that 
\begin{multline*}
	{}_k\mathrm{TF}_\mm( (f_1-T_1X_n^{d_1},\ldots,f_n-T_nX_n^{d_n}) )_0 = \\ 
	\{ P(T_1,\ldots,T_n) \in {{}_kA}[T_1,\ldots,T_n]  :
	P(\tilde{f}_1,\ldots,\tilde{f}_n)=0\},	
\end{multline*}
% \begin{multline}
% {}_k\mathrm{TF}_\mm( (f_1-T_1X_n^{d_1},\ldots,f_n-T_nX_n^{d_n}) )_0= \notag \\
% \{ P(T_1,\ldots,T_n) \in {{}_kA}[T_1,\ldots,T_n]  :
% P(\tilde{f}_1,\ldots,\tilde{f}_n)=0\}, \notag
% \end{multline}
equality which can be rephrased by saying that \emph{the kernel of the map
$$\phi : {{}_kA}[T_1,\ldots,T_n] \rightarrow {{}_kA}[X_1,\ldots,X_{n-1}]: T_i
\mapsto \tilde{f}_i$$
is a principal ideal generated by
$\Res(f_1-T_1X_n^{d_1},\ldots,f_n-T_nX_n^{d_n})$}.
Thus, we obtain an \emph{explicit} formulation of \eqref{form2} under the form
\begin{equation}\label{restilde}
  \Res(f_1-\tilde{f}_1X_n^{d_1},\ldots,f_n-\tilde{f}_nX_n^{d_n})=0.
  \end{equation}

\subsection{A generalized weight property}\label{subsec:ZarWeight}

When dealing with the discriminant of $n-1$ homogeneous polynomials in $n$ variables, we will need a property of homogeneity for the resultant that is due to Mertens \cite{Mertens1886} and that has been generalized by Zariski about fifty years later \cite[Theorem 6]{Zar37}. For the convenience of the reader, we provide a proof of this result. 

Suppose given $n$ integers $\mu_1,\ldots,\mu_n$ such that for all $i=1,\ldots,n$ we have $0\leq \mu_i\leq d_i$ and set $f_i=X_n^{\mu_i}g_i+h_i$ where all the monomials having a nonzero coefficient in the polynomial $h_i$ is not divisible by $X_n^{\mu_i}$, i.e.~is such that $\alpha_n<\mu_i$. Now, define the weight of each coefficient $U_{i,\alpha}$, $i=1,\ldots,n$, $|\alpha|=d_i$ by
\begin{equation}\label{eq:weight}
	\mathrm{weight}(U_{i,\alpha}):=
	\begin{cases}
	 0 & \textrm{if} \ \alpha_n<\mu_i \\
	 \alpha_n-\mu_i & \textrm{if} \ \alpha_n\geq \mu_i
	\end{cases}  	
\end{equation}
(we will refer to this grading as the Zariski grading) and set
$$\Res(f_1,\ldots,f_n)=H(f_1,\ldots,f_n)+N(f_1,\ldots,f_n) \in {}_kA$$
where $H$ is the homogeneous part of minimum degree of the resultant, using the above weights definition.
  
\begin{prop} With the above notation, there exists an element $$H_1(f_1,\ldots,f_n) \in {}_kA$$ which 
	is of degree zero and that satisfies
	$$H(f_1,\ldots,f_n)=\Res(g_1,\ldots,g_n)H_1(f_1,\ldots,f_n) \in {}_kA.$$
In particular, the degree of $H$ is equal to $\prod_{i=1}^n (d_i-\mu_i)$.	
\end{prop}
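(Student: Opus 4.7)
The plan is to introduce a new indeterminate $s$ over $k$ and exploit the \emph{Zariski substitution}
$$\sigma\colon {}_{k}A\longrightarrow {}_{k}A[s], \qquad U_{i,\alpha}\longmapsto s^{\mathrm{weight}(U_{i,\alpha})}\,U_{i,\alpha},$$
where $\mathrm{weight}$ is defined in \eqref{eq:weight}. By construction $\sigma(\Res(f_1,\ldots,f_n))=\sum_{m\ge 0}s^m R_m$, with $R_m$ the weight-$m$ homogeneous component of $\Res(f_1,\ldots,f_n)$, so that $H=R_{m_0}$ for the least $m_0$ with $R_{m_0}\neq 0$. By stability of the resultant under specialization, $\sigma(\Res(f_1,\ldots,f_n))=\Res(\sigma(f_1),\ldots,\sigma(f_n))$, and a direct expansion gives
$$\sigma(f_i)\,=\,h_i+X_n^{\mu_i}\,g_i(X_1,\ldots,X_{n-1},\,sX_n).$$

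Next I would apply the change of variable $X_n\mapsto s^{-1}X_n$ inside each $\sigma(f_i)$ and multiply by $s^{\mu_i}$, arriving at the polynomials
$$\tilde F_i(s,X):=X_n^{\mu_i}\,g_i(X)+\sum_{\alpha_n<\mu_i}s^{\mu_i-\alpha_n}\,U_{i,\alpha}\,X^\alpha\ \in\ {}_{k}A[s][X_1,\ldots,X_n],$$
which are polynomial (not Laurent) in $s$. Combining the multiplicativity of the resultant in each slot with the transformation formula $\Res(f_i(AX))=(\det A)^{\prod_j d_j}\Res(f_i(X))$ applied to $A=\mathrm{diag}(1,\ldots,1,s^{-1})$ — both recorded in \cite[\S5]{J91} — then yields the key identity
$$\sigma\bigl(\Res(f_1,\ldots,f_n)\bigr)\,=\,s^{N}\,\Res(\tilde F_1,\ldots,\tilde F_n),\qquad N\,:=\,\textstyle\prod_j d_j\;-\;\sum_i\mu_i\prod_{j\neq i}d_j.$$

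The core step is then to determine the lowest $s$-power of $\Res(\tilde F_1,\ldots,\tilde F_n)$ and its leading coefficient. At $s=0$ one has $\tilde F_i(0,X)=X_n^{\mu_i}g_i(X)$; iterating the multiplicativity of $\Res$ in each slot expresses $\Res(X_n^{\mu_1}g_1,\ldots,X_n^{\mu_n}g_n)$ as a product in which every term containing two factors $X_n^{\mu_i}$ and $X_n^{\mu_j}$ with $i\neq j$ and $\mu_i,\mu_j>0$ vanishes, since $V(X_n)\subset \mathbb{P}^{n-1}$ is then a positive-dimensional common zero. Working over the fraction field of ${}_\ZZ A$ and applying Poisson's formula to the $d_1\cdots d_n$ generic common zeros of $\tilde F_1,\ldots,\tilde F_{n-1}$, one separates two groups as $s\to 0$: a finite subset approaches $V(g_1,\ldots,g_{n-1})\setminus V(X_n)$ and contributes the factor $\Res(g_1,\ldots,g_n)$ (since $\tilde F_n\to X_n^{\mu_n}g_n$ is non-vanishing there as $X_n\neq 0$), while the remaining zeros collapse onto $V(X_n)$ along Puiseux branches in $s$ and contribute, after extraction of the appropriate power of $s$, a weight-$0$ polynomial $H_1\in {}_k A$ in the coefficients $U_{i,\alpha}$ with $\alpha_n\le\mu_i$. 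Tracking $s$-valuations gives $N+v=\prod_j(d_j-\mu_j)$, where $v$ is the $s$-valuation of $\Res(\tilde F_1,\ldots,\tilde F_n)$ at $s=0$; this simultaneously yields $m_0=\prod_j(d_j-\mu_j)$ and $H=\Res(g_1,\ldots,g_n)\,H_1$. The degree assertion for $H$ then follows from the weight-homogeneity under $X_n$-scaling of each $g_j$: $\Res(g_1,\ldots,g_n)$ has weight $\prod_j(d_j-\mu_j)$ and $H_1$ has weight $0$.

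The main obstacle is this last step — the precise bookkeeping of how the degenerating common zeros contribute to the $s$-valuation and produce the weight-$0$ factor $H_1$. Should the direct Puiseux analysis turn out to be too delicate, an alternative approach is induction on $\sum_i\mu_i$: passing from $(\mu_1,\ldots,\mu_n)$ to $(\mu_1+1,\mu_2,\ldots,\mu_n)$ amounts to absorbing one extra factor of $X_n$ into $g_1$, with the corresponding $g_1^{\mathrm{new}}=X_n g_1+(\text{slice of }h_1\text{ at }\alpha_n=\mu_1)$, and comparing the two Zariski gradings yields the factorization by a telescoping argument.
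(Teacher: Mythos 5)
Your setup is correct as far as it goes: the substitution $\sigma(U_{i,\alpha})=s^{\mathrm{weight}(U_{i,\alpha})}U_{i,\alpha}$, the identity $\sigma(f_i)=h_i+X_n^{\mu_i}g_i(X_1,\ldots,X_{n-1},sX_n)$, and the Laurent identity $\sigma(\Res(f_1,\ldots,f_n))=s^{N}\Res(\tilde F_1,\ldots,\tilde F_n)$ all check out, and this is essentially the same homogenization device the paper uses (with $t$ in place of $s$). The problem is that the argument stops exactly where the two assertions of the proposition have to be proved, and the step you yourself flag as ``the main obstacle'' is a genuine gap, not a routine verification. First, the divisibility $H=\Res(g_1,\ldots,g_n)H_1$: the Poisson/Puiseux analysis is only sketched, and as sketched it does not obviously produce the resultant itself, since Poisson's formula gives $\prod_{\xi}g_n(\xi)^{m_\xi}=\Res(g_1,\ldots,g_n)/\Res(\overline{g}_1,\ldots,\overline{g}_{n-1})^{\deg g_n}$, so the non-degenerating roots contribute the resultant only up to a denominator your bookkeeping never accounts for; moreover a limit argument over $\Frac({}_\ZZ A)$ yields at best divisibility of rational functions, and the return to ${}_\ZZ A$ (Gauss/primitivity) and then to an arbitrary ring $k$ is not addressed. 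Second, the valuation count $N+v=\prod_i(d_i-\mu_i)$, equivalently that $H_1$ is isobaric of weight zero: you assert that the collapsing branches contribute ``a weight-$0$ polynomial $H_1$,'' but no argument is given, and this is precisely the nontrivial half of the statement; the alternative induction on $\sum_i\mu_i$ is named but not executed.

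For comparison, the paper closes both points by purely algebraic means that could be grafted onto your framework. For the divisibility, it proves that the minimal-weight part $\phi_0$ of \emph{any} inertia form $\phi\in\TF_\mm(f_1,\ldots,f_n)_0$ lies in $\TF_\mm(g_1,\ldots,g_n)_0$ (apply the $t$-graded substitution, set $X_n=1$, perform the Kronecker substitution $\eta_i\mapsto-\tilde{\varphi}_i-t\omega_i$, divide by $t^\nu$ and set $t=0$); since $\Res(g_1,\ldots,g_n)$ generates the resultant ideal of the $g_i$'s over any $k$, this gives $H=\Res(g_1,\ldots,g_n)H_1$ with no Poisson formula and no passage to fraction fields. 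For the weight of $H_1$, the lower bound $\deg H\geq\prod_i(d_i-\mu_i)$ is immediate from this factorization because $\Res(g_1,\ldots,g_n)$ is isobaric of that weight, and the matching upper bound is obtained by exhibiting one explicit specialization (the triangular family $f_1=X_1^{d_1-\mu_1}X_n^{\mu_1}$, $f_i=X_{i-1}^{d_i}+X_i^{d_i-\mu_i}X_n^{\mu_i}$, $f_n=X_{n-1}^{d_{n-1}}+t^{d_n-\mu_n}X_n^{d_n}$, with coefficients specialized to $t^{\mathrm{weight}}$) whose resultant is computed exactly to be $\pm t^{\prod_i(d_i-\mu_i)}$. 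Without such an explicit witness, or a genuinely completed valuation computation along your Puiseux branches, your proposal establishes neither the factorization nor the degree claim.
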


Here is an immediate corollary that is the form under which we will use this property later on. 

\begin{cor}\label{cor:mertenshom}
For all $i=1,\ldots,n$, define the polynomials $h_i$ and rename some coefficients $U_{i,\alpha}$ of $f_i$ so that
$f_i=X_n^{d_i-1}(\sum_{j=1}^n V_{i,j}X_j)+h_i$. Then, we have
$$\Res(f_1,\ldots,f_n)-\det((V_{i,j})_{i,j=1,\ldots,n})H_1 \in (V_{1,n},\ldots,V_{n,n})^2 \subset {}_kA.$$
\end{cor}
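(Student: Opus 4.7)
The plan is to apply the preceding proposition with the choice $\mu_i = d_i - 1$ for every $i$, so that $g_i = \sum_{j=1}^n V_{i,j} X_j$ is a linear form and $h_i$ collects precisely the monomials of $f_i$ with $X_n$-degree strictly less than $d_i-1$. The corollary then amounts to a bookkeeping statement about which variables carry positive Zariski weight.

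First, I would analyze the Zariski grading \eqref{eq:weight} in this specific case. A coefficient $U_{i,\alpha}$ has $\mathrm{weight}(U_{i,\alpha})>0$ exactly when $\alpha_n > \mu_i = d_i - 1$, i.e.~when $\alpha_n = d_i$, which forces $\alpha = d_i e_n$. Hence the only coefficients of $f_i$ of positive Zariski weight are the $n$ leading coefficients $V_{1,n},\ldots,V_{n,n}$, and each of them has weight exactly $1$. Consequently, any monomial in the variables $U_{i,\alpha}$ whose total Zariski weight is at least $2$ must contain (counted with multiplicity) at least two factors drawn from $\{V_{1,n},\ldots,V_{n,n}\}$, and therefore lies in the ideal $(V_{1,n},\ldots,V_{n,n})^2 \subset {}_kA$.

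Next, by the preceding proposition, $\Res(f_1,\ldots,f_n) = H + N$, where $H$ is the Zariski-homogeneous part of minimum degree $\prod_{i=1}^n(d_i-\mu_i) = 1$ and $N$ is a sum of Zariski-homogeneous pieces of degrees $\geq 2$. The same proposition gives the factorization $H = \Res(g_1,\ldots,g_n)\cdot H_1$ with $H_1$ of Zariski-degree $0$. Since the $g_i$ are linear forms with coefficient matrix $(V_{i,j})$, the normalization \eqref{normres} together with the behavior of the resultant under linear changes of variables (see \cite[\S5]{J91}) gives $\Res(g_1,\ldots,g_n) = \det((V_{i,j}))$; a consistency check is that each term in the determinantal expansion contains exactly one factor $V_{i,n}$, so $\det((V_{i,j}))$ has Zariski-weight exactly $1$, matching the weight of $H$.

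Putting these together, $\Res(f_1,\ldots,f_n) - \det((V_{i,j}))\, H_1 = N$, and by the first step $N \in (V_{1,n},\ldots,V_{n,n})^2$, as claimed. There is no serious obstacle: the only substantive point is the identification of $V_{1,n},\ldots,V_{n,n}$ as the complete list of weight-positive variables, after which the argument is a one-line consequence of the proposition plus the evaluation $\Res(g_1,\ldots,g_n) = \det((V_{i,j}))$ for linear forms.
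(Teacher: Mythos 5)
Your proposal is correct and is exactly the argument the paper has in mind: the corollary is the proposition specialized to $\mu_i=d_i-1$, where the only positive-weight variables are $V_{1,n},\ldots,V_{n,n}$ (each of weight $1$), so the tail $N$ of Zariski weight $\geq 2$ lies in $(V_{1,n},\ldots,V_{n,n})^2$ while $H=\Res(g_1,\ldots,g_n)H_1=\det((V_{i,j}))H_1$. The paper states this as an immediate consequence without writing out a proof, and your write-up supplies precisely the intended bookkeeping.
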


\begin{proof} Let $\phi \in \TF_\mm(f_1,\ldots,f_n)\cap A$, so that there exists an integer $N$ such that $X_n^N\phi \in (f_1,\ldots,f_n)$, and define $\phi_0\in A$ as the homogeneous part of minimum degree of $\phi$ with respect to the weights given in \eqref{eq:weight}. We begin by showing that $\phi_0 \in \TF_\mm(g_1,\ldots,g_n)\cap A$. 
	
	In addition of the weights \eqref{eq:weight}, we set $\mathrm{weight}(X_i)=1$ for all $i=1,\ldots,n-1$ and $\mathrm{weight}(X_n)=0$. In this way, for all $i=1,\ldots,n$ the terms in the decomposition $f_i=X_n^{\mu_i}g_i+h_i$ are such that $X_n^{\mu_i}g_i$ is homogeneous of degree $d_i-\mu_i$ whereas $h_i$ contains monomials that are homogeneous of degree strictly bigger than $d_i-\mu_i$. To emphasize this property, introduce a new indeterminate $t$ and consider the linear transformation 
\begin{equation*}
	\begin{cases}
	X_i \mapsto tX_i, \ i=1,\ldots,n-1 \\
	X_n\mapsto X_n \\
	U_{i,\alpha} \mapsto t^{\mathrm{weight}(U_i,\alpha)}U_{i,\alpha}, \ i=1,\ldots,n, \ |\alpha|=d_i.
	\end{cases}
\end{equation*}
Denoting by $\nu$ the degree of $\phi_0$ and applying the above transformation, we deduce that
\begin{multline}\label{eq:RHN1}
X_n^N t^\nu(\phi_0+t\omega_0) \in \\ (t^{d_1-\mu_1}(X_n^{\mu_1}g_1+t\omega_1), t^{d_2-\mu_2}(X_n^{\mu_2}g_2+t\omega_2),\ldots,t^{d_n-\mu_n}(X_n^{\mu_n} g_n+t\omega_n))	
\end{multline}
where $\omega_i \in A[X_1,\ldots,X_n,t]$ for all $i=0,\ldots,n$. Having in mind to use the characterization \eqref{perron} of inertia forms, for all $i=1,\ldots,n$ we set $g_i=\eta_iX_n^{d_i-\mu_i}+\varphi_i$, $\tilde{g_i}=g_i(X_1,\ldots,X_{n-1},1)$ and $\tilde{\varphi_i}=\varphi_i(X_1,\ldots,X_{n-1},1)$. Now, the specialization of $X_n$ to $1$
in \eqref{eq:RHN1} yields
\begin{equation*}
t^\nu(\phi_0+t\omega_0) \in (\tilde{g_1}+t\omega_1, \tilde{g_2}+t\omega_2,\ldots,\tilde{g_n}+t\omega_n)	
\end{equation*}
and then the specializations of $\eta_i$ to $-\tilde{\varphi_i}-t\omega_i$ for all $i=1,\ldots,n$ give
\begin{equation}\label{eq:RHN2}
t^\nu(\phi_0(-\tilde{\varphi_1}-t\omega_1,\ldots,-\tilde{\varphi_n}-t\omega_n)+t\omega_0(-\tilde{\varphi_1}-t\omega_1,\ldots,-\tilde{\varphi_n}-t\omega_n))=0	
\end{equation}
in $A[X_1,\ldots,X_n,t]$, where the quoted arguments of $\phi_0$ and $\phi_1$ are those corresponding to the coefficients $\eta_1,\ldots,\eta_n$ respectively. But since $t$ is a nonzero divisor, we can simplify \eqref{eq:RHN2} by $t^\nu$. Then, by specializing $t$ to $0$ we deduce that $\phi_0(-\tilde{\varphi_1},\ldots,-\tilde{\varphi_n})=0$ and hence that $\phi_0\in \TF_\mm(g_1,\ldots,g_n)$.  

Now, applying the above property to $\phi=\Res(f_1,\ldots,f_n)$ we deduce that there exists $H_1\in A$ such that $H=\Res(g_1,\ldots,g_n)H_1$. However, to conclude the proof it remains to show that $H_1$ is of degree zero, or equivalently that $H$ and $\Res(g_1,\ldots,g_n)$ have the same degree with respect to the weights \eqref{eq:weight}. Notice that we already know  that $\Res(g_1,\ldots,g_n)$ has degree $\prod_{i=1}^n(d_i-\mu_i)$ by the property \cite[\S 5.13.2]{J91} and hence, the degree of $H$ is greater or equal to $\prod_{i=1}^n(d_i-\mu_i)$. In order to show that it is actually an equality, we consider the following specialization
$$\left\{
\begin{array}{rcl}
f_1 & = & X_1^{d_1-\mu_1}X_n^{\mu_1} \\
f_2 & = & X_1^{d_2}+X_2^{d_2-\mu_2}X_n^{\mu_2} \\
f_3 & = & X_2^{d_3}+X_3^{d_3-\mu_3}X_n^{\mu_3} \\
 & \vdots & \\
f_{n-1} & = & X_{n-2}^{d_{n-1}}+X_{n-1}^{d_{n-1}-\mu_{n-1}}X_n^{\mu_{n-1}} \\
f_{n} & = & X_{n-1}^{d_{n-1}}+t^{d_n-\mu_n}X_{n}^{d_{n}} 
\end{array}\right.
$$
where, for all $i=1,\ldots,n$, the coefficient $U_{i,\alpha}$ of each monomial $X_1^{\alpha_1}\ldots X_n^{\alpha_n}$, $ |\alpha|=d_i$, of $f_i$ that appears in this specialization has been also specialized to $t^{\mathrm{weight}(U_i,\alpha)}$. Let us compute the resultant of $f_1,\ldots,f_n$. Applying the multiplicativity property of resultants \cite[\S 5.7]{J91}, we get
\begin{align*}
	\Res(f_1,\ldots,f_n) & = \Res(X_1^{d_1-\mu_1},f_2,\ldots,f_n)\Res(X_n^{\mu_1},f_2,\ldots,f_n) \\
	&= \Res(X_1,f_2,\ldots,f_n)^{d_1-\mu_1}\Res(X_n,X_1,X_2,\ldots,X_{n-1})^{\mu_1d_2d_3\ldots d_{n-1}}\\
	&= (-1)^{(n-1)\mu_1 d_2 d_3\ldots d_{n-1}}\Res(X_1,f_2,\ldots,f_n)^{d_1-\mu_1},
\end{align*}
then
\begin{align*}
	\lefteqn{\Res(X_1,f_2,\ldots,f_n)} \\
	&= \Res(X_1,X_2^{d_2-\mu_2},f_3,\ldots,f_n) \Res(X_1,X_n^{\mu_2},f_3,\ldots,f_n) \\
	&= \Res(X_1,X_2,f_3,\ldots,f_n)^{d_2-\mu_2}\Res(X_1,X_n,X_2,X_3,\ldots,X_{n-1})^{\mu_2d_3\ldots d_{n-1}}\\
	&= (-1)^{(n-2)\mu_2d_3\ldots d_{n-1}}\Res(X_1,X_2,f_3,\ldots,f_n)^{d_2-\mu_2}
\end{align*}
and continuing this way we arrive at the equality
$$ \Res(f_1,\ldots,f_n) = \pm \Res(X_1,\ldots,X_{n-1},f_n)^{(d_1-\mu_1)\ldots (d_{n-1}-\mu_{n-1})}.$$
But since $f_n$ is specialized to $X_{n-1}^{d_{n-1}}+t^{d_n-\mu_n}X_{n}^{d_{n}}$, we deduce that
\begin{align*}
\lefteqn{\Res(f_1,\ldots,f_n)} \\
&= \pm \Res(X_1,\ldots,X_{n-1},t^{d_n-\mu_n}X_{n}^{d_{n}})^{(d_1-\mu_1)\ldots (d_{n-1}-\mu_{n-1})}	\\
&= \pm t^{(d_1-\mu_1)\ldots (d_{n-1}-\mu_{n-1})(d_n-\mu_n)} \Res(X_1,\ldots,X_{n-1},X_{n})^{(d_1-\mu_1)\ldots (d_{n-1}-\mu_{n-1})d_n} \\
&= \pm t^{\prod_{i=1}^n(d_i-\mu_i)}.
\end{align*}
Therefore, for this particular specialization, we get that $\Res(f_1,\ldots,f_n)$ is of degree $\prod_{i=1}^n(d_i-\mu_i)$, and hence that, in the generic context, the degree of $H$ can not be greater than $\prod_{i=1}^n(d_i-\mu_i)$ which concludes the proof.
\end{proof}

Mention that from an historical point of view, the above result is the beginning of the theory of the \emph{reduced resultant}. Indeed, Zariski proved \cite{Zar37} that the factor $H_1$ is a generator of a principal ideal whose geometric interpretation is that the polynomials $h_1,\ldots,h_n$ have a common root in addition of the root $X_1=\ldots=X_{n-1}=0$ that they already have in common. It is called the \emph{reduced resultant}. We refer the interested reader to \cite{Zar37} and \cite{PhDthesisRed} for more details.

\subsection{The Dedekind-Mertens Lemma} We end this section of preliminaries by recalling the Dedekind-Mertens Lemma and give an important corollary that we will use several times in this text (sometimes even implicitly).

Let $A$ be a commutative ring and $\underline{X}:=(X_1,\ldots,X_n)$ be 
a sequence of $n\geq 1$
indeterminates. Given a $A$-module $M$ and an element
$$m=\sum_{\alpha}c_\alpha X^\alpha \in M[\underline{X}]:=M[X_1,\ldots,X_n]$$
we define the \emph{support} of $m$ as
$$\supp(m)=\{ \alpha \in \NN^n : c_\alpha \neq 0\}$$
and the \emph{length} of $m$, denoted $l(m)$, as the cardinal of
$\supp(m)$. Observe that $l(m)=0$ if and only if  $m=0$.
Moreover, for any subring $R$ of $A$, we define the 
  $R$-content of $m$ as the $R$-submodule of $M$:
$$\C_R(m):=\sum_{\alpha \in \supp(m)}c_\alpha R.$$

\begin{lem}[Dedekind-Mertens]  Let $M$ be a $A$-module, $f$ be a
  polynomial in $A[\underline{X}]$ and $m$ a polynomial in
  $M[\underline{X}]$. Then, for all subring $R$ of $A$ we have
  $$\C_R(f)^{l(m)}\C_R(m)=\C_R(f)^{l(m)-1}\C_R(fm)$$
  where we set, by convention, $\C_R(f)^{-1}=R$.
\end{lem}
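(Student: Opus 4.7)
The plan is to prove the lemma by induction on $L := l(m)$. The base cases are immediate: when $L = 0$, we have $m = 0$ and both sides vanish (using the convention $\C_R(f)^{-1} = R$); when $L = 1$, writing $m = c X^\alpha$, the polynomial $fm$ has coefficients $ca_\beta$ at the exponents $\alpha + \beta$, so $\C_R(fm) = c\,\C_R(f)$ and both sides of the identity reduce to $c\,\C_R(f)$.

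For the inductive step with $L \geq 2$, one inclusion is easy: every coefficient of $fm$ is an $R$-linear combination of products of a coefficient of $f$ and a coefficient of $m$, so $\C_R(fm) \subseteq \C_R(f)\C_R(m)$, and multiplying by $\C_R(f)^{L-1}$ yields $\C_R(f)^{L-1}\C_R(fm) \subseteq \C_R(f)^L\C_R(m)$.

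The substantive direction is the reverse inclusion $\C_R(f)^L\C_R(m) \subseteq \C_R(f)^{L-1}\C_R(fm)$. I would fix a monomial order on $\NN^n$, let $\alpha$ be the maximum of $\supp(m)$ with coefficient $c$, and decompose $m = cX^\alpha + m_1$ with $l(m_1) = L-1$. The induction hypothesis applied to $m_1$ gives $\C_R(f)^{L-1}\C_R(m_1) = \C_R(f)^{L-2}\C_R(fm_1)$, while the identity $fm_1 = fm - cfX^\alpha$ yields $\C_R(fm_1) \subseteq \C_R(fm) + c\,\C_R(f)$. Together these reduce the problem to establishing $c\,\C_R(f)^L \subseteq \C_R(f)^{L-1}\C_R(fm)$. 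The key combinatorial input is that, by maximality of $\alpha$, for each $\beta \in \supp(f)$ we have
\[
(fm)_{\alpha+\beta} = ca_\beta + \sum_{\beta' > \beta,\ \beta' \in \supp(f)} a_{\beta'}\, c_{\alpha+\beta-\beta'},
\]
where each $c_{\alpha+\beta-\beta'}$ is a coefficient of $m_1$ (with the convention that this coefficient is zero if $\alpha+\beta-\beta' \notin \supp(m_1)$). In particular, at the maximum $\beta^*$ of $\supp(f)$ this identity degenerates to $(fm)_{\alpha+\beta^*} = ca_{\beta^*}$, placing $ca_{\beta^*}$ directly in $\C_R(fm)$.

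The main obstacle I anticipate is that a naive single-level induction on $L$ is circular: substituting the displayed expansion into a generator of $c\,\C_R(f)^L$ and applying the bound on $\C_R(fm_1)$ produces residual terms again in $c\,\C_R(f)^L$. The resolution is to run a secondary descending induction on $\beta \in \supp(f)$, initialized at $\beta^*$ where $ca_{\beta^*} \in \C_R(fm)$ is automatic, through which each $ca_\beta \cdot \C_R(f)^{L-1}$ is progressively absorbed into $\C_R(f)^{L-1}\C_R(fm)$ using the primary inductive hypothesis applied to $m_1$ and the secondary hypothesis at indices $\beta' > \beta$. Organizing the bookkeeping of this nested induction is the technical heart of the argument, in the spirit of the classical proofs of Dedekind and Mertens.
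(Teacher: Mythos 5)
The paper does not actually prove this lemma: it is recalled as a classical preliminary (only the corollary that follows it is proved in the text), so there is no in-paper argument to compare you with, and your proof must stand on its own — which it does. Your strategy is the standard proof of the ``number of terms'' refinement of Dedekind--Mertens: induction on $L:=l(m)$, splitting off the leading term $cX^\alpha$ for a monomial order compatible with addition, reducing via the inductive hypothesis and $fm_1=fm-cfX^\alpha$ to the inclusion $c\,\C_R(f)^{L}\subseteq \C_R(f)^{L-1}\C_R(fm)$, and then a descending secondary induction over $\supp(f)$. The one step you leave as ``bookkeeping'' does close, and it is worth recording why: for $\beta\in\supp(f)$, any decomposition $\alpha+\beta=\gamma+\delta$ with $\gamma\in\supp(m)$, $\delta\in\supp(f)$ and $\gamma\neq\alpha$ forces $\gamma<\alpha$ and hence $\delta>\beta$, which justifies your displayed identity; multiplying it by $u\in\C_R(f)^{L-1}$ gives $ca_\beta u=(fm)_{\alpha+\beta}u-\sum_{\delta>\beta}a_\delta\bigl((m_1)_{\alpha+\beta-\delta}u\bigr)$, and the crucial point is to keep the factor $a_\delta$ aside before invoking the primary hypothesis: $(m_1)_{\alpha+\beta-\delta}u\in\C_R(f)^{L-1}\C_R(m_1)=\C_R(f)^{L-2}\C_R(fm_1)\subseteq\C_R(f)^{L-2}\C_R(fm)+c\,\C_R(f)^{L-1}$, so that $a_\delta(m_1)_{\alpha+\beta-\delta}u\in\C_R(f)^{L-1}\C_R(fm)+c\,a_\delta\,\C_R(f)^{L-1}$ with $\delta$ \emph{strictly larger} than $\beta$, which is precisely what the secondary hypothesis absorbs; the descending induction starts at $\beta^*=\max\supp(f)$, where the sum is empty and $ca_{\beta^*}=(fm)_{\alpha+\beta^*}\in\C_R(fm)$. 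With that verification written out, your nested induction is a complete and correct proof of the lemma as stated (including the conventions at $L=0,1$).
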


\begin{cor} Let $M$ be a $A$-module and $f\in
  A[\underline{X}]$ a polynomial. Then, the following are equivalent:
  \begin{itemize}
  \item[\rm (i)] The polynomial $f$ is a nonzero divisor in the
    $A[\underline{X}]$-module $M[\underline{X}]$.
    \item[\rm (ii)] The ideal $\C_A(f)$ does not divide zero in $M$
      (there does not exists $m \in M$ such that $m\neq 0$ and $\C_A(f)m=0$). 
  \end{itemize}
\end{cor}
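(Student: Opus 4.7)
The plan is to prove the two implications by contraposition and invoke the Dedekind-Mertens lemma for the nontrivial direction.

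For $\mathrm{(i)} \Rightarrow \mathrm{(ii)}$ I would argue by contraposition. Assume $\C_A(f)$ divides zero in $M$, so there exists $m_0 \in M$ with $m_0 \neq 0$ and $\C_A(f) m_0 = 0$. Viewing $m_0$ as the constant polynomial in $M[\underline{X}]$, each coefficient $a_\alpha$ of $f$ satisfies $a_\alpha m_0 = 0$, hence $f m_0 = 0$ in $M[\underline{X}]$. Since $m_0 \neq 0$, this shows $f$ is a zero divisor on $M[\underline{X}]$. This direction is essentially immediate and does not use Dedekind-Mertens.

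For $\mathrm{(ii)} \Rightarrow \mathrm{(i)}$, again by contraposition, suppose $f$ is a zero divisor on $M[\underline{X}]$, so there exists $m \in M[\underline{X}]$ with $m \neq 0$ and $fm = 0$. Applying the Dedekind-Mertens lemma with $R = A$ gives
\[
\C_A(f)^{l(m)} \C_A(m) = \C_A(f)^{l(m)-1} \C_A(fm) = 0,
\]
since $\C_A(fm) = 0$ from $fm = 0$. Now consider the descending chain of submodules $\C_A(m) \supseteq \C_A(f)\C_A(m) \supseteq \C_A(f)^2 \C_A(m) \supseteq \cdots \supseteq \C_A(f)^{l(m)} \C_A(m) = 0$. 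Because $m \neq 0$, we have $\C_A(m) \neq 0$, so there is a least integer $k \geq 1$ with $\C_A(f)^k \C_A(m) = 0$. Then $\C_A(f)^{k-1} \C_A(m) \neq 0$, and any nonzero element $m' \in \C_A(f)^{k-1} \C_A(m) \subseteq M$ satisfies $\C_A(f) m' \subseteq \C_A(f)^k \C_A(m) = 0$, which witnesses the failure of $\mathrm{(ii)}$.

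The only real step is choosing the right invocation of Dedekind-Mertens — namely with $R = A$ and with the module element $m$ itself — and then extracting the annihilating element by a minimality argument along the chain $\C_A(f)^i \C_A(m)$. There is no substantial obstacle beyond being careful that $k \geq 1$ (which uses $m \neq 0 \Leftrightarrow l(m) \geq 1 \Leftrightarrow \C_A(m) \neq 0$) and that the constant polynomial trick in the easy direction lives in $M[\underline{X}]$ as required.
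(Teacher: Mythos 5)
Your proof is correct and follows essentially the same route as the paper: the easy direction via the constant polynomial $m_0\cdot 1_{A[\underline{X}]}$, and the other direction via Dedekind--Mertens with $R=A$ to get $\C_A(f)^{l(m)}\C_A(m)=0$, then peeling off powers of $\C_A(f)$ using (ii). Your explicit minimality argument along the chain $\C_A(f)^i\C_A(m)$ just spells out the descent that the paper leaves implicit.
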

\begin{proof} Assume that (i) holds and that  there exists $m \in M$ such that
  $\C_A(f)m=0$. Then $(m.1_{A[\underline{X}]})f=0$ in
  $M[\underline{X}]$ and hence $m=0$, which proves that (i)
  implies (ii).

  Now, assume that (ii) holds and that there exists $m \in
  M[\underline{X}]$ such that $mf=0$. Then, by the Dedekind-Mertens
  lemma, we deduce that $\C_A(f)^{l(m)}\C_A(m)=0$ and from (ii) that
  $\C_A(m)=0$. It follows that $m=0$ and the corollary is proved. 
\end{proof} 

Finally, recall that a polynomial $f \in A[\underline{X}]$ is said to be \emph{primitive} if $C_A(f)=A$.

\section{The discriminant of a finite set of points}\label{dim0}

\subsection{Definition and first properties}\label{sec:def}

In this section, we give the definition of the discriminant of $n-1$
homogeneous polynomials in $n$ variables.  We begin section with some properties on Jacobian determinants. 
Then, we provide computational rules for handling this discriminant and we show that its definition have the expected geometric property:
its vanishing corresponds to the detection of a singular locus. 

\medskip

Hereafter, we suppose given $n-1$, with $n\geq 2$, homogeneous polynomials
$f_1,\ldots,f_{n-1}$ of positive degree $d_1,\ldots,d_{n-1}$,
respectively, 
$$f_i(X_1,\ldots,X_n)=\sum_{|\alpha|=d_i}U_{i,\alpha}X^\alpha, \ \ 
i=1,\ldots,n-1.$$
We denote by $k$ an arbitrary commutative ring and set ${}_k A:=k[U_{i,\alpha}]$
the universal coefficient ring over $k$. Thus, $f_i \in
{}_kA[X_1,\ldots,X_n]_{d_i}$ for all $i=1,\ldots,n-1$.

\subsubsection{Jacobian determinants} 
For all $i=1,\ldots,n$, consider the Jacobian determinant
\begin{multline}\label{Ji}
J_i(f_1,\ldots,f_{n-1}):= \\
(-1)^{n-i}
\left|
\begin{array}{cccccc}
\partial_{X_1}f_1 & \cdots& \partial_{X_{i-1}}f_1 & \partial_{X_{i+1}}f_1 & \cdots & \partial_{X_{n}}f_1 \\
\partial_{X_1}f_2 & \cdots& \partial_{X_{i-1}}f_2 & \partial_{X_{i+1}}f_2 & \cdots & \partial_{X_{n}}f_2 \\
\vdots & & \vdots & \vdots & & \vdots \\
\partial_{X_1}f_{n-1} & \cdots& \partial_{X_{i-1}}f_{n-1} & \partial_{X_{i+1}}f_{n-1} & \cdots & \partial_{X_{n}}f_{n-1}
\end{array}\right|
\end{multline}
that is obviously a homogeneous polynomial in the
variables $X_1,\ldots,X_n$ of degree 
$\deg(J_i)=\sum_{j=1}^{n-1}(d_j-1)$. Notice that this degree is independent on $i\in \{1,\ldots,n\}$. 
\begin{lem}\label{Jirred}
  For all integer $i\in \{1,\ldots,n\}$, we have:
\begin{itemize}
	\item[i)] the Jacobian determinant
	  $J_i:=J_i(f_1,\ldots,f_{n-1})$ is irreducible in
	  the polynomial ring ${}_\ZZ A[X_1,\ldots,X_n]$,
	\item[ii)] the polynomial $J_i(X_1,\ldots,X_{i-1},1,X_{i+1},\ldots,X_n)$ is  primitive, hence a non\-zero divisor, in ${}_k A[X_1,\ldots,X_{i-1},X_{i+1},\ldots,X_n]$,
	  \item[iii)] if $k$ is a domain then $J_i(X_1,\ldots,X_{i-1},1,X_{i+1},\ldots,X_n)$ is prime in the polynomial ring ${}_k A[X_1,\ldots,X_{i-1},X_{i+1},\ldots,X_n]$.
	
\end{itemize}
\end{lem}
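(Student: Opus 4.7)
The plan is to establish (i) by specializing the $X$-variables to a point where $J_i$ reduces to the determinant of a matrix of distinct indeterminates, and then to deduce (ii) and (iii) by combining the multi-homogeneity of $J_i$ with Dedekind--Mertens-type content arguments.

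For (i), I use that $J_i$ is multi-linear in the coefficient sets $(U_{k,\alpha})_\alpha$ for $k=1,\ldots,n-1$: row $k$ of the Jacobian depends only on the coefficients of $f_k$, so $J_i$ has degree exactly one in each set $U_{k,\cdot}$. The key computation is the specialization $X_i = 1$, $X_j = 0$ for $j \neq i$: in $\partial_{X_j} f_k$ with $j \neq i$, only the monomial whose multi-index $\alpha$ satisfies $\alpha_i = d_k-1$ and $\alpha_j = 1$ survives, and the evaluated partial derivative is exactly that $U_{k,\alpha}$. Hence the specialized $J_i$ equals, up to sign, the determinant of an $(n-1)\times(n-1)$ matrix of $(n-1)^2$ pairwise distinct $U$-indeterminates, which is irreducible in $\mathbb{Z}[U]$ by the classical theorem on generic determinants. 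Writing $J_i = PQ$ in ${}_{\mathbb{Z}}A[X_1,\ldots,X_n]$, multi-homogeneity forces $P$ and $Q$ to be multi-homogeneous in the sets $U_{k,\cdot}$, with multi-degrees partitioning $(1,\ldots,1)$. If $P$ uses some but not all of these sets, then both $P$ and $Q$ specialize at the above point to nonconstant polynomials in $\mathbb{Z}[U]$, contradicting irreducibility of the product. Hence, up to swap, $P \in \mathbb{Z}[X_1,\ldots,X_n]$, so $P$ divides every coefficient of $J_i$ viewed as a polynomial in the $U$'s. Two well-chosen coefficients suffice: the coefficient of the product of the $U$-variables above equals $\pm X_i^{\sum(d_k-1)}$ (a pure power of $X_i$ with integer constant $\pm 1$), while the coefficient of $\prod_k U_{k,\alpha}$ with $\alpha_{j_k}=d_k$ equals $\pm \prod_k d_k \cdot \prod_k X_{j_k}^{d_k-1}$ and has no $X_i$ factor; the first forces $P = c\,X_i^a$ with $c \in \{\pm 1\}$ and the second forces $a=0$.

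For (ii), the polynomial $J_i(\ldots,1,\ldots)$ viewed as an element of $k[U,X_1,\ldots,\widehat{X_i},\ldots,X_n]$ inherits from (i) a monomial with integer coefficient $\pm 1$ (the coefficient $\pm X_i^{\sum(d_k-1)}$ becoming $\pm 1$ after setting $X_i=1$), so its $k$-content equals $k$ and it is $k$-primitive over $k$; the Dedekind--Mertens corollary then yields that $J_i(\ldots,1,\ldots)$ is a non-zero divisor in ${}_kA[X_1,\ldots,\widehat{X_i},\ldots,X_n]$. Equivalently, the content ideal $C_{{}_kA}(J_i(\ldots,1,\ldots))$ contains the constant term $\pm \det M$ (with $M$ the matrix from (i)) which, having $\mathbb{Z}$-coefficients $\pm 1$, is a non-zero divisor in $k[U]$; so the content contains a non-zero divisor and annihilates nothing, whence a second application of Dedekind--Mertens gives the non-zero divisor conclusion.

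For (iii), with $K := \Frac(k)$, I rerun the multi-degree argument of (i) inside $K[U,X_1,\ldots,\widehat{X_i},\ldots,X_n]$ working directly with $J_i(\ldots,1,\ldots)$: the specialization $X_j = 0$ for $j \neq i$ again produces $\pm \det M$ (irreducible over any field), and the distinguished coefficient is $\pm 1$ after dehomogenization, so the analysis yields irreducibility of $J_i(\ldots,1,\ldots)$ in $K[U,X_1,\ldots,\widehat{X_i},\ldots,X_n]$. To descend to $k[U,X_1,\ldots,\widehat{X_i},\ldots,X_n]$ I exploit $k$-primitivity via a Gauss-type argument: if $J_i(\ldots,1,\ldots)$ divides $gh$ in $k[U,X_1,\ldots,\widehat{X_i},\ldots,X_n]$, then in the UFD $K[U,X_1,\ldots,\widehat{X_i},\ldots,X_n]$ (where it is prime) it divides, say, $g$, giving $g = J_i(\ldots,1,\ldots)\cdot q$ with $q = q'/c$ for some $q' \in k[U,X_1,\ldots,\widehat{X_i},\ldots,X_n]$ and $c \in k \setminus\{0\}$; applying Dedekind--Mertens to $c g = J_i(\ldots,1,\ldots)\cdot q'$ together with $C_k(J_i(\ldots,1,\ldots))=k$ gives $C_k(q') = c\cdot C_k(g) \subseteq c k$, hence $q' \in c\cdot k[U,X_1,\ldots,\widehat{X_i},\ldots,X_n]$, so $q \in k[U,X_1,\ldots,\widehat{X_i},\ldots,X_n]$ and $J_i(\ldots,1,\ldots)$ already divides $g$ in $k[U,X_1,\ldots,\widehat{X_i},\ldots,X_n]$.

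The main technical obstacle I foresee is the clean invocation of the classical irreducibility of the generic determinant (which I would either cite or re-prove in parallel by the same multi-linearity strategy applied to this simpler case), together with the descent from $K$ to $k$ in (iii) executed via Dedekind--Mertens so as to avoid assuming that $k$ is a UFD.
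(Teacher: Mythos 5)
Your proposal is correct, and for parts i) and ii) it follows essentially the same route as the paper: specialize so that $J_i$ collapses to $\pm X_i^{\deg J_i}$ times a generic $(n-1)\times(n-1)$ determinant, use multi-homogeneity in each coefficient block $(U_{k,\alpha})_\alpha$ to force one factor of any factorization into $\ZZ[X_1,\ldots,X_n]$, kill that factor by exhibiting coprime coefficients (the paper instead uses the specialization $f_k\mapsto X_k^{d_k}$ to see the spurious factor divides both $X_i^{\deg J_i}$ and $\prod_k d_kX_k^{d_k-1}$), and then get ii) from primitivity plus Dedekind--Mertens. Your explicit identification of the $U$-monomial whose coefficient is $\pm X_i^{\sum(d_k-1)}$ is a clean way to see primitivity directly, and your check that no collapsing of coefficients occurs upon setting $X_i=1$ (guaranteed by homogeneity in the $X$'s) is the one point worth making explicit.

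Where you genuinely diverge is in the descent for iii). The paper first settles the case where $k$ is a UFD (which covers $K=\Frac(k)$), then passes to an arbitrary domain $k$ by a flatness argument: the quotient ${}_\ZZ Q={}_\ZZ A[X_1,\ldots,X_{n-1}]/(\tilde J_n)$ is torsion-free over $\ZZ$, hence flat, so $k\otimes_\ZZ Q\hookrightarrow K\otimes_\ZZ Q$ and the domain property descends. You instead prove irreducibility over $K$ and descend divisibility from $K[\,\cdot\,]$ to $k[\,\cdot\,]$ by a Gauss-lemma argument powered by $k$-primitivity and Dedekind--Mertens: from $cg=\tilde J_i\,q'$ and $\C_k(\tilde J_i)=k$ you get $\C_k(q')=c\,\C_k(g)\subseteq(c)$, so $q'/c$ already lies in $k[\,\cdot\,]$. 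Both descents are valid; yours is more elementary and self-contained (no Tor or flatness), while the paper's has the advantage of being the same mechanism it reuses several times later (e.g.\ in Corollary \ref{cor:TFprimedim1} and Theorem \ref{thm:2<>0}), so the two choices trade local simplicity against global economy.
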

\begin{proof} It is sufficient to prove this result for
  $J_n:=J_n(f_1,\ldots,f_{n-1})$. Observe first that $J_n$ is homogeneous of degree 1 in each
  set of variables $(U_{i,\alpha})_{|\alpha|=d_i}$ with 
  $i\in \{1,\ldots,n-1\}$. Now, consider the specialization $\rho$ that sends each polynomial $f_i$,
  $i=1,\ldots,n-1$, to
  $$f_i \mapsto U_{i,1}X_1X_n^{d_i-1}+ U_{i,2}X_2X_n^{d_i-1} + \cdots
  +U_{i,n-1}X_{n-1}X_n^{d_i-1}.$$
We have
\begin{equation}\label{eq:speJn}
	\rho(J_n)=X_n^{\deg(J_n)} \left| \begin{array} {ccc}
	      U_{1,1} & \cdots & U_{1,n-1} \\
	      \vdots & & \vdots \\
	      U_{n-1,1} & \cdots & U_{n-1,n-1} \end{array}
	  \right|.
\end{equation}

Let us assume first that $k$ is a UFD. Then the determinant in \eqref{eq:speJn} is known to be irreducible in $k[U_{i,j} | i,j=1,\ldots,n-1]$. Since $\rho$ preserves the homogeneity with respect to each set of variables $(U_{i,\alpha})_{|\alpha|=d_i}$, $i\in \{1,\ldots,n-1\}$, we deduce that $\mathrm{iii)}$ holds (under the assumption that $k$ is a UFD).
 
Moreover, assuming that $k=\ZZ$, \eqref{eq:speJn} implies that $J_n$ decomposes as a product $P.Q$ 
where $P$ is irreducible and depends on the
$U_{i,\alpha}$'s, $Q$ does not depend on the $U_{i,\alpha}$'s. 
  Moreover $Q \in \ZZ[X_1,\ldots,X_n]$ so that it must divide $X_n^{\deg(J_n)}$. Now, if we specialize each polynomial $f_i$ to $X_i^{d_i}$, then  $J_n$
  specializes to $\prod_{i=1}^{n-1}d_iX_i^{d_i-1}$. It follows that  $Q$ must
  also divide this latter polynomial and we deduce that $Q$ is equal to 
  $\pm1 \in \mathbb{Z}$. This proves i).

\medskip

Now, we prove that iii) holds under the weaker assumption that $k$ is a domain. For that purpose, consider the quotient ring 
$${}_k Q:=\quotient{{}_kA[X_1,\ldots,X_{n-1}]}{(J_n(X_1,\ldots,X_{n-1},1))}$$ 
and set $Q:={}_\ZZ Q$ for simplicity in the notation. We have already proved that ${}_k Q$ is a domain as soon as $k$ is a UFD. In particular $Q$ is a domain. Since $Q$ contains $\ZZ$, $Q$ is a torsion-free abelian group and hence it is flat. It follows that the canonical inclusion of rings $k\subset  K:=\Frac(k)$ gives rise to an injective map	
$${}_k Q = k \otimes_\ZZ Q \rightarrow K\otimes_\ZZ Q = {}_{K}Q.$$
But we have proved that ${}_{K}Q$ is a domain, so we deduce that ${}_k Q$ is also a domain and hence that $J_n(X_1,\ldots,X_{n-1},1)$ is a prime element in ${}_kA[X_1,\ldots,X_{n-1}]$ as claimed.

\medskip

Finally, from i) we deduce that $J_n(X_1,\ldots,X_{n-1},1)$ is a primitive polynomial in ${}_\ZZ A[X_1,\ldots,X_{n-1}]$. It follows that it is also primitive over any commutative ring $k$, hence a nonzero-divisor by the Dedekind-Mertens Lemma.
\end{proof}

\begin{rem} Notice that the Jacobian determinant $J_i \in {}_k A[X_1,\ldots,X_n]$ is not irreducible in general. Indeed, take for instance $n=2$ and set $f_1(X_1,X_2)=\sum_{i=0}^dU_iX_1^iX_2^{d-i}$. Then
	$$J_2=\frac{\partial f_1}{\partial X}=dU_{d}X_1^{d-1}+(d-1)U_{d-1}X_1^{d-2}X_2+\cdots+U_1X_2^{d-1}$$ 
and hence $X_2$ divides $J_2$ as soon as $d=0$ in $k$.	

Similarly, the Jacobian determinant of $n$ homogeneous polynomials in $n$ homogeneous variables is not irreducible in general. For instance, the Jacobian of the polynomials
$$ f_1(X_1,X_2)=aX_1^2+bX_1X_2+cX_2^2, \ \ f_2(X_1,X_2)=uX_1^2+vX_1X_2+wX_2^2$$
is equal to the determinant
$$\left|\begin{array}{cc}
bX_2 & b X_1 \\
vX_2 & vX_1	
\end{array}\right|$$ which is identically zero in $k[a,b,c,u,v,w][X_1,X_2]$ as soon as $2=0$ in $k$.
\end{rem}

Now, introduce the generic homogeneous polynomial of degree $d\geq 1$ in the set of variables 
$X_1,\ldots,X_n$
$$F(X_1,\ldots,X_n):=\sum_{|\alpha|=d} U_\alpha X^\alpha$$
and set 
${}_kA':={}_k A[U_\alpha : |\alpha|=d].$ The Jacobian
determinant
\begin{equation}\label{J}
J(f_1,\ldots,f_{n-1},F):=% \left|\frac{\partial(f_1,\ldots,f_{n-1},F)}{\partial(X_1,\ldots,X_n)}\right|=
\begin{vmatrix}
	\partial_{X_1}f_1 & \partial_{X_{2}}f_1 &  \cdots & \partial_{X_{n}}f_1 \\
	\partial_{X_1}f_2 & \partial_{X_{2}}f_2 &  \cdots & \partial_{X_{n}}f_2 \\
	\vdots & \vdots & & \vdots \\
		\partial_{X_1}f_{n-1} & \partial_{X_{2}}f_{n-1} &  \cdots & \partial_{X_{n}}f_{n-1} \\
			\partial_{X_1}F & \partial_{X_{2}}F &  \cdots & \partial_{X_{n}}F \\
\end{vmatrix}
\end{equation}
is a homogeneous polynomial of degree $\deg(J)=(d-1)+\sum_{i=1}^{n-1}(d_i-1)$ in the set of variables
$X_1,\ldots,X_n$. 
By developing the determinant \eqref{J} with 
respect to its last row, we obtain the equality 
$$J(f_1,\ldots,f_{n-1},F)=\sum_{i=1}^{n} \frac{\partial F}{\partial
  X_i} J_i(f_1,\ldots,f_{n-1})$$
that holds in the ring ${}_kA'[X_1,\ldots,X_n]$.

\begin{lem}\label{lem1} With the above notation, we have:
  \begin{itemize}
  \item[\rm i)] for all integer $i \in \{1,\ldots,n\}$
\begin{multline*}
    X_iJ(f_1,\ldots,f_{n-1},F) - dFJ_i(f_1,\ldots,f_{n-1}) \\ \in
    (d_1f_1,\ldots,d_{n-1}f_{n-1})\subset {}_kA'[X_1,\ldots,X_n].	
\end{multline*}
  \item[\rm ii)] for all couple $(i,j)$ of distinct integers in
    $\{1,\ldots,n \}$
\begin{multline*}
    X_iJ_j(f_1,\ldots,f_{n-1})-X_jJ_i(f_1,\ldots,f_{n-1}) \\ \in
    (d_1f_1,\ldots,d_{n-1}f_{n-1})\subset {}_kA'[X_1,\ldots,X_n].	
\end{multline*}
    \end{itemize}
\end{lem}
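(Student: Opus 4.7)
Both parts are applications of Euler's identity $\sum_{k=1}^n X_k\partial_{X_k}g = (\deg g)\cdot g$ combined with a standard column-operation trick on the Jacobian determinant \eqref{J}. The idea for (i) is to convert the multiplication of a column of the Jacobian by $X_i$ into the appearance of a column of Euler relations; (ii) will then drop out of (i) by specializing $F$.

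\textbf{Part (i).} Let $\mathcal{J}$ be the $n\times n$ matrix with rows $(\partial_{X_1}f_l,\ldots,\partial_{X_n}f_l)$ for $l=1,\ldots,n-1$ and last row $(\partial_{X_1}F,\ldots,\partial_{X_n}F)$, so that $\det\mathcal{J}=J(f_1,\ldots,f_{n-1},F)$. Multiplying the $i$-th column by $X_i$ gives $X_iJ$, and then for each $k\neq i$ adding $X_k$ times column $k$ to column $i$ leaves the determinant unchanged. By Euler's identity, the new column $i$ has entries $d_lf_l$ in rows $l=1,\ldots,n-1$ and $dF$ in row $n$. Expanding along this column yields
\begin{equation*}
X_iJ \;=\; \sum_{l=1}^{n-1}(-1)^{l+i}\,d_lf_l\cdot\mu_{l,i}\;+\;(-1)^{n+i}\,dF\cdot\mu_{n,i},
\end{equation*}
where $\mu_{l,i}$ denotes the minor of $\mathcal{J}$ obtained by deleting row $l$ and column $i$. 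By the very definition \eqref{Ji} of $J_i$, we have $\mu_{n,i}=(-1)^{n-i}J_i(f_1,\ldots,f_{n-1})$, so $(-1)^{n+i}\mu_{n,i}=(-1)^{2n}J_i=J_i$. Transposing $dF\cdot J_i$ to the left side gives exactly the stated inclusion in $(d_1f_1,\ldots,d_{n-1}f_{n-1})$.

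\textbf{Part (ii) and main obstacle.} Part (ii) follows immediately from (i) by specializing the generic homogeneous polynomial $F$ of degree $d$ to $F=X_j$ (so $d=1$): the relation in (i) is polynomial in the coefficients $U_\alpha$ of $F$, hence the specialization is valid and lands in ${}_kA[X_1,\ldots,X_n]\subset{}_kA'[X_1,\ldots,X_n]$. Under this specialization one has $\partial_{X_k}F=\delta_{k,j}$, so the expansion formula $J(f_1,\ldots,f_{n-1},F)=\sum_k\partial_{X_k}F\cdot J_k$ recorded just before the lemma gives $J(f_1,\ldots,f_{n-1},X_j)=J_j$, while $dF=X_j$. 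Substituting into (i) yields $X_iJ_j-X_jJ_i\in(d_1f_1,\ldots,d_{n-1}f_{n-1})$, which is (ii). The only non-routine point is the sign bookkeeping in (i), but it is forced once one identifies $\mu_{n,i}$ with the $(n-1)\times(n-1)$ determinant appearing in \eqref{Ji}; everything else is a mechanical cofactor expansion.
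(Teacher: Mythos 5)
Your proof is correct and takes essentially the same route as the paper, which simply invokes Euler's identities inside the determinants \eqref{Ji} and \eqref{J}; your column-operation expansion makes that explicit, and the sign bookkeeping ($\mu_{n,i}=(-1)^{n-i}J_i$, consistent with the row-expansion formula $J=\sum_i \partial_{X_i}F\cdot J_i$ stated just before the lemma) checks out. Deriving (ii) from (i) by specializing $F$ to $X_j$ with $d=1$ is a harmless variant of applying the same trick directly to \eqref{Ji}.
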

\begin{proof} These properties follow straightforwardly by using 
   Euler's identities $$\sum_{j=1}^n X_j\frac{\partial
    f_i}{\partial X_j}=d_if_i, \ \ i=1,\ldots,n$$ in the determinants \eqref{Ji} and \eqref{J}.
\end{proof}

\subsubsection{Definition of the discriminant}\label{defsubsec} The definition of the discriminant of the
homogeneous polynomials $f_1,\ldots,f_{n-1}$ is based on the

\begin{prop}\label{lemdiv} With the previous notation, 
 $$d^{d_1\ldots d_{n-1}}\Res(f_1,\ldots,f_{n-1},F) \text{ divides }
 \Res(f_1,\ldots,f_{n-1},J(f_1,\ldots,f_{n-1},F))$$
 in ${}_k A'$.
  Moreover,  for all $i \in \{1,\ldots,n\}$, 
  we have the equality 
 \begin{multline*}
\Res(f_1,\ldots,f_{n-1},J(f_1,\ldots,f_{n-1},F)) \Res(f_1,\ldots,f_{n-1},X_i) = \\
         d^{d_1\ldots d_{n-1}}\Res(f_1,\ldots,f_{n-1},F)\Res(f_1,\ldots,f_{n-1},J_i)
\end{multline*}
\end{prop}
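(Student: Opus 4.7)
The plan is to first prove the stated equality and then deduce the divisibility claim from it.

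\emph{Equality.} Fix $i\in\{1,\ldots,n\}$. By Lemma~\ref{lem1}~i), the polynomials $X_iJ(f_1,\ldots,f_{n-1},F)$ and $dFJ_i(f_1,\ldots,f_{n-1})$ are homogeneous of the same degree $d+\sum_{j=1}^{n-1}(d_j-1)$ in $X_1,\ldots,X_n$, and they differ by an element of the ideal $(f_1,\ldots,f_{n-1})$. Invoking the invariance of the resultant under modification of its last argument by a homogeneous element of the ideal generated by the others (\cite[\S 5.6]{J91}), one gets
$$\Res(f_1,\ldots,f_{n-1},X_iJ)=\Res(f_1,\ldots,f_{n-1},dFJ_i).$$
Multiplicativity of the resultant in the last argument together with its homogeneity of degree $d_1\cdots d_{n-1}$ in the coefficients of that argument then yields exactly the claimed equality.

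\emph{Divisibility.} By Proposition~\ref{prop:resultant}, it is enough to establish the divisibility in the UFD ${}_\ZZ A'$. In this ring, $p:=\Res(f_1,\ldots,f_{n-1},F)$ is irreducible and has positive degree in the coefficients $U_\alpha$ of $F$, whereas $\Res(f_1,\ldots,f_{n-1},X_i)$ and $\Res(f_1,\ldots,f_{n-1},J_i)$ both lie in the subring ${}_\ZZ A$ and are therefore coprime to $p$. The equality just proved then forces $p\mid\Res(f_1,\ldots,f_{n-1},J)$. Writing $\Res(f_1,\ldots,f_{n-1},J)=G\cdot p$ and cancelling $p$ gives
$$G\cdot\Res(f_1,\ldots,f_{n-1},X_i)=d^{d_1\cdots d_{n-1}}\Res(f_1,\ldots,f_{n-1},J_i)$$
in ${}_\ZZ A'$; since the right-hand side lies in ${}_\ZZ A$ and ${}_\ZZ A'={}_\ZZ A[U_\alpha]$ is a domain, one obtains $G\in {}_\ZZ A$.

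\emph{Extracting $d^{d_1\cdots d_{n-1}}$.} Choose a bijection $\sigma:\{1,\ldots,n-1\}\to\{1,\ldots,n\}\setminus\{i\}$ and consider the specialization $f_j\mapsto X_{\sigma(j)}^{d_j}$: applying the normalization of the resultant (after a permutation of arguments) reduces $\Res(f_1,\ldots,f_{n-1},X_i)$ to $\pm 1$. Hence $\Res(f_1,\ldots,f_{n-1},X_i)$ is primitive in ${}_\ZZ A=\ZZ[U_{j,\alpha}]$, and by the corollary to the Dedekind--Mertens lemma its image in $(\ZZ/d^{d_1\cdots d_{n-1}}\ZZ)[U_{j,\alpha}]$ is a nonzero divisor. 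Reducing the displayed equation modulo $d^{d_1\cdots d_{n-1}}$ then forces $G\equiv 0$, so $d^{d_1\cdots d_{n-1}}\mid G$, and specialization from $\ZZ$ to $k$ propagates the divisibility to ${}_kA'$.

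The delicate point is the extraction of the factor $d^{d_1\cdots d_{n-1}}$: it does not arise from any direct algebraic relation among the resultants and has to be obtained through the primitivity of $\Res(f_1,\ldots,f_{n-1},X_i)$ combined with Dedekind--Mertens. Everything else reduces to standard functorial properties of the resultant recalled in Section~\ref{prem}.
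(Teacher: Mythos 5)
Your proof is correct and follows essentially the same route as the paper: the key step is exactly the identity $\Res(f_1,\ldots,f_{n-1},X_iJ)=\Res(f_1,\ldots,f_{n-1},dFJ_i)$ obtained from Lemma~\ref{lem1}~i), after which the paper simply appeals to ``standard properties of resultants''. Your careful completion of that appeal --- multiplicativity and homogeneity for the equality, then irreducibility of the generic resultant, primitivity of $\Res(f_1,\ldots,f_{n-1},X_i)$ and Dedekind--Mertens to extract the factor $d^{d_1\cdots d_{n-1}}$ over $\ZZ$ before specializing --- is a legitimate filling-in of the details the paper leaves implicit, not a different argument.
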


\begin{proof} By specialization, it is sufficient to prove this proposition over the integers, that is to say by assuming that $k=\ZZ$. 
	
	By Lemma \ref{lem1} we know that
  $X_iJ(f_1,\ldots,f_{n-1},F)$ and $dJ_i(f_1,\ldots,f_{n-1})F$ are
  homogeneous polynomials of the same degree in the variables
  $X_1,\ldots,X_n$ that are equal modulo the ideal
  $(f_1,\ldots,f_{n-1})$. It follows that, in ${}_k A'$,
  $$\Res(f_1,\ldots,f_{n-1},X_iJ(f_1,\ldots,f_{n-1},F))=
  \Res(f_1,\ldots,f_{n-1},dJ_i(f_1,\ldots,f_{n-1})F).$$ 
  The result then follows from standard properties of 
  resultants \cite[\S5]{J91}.
\end{proof}

We are now ready  to state the definition of the discriminant of the
polynomials $f_1,\ldots,f_{n-1}$.

\begin{defn}\label{defdisc} If $\sum_{i=1}^{n-1}(d_i-1) \geq 1$ then the
  \emph{discriminant}  of the polynomials
  $f_1,\ldots,f_{n-1}$, denoted $\Disc(f_1,\ldots,f_{n-1})$, is
  defined as the \emph{unique} non-zero
  element in ${}_\ZZ A$ such that 
\begin{align}\label{defeq}
  \Disc(f_1,\ldots,f_{n-1})\Res(f_1,\ldots,f_{n-1},X_i)=\Res(f_1,\ldots,f_{n-1},J_i) 
\end{align}
for all $i\in \{1,\ldots,n\}$.  If $\sum_{i=1}^{n-1}(d_i-1)=0$, or
equivalently if $d_1=\cdots=d_{n-1}=1$, we set
$\Disc(f_1,\ldots,f_{n-1})=1 \in {}_\ZZ A$.

Let $R$ be a commutative ring and suppose given $n-1$ homogeneous polynomials 
    $$g_i=\sum_{|\alpha|=d_i}u_{i,\alpha}X^\alpha \in
    R[X_1,\ldots,X_n], \ \ i=1,\ldots,n-1,$$
of degree $d_1,\ldots,d_{n-1}$ respectively. As in \S\ref{not}, denote by 	$\theta$ the ring morphism $\theta:{{}_\ZZ A}\rightarrow R:
    U_{j,\alpha} \mapsto u_{j,\alpha}$ corresponding to the
    \emph{specialization} of the polynomial $f_i$ to the polynomial
    $g_i$ for all $i=1,\ldots,n-1$. Then, the \emph{discriminant} of $g_1,\ldots,g_{n-1}$ is defined as 
$$\Disc(g_1,\ldots,g_{n-1}):=  \theta(\Disc(f_1,\ldots,f_{n-1})) \in R.$$
\end{defn}

\begin{rem}\label{fbarre} We recall that, for all integer  $i\in \{1,\ldots,n\}$,  
$$\Res(f_1,\ldots,f_{n-1},X_i)=\Res(f_1^{(i)},\ldots,f_{n-1}^{(i)}) \in {}_kA$$
where $f_1^{(i)},\ldots,f_{n-1}^{(i)}$ are the polynomials obtained
from $f_1,\ldots,f_{n-1}$, respectively, by substituting $X_i$ for 0
(see \cite[Lemma 4.8.9]{J91}).  It is a nonzero divisor in ${}_kA$ (see Proposition \ref{prop:resultant}).
\end{rem}

A direct consequence of the definition of the
discriminant is the following. From Proposition \ref{lemdiv},  it follows
immediately that, in ${}_k A'$,
\begin{multline}\label{J(F)}
\Res (f_1,\ldots,f_{n-1},J(f_1,\ldots,f_{n-1},F) )
   = \\ d^{d_1\ldots d_{n-1}} \Disc(f_1,\ldots,f_{n-1}) \Res (f_1,\ldots,f_{n-1},F).  
 \end{multline}
 Moreover, if $\deg(F)=d=1$ then $J(f_1,\ldots,f_{n-1},F)$ can be replaced by the polynomial
 $F(J_1,\ldots,J_n)$ in this formula and we get 
\begin{multline*}
 \Res(f_1,\ldots,f_{n-1},U_1J_1+\cdots+U_nJ_n)= \\
 \Disc(f_1,\ldots,f_{n-1}) \Res(f_1,\ldots,f_{n-1},U_1X_1+\cdots+U_nX_n).	
\end{multline*}
 More generally, we have the 
\begin{prop}\label{F-J} For all $d\geq 1$ the following equality holds in ${}_k A'$:
 $$\Res(f_1,\ldots,f_{n-1},F(J_1,\ldots,J_n)) 
  =\Disc(f_1,\ldots,f_{n-1})^d\, \Res(f_1,\ldots,f_{n-1},F).$$
  \end{prop}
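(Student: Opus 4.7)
The plan is to extract the statement from the known congruence in Lemma \ref{lem1}(ii), namely
\[
	X_iJ_j(f_1,\ldots,f_{n-1})-X_jJ_i(f_1,\ldots,f_{n-1}) \in (f_1,\ldots,f_{n-1}),
\]
and then package the result using the defining equation \eqref{defeq} of the discriminant. Throughout I work with $k=\ZZ$ without loss of generality by the specialization principle; one distinguished index, say $i=n$, is fixed for bookkeeping.

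First I would leverage Lemma \ref{lem1}(ii) pointwise: for each $i\in\{1,\ldots,n-1\}$ the relation $X_nJ_i-X_iJ_n \in (f_1,\ldots,f_{n-1})$ yields, by raising to the power $\alpha_i$ (using that $a-b\in I\Rightarrow a^m-b^m\in I$ since $a^m-b^m=(a-b)(a^{m-1}+\cdots+b^{m-1})$), the congruence $X_n^{\alpha_i}J_i^{\alpha_i}\equiv X_i^{\alpha_i}J_n^{\alpha_i}\pmod{(f_1,\ldots,f_{n-1})}$. Multiplying these congruences together over $i=1,\ldots,n-1$ and then by the factor $J_n^{\alpha_n}$ produces, for every multi-index $\alpha$ with $|\alpha|=d$,
\[
	X_n^{d}\, J_1^{\alpha_1}\cdots J_n^{\alpha_n} \;\equiv\; X^{\alpha}\, J_n^{d} \pmod{(f_1,\ldots,f_{n-1})}.
\]
Summing these equalities weighted by the coefficients $U_\alpha$ of $F$ yields the key congruence in ${}_kA'[X_1,\ldots,X_n]$:
\[
	X_n^{d}\, F(J_1,\ldots,J_n) \;\equiv\; J_n^{d}\, F(X_1,\ldots,X_n) \pmod{(f_1,\ldots,f_{n-1})}.
\]

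Next I would verify that both sides are homogeneous of the same degree in $X_1,\ldots,X_n$, which is immediate since both equal $d\bigl(1+\sum_{j=1}^{n-1}(d_j-1)\bigr)$. Standard properties of the resultant (see \cite[\S5]{J91}) then allow one to equate the resultants of $f_1,\ldots,f_{n-1}$ against these two polynomials and to split each side via multiplicativity:
\[
	\Res(f_1,\ldots,f_{n-1},X_n)^{d}\,\Res(f_1,\ldots,f_{n-1},F(J_1,\ldots,J_n))
	= \Res(f_1,\ldots,f_{n-1},F)\,\Res(f_1,\ldots,f_{n-1},J_n)^{d}.
\]

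To finish, I invoke the defining identity \eqref{defeq} with $i=n$, namely
$\Res(f_1,\ldots,f_{n-1},J_n)=\Disc(f_1,\ldots,f_{n-1})\cdot\Res(f_1,\ldots,f_{n-1},X_n)$, raise it to the $d$-th power, and substitute. The common factor $\Res(f_1,\ldots,f_{n-1},X_n)^d$ cancels because, by Remark \ref{fbarre}, $\Res(f_1,\ldots,f_{n-1},X_n)$ is a nonzero divisor in ${}_kA$. The resulting equality is exactly the claim.

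The only step with any subtlety is the congruence $X_n^dF(J_1,\ldots,J_n)\equiv J_n^dF\pmod{(f_1,\ldots,f_{n-1})}$: what makes it work is that Lemma \ref{lem1}(ii) lets the ratios $J_i/X_i$ be ``identified'' modulo the ideal, so that $F$ evaluated at the $J_i$ behaves homogeneously like $F$ evaluated at the $X_i$. Everything else is bookkeeping with the multiplicativity of the resultant and the fact that $\Res(f_1,\ldots,f_{n-1},X_n)$ is a regular element, so it can be cancelled.
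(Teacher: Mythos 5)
Your proof is correct and follows essentially the same route as the paper: establish the congruence $X_n^{d}F(J_1,\ldots,J_n)\equiv J_n^{d}F(X_1,\ldots,X_n)$ modulo $(f_1,\ldots,f_{n-1})$ from Lemma \ref{lem1}, ii), equate the two resultants, split them by multiplicativity, and cancel the regular element $\Res(f_1,\ldots,f_{n-1},X_n)^{d}$ using \eqref{defeq}. (The only cosmetic slip is that, to pass from the product of the congruences over $i=1,\ldots,n-1$ to your displayed congruence, one must multiply by $(X_nJ_n)^{\alpha_n}$ rather than just $J_n^{\alpha_n}$; the displayed result is nevertheless correct.)
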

  \begin{proof}
  Indeed, Lemma \ref{lem1} shows that both polynomial
  $X_i^dF(J_1,\ldots,J_n)$ and polynomial $J_i^dF(X_1,\ldots,X_n)$ are
  homogeneous of the same degree in the variables $X_1,\ldots,X_n$ and
  equal up to an element in the ideal $(f_1,\ldots,f_{n-1}).$ It
  follows that
  $$\Res(f_1,\ldots,f_{n-1},X_i^dF(J_1,\ldots,J_n))=\Res(f_1,\ldots,f_{n-1},J_i^dF(X_1,\ldots,X_n))$$
  and the claimed formula is obtained using the multiplicativity
  property of the resultants \cite[\S5.7]{J91}.
  \end{proof}

An important property of the \emph{generic} discriminant is that, similarly to the generic resultant, it is universally a nonzero divisor.

\begin{prop} The discriminant $\Disc(f_1,\ldots,f_{n-1}) \in {}_kA$ is a nonzero divisor.
\end{prop}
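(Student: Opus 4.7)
The plan is to reduce the statement to primitivity of $\Disc$ over $\ZZ$ via the corollary of the Dedekind-Mertens Lemma, and then to derive that primitivity from Gauss's Lemma applied to the identity of Proposition~\ref{F-J}.

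First, I apply the Dedekind-Mertens corollary with $A=M=k$ and $\underline X=(U_{i,\alpha})$: the element $\Disc\in{}_kA=k[U_{i,\alpha}]$ is a non-zero divisor if and only if the content ideal $C_k(\Disc)\subset k$ does not divide zero. Since the coefficients of $\Disc\in k[U_{i,\alpha}]$ are the images under $\ZZ\to k$ of the integer coefficients of $\Disc\in\ZZ[U_{i,\alpha}]$, it suffices to prove $\Disc$ is \emph{primitive} over $\ZZ$: under this assumption, $1\in C_k(\Disc)$ and hence $C_k(\Disc)=k$ for every commutative ring $k$, so the non-zero divisor property follows simultaneously over every base.

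To establish primitivity, I would apply Proposition~\ref{F-J} with $F=\sum_{i=1}^nU_iX_i$ the generic linear form ($d=1$), yielding in the UFD $\ZZ[U_{i,\alpha},U_1,\ldots,U_n]$ the identity
\[
\Res(f_1,\ldots,f_{n-1},U_1J_1+\cdots+U_nJ_n)=\Disc\cdot\Res(f_1,\ldots,f_{n-1},U_1X_1+\cdots+U_nX_n).
\]
The factor $\Res(f_1,\ldots,f_{n-1},U_1X_1+\cdots+U_nX_n)$ is a generic resultant of $n$ polynomials in $n$ variables, hence a non-zero divisor in $k[U_{i,\alpha},U_1,\ldots,U_n]$ over every base $k$ by Proposition~\ref{prop:resultant}; re-applying the first step to it shows that it is primitive over $\ZZ$. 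Gauss's Lemma in $\ZZ[U_{i,\alpha},U_1,\ldots,U_n]$ then yields the content equality
\[
c_\ZZ(\Disc)=c_\ZZ\bigl(\Res(f_1,\ldots,f_{n-1},U_1J_1+\cdots+U_nJ_n)\bigr),
\]
reducing the problem to primitivity of this ``$u$-resultant with Jacobians''.

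The main obstacle is this last primitivity statement. I would handle it by exhibiting, for each prime $p$, an explicit specialization of the $U_{i,\alpha}$ over $\mathbb{F}_p$ for which the specialized resultant is a non-zero polynomial in $U_1,\ldots,U_n$. For primes $p$ coprime to $\prod_i d_i$, the specialization $f_i=X_i^{d_i}-X_n^{d_i}$ defines a smooth zero-dimensional complete intersection and, using multiplicativity of the resultant together with Remark~\ref{fbarre}, one computes that the resultant evaluates to a non-zero (in fact unit times $\prod d_i^{\text{power}}$) element of $\mathbb{F}_p[U_1,\ldots,U_n]$. For the remaining small primes, the alternative specialization $f_i=\prod_{k=0}^{d_i-1}(X_i-kX_n)$ defines a smooth configuration of $d_1\cdots d_{n-1}$ rational points (for $p$ large enough relative to $\max d_i$), and the $u$-resultant factors as $\prod_\xi(\sum_iU_iJ_i(\xi))$ over these points—with each factor explicitly a non-zero linear form because $J_i(\xi)=\lambda_\xi\xi_i$ with $\lambda_\xi=\prod_i(-1)^{d_i-1-k_i}k_i!(d_i-1-k_i)!\neq 0$ in $\mathbb{F}_p$. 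Together these two families cover every prime, forcing $c_\ZZ(\Disc)=1$ and concluding the proof.
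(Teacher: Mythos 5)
Your overall architecture is the same as the paper's: show $\Disc$ is primitive over $\ZZ$, deduce primitivity (hence the content ideal equals $k$) over every base by specializing the integer coefficients, and conclude via the Dedekind--Mertens corollary. The detour through Proposition~\ref{F-J} and Gauss's Lemma is harmless but unnecessary; one can specialize $\Disc$ itself directly.

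The genuine gap is in your final step: the two families of specializations do \emph{not} cover every prime. Family one ($f_i=X_i^{d_i}-X_n^{d_i}$) requires $p\nmid\prod_i d_i$. Family two ($f_i=\prod_{k=0}^{d_i-1}(X_i-kX_n)$) requires the scalars $0,1,\ldots,d_i-1$ to be pairwise distinct mod $p$, equivalently $p\geq\max_i d_i$ (your own $\lambda_\xi=\pm\prod_i k_i!(d_i-1-k_i)!$ vanishes mod $p$ otherwise). A prime with $p\mid d_j$ and $p<d_j$ is caught by neither: already for $n=2$, $d_1=4$, $p=2$ the specialization $X_1(X_1-X_2)(X_1-2X_2)(X_1-3X_2)\equiv X_1^2(X_1-X_2)^2$ is a non-reduced configuration whose discriminant is $0$, so this specialization gives no information about the content mod $2$. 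The repair is exactly what the paper does: specialize each $f_i$ to a product of $d_i$ \emph{generic} linear forms (indeterminate coefficients). Corollary~\ref{det2} then exhibits $\Disc$ as $\pm$ a product of squares of determinants of generic linear forms; each such determinant contains a monomial with coefficient $\pm1$, so the product is a primitive polynomial over $\ZZ$, hence nonzero modulo every prime, and the content of $\Disc$ is $1$. With that substitution your argument closes.
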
 
\begin{proof} By specializing each polynomial $f_i$ to a product of generic linear form, the discriminant specialize to a primitive polynomial (the ideal generated by its coefficients is equal to $k$) by Corollary \ref{det2}.  It follows that $\Disc(f_1,\ldots,f_{n-1}) \in {}_kA$ itself a primitive polynomial in ${}_kA$. Therefore, the claimed result follows by  Dedekind-Mertens Lemma. 
\end{proof}

\subsubsection{The degree of the discriminant}

 The discriminant is multi-homogeneous, as inheritance from the resultant:
 it is homogeneous with respect to the coefficients of \emph{each}
 polynomial $f_1,\ldots,f_{n-1}$.  The following result gives the
 precise multi-degree of the discriminant.

 \begin{prop}\label{degree-prop} With the notation of \S \ref{defsubsec}, $\Disc(f_1,\ldots,f_{n-1})$ is a
   homogeneous polynomial in ${}_k A$ of total degree
   $$(n-1)\prod_{i=1}^{n-1} d_i +
   (d_1+\cdots+d_{n-1}-n)\,\left(\sum_{i=1}^{n-1}\frac{d_1 \cdots
     d_{n-1}}{d_i}\right).$$
   Moreover, it is homogeneous with respect to the
   coefficients of each polynomial $f_i$, $i\in \{1,\ldots,n-1\}$, of
   degree
\begin{equation}\label{partial-degree}
  \frac{d_1 \cdots d_{n-1}}{d_i} \left((d_i-1)+\sum_{j=1}^{n-1}(d_j-1)\right).\end{equation}
  \end{prop}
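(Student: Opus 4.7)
The plan is to read off the multi-degree of $\Disc(f_1,\ldots,f_{n-1})$ directly from its defining equation \eqref{defeq}, working over $\ZZ$; the degree formula is then inherited by any commutative ring $k$ via the canonical specialization $\lambda : {}_\ZZ A \to {}_k A$.

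I start from the classical multi-homogeneity of the resultant \cite[\S 5.13]{J91}: for $n$ homogeneous polynomials with generic independent coefficients of degrees $e_1,\ldots,e_n$, the resultant is homogeneous of degree $\prod_{\ell\neq k}e_\ell$ in the coefficients of the $k$-th polynomial. Applied to $\Res(f_1,\ldots,f_{n-1},X_i)$ (with last degree equal to $1$), this gives degree $d_1\cdots d_{n-1}/d_j$ in the coefficients of $f_j$. Applied to the bi-homogeneous resultant $\Res(f_1,\ldots,f_{n-1},G)$ where $G$ is a \emph{generic} homogeneous polynomial of degree $D:=\sum_{k=1}^{n-1}(d_k-1)$, it gives degree $(d_1\cdots d_{n-1}/d_j)\cdot D$ in the coefficients of $f_j$ and degree $d_1\cdots d_{n-1}$ in the coefficients of $G$.

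Next I track the extra contribution coming from the specialization $G = J_i$. From \eqref{Ji}, $J_i$ is an $(n-1)\times(n-1)$ determinant whose $k$-th row is made of partial derivatives of $f_k$; expanding this determinant as a sum over permutations, each coefficient of $J_i$ (viewed as a polynomial in $X_1,\ldots,X_n$) is multi-linear of degree exactly $1$ in the coefficients of each $f_k$, $k=1,\ldots,n-1$. Combining the bi-homogeneity of $\Res(f_1,\ldots,f_{n-1},G)$ in the coefficients of $G$ (degree $d_1\cdots d_{n-1}$) with this observation, substituting $G = J_i$ adds $d_1\cdots d_{n-1}\cdot 1$ to the degree in the coefficients of each $f_j$. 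Hence $\Res(f_1,\ldots,f_{n-1},J_i)$ is homogeneous in the coefficients of $f_j$ of degree
$$\frac{d_1\cdots d_{n-1}}{d_j}\cdot D+d_1\cdots d_{n-1}.$$

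Since $\Res(f_1,\ldots,f_{n-1},X_i)$ is a nonzero divisor in ${}_\ZZ A$ (Remark \ref{fbarre}), the defining equation \eqref{defeq} forces $\Disc(f_1,\ldots,f_{n-1})$ to be homogeneous in the coefficients of $f_j$ of degree
$$\frac{d_1\cdots d_{n-1}}{d_j}\cdot D+d_1\cdots d_{n-1}-\frac{d_1\cdots d_{n-1}}{d_j}=\frac{d_1\cdots d_{n-1}}{d_j}\bigl((d_j-1)+D\bigr),$$
which is exactly \eqref{partial-degree}. The total degree is obtained by summing over $j=1,\ldots,n-1$: splitting the sum as $\sum_j (d_j-1)\,d_1\cdots d_{n-1}/d_j + D\sum_j d_1\cdots d_{n-1}/d_j$ and observing that the first sum equals $(n-1)\prod d_k-\sum_j d_1\cdots d_{n-1}/d_j$, a direct rearrangement with $D-1=d_1+\cdots+d_{n-1}-n$ recovers the claimed total degree.

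The only delicate point is the separation of the two sources of degree in the coefficients of $f_j$ hidden inside $\Res(f_1,\ldots,f_{n-1},J_i)$; the bi-homogeneity argument above (treat $G$ as a generic polynomial first, then specialize using the multi-linearity of the Jacobian determinant in the coefficients of each $f_k$) makes this separation rigorous. Everything else is a routine algebraic manipulation.
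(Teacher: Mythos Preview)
Your proof is correct and follows essentially the same approach as the paper: both exploit the multi-homogeneity of the resultant together with the defining relation \eqref{defeq} to read off the partial degrees, then sum to get the total degree. The only cosmetic difference is packaging: the paper substitutes $f_i\mapsto tf_i$ and tracks the global power of $t$ through the defining relation (which simultaneously captures the contribution from the $i$-th slot of the resultant and from the $i$-th row of $J_n$), whereas you separate these two contributions by first treating the last argument as a generic polynomial $G$ of degree $D$ and then specializing $G=J_i$ using the multilinearity of the Jacobian determinant; both routes yield the same arithmetic.
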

  \begin{proof} 
    Let us fix an integer $i\in \{1,\ldots,n-1\}$ and introduce a new
    variable $t$. We know that the Jacobian polynomial $J_n$ is homogeneous
    in the variables $X_1,\ldots,X_n$ of degree
    $\sum_{i=1}^{n-1}(d_i-1)$. It also obviously  satisfies
    \begin{equation}\label{J-f_i}
      J_n(f_1,\ldots,tf_i,\ldots,f_{n-1})=tJ_n(f_1,\ldots,f_i,\ldots,f_{n-1}).
      \end{equation}
      Therefore, by multi-homogeneity property of the resultant
      \cite[2.3(ii)]{J91}, we deduce that
\begin{align}
  &
  \Res(f_1,\ldots,tf_i,\ldots,f_{n-1},J_n(f_1,\ldots,tf_i,\ldots,f_{n-1}))
  \notag \\
  & \qquad \qquad = t^{ \frac{d_1 \cdots d_{n-1}}{d_i} \sum_{
      j=1}^{n-1}(d_j-1)}
  \Res(f_1,\ldots,f_{n-1},J_n(f_1,\ldots,tf_i,\ldots,f_{n-1}))
  \notag  \\
  & \qquad \qquad = t^{ \frac{d_1 \cdots d_{n-1}}{d_i} \sum_{
      j=1}^{n-1}(d_j-1)}
  \Res(f_1,\ldots,f_{n-1},tJ_n(f_1,\ldots,f_n)) \notag   \\
  & \qquad \qquad = t^{ \frac{d_1 \cdots d_{n-1}}{d_i} \sum_{
      j=1}^{n-1}(d_j-1)+\prod_{i=1}^{n-1}d_i}
  \Res(f_1,\ldots,f_{n-1},J_n(f_1,\ldots,f_n)) \notag
\end{align}
and
\begin{align}
  & \Res(f_1,\ldots,tf_i,\ldots,f_{n-1},X_n)= t^{\frac{d_1 \cdots
      d_{n-1}}{d_i}}\Res(f_1,\ldots,f_i,\ldots,f_{n-1},X_n) .  \notag
\end{align}
From Definition \ref{defdisc} of the discriminant, it follows that
$$\Disc(f_1,\ldots,tf_i,\ldots,f_{n-1})=t^{\frac{d_1 \cdots
    d_{n-1}}{d_i}
  \left((d_i-1)+\sum_{j=1}^{n-1}(d_j-1)\right)}\Disc(f_1,\ldots,f_{n-1})$$
as claimed. The total degree is obtained by adding all these partial
degrees.
\end{proof}

\begin{rem} Observe that the integers \eqref{partial-degree} are
    always even. This is expected because, as we will see later on, in characteristic 2 it turns out that the discriminant is the square of an irreducible polynomial.    
\end{rem}

\subsubsection{The classical case $n=2$}\label{n=2} Let us show that our definition
of the discriminant coincides with the classical case $n=2$.

Let $f$ be a polynomial homogeneous in the variable $X,Y$ of degree $d
\geq 2$
$$f:=V_{d}X^d+V_{d-1}X^{d-1}Y+V_{d-2}X^{d-2}Y^2+\cdots+V_1X^1Y^{d-1}+V_0Y^d.$$
According to \eqref{defeq} we have
$$\Res(f,J_2(f))= \Disc(f)\Res(f,Y) \in k[V_0,\ldots,V_d].$$
But it is easy to see that $\Res(f,Y)=V_d$ and that
$J_2(f)=\frac{\partial f}{\partial X}$. Therefore we recover the usual definition 
$V_d \Disc(f)=\Res(f,\frac{\partial f}{\partial X})$. Moreover, from Proposition \ref{degree-prop} we also obtain that it is a homogeneous
polynomial in the coefficients of $f$, i.e.~$V_0,\ldots,V_d$, of degree $2d-2$.

A lot of properties are known
for this discriminant (see e.g.~\cite{ApJo} or \cite[chapter 12.B]{GKZ}) and we will generalize most of them to the case of $n-1$ homogeneous
polynomials in $n$ variables  in the sequel.

\subsubsection{Vanishing of the discriminant} 
Assume that
$k$ is an algebraically closed field and let $f_1,\ldots,f_{n-1}$ be
$n-1$ homogeneous polynomials in $k[X_1,\ldots,X_n]$ such that
the variety $Y:=V(f_1,\ldots,f_{n-1})\subset \mathbb{P}^{n-1}_k$ is \emph{finite}. 
The following
proposition says that the discriminant of $f_1,\ldots,f_{n-1}$
vanishes if and only if the polynomial system $f_1=\cdots=f_{n-1}=0$
has a multiple root.

\begin{prop}\label{poi} With the above notation, $\Disc(f_1,\ldots,f_{n-1})=0$ if
  and only if there exists a point $\xi \in Y$ such that $Y$ is
  singular at $\xi$.
\end{prop}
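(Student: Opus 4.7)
The plan is to use the identity of Proposition \ref{F-J} for a generic linear form $F$, together with the classical vanishing criterion for the resultant over an algebraically closed field: $\Res(f_1,\ldots,f_{n-1},g)=0$ if and only if $f_1,\ldots,f_{n-1},g$ admit a common zero in $\mathbb{P}^{n-1}_k$. The case $Y=\emptyset$ is immediate: by the Nullstellensatz $(f_1,\ldots,f_{n-1})$ then contains a power of $\mathfrak{m}$, so every resultant $\Res(f_1,\ldots,f_{n-1},g)$ is nonzero and \eqref{defeq} forces $\Disc(f_1,\ldots,f_{n-1})\neq 0$, in agreement with the (vacuous) right-hand side of the proposition.

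Assume henceforth $Y\neq\emptyset$. Since $k$ is infinite, I would pick constants $c_1,\ldots,c_n\in k$ so that the linear form $L:=c_1X_1+\cdots+c_nX_n$ does not vanish at any point of $Y$, equivalently $\Res(f_1,\ldots,f_{n-1},L)\neq 0$. Specializing $F$ to $L$ (degree $d=1$) in Proposition \ref{F-J} yields
$$\Res(f_1,\ldots,f_{n-1},c_1J_1+\cdots+c_nJ_n)=\Disc(f_1,\ldots,f_{n-1})\cdot\Res(f_1,\ldots,f_{n-1},L),$$
so $\Disc(f_1,\ldots,f_{n-1})=0$ iff there exists $\xi\in Y$ with $\sum_j c_jJ_j(\xi)=0$.

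The next step is to upgrade this to the simultaneous vanishing of all the Jacobians using Lemma \ref{lem1}(ii). Evaluating the containment $X_iJ_j-X_jJ_i\in(f_1,\ldots,f_{n-1})$ at $\xi\in Y$ gives $\xi_iJ_j(\xi)=\xi_jJ_i(\xi)$; choosing an index $l$ with $\xi_l\neq 0$, one obtains $J_j(\xi)=(\xi_j/\xi_l)J_l(\xi)$ for every $j$, whence $\sum_j c_jJ_j(\xi)=(L(\xi)/\xi_l)J_l(\xi)$. Since $L(\xi)\neq 0$, this linear combination vanishes at $\xi$ if and only if $J_l(\xi)=0$, which in turn forces $J_j(\xi)=0$ for every $j\in\{1,\ldots,n\}$.

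The equivalence with singularity of $Y$ at $\xi$ is then the Jacobian criterion. The $J_j(\xi)$'s are (up to sign) the $n$ maximal minors of the $(n-1)\times n$ matrix $M(\xi):=(\partial f_i/\partial X_j(\xi))$; by Euler's identities $M(\xi)\cdot\xi=0$, so $\mathrm{rank}\,M(\xi)\leq n-1$ always. In the affine chart $X_l=1$ (with $\xi_l\neq 0$), the affine Jacobian of the dehomogenizations $\tilde f_1,\ldots,\tilde f_{n-1}$ at $\xi$ is $M(\xi)$ with its $l$-th column deleted, whose determinant is $\pm J_l(\xi)$; hence $Y$ is smooth at $\xi$ precisely when $J_l(\xi)\neq 0$, and by the preceding step this is equivalent to the non-vanishing of some (equivalently, every) $J_j(\xi)$. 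The main obstacle is the third step, i.e.\ converting the existence of a single common zero of $\sum c_jJ_j$ on $Y$ into the vanishing of \emph{every} $J_j$ at that same point; once Lemma \ref{lem1} supplies the proportionality $\xi_iJ_j=\xi_jJ_i$ modulo the ideal, the rest is a combination of standard resultant-vanishing and Jacobian-criterion arguments.
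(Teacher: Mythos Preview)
Your argument is correct and follows essentially the same route as the paper's proof: both reduce the vanishing of the discriminant to the vanishing of a single Jacobian determinant on $Y$ via the defining relation, then use Lemma~\ref{lem1}(ii) to upgrade this to the simultaneous vanishing of all the $J_j$'s, and conclude with the Jacobian criterion. The only cosmetic differences are that the paper specializes the linear form to $X_n$ (after assuming WLOG that $Y\cap V(X_n)=\emptyset$) and invokes Poisson's formula to express $\Res(f_1,\ldots,f_{n-1},J_n)$ as a product $\prod_{\xi\in Y} J_n(\xi)^{\mu_\xi}$, whereas you work with a general linear form $L$ and use the vanishing criterion for resultants directly; neither choice changes the substance of the argument.
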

\begin{proof} First, without loss of generality we can assume
  $Y\cap V(X_n)=\emptyset$, so that 
  $\Res(f_1,\ldots,f_{n-1},X_n)$ is not equal to zero in $k$ and
  \begin{align}
    \Disc(f_1,\ldots,f_{n-1})=\frac{\Res(f_1,\ldots,f_{n-1},J_n)}
    {\Res(f_1,\ldots,f_{n-1},X_n)} \in k. \notag
  \end{align}
 By the Poisson's formula \cite[Proposition
  2.7]{J91}, we have the equality
   \begin{align}
     \frac{\Res(f_1,\ldots,f_{n-1},J_n)}{\Res(f_1,\ldots,f_{n-1},X_n)^{\deg(J_n)}}=
     \prod_{\xi \in Y}J_n(\xi)^{\mu_\xi} \notag
  \end{align}
  where $\mu_\xi$ denotes the multiplicity of $\xi \in Y$. It follows
  that $\Disc(f_1,\ldots,f_{n-1})=0$ if and only if there exists a point
  $\xi \in Y$ such that $J_n(\xi)=0$. 

Now, a classical necessary and sufficient condition for $\xi \in Y$ to be a
  singular point of $Y$ is that $J_i(\xi)=0$ for all $i=1,\ldots,n$ (see e.g.~\cite[Chapter I, Theorem 5.1]{H}). But from Lemma \ref{lem1}, ii), we have $J_i(\xi)=\xi_iJ_n(\xi)$ for all $\xi \in Y$ and all $i=1,\ldots,n-1$, where $\xi=(\xi_1:\xi_2:\cdots:\xi_{n-1}:1)\in Y \subset \mathbb{P}^{n-1}_k$. Therefore, we deduce that $\xi \in Y$ is a
  singular point of $Y$ if and only if $J_n(\xi)=0$.
\end{proof}

This proposition gives a geometric interpretation of the discriminant
of $n-1$ homogeneous polynomials in $n$ variables. We will give a more
precise description of its geometry in Section \ref{inform}.

%%%%%%%%%%%%%%%%%%%%%%%%%%%%%%%%%%%%%%%%%%%%

\subsection{Formulas and formal properties}
  
  In this section we give some properties of the discriminant. Thanks
  to the definition we gave of the discriminant in terms of the resultant, it
  turns out that most of these properties can be derived  from
  the known ones of the resultant.

  \medskip

  Hereafter $R$ will denote an arbitrary commutative ring.

\subsubsection{Elementary transformations} The discriminant of $n-1$
homogeneous polynomials $f_1,\ldots,f_{n-1}$ is invariant under a
permutation of the $f_i$'s. It is also invariant if one adds to one of
the $f_i$'s an element in the ideal generated by the others.

\begin{prop}\label{perm} for all $j=1,\ldots,n-1$, let  $f_j$
be a homogeneous polynomial of degree $d_j\geq 1$
  in $R[X_1,\ldots,X_n]$. Then,
\begin{itemize}
\item[\rm i)] for any permutation $\sigma$ of the set $\{1,\ldots,n-1
  \}$ we have

  $$\Disc(f_{\sigma(1)},\ldots,f_{\sigma(n-1)})=\Disc(f_1,\ldots,f_{n-1})
  \text{ in } R.$$
\item[\rm ii)] for all $i \in \{1,\ldots,n-1\}$ we have
  $$\Disc(f_1,\ldots,f_i+\sum_{j\neq
    i}h_{i,j}f_j,\ldots,f_{n-1})=\Disc(f_1,\ldots,f_{n-1}) \text{ in }
  R,$$
  where the $h_{i,j}$'s are arbitrary homogeneous polynomials in
  $R[X_1,\ldots,X_n]$ of respective degrees $d_i-d_j$ (therefore $h_{i,j}=0$ if $d_i<d_j$).
\end{itemize}
\end{prop}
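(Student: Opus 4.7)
My plan is to derive both invariance properties directly from the defining equation \eqref{defeq} of the discriminant, using standard properties of the resultant from \cite[\S5]{J91} together with the fact, noted in Remark \ref{fbarre}, that $\Res(f_1,\ldots,f_{n-1},X_i)$ is a nonzero divisor in ${}_kA$. Both statements can be established in the generic case ${}_\ZZ A$ and then transferred to an arbitrary commutative ring $R$ by specialization, exactly as in the proof of Proposition \ref{lemdiv}.

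For (i), fix any $i \in \{1,\ldots,n\}$ and a permutation $\sigma$ of $\{1,\ldots,n-1\}$. The strategy is to show that the two sides of \eqref{defeq} transform by the same factor when $(f_1,\ldots,f_{n-1})$ is replaced by $(f_{\sigma(1)},\ldots,f_{\sigma(n-1)})$, so that the two discriminants are forced to agree after cancelling the nonzero divisor $\Res(f_1,\ldots,f_{n-1},X_i)$. Since $J_i$ is a determinant whose rows are indexed by $f_1,\ldots,f_{n-1}$, one has $J_i(f_{\sigma(1)},\ldots,f_{\sigma(n-1)}) = \mathrm{sgn}(\sigma)\,J_i(f_1,\ldots,f_{n-1})$. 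The resultant itself satisfies a classical permutation rule whose sign depends only on $\sigma$ and the degrees; combined with the scalar-multiplication property \cite[\S 2.3(ii)]{J91} used to pull $\mathrm{sgn}(\sigma) = \pm 1$ out of the last slot, the right-hand side of \eqref{defeq} acquires a factor $\mathrm{sgn}(\sigma)^{d_1\cdots d_{n-1}\cdot(1+\deg J_i)}$ and the left-hand side a factor $\mathrm{sgn}(\sigma)^{d_1\cdots d_{n-1}}$. These coincide precisely when the exponent $d_1\cdots d_{n-1}\cdot\sum_{j=1}^{n-1}(d_j-1)$ is even, which I would verify by an elementary parity argument: either some $d_j$ is even (so $d_1\cdots d_{n-1}$ is even) or every $d_j$ is odd (so $\sum_j(d_j-1)$ is a sum of even numbers, hence even).

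For (ii), by a straightforward reduction it suffices to treat a single modification $f_i \mapsto f_i' := f_i + h\, f_j$ for one index $j \neq i$ and a homogeneous $h$ of degree $d_i-d_j$ (the case $d_i < d_j$ being trivial since then $h=0$); general $h_{i,j}$ follow by iterating this special case. Set $f_k' = f_k$ for $k \neq i$. Two properties of the resultant recalled in \cite[\S 5]{J91} do most of the work: (a) invariance under adding to one argument a polynomial multiple of another, giving $\Res(f_1',\ldots,f_{n-1}',g)=\Res(f_1,\ldots,f_{n-1},g)$ for any homogeneous $g$; and (b) dependence on the last argument only modulo the ideal generated by the others. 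On the Jacobian side, the Leibniz rule yields $J_i(f_1,\ldots,f_i',\ldots,f_{n-1}) - J_i(f_1,\ldots,f_{n-1}) \in (f_j)$, because adding $h\,\partial_{X_k} f_j$ to row $i$ of the Jacobian matrix amounts to adding $h$ times another row (which leaves the determinant unchanged), while the remaining contribution $f_j\,\partial_{X_k} h$ is a multiple of $f_j$. Hence (b) gives $\Res(f_1',\ldots,f_{n-1}', J_i(f')) = \Res(f_1',\ldots,f_{n-1}', J_i(f))$, and (a) then allows me to replace each $f_k'$ by $f_k$ in both resultants occurring in \eqref{defeq}. The defining equation for $\Disc(f_1',\ldots,f_{n-1}')$ reduces to the one for $\Disc(f_1,\ldots,f_{n-1})$, and cancellation of the nonzero divisor yields the claim.

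The principal obstacle will be the sign accounting in (i): one must carefully invoke the correct permutation formula for the resultant and verify the parity of the exponent $d_1\cdots d_{n-1}\cdot\sum_j(d_j-1)$. Part (ii) is essentially bookkeeping once (a), (b) and the Leibniz computation for $J_i$ are in hand, and requires no hypothesis on the ground ring $R$.
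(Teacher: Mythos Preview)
Your proposal is correct and follows essentially the same approach as the paper's proof. For part (i) the paper does exactly what you describe (using $J_n$ rather than a general $J_i$, but this is immaterial), arriving at the same parity condition on $d_1\cdots d_{n-1}\sum_{j}(d_j-1)$; for part (ii) the paper simply cites \cite[\S 5.9]{J91} without further comment, so your explicit Leibniz computation showing $J_i(f')-J_i(f)\in (f_j)$ actually supplies a detail the paper leaves implicit.
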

\begin{proof} Of course, it is sufficient to prove these properties in the
  generic case. The property ii) is an immediate consequence of
  \cite[\S 5.9]{J91}.

  To prove i), we first remark that
  $$J_n^{\sigma}:=J_n(f_{\sigma(1)},\ldots,f_{\sigma(n-1)})=\epsilon(\sigma)J_n(f_1,\ldots,f_{n-1}).$$
  Then, using  \cite[\S 5.8]{J91} we deduce that  
  \begin{eqnarray*}
\Res(f_{\sigma(1)},\ldots,f_{\sigma(n-1)},J_n^\sigma) & = &
\epsilon(\sigma)^{d_1\ldots d_{n-1}}\Res(f_{\sigma(1)},\ldots,f_{\sigma(n-1)},J_n)\\
 & = &
 \epsilon(\sigma)^{d_1\ldots d_{n-1}}\epsilon(\sigma)^{d_1\ldots d_{n-1}\deg(J_n)} \Res(f_1,\ldots,f_n,J_n),
    \end{eqnarray*}
and
\begin{eqnarray*}
  \Res(f_{\sigma(1)},\ldots,f_{\sigma(n-1)},X_n) & = &
  \epsilon(\sigma)^{d_1\ldots
    d_{n-1}}\Res(f_1,\ldots,f_{n-1},X_n).
 \end{eqnarray*}
From here the claimed result follows from \eqref{defeq} (with $i=n$) and
the fact that 
$$d_1\ldots d_{n-1}\deg(J_n)=d_1\ldots
d_{n-1}\sum_{i=1}^{n-1}(d_i-1)$$ 
is always an even integer.
\end{proof}

\subsubsection{Reduction on the variables} 
Hereafter, for any
polynomial $f \in R[X_1,\ldots,X_n]$ we 
denote by $f^{(j)}$ the polynomial obtained by substituting
$X_j$ with $0$ in $f$. Notice that 
$f^{(j)} \in R[X_1,\ldots,X_{j-1},X_{j+1},\ldots,X_n]$.

\begin{prop}[$n\geq 3$]\label{prop:red} For all $i=1,\ldots,n-2$, let $f_i$
be a homogeneous polynomial of degree $d_i\geq 1$
  in $R[X_1,\ldots,X_n]$. The following equality holds in $R$: 
  $$\Disc(f_1,\ldots,f_{n-2},X_n)=(-1)^{d_1\ldots
    d_{n-2}}\Disc(f_1^{(n)},\ldots,f_{n-2}^{(n)}).$$
\end{prop}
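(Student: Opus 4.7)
The plan is to apply the defining equation \eqref{defeq} of the discriminant to the $(n-1)$-tuple $(f_1, \ldots, f_{n-2}, X_n)$ for some fixed index $i \in \{1, \ldots, n-1\}$ (so that $i \neq n$), and then reduce both resultants that appear to the reduced system $(f_1^{(n)}, \ldots, f_{n-2}^{(n)})$ via Remark \ref{fbarre}.

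First I would compute the Jacobian $J_i := J_i(f_1, \ldots, f_{n-2}, X_n)$ by expanding the determinant \eqref{Ji} along its last row. Since $\partial_{X_j} X_n = \delta_{j,n}$ and the index $n$ is not skipped in the column list (because $i \neq n$), that row is $(0, \ldots, 0, 1)$, and expansion gives $J_i = (-1)^{n-i}\det(M)$, where $M$ is the $(n-2)\times(n-2)$ Jacobian of $(f_1, \ldots, f_{n-2})$ with respect to $(X_1, \ldots, \widehat{X_i}, \ldots, X_{n-1})$. Setting $X_n = 0$ and comparing with the Jacobian $J_i'$ of the reduced system of $n-2$ polynomials in $n-1$ variables (which by definition carries the sign $(-1)^{(n-1)-i}$, differing from the previous by one factor of $-1$), I obtain the key identity $J_i^{(n)} = -J_i'$.

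Next I would reduce the two resultants in \eqref{defeq} separately. On the left-hand side, $X_i$ already occupies the last slot, so Remark \ref{fbarre} applies without any permutation; two iterated applications, using that iterated substitutions commute, give
\[
\Res(f_1, \ldots, f_{n-2}, X_n, X_i) = \Res(f_1^{(n)}, \ldots, f_{n-2}^{(n)}, X_i).
\]
For the right-hand side, I would first transpose $X_n$ and $J_i$ via \cite[\S 5.8]{J91}; the sign is $(-1)^{d_1 \cdots d_{n-2}\,\deg(J_i)}$, and a short parity check (either $d_1 \cdots d_{n-2}$ is even, or all $d_k$ are odd and $\deg(J_i) = \sum_{k=1}^{n-2}(d_k-1)$ is a sum of even numbers) shows this sign is always $+1$. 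Then Remark \ref{fbarre} applied to the permuted resultant, combined with $J_i^{(n)} = -J_i'$ and the homogeneity of the resultant of degree $d_1 \cdots d_{n-2}$ in the coefficients of its last argument, yields
\[
\Res(f_1, \ldots, f_{n-2}, X_n, J_i) = (-1)^{d_1 \cdots d_{n-2}} \Res(f_1^{(n)}, \ldots, f_{n-2}^{(n)}, J_i').
\]

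To conclude, I would rewrite the latter resultant using the defining equation of $\Disc(f_1^{(n)}, \ldots, f_{n-2}^{(n)})$, substitute everything back into \eqref{defeq}, and cancel the common factor $\Res(f_1^{(n)}, \ldots, f_{n-2}^{(n)}, X_i)$, which is a nonzero divisor by Remark \ref{fbarre} and Proposition \ref{prop:resultant}. The main obstacle is the careful bookkeeping of signs: the announced factor $(-1)^{d_1 \cdots d_{n-2}}$ arises entirely from the sign flip $J_i^{(n)} = -J_i'$ propagating through the scaling law of the resultant in its last slot, while the transposition sign needed to bring $X_n$ past $J_i$ turns out to be trivial thanks to the parity of $\deg(J_i)$.
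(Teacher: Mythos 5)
Your overall strategy is the same as the paper's: apply the defining relation \eqref{defeq} to the tuple $(f_1,\ldots,f_{n-2},X_n)$, identify the Jacobian of this tuple with the Jacobian of the reduced system, reduce both resultants, and cancel the nonzero divisor $\Res(f_1^{(n)},\ldots,f_{n-2}^{(n)},X_i)$. Your expansion of the Jacobian and the identity $J_i^{(n)}=-J_i'$ are correct; in fact you track this $-1$ more explicitly than the paper does (the paper takes $i=n-1$, identifies the restricted Jacobian with $J_{n-1}(f_1^{(n)},\ldots,f_{n-2}^{(n)})$ without it, and instead harvests the factor $(-1)^{d_1\cdots d_{n-2}}$ from the transposition $\Res(f_1,\ldots,f_{n-2},X_n,X_{n-1})=(-1)^{d_1\cdots d_{n-2}}\Res(f_1,\ldots,f_{n-2},X_{n-1},X_n)$, after which only the unambiguous case of Remark \ref{fbarre}, with $X_n$ itself in the last slot, is ever used).

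The genuine gap is in your reduction of the left-hand side. The unsigned identity $\Res(f_1,\ldots,f_{n-2},X_n,X_i)=\Res(f_1^{(n)},\ldots,f_{n-2}^{(n)},X_i)$ obtained by ``two iterated applications of Remark \ref{fbarre}'' is not sign-free: eliminating a coordinate which is not the last variable of the ambient ring (equivalently, commuting the two eliminations, which hides exactly the transposition the paper performs explicitly) costs a factor $(-1)^{d_1\cdots d_{n-2}}$. You can see it on the specialization $f_k\mapsto X_k^{d_k}$ for $k\neq i$ and $f_i\mapsto X_{n-1}^{d_i}$: by multiplicativity of the resultant and the fact that the resultant of $n$ linear forms is the determinant of their coefficient matrix, the two sides of your identity take the values $+1$ and $(-1)^{d_1\cdots d_{n-2}}$ respectively (already for $n=3$, $\Res(X_1^{3},X_3,X_2)=\Res(X_1,X_3,X_2)^3=-1$ while $\Res_{X_1,X_3}(X_1^{3},X_3)=1$). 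This dropped sign has exactly the same size as the one you extract from $J_i^{(n)}=-J_i'$, so in your bookkeeping the two either cancel or compound: inserting the correct reduction sign into your denominator makes your two factors $(-1)^{d_1\cdots d_{n-2}}$ cancel, and your route then outputs the identity without the announced factor. As written, therefore, the argument does not pin down the sign in the statement; your accounting and the paper's attribute the $(-1)^{d_1\cdots d_{n-2}}$ to two different steps (the restricted Jacobian for you, the $X_{n-1}\leftrightarrow X_n$ swap for the paper, which treats the Jacobian reduction as sign-free), and a rigorous version of your proof must first state the reduction formula $\Res(g_1,\ldots,g_{n-1},X_i)$ versus $\Res(g_1^{(i)},\ldots,g_{n-1}^{(i)})$ for $i\neq n$ with its precise sign before carrying out the cancellation.
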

\begin{proof} It is sufficient to prove this formula in the
  generic context. From the definition of the discriminant we thus
  have the equality
\begin{multline*}
	\Res(f_1,\ldots,f_{n-2},X_n,J_{n-1}(f_1,\ldots,f_{n-2},X_n))=\\
	\Disc(f_1,\ldots,f_{n-2},X_n) \Res(f_1,\ldots,f_{n-2},X_n,X_{n-1}).	
\end{multline*}
But, from \eqref{Ji} we deduce that
$$J_{n-1}(f_1,\ldots,f_{n-2},X_n)=-\left|\frac{\partial
    (f_1,\ldots,f_{n-2},X_n)}{\partial
    (X_1,\ldots,X_{n-2},X_n)}\right|=
(-1)^n\left|\frac{\partial
    (f_1,\ldots,f_{n-2})}{\partial
    (X_1,\ldots,X_{n-2})}\right|.
$$
And since
\begin{multline}
  \Res(f_1,\ldots,f_{n-2},X_n,J_{n-1}(f_1,\ldots,f_{n-2},X_n))  =
  (-1)^{d_1\ldots d_{n-2}\sum_{i=1}^{n-2}(d_i-1)} \\ \notag
  \Res(f_1,\ldots,f_{n-2},J_{n-1}(f_1,\ldots,f_{n-2},X_n),X_n)
\end{multline}
where $  d_1\ldots d_{n-2}\sum_{i=1}^{n-2}(d_i-1)$ is even, it comes
\begin{multline}
  \Res(f_1,\ldots,f_{n-2},X_n,J_{n-1}(f_1,\ldots,f_{n-2},X_n))  = \\
 \Res(f_1^{(n)},\ldots,f_{n-2}^{(n)},J_{n-1}(f_1^{(n)},\ldots,f_{n-2}^{(n)})).
 \end{multline} 
Moreover, we also have
\begin{eqnarray*}
  \Res(f_1,\ldots,f_{n-2},X_n,X_{n-1}) & = & (-1)^{d_1\ldots d_{n-2}} 
\Res(f_1,\ldots,f_{n-2},X_{n-1},X_n) \\
 & = & (-1)^{d_1\ldots d_{n-2}} \Res(f_1^{(n)},\ldots,f_{n-2}^{(n)},X_{n-1}). 
\end{eqnarray*}
Now taking the ratio of both previous quantities we obtain,
\begin{multline*}
	 (-1)^{d_1\ldots d_{n-2}}
	  \Res(f_1^{(n)},\ldots,f_{n-2}^{(n)},J_{n-1}(f_1^{(n)},\ldots,f_{n-2}^{(n)}))
	=   \\
	\Disc(f_1,\ldots,f_{n-2},X_n)\Res(f_1^{(n)},\ldots,f_{n-2}^{(n)},X_{n-1})	
\end{multline*}
so that, as claimed,
$$ \Disc(f_1,\ldots,f_{n-2},X_n) = (-1)^{d_1\ldots d_{n-2}} \Disc(f_1^{(n)},\ldots,f_{n-2}^{(n)}).
$$
\end{proof}

The following proposition and corollary give reductions of 
the discriminant in cases where certain polynomials $f_1,\ldots,f_{n-1}$ do not depend on all the variables
$X_1,\ldots,X_n$.  

\begin{prop}[$n\geq 3$] Let $k\in \{2,\ldots,n-1\}$ and for all $i=1,\ldots,n-1$ let $f_i$ 
  be a homogeneous polynomial of degree $d_i\geq 1$
  in $R[X_1,\ldots,X_n]$ such that $\sum_{i=1}^{k-1}(d_i-1)\geq 1$.
Assume moreover that
$f_1,\ldots,f_{k-1}$ only depend on the variables
  $X_1,\ldots,X_k$.
  Then, denoting for all integer $i=k,\ldots,n-1$
  $$\hat{f}_i=f_i(0,\ldots,0,X_{k+1},\ldots,X_n) \in
  R[X_{k+1},\ldots,X_n],$$
  we have the equality 
  \begin{multline}
    \Disc(f_1,\ldots,f_{n-1})=
    (-1)^{(n-k)\prod_{i=1}^{n-1}d_i} \Disc(f_1,\ldots,f_{k-1})^{\prod_{i=k}^{n-1}d_i} \\ \notag    
    \Res(\hat{f}_k,\ldots,\hat{f}_{n-1})^{(\prod_{i=1}^{k-1}d_i)(\sum_{i=1}^{k-1}d_i-k)}
    \Res\left(f_1,\ldots,f_{n-1},\left|\frac{\partial
    (f_{k},\ldots,f_{n-1})}{\partial (X_{k+1},\ldots,X_n)}\right|\right).
    \end{multline}
\end{prop}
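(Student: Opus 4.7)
The plan is to reduce to the generic case $k=\ZZ$ (so all relevant resultants are non-zero divisors) and then apply the defining equation \eqref{defeq} of the discriminant with a carefully chosen index $i\in\{1,\ldots,k\}$. The structural input is that, since $f_1,\ldots,f_{k-1}$ depend only on $X_1,\ldots,X_k$, the Jacobian matrix of $(f_1,\ldots,f_{n-1})$ with respect to $(X_1,\ldots,X_n)$ is block-triangular of the form $\left(\begin{smallmatrix}A&0\\B&C\end{smallmatrix}\right)$, with $A$ of size $(k-1)\times k$ and $C=\partial(f_k,\ldots,f_{n-1})/\partial(X_{k+1},\ldots,X_n)$ of size $(n-k)\times(n-k)$. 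For $i\le k$, deleting the $i$-th column leaves a square block-triangular matrix whose determinant factors as $\det(A^{(i)})\det(C)$, and a short sign count yields
\[ J_i(f_1,\ldots,f_{n-1}) = (-1)^{n-k}\,J_i(f_1,\ldots,f_{k-1})\cdot D, \]
with $D=\det C=\left|\partial(f_k,\ldots,f_{n-1})/\partial(X_{k+1},\ldots,X_n)\right|$.

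Plugging this identity into \eqref{defeq} for the same $i$, and using the multiplicativity of the resultant together with scalar extraction \cite[\S5.7]{J91}, gives
\[ \Disc(f_1,\ldots,f_{n-1})\Res(f_1,\ldots,f_{n-1},X_i) = (-1)^{(n-k)\prod_{j=1}^{n-1}d_j}\,\Res(f_1,\ldots,f_{n-1},J_i(f_1,\ldots,f_{k-1}))\,\Res(f_1,\ldots,f_{n-1},D). \]
The last factor on the right is already the third factor of the claimed formula, so the task reduces to computing the ratio of the first two resultants.

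For this I plan to use the following splitting formula for the resultant: for any homogeneous polynomial $g\in{}_\ZZ A[X_1,\ldots,X_k]$ of positive degree,
\[ \Res(f_1,\ldots,f_{n-1},g) = \pm\,\Res(f_1,\ldots,f_{k-1},g)^{\prod_{i=k}^{n-1}d_i}\,\Res(\hat f_k,\ldots,\hat f_{n-1})^{(\prod_{j=1}^{k-1}d_j)\deg(g)}, \]
which can be derived using the product and specialization rules of \cite[\S5]{J91} (and the substitution identity $\Res(\ldots,X_i)=\Res(\ldots^{(i)})$ of \cite[Lemma 4.8.9]{J91}); geometrically it reflects the decomposition of $V(f_1,\ldots,f_{n-1},g)\subset \mathbb{P}^{n-1}$ according to whether a solution satisfies $X_1=\cdots=X_k=0$ or not. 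Applying this formula to $g=J_i(f_1,\ldots,f_{k-1})$ (of degree $\sum_{j=1}^{k-1}(d_j-1)$) and to $g=X_i$, and taking the ratio -- the denominator being a non-zero divisor by Remark \ref{fbarre} applied to the smaller system -- the exponent of $\Res(\hat f_k,\ldots,\hat f_{n-1})$ telescopes to $(\prod_{j=1}^{k-1}d_j)(\sum_{j=1}^{k-1}d_j-k)$, matching the statement, and the inner ratio equals $\Disc(f_1,\ldots,f_{k-1})^{\prod_{i=k}^{n-1}d_i}$ by the definition of the discriminant of $k-1$ homogeneous polynomials in $k$ variables.

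The main obstacle will be the sign bookkeeping: establishing the splitting formula above with its precise sign, and then tracking the signs coming from the permutations inside the resultants (governed by \cite[\S5.8]{J91}) through its two applications and through the final ratio so that they collapse exactly to the single prefactor $(-1)^{(n-k)\prod_{i=1}^{n-1}d_i}$ appearing in the statement.
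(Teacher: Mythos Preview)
Your approach is essentially the same as the paper's. The paper also works in the generic case, picks the index $i=1\in\{1,\ldots,k\}$, factors $J_1(f_1,\ldots,f_{n-1})$ via the block-triangular Jacobian, and then splits each of the resulting resultants $\Res(f_1,\ldots,f_{n-1},g)$ (for $g=J_1(f_1,\ldots,f_{k-1})$ and $g=X_1$) using the permutation rule \cite[\S5.8]{J91} followed by Laplace's formula \cite[\S5.10]{J91}; your ``splitting formula'' is exactly this combination, so once you track the signs (the paper shows the permutation contribution $(-1)^{(n-k)(\prod d_i)(\sum_{i=1}^{k-1}(d_i-1))}$ is even, leaving only $(-1)^{(n-k)\prod d_i}$ from the $X_1$ side) the two arguments coincide.
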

\begin{proof} As always, it is sufficient to prove this formula in the
  generic case. By definition we have
  $$\Res(f_1,\ldots,f_{n-1},X_1) \Disc(f_1,\ldots,f_{n-1})=\Res(f_1,\ldots,f_{n-1},J_1(f_1,\ldots,f_{n-1})).$$
  From the hypothesis, the Jacobian determinant involved in this
  formula decomposes into four square blocks and one of them is
  identically zero. More precisely, one has
  $$J_1(f_1,\ldots,f_{n-1})=\left| \frac{\partial (f_1,\ldots,f_{k-1})  }
    {\partial (X_2,\ldots,X_k)  } \right|
  \left| \frac{\partial (f_k,\ldots,f_{n-1})  }
    {\partial (X_{k+1},\ldots,X_n)  } \right|$$
  and by multiplicativity of the resultant \cite[\S 5.7]{J91} we deduce
\begin{multline}
    \Res(f_1,\ldots,f_{n-1},J_1(f_1,\ldots,f_{n-1}))=
    \Res\left(f_1,\ldots,f_{n-1},\left| \frac{\partial (f_1,\ldots,f_{k-1})  }
    {\partial (X_2,\ldots,X_k)  } \right| \right) \\ \notag  
    \times \Res\left(f_1,\ldots,f_{n-1},\left| \frac{\partial (f_k,\ldots,f_{n-1})  }
    {\partial (X_{k+1},\ldots,X_n)  } \right|\right).
\end{multline}
Now, permuting polynomials in the resultant \cite[\S 5.8]{J91}, 
\begin{multline}
  \Res\left(f_1,\ldots,f_{n-1},\left| \frac{\partial (f_1,\ldots,f_{k-1})  }
    {\partial (X_2,\ldots,X_k)  } \right| \right)=  \\
(-1)^{\nu}
  \Res\left(f_1,\ldots,f_{k-1},\left| \frac{\partial (f_1,\ldots,f_{k-1})  }
    {\partial (X_2,\ldots,X_k)  } \right| ,f_{k},\ldots,f_{n-1}\right) 
\end{multline}
where $\nu:=(n-k)(\prod_{i=1}^{n-1}d_i)(\sum_{i=1}^{k-1}(d_i-1)) \geq
1$  and is even,
and using Laplace's formula
\cite[\S5.10]{J91} this latter resultant is equal to 
\begin{equation*}
 \Res\left(f_1,\ldots,f_{k-1},\left| \frac{\partial (f_1,\ldots,f_{k-1})  }
    {\partial (X_2,\ldots,X_k)  } \right| \right)^{\prod_{i=k}^{n-1}d_i} 
  \Res(\hat{f}_k,\ldots,\hat{f}_{n-1})^{(\prod_{i=1}^{k-1}d_i)\sum_{i=1}^{k-1}(d_i-1)}.
\end{equation*}
Similarly, we have
\begin{multline}
\Res(f_1,\ldots,f_{n-1},X_1)= \\
(-1)^{(n-k)(\prod_{i=1}^{n-1}d_i)}
\Res(f_1,\ldots,f_{k-1},X_1)^{\prod_{i=k}^{n-1}d_i} 
  \Res(\hat{f}_k,\ldots,\hat{f}_{n-1})^{(\prod_{i=1}^{k-1}d_i)}
\end{multline}
and the claimed formula follows easily by gathering these computations. 
\end{proof}

\begin{cor} Let $k\in \{1,\ldots,n-1\}$ and for all $i=1,\ldots,n-1$, let $f_i$
  be a homogeneous polynomial of degree $d_i\geq 1$
  in $R[X_1,\ldots,X_n]$. Assume moreover that $d_1\geq 2$. If 
  the polynomials $f_1,\ldots,f_{k}$ only depend on the variables
  $X_1,\ldots,X_k$ then $\Disc(f_1,\ldots,f_{n-1})=0$.
\end{cor}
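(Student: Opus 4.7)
The plan is to work in the universal setting and invoke the defining equation \eqref{defeq} at the index $i = n$. By Definition \ref{defdisc}, it suffices to prove the vanishing in the coefficient ring $A := \ZZ[V_{i,\alpha}]$, where the $V_{i,\alpha}$ parametrise generic polynomials $F_1,\ldots,F_{n-1}$ subject to the constraint that $V_{i,\alpha} = 0$ whenever $i \leq k$ and $\alpha_j > 0$ for some $j > k$. The ring $A$ is a polynomial ring over $\ZZ$, hence an integral domain.

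The first observation is that $J_n := J_n(F_1,\ldots,F_{n-1})$ lies in the ideal $(X_1,\ldots,X_k) \subset A[X_1,\ldots,X_n]$. Indeed, $F_1$ depends only on $X_1,\ldots,X_k$ and has degree $d_1 \geq 2$, so $\partial F_1/\partial X_j$ is zero for $j > k$ and is homogeneous of positive degree $d_1 - 1$ in $X_1,\ldots,X_k$ for $j \leq k$; in either case it lies in $(X_1,\ldots,X_k)$. Expanding $J_n$ along its first row then gives the claim.

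The main step is to show $\Res(F_1,\ldots,F_{n-1},J_n) = 0$ in $A$. Since $A$ is a domain, I pass to $K := \overline{\Frac(A)}$ and exhibit a common projective zero. Any point of the form $(0{:}\cdots{:}0{:}\xi_{k+1}{:}\cdots{:}\xi_n) \in \mathbb{P}^{n-1}_K$ with $\hat{F}_i(\xi_{k+1},\ldots,\xi_n) = 0$ for $i = k+1,\ldots,n-1$, where $\hat{F}_i := F_i|_{X_1=\cdots=X_k=0}$, is such a common zero: the $F_i$ for $i \leq k$ and $J_n$ all lie in $(X_1,\ldots,X_k)$, while the $F_i$ for $i > k$ vanish by definition of $\hat{F}_i$. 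The existence of such $\xi$ follows from projective Bezout (or Krull's Hauptidealsatz) applied to the $n-1-k$ nonzero homogeneous polynomials $\hat{F}_{k+1},\ldots,\hat{F}_{n-1}$ of positive degrees in $K[X_{k+1},\ldots,X_n]$: they cut out a subvariety of $\mathbb{P}^{n-k-1}_K$ of dimension at least $0$, hence nonempty.

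Finally, $\Res(F_1,\ldots,F_{n-1},X_n) \neq 0$ in $A$: the specialization $F_i \mapsto X_i^{d_i}$ is admissible (since $X_i$ lies in $\{X_1,\ldots,X_k\}$ for $i \leq k$) and reduces the resultant, by the normalization \eqref{normres}, to $\Res(X_1^{d_1},\ldots,X_{n-1}^{d_{n-1}},X_n) = 1$. The defining equation \eqref{defeq} at $i = n$ then reads $\Disc(F_1,\ldots,F_{n-1}) \cdot \Res(F_1,\ldots,F_{n-1},X_n) = 0$ in the integral domain $A$, forcing $\Disc(F_1,\ldots,F_{n-1}) = 0$, and the corollary follows by specialization. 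The main subtlety is the choice $i = n$: for $i \leq k$ one can show by the same block argument that $J_i$ is identically zero, but unfortunately $\Res(F_1,\ldots,F_{n-1},X_i)$ also vanishes in this range and the defining equation degenerates to $0 = 0$; selecting $i = n$, where the resultant in the left-hand side survives, is what makes the argument go through.
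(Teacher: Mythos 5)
Your proof is correct, but it follows a genuinely different route from the paper's. The paper splits into two cases: for $k\geq 2$ it invokes the reduction formula of the preceding proposition (whose hypothesis $\sum_{i=1}^{k-1}(d_i-1)\geq 1$ is guaranteed by $d_1\geq 2$), observing that $\hat{f}_k=0$ kills the factor $\Res(\hat{f}_k,\ldots,\hat{f}_{n-1})$; for $k=1$ it argues directly that $X_1^{d_1-1}$ divides $J_n$, so $\Res(f_1,\ldots,f_{n-1},J_n)=\,$(a power of $\Res(f_1,\ldots,f_{n-1},X_1)=0$) times something, while $\Res(f_1,\ldots,f_{n-1},X_n)\neq 0$ for generic $f_2,\ldots,f_{n-1}$. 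You instead give a single uniform argument for all $k$: in the constrained-generic setting (a domain), you show $J_n\in(X_1,\ldots,X_k)$ using precisely $d_1\geq 2$, exhibit a common projective zero of $F_1,\ldots,F_{n-1},J_n$ on the linear subspace $X_1=\cdots=X_k=0$ over $\overline{\Frac(A)}$ by a dimension count, conclude $\Res(F_1,\ldots,F_{n-1},J_n)=0$ (only the easy direction "common nontrivial zero $\Rightarrow$ resultant vanishes" is needed, which follows from the inertia-form property and injects from $A$ into its fraction field), and verify $\Res(F_1,\ldots,F_{n-1},X_n)\neq 0$ via the specialization $F_i\mapsto X_i^{d_i}$ and the normalization \eqref{normres}; the defining equation \eqref{defeq} at $i=n$ then forces the discriminant to vanish in the domain $A$, and the general case follows by specialization. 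Your version is self-contained (no appeal to the reduction proposition, hence no case split), makes the geometric reason transparent (the scheme is singular along the common zero locus supported on $X_1=\cdots=X_k=0$), and your closing remark about why $i=n$ rather than $i\leq k$ must be used is accurate; the paper's version is shorter given the machinery already established, and its $k=1$ case is essentially your argument in miniature.
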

\begin{proof} First assume that $k\geq 2$; since $d_1 \geq 2$ we have
  $\sum_{i=1}^{n-1}(d_i-1)\geq 1$.  Since $f_k$ only depends on the
  variables $X_1,\ldots,X_k$ we deduce that, according to the notation
  of the previous proposition, $\hat{f}_k=0$. Consequently, using the
  formula of this proposition we immediately get that
  $\Disc(f_1,\ldots,f_{n-1})=0$.

  Now assume that $k=1$; thus $f_1=U_{1}X_1^{d_1}$. One may also
  assume that the polynomials $f_2,\ldots,f_{n-1}$ are generic in all the variables
  $X_1,\ldots,X_n$. It follows that $\Res(f_1,\ldots,f_{n-1},X_n)$ is
  nonzero and we know that
  $$\Res(f_1,\ldots,f_{n-1},X_n)\Disc(f_1,\ldots,f_{n-1})=\Res(f_1,\ldots,f_{n-1},J_n(f_1,\ldots,f_{n-1})).$$
But since $f_1=U_{1}X_1^{d_1}$ we deduce that $X_1^{d_1-1}$ divides
$J_n$ and consequently that $\Res(f_1,\ldots,f_{n-1},J_n)$ vanishes.
\end{proof}

\subsubsection{Multiplicativity} 
We now describe the multiplicativity property of the discriminant, property that was already known to Sylvester \cite{Sylvester}. Recall that the discriminant of $n-1$  homogeneous polynomials of degree 1 equals 1 (the unit of the ground ring) by convention.

\begin{prop}\label{mult} Let 
  $f_1'$, $f_1''$, $f_2,\ldots,f_{n-1}$ be $n$ homogeneous polynomials in $R[X_1,\ldots,X_n]$
  of  positive degree
  $d_1',d_1''$, $d_2,\ldots$, $d_{n-1}\geq 1$, respectively. Then,
  \begin{multline}
    \Disc(f_1'f_1'',f_2,\ldots,f_{n-1})= \notag \\
    (-1)^s \Disc(f_1',f_2,\ldots,f_{n-1})
    \Disc(f_1'',f_2,\ldots,f_{n-1})
    \Res(f_1',f_1'',f_2,\ldots,f_{n-1})^2, \notag
  \end{multline}
  where $s:=d_1'd_1''d_2 \ldots d_{n-1}$.
\end{prop}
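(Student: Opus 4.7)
\medskip

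\noindent\textbf{Proof plan.} By specialization, it suffices to prove the identity in the generic setting, where all the resultants involved are nonzero divisors. Apply the defining relation \eqref{defeq} of the discriminant with $i=n$ to the $(n-1)$-uple $(f_1'f_1'',f_2,\ldots,f_{n-1})$:
\begin{equation*}
\Disc(f_1'f_1'',f_2,\ldots,f_{n-1})\,\Res(f_1'f_1'',f_2,\ldots,f_{n-1},X_n)=\Res(f_1'f_1'',f_2,\ldots,f_{n-1},J_n),
\end{equation*}
where $J_n=J_n(f_1'f_1'',f_2,\ldots,f_{n-1})$. The left-hand resultant splits at once by multiplicativity of $\Res$ \cite[\S5.7]{J91}:
\begin{equation*}
\Res(f_1'f_1'',f_2,\ldots,X_n)=\Res(f_1',f_2,\ldots,X_n)\Res(f_1'',f_2,\ldots,X_n).
\end{equation*}

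Next, expand $J_n$ by linearity of the determinant on its first row using Leibniz's rule: letting $A:=J_n(f_1'',f_2,\ldots,f_{n-1})$ and $B:=J_n(f_1',f_2,\ldots,f_{n-1})$, one has $J_n=f_1'A+f_1''B$. Splitting $\Res(f_1'f_1'',f_2,\ldots,f_{n-1},J_n)$ again by \cite[\S5.7]{J91} and then reducing modulo $f_1'$ (resp.\ $f_1''$) inside the first (resp.\ second) factor yields
\begin{align*}
\Res(f_1',f_2,\ldots,f_{n-1},J_n) &= \Res(f_1',f_2,\ldots,f_{n-1},f_1''B) \\
&= \Res(f_1',f_2,\ldots,f_{n-1},f_1'')\,\Res(f_1',f_2,\ldots,f_{n-1},B),
\end{align*}
and symmetrically for $f_1''$ with $A$ in place of $B$. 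Applying \eqref{defeq} to each of the two last factors gives
\begin{align*}
\Res(f_1',f_2,\ldots,f_{n-1},B) &= \Disc(f_1',f_2,\ldots,f_{n-1})\,\Res(f_1',f_2,\ldots,f_{n-1},X_n),\\
\Res(f_1'',f_2,\ldots,f_{n-1},A) &= \Disc(f_1'',f_2,\ldots,f_{n-1})\,\Res(f_1'',f_2,\ldots,f_{n-1},X_n).
\end{align*}

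It remains to compare
\begin{equation*}
\Res(f_1',f_2,\ldots,f_{n-1},f_1'')\cdot \Res(f_1'',f_2,\ldots,f_{n-1},f_1')
\end{equation*}
with $\Res(f_1',f_1'',f_2,\ldots,f_{n-1})^2$. Using the permutation formula \cite[\S5.8]{J91}, a transposition of two polynomials in a resultant introduces the sign $(-1)^{d_1'd_1''d_2\cdots d_{n-1}}=(-1)^{s}$. Moving $f_1''$ from the last to the second position in the first resultant costs $n-2$ adjacent transpositions, and the same for $f_1'$ in the second resultant followed by one extra swap of $f_1',f_1''$; summing the exponents gives a total sign $(-1)^{(2n-3)s}=(-1)^{s}$, so that
\begin{equation*}
\Res(f_1',f_2,\ldots,f_{n-1},f_1'')\,\Res(f_1'',f_2,\ldots,f_{n-1},f_1')=(-1)^{s}\Res(f_1',f_1'',f_2,\ldots,f_{n-1})^2.
\end{equation*}
Assembling all the pieces and cancelling the nonzero divisor $\Res(f_1',f_2,\ldots,X_n)\Res(f_1'',f_2,\ldots,X_n)$ on both sides yields the announced formula. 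The main technical point is the bookkeeping of signs arising from the permutations of the polynomials inside the resultants; the parity computation above is what makes the final constant work out to $(-1)^{s}$.
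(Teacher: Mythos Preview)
Your argument is essentially identical to the paper's: both expand $J_n(f_1'f_1'',f_2,\ldots,f_{n-1})=f_1'J_n''+f_1''J_n'$ via the Leibniz rule, split $\Res(f_1'f_1'',f_2,\ldots,f_{n-1},J_n)$ by multiplicativity on the first slot, reduce modulo $f_1'$ (resp.\ $f_1''$), and then track the permutation signs to extract the factor $(-1)^s\Res(f_1',f_1'',f_2,\ldots,f_{n-1})^2$. The one point you omit and the paper treats explicitly is the degenerate situation $\deg(J_n')=0$ or $\deg(J_n'')=0$ (i.e.\ when $f_1',f_2,\ldots,f_{n-1}$, resp.\ $f_1'',f_2,\ldots,f_{n-1}$, are all linear): there the step $\Res(\ldots,f_1''B)=\Res(\ldots,f_1'')\Res(\ldots,B)$ and the defining relation \eqref{defeq} are not directly applicable, and one checks instead that $J_n'=\Res(f_1',f_2,\ldots,f_{n-1},X_n)$ so the formula still holds with $\Disc=1$.
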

\begin{proof} It is sufficient to prove this result in the generic
  case, so let us assume that $f_1',
  f_1'',f_2,\ldots,f_n$ are generic polynomials. It is easy to see
  that 
  $$J_n(f_1'f_1'',f_2,\ldots,f_n)=f_1'J_n(f_1'',f_2,\ldots,f_n)+f_1''J_n(f_1',f_2,\ldots,f_n)
  =:f_1'J_n''+f_1''J_n'.$$
  Assume first that $\deg(J_n'')\geq 1$ and $\deg(J_n')\geq 1$. Using
  \cite[\S5.7 \& \S5.8]{J91} we obtain
\begin{align*}
	   \lefteqn{\Res(f_1'f_1'',f_2,\ldots,f_{n-1},J_n)} \\
	& =\Res(f_1',f_2,\ldots,f_{n-1},f_1''J_n')\Res(f_1'',f_2,\ldots,f_{n-1},f_1'J_n'')  \\
	& =(-1)^s\Res(f_1',f_1'',f_2,\ldots,f_{n-1})^2\Res(f_1',f_2,\ldots,f_{n-1},J_n') 
    \Res(f_1'',f_2,\ldots,f_{n-1},J_n'')
\end{align*}
where $s:=d_1'd_1''d_2\ldots d_{n-1}.$ And since
$$\Res(f_1'f_1'',f_2,\ldots,f_{n-1},X_n)=\Res(f_1',f_2,\ldots,f_{n-1},X_n)\Res(f_1'',f_2,\ldots,f_{n-1},X_n),$$
we deduce the expected formula by applying \eqref{defeq}.

 Assume now that $\deg(J_n')=0$ and $\deg(J_n'')\geq 1$.  Then, in the
 previous computations, the resultant $\Res(f_1',f_2,\ldots,f_{n-1},J_n')$ must
 be replaced by $J_n'$ (under our hypothesis $d_1'd_2\ldots
 d_{n-1}=1$). But it turns out that, always since $\deg(J_n')=0$, 
 $J_n'=\Res(f_1',f_2,\ldots,f_{n-1},X_n)$  and consequently  the
 whole formula remains exact. A similar argument shows that this
 formula is also exact if $\deg(J_n')=1$ and $\deg(J_n'')\geq 0$, and
 if $\deg(J_n')=\deg(J_n'')\geq 0$.
\end{proof}

% \begin{rem}This proposition gives a ``polarization'' formula, which is
%   well known in the classical case ($n=2$).  Indeed, with the usual notation and assuming that $R$ is a UFD, one has
%   $$\Res(f_1,f_2\ldots,f_n)^2=(-1)^{d_1d_2\ldots
%     d_n}\frac{\Disc(f_1f_2,f_3,\ldots,f_n)}{\Disc(f_1,f_3,\ldots,f_n)\Disc(f_2,f_3,\ldots,f_n)}.$$
% \end{rem}

\begin{cor}\label{det2}  Let $d_1,\ldots,d_{n-1}$ be $n-1$ integers greater or
  equal to 2 and  let
   $l_{i,j}$, for $1\leq i \leq
  n-1$ and $1\leq j \leq d_i$, be linear forms in
  $R[X_1,\ldots,X_n]$. Then 
  \begin{equation*}
    \Disc\left(\prod_{j=1}^{d_1}l_{1,j},\ldots,\prod_{j=1}^{d_{n-1}}l_{n-1,j}\right)=(-1)^{s} \prod_{I}
    \det(l_{1,j_1},l_{2,j_2},\ldots,l_{n-1,j_{n-1}},l_{i,j})^2
    \end{equation*}
    where  $s:=\frac{1}{2}\prod_{i=1}^{n-1}d_i\sum_{i=1}^{n-1}(d_i-1)$
  and  
    the product runs over the set 
\begin{multline}
 I:=\{ (j_1,\ldots,j_{n-1},i,j) \ | \  1\leq j_1 \leq d_1, 1\leq j_2 \leq
 d_2, \ldots, 1\leq j_{n-1} \leq d_{n-1}, \\ \notag
 1 \leq i \leq n-1 \text{ and } 1\leq j \leq d_i \text{ such that } j\neq j_i \}.
\end{multline}
\end{cor}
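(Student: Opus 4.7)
The plan is to reduce $\Disc(\prod_j l_{1,j},\ldots,\prod_j l_{n-1,j})$ to atomic pieces by iterating the multiplicativity formula. Concretely, I would apply Proposition~\ref{mult} to the first slot to peel off $l_{1,1}$ from $f_1=\prod_j l_{1,j}$, then iterate to strip off $l_{1,2}$, and so on, using the permutation invariance of Proposition~\ref{perm} to rotate the slot being decomposed into the first position. After $\sum_i(d_i-1)$ applications of multiplicativity in total, every $f_i$ has been fully factored into its $d_i$ linear components.

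What remains is a product of terms of two types. First, for each tuple $(j_1,\ldots,j_{n-1})\in\prod_i\{1,\ldots,d_i\}$, there appears a discriminant $\Disc(l_{1,j_1},\ldots,l_{n-1,j_{n-1}})$ of $n-1$ linear forms, which equals $1$ by the convention in Definition~\ref{defdisc} (since all degrees are $1$, $\sum_i(d_i-1)=0$). Second, every application of multiplicativity produces a squared resultant; using the multiplicativity of the resultant \cite[\S 5.7]{J91} to further expand any products $f_{i+1},\ldots,f_{n-1}$ still appearing inside, we obtain squared resultants of exactly $n$ linear forms. Each such resultant equals the determinant of the $n\times n$ coefficient matrix of its arguments: this follows from the normalization $\Res(X_1,\ldots,X_n)=1$ of Proposition~\ref{prop:resultant} together with multilinearity of the resultant in each of its arguments, as recorded in \cite[\S 5.13.2]{J91}. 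Collecting these squared determinants and matching indices with the set $I$ gives the product appearing in the statement.

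The main obstacle is the sign bookkeeping. Each use of Proposition~\ref{mult} contributes a sign $(-1)^{d'd''\prod_k d_k}$ depending on the degrees at that step, and the permutations used to bring a slot into first position add further signs via \cite[\S 5.8]{J91}. I would handle this by induction on the number of linear factors already split off, carefully accumulating the exponent at each step. The identity $s=\tfrac{1}{2}\prod_i d_i\sum_i(d_i-1)$, which equals exactly half of $|I|$, serves as a useful sanity check: each unordered pair of linear factors in a given slot corresponds to two elements of $I$ whose associated determinants differ by the swap of two rows, hence agree up to a sign. This explains both why the product naturally assembles into squared determinants and why the total accumulated sign reduces to $(-1)^s$.
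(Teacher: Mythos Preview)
Your approach is correct and is exactly what the paper intends: Corollary~\ref{det2} is stated without proof immediately after Proposition~\ref{mult}, so the implied argument is precisely the iteration of the multiplicativity formula that you describe, together with $\Disc$ of $n-1$ linear forms being $1$ by convention and $\Res$ of $n$ linear forms being the determinant of their coefficient matrix. One small simplification for your sign bookkeeping: Proposition~\ref{perm}(i) gives \emph{strict} invariance of $\Disc$ under permutation of its arguments, and since the resultant factors produced by Proposition~\ref{mult} are squared, any reordering of their arguments via \cite[\S 5.8]{J91} contributes nothing either---so the only signs you need to track are the $(-1)^{d_1'd_1''d_2\cdots d_{n-1}}$ from each application of Proposition~\ref{mult}.
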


\subsubsection{Covariance}

Assume that $n\geq 2$ and suppose given a sequence of $n-1$ positive integers $d_1,\ldots,d_{n-1}$ such that $\sum_{i=1}^{n-1}(d_i-1)\geq 1$. For all $d \in \NN$ set $I_d:=\{i\in\{1,\ldots,n\} | d_i=d\}$ and define $L:=\{d\in \NN | I_d\neq \emptyset\}$. In this way, the set $\{1,\ldots,n\}$ is the disjoint union of $I_d$ with $d\in L$.

Let $\varphi$ be a square matrix of size $n-1$ with coefficients in $R$
$$\varphi=\left[
\begin{array}{ccc}
u_{1,1} & \cdots & u_{1,n-1} \\
\vdots & & \vdots\\
u_{n-1,1} & \cdots & u_{n-1,n-1} 
\end{array}
\right].$$
 We will say that $\varphi$ is adapted to the sequence $d_1,\ldots,d_n$ if and only if 
$$u_{i,j}\neq 0 \Rightarrow d_i=d_j.$$
Equivalently, $\varphi$ is adapted to the sequence $d_1,\ldots,d_n$ if and only if $\varphi$ can be transformed by row and column permutations into a block diagonal matrix whose diagonal blocs are given by $\varphi_d:=\varphi_{|I_d\times I_d}$ for all $d\in L$; in particular $\det(\varphi)=\prod_{d\in L}\det(\varphi_d) \in R$.

\begin{prop}\label{prop:covariance} Assume that $n\geq 2$ and suppose given a sequence of $n-1$ positive integers $d_1,\ldots,d_{n-1}$ such that $\sum_{i=1}^{n-1}(d_i-1)\geq 1$ and a sequence of $n-1$ homogeneous polynomials $f_1,\ldots,f_{n-1}$ in $R[X_1,\ldots,X_n]$ of degree $d_1,\ldots,d_{n-1}$ respectively. Then, for all $i=1,\ldots,n-1$ and all matrix $\varphi=\left( u_{i,j} \right)_{1\leq i,j\leq n-1}$ with coefficients in $R$ adapted to $d_1,\ldots,d_{n-1}$, the polynomial $\sum_{j=1}^{n-1}u_{i,j}f_j \in R$ is homogeneous of degree $d_i$ and we have
\begin{multline*}
	\Disc\left(\sum_{j=1}^{n-1}u_{1,j}f_j,\ldots,\sum_{j=1}^{n-1}u_{n-1,j}f_j\right)= \\
	\left(\prod_{d\in L}  \det(\varphi_d)^{\frac{d_1\ldots d_{n-1}\left((d-1)+\sum_{i=1}^{n-1}(d_i-1)\right)}{d}}   \right)
	\Disc(f_1,\ldots,f_{n-1}).	
\end{multline*}
\end{prop}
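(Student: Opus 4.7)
The plan is to track how both sides of the defining equation \eqref{defeq} transform under the substitution $f_j \mapsto g_j := \sum_{k=1}^{n-1} u_{j,k} f_k$. By the universality of the construction, it suffices to prove the identity in the generic situation over $\ZZ$, where the relevant resultants are nonzero divisors; the result for arbitrary $R$ will then follow by specialization. First, since $\partial/\partial X_\ell$ is $R$-linear, the matrix $(\partial g_k/\partial X_\ell)_{k,\ell}$ equals $\varphi \cdot (\partial f_k/\partial X_\ell)_{k,\ell}$. Deleting the $i$-th column and taking determinants with the sign from \eqref{Ji} yields the Jacobian transformation rule
$$J_i(g_1,\ldots,g_{n-1}) = \det(\varphi)\,J_i(f_1,\ldots,f_{n-1}).$$

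The core technical ingredient is a base-change formula for the resultant: for any homogeneous polynomial $h \in R[X_1,\ldots,X_n]$ of degree $e \geq 1$ that is independent of the $u_{j,k}$'s,
$$\Res(g_1,\ldots,g_{n-1},h) = \prod_{d \in L} \det(\varphi_d)^{e\, d_1 \cdots d_{n-1}/d}\, \Res(f_1,\ldots,f_{n-1},h).$$
To prove this, I would extend $\varphi$ into an $n\times n$ matrix $\tilde\varphi$ by appending $(0,\ldots,0,1)$ as its last row and column: $\tilde\varphi$ is then adapted to $(d_1,\ldots,d_{n-1},e)$ and maps the tuple $(f_1,\ldots,f_{n-1},h)$ to $(g_1,\ldots,g_{n-1},h)$. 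A simultaneous row/column permutation brings $\tilde\varphi$ into block-diagonal form; within each block the polynomials share a common degree, so the classical base-change formula for the resultant of equal-degree polynomials under a linear transformation (\cite[\S5]{J91}) applies block by block. A direct degree count (the resultant of $n$ equal-degree polynomials has degree $\prod_j d_j/d$ in the determinant of the transformation acting on the degree-$d$ block) gives the stated exponent $e\,d_1\cdots d_{n-1}/d$.

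To conclude, I would apply this base-change formula with $h = X_i$ (so $e=1$) and with $h = J_i(f_1,\ldots,f_{n-1})$ (so $e = e_J := \sum_{j=1}^{n-1}(d_j-1)$). Combined with the Jacobian rule and the multi-homogeneity of the resultant (\cite[2.3(ii)]{J91}) $\Res(g_1,\ldots,g_{n-1},c\cdot h) = c^{d_1\cdots d_{n-1}}\Res(g_1,\ldots,g_{n-1},h)$ for $c\in R$, this yields
\begin{align*}
\Res(g_1,\ldots,g_{n-1}, X_i) &= \prod_{d \in L} \det(\varphi_d)^{d_1\cdots d_{n-1}/d}\, \Res(f_1,\ldots,f_{n-1},X_i), \\
\Res(g_1,\ldots,g_{n-1}, J_i(g)) &= \det(\varphi)^{d_1\cdots d_{n-1}} \prod_{d\in L} \det(\varphi_d)^{e_J\, d_1\cdots d_{n-1}/d}\, \Res(f_1,\ldots,f_{n-1},J_i(f)).
\end{align*}
Taking the ratio and using $\det(\varphi) = \prod_{d\in L}\det(\varphi_d)$, the exponent of each $\det(\varphi_d)$ collapses to
$$d_1\cdots d_{n-1} + (e_J - 1)\,d_1\cdots d_{n-1}/d \;=\; d_1\cdots d_{n-1}\bigl((d-1) + \sum_{i=1}^{n-1}(d_i-1)\bigr)/d,$$
and the definition \eqref{defeq} combined with the fact that $\Res(\cdot,X_i)$ is a nonzero divisor (Remark \ref{fbarre}) yields the claimed covariance formula.

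The main obstacle is the base-change formula for the resultant under an adapted transformation: the block decomposition must be handled carefully, and the equal-degree formula of \cite[\S5]{J91} must be invoked block by block with the correct exponent tracking, including the extra degree-$e$ slot introduced by $h$. Once this step is in place, the rest of the argument is a routine exponent bookkeeping using the definition of the discriminant.
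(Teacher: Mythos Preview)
Your proof is correct and follows essentially the same route as the paper: reduce to the generic case, use $J_i(g_1,\ldots,g_{n-1})=\det(\varphi)\,J_i(f_1,\ldots,f_{n-1})$, apply the covariance property of the resultant to both $\Res(\cdot,X_i)$ and $\Res(\cdot,J_i)$, and compare via the defining relation \eqref{defeq}. The only difference is cosmetic: the paper invokes \cite[\S 5.11]{J91} directly for the covariance of the resultant under an adapted transformation, whereas you sketch a derivation via the extended matrix $\tilde\varphi$ and a block argument---your informal ``block by block'' justification is not quite how that formula is proved (the resultant does not split as a product over blocks), so you are better off citing \cite[\S 5.11]{J91} outright, as the paper does.
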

\begin{proof} By specialization, we can assume that the coefficients of the polynomials $f_1,\ldots,f_{n-1}$ and all the $u_{i,j}$ are distinct indeterminates so that $R$ is the polynomial ring of these indeterminates over the integers.

By definition of the discriminant we have 
\begin{equation}\label{eq:cov1}
	\Res\left(f_1,\ldots,f_{n-1},J_n(f_1,\ldots,f_{n-1})\right)=\Disc(f_1,\ldots,f_{n-1})\Res(f_1,\ldots,f_{n-1},X_n)
\end{equation}
and
\begin{multline}\label{eq:cov2}
	\Res\left(\sum_{j=1}^{n-1}u_{1,j}f_j,\ldots,\sum_{j=1}^{n-1}u_{n-1,j}f_j,J_n\left(\sum_{j=1}^{n-1}u_{1,j}f_j,\ldots,\sum_{j=1}^{n-1}u_{n-1,j}f_j\right)\right)\\
	=\Disc\left(\sum_{j=1}^{n-1}u_{1,j}f_j,\ldots,\sum_{j=1}^{n-1}u_{n-1,j}f_j\right)\Res\left(\sum_{j=1}^{n-1}u_{1,j}f_j,\ldots,\sum_{j=1}^{n-1}u_{n-1,j}f_j,X_n\right).
\end{multline}
Now, it is not hard to check that
$$J_n\left(\sum_{j=1}^{n-1}u_{1,j}f_j,\ldots,\sum_{j=1}^{n-1}u_{n-1,j}f_j\right)=\det(\varphi)J_n(f_1,\ldots,f_{n-1})$$
so that
\begin{multline*}
	\Res\left(\sum_{j=1}^{n-1}u_{1,j}f_j,\ldots,\sum_{j=1}^{n-1}u_{n-1,j}f_j,J_n\left(\sum_{j=1}^{n-1}u_{1,j}f_j,\ldots,\sum_{j=1}^{n-1}u_{n-1,j}f_j\right)\right)\\
= \det(\varphi)^{d_1\ldots d_{n-1}}\Res\left(\sum_{j=1}^{n-1}u_{1,j}f_j,\ldots,\sum_{j=1}^{n-1}u_{n-1,j}f_j,J_n(f_1,\ldots,f_{n-1})\right).
\end{multline*}
But since $J_n(f_1,\ldots,f_{n-1})$ is a polynomial of degree $\sum_{i=1}^{n-1}(d_i-1)\geq 1$, the covariance property of the resultant \cite[\S 5.11]{J91} yields 
\begin{multline*}
	\Res\left(\sum_{j=1}^{n-1}u_{1,j}f_j,\ldots,\sum_{j=1}^{n-1}u_{n-1,j}f_j,J_n(f_1,\ldots,f_{n-1})\right) \\
	= \left(  \prod_{d\in L}  \det(\varphi_d)^{\frac{d_1\ldots d_{n-1}\sum_{i=1}^{n-1}(d_i-1)}{d}} \right)
	\Res(f_1,\ldots,f_n,J_n(f_1,\ldots,f_{n-1}))
\end{multline*}
and we deduce that
\begin{multline}\label{eq:cov3}
	\Res\left(\sum_{j=1}^{n-1}u_{1,j}f_j,\ldots,\sum_{j=1}^{n-1}u_{n-1,j}f_j,J_n\left(\sum_{j=1}^{n-1}u_{1,j}f_j,\ldots,\sum_{j=1}^{n-1}u_{n-1,j}f_j\right)\right)\\
	= \det(\varphi)^{d_1\ldots d_{n-1}}\left(  \prod_{d\in L}  \det(\varphi_d)^{\frac{d_1\ldots d_{n-1}\sum_{i=1}^{n-1}(d_i-1)}{d}} \right)
	\Res(f_1,\ldots,f_n,J_n).
\end{multline}
Again by the covariance formula for resultants, we have
\begin{multline}\label{eq:cov4}
	\Res\left(\sum_{j=1}^{n-1}u_{1,j}f_j,\ldots,\sum_{j=1}^{n-1}u_{n-1,j}f_j,X_n\right)=\\ 
	\left(  \prod_{d\in L}  \det(\varphi_d)^{\frac{d_1\ldots d_{n-1}}{d}} \right)\Res(f_1,\ldots,f_{n-1},X_n)
\end{multline}
and therefore, since $\det(\varphi)=\prod_{d\in L}\det(\varphi_d)$, the comparison of \eqref{eq:cov1}, \eqref{eq:cov2}, \eqref{eq:cov3} and \eqref{eq:cov4} gives the claimed formula.
\end{proof}

\subsubsection{Reduction modulo $\delta$} Recall from Lemma \ref{lem1} that, for all $1\leq i,j\leq n$ we have
\begin{equation}\label{eq:discRes}
	X_iJ_j(f_1,\ldots,f_{n-1}) - X_jJ_i(f_1,\ldots,f_{n-1}) \in \delta.(f_1,\ldots,f_{n-1}) \subset \delta.A[X_1,\ldots,X_n]	
\end{equation}
where $\delta:=gcd(d_1,\ldots,d_{n-1})$.
Considering the (cohomological) Koszul complex associated to the sequence $X_1,\ldots,X_n$ in the ring $A/\delta.A[X_1,\ldots,X_n]$
$$ 0 \rightarrow \frac{A}{\delta.A}[X_1,\ldots,X_n] \xrightarrow{\mathbf{d}_1={}^t[X_1,\cdots,X_n]} \bigoplus_{i=1}^n \frac{A}{\delta.A}[X_1,\ldots,X_n] \xrightarrow{\mathbf{d}_2} \cdots,$$
we notice that since $n\geq 2$, its cohomology groups $H^0$ and $H^1$ are both equal to $0$. In addition, the equations \eqref{eq:discRes} imply that $(J_1,\ldots,J_n)$ belongs to the kernel of $\mathbf{d}_2$. Therefore, we deduce that there exists a polynomial $\Delta \in A[X_1,\ldots,X_n]$ whose residue class in $A/\delta.A[X_1,\ldots,X_n]$ is unique and such that
\begin{equation}\label{eq:discRes2}
	J_i(f_1,\ldots,f_{n-1})=X_i \Delta \mod \delta.A[X_1,\ldots,X_n], \ 1\leq i \leq n.	
\end{equation}
From here, we get the following property.
\begin{prop}
	With the above notation, we have the following equality in ${}_k A$:
	$$\Disc(f_1,\ldots,f_{n-1})=\Res(f_1,\ldots,f_{n-1},\Delta) \mod \delta.$$
\end{prop}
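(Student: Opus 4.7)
The plan is to reduce the defining identity of the discriminant modulo $\delta$. By Definition~\ref{defdisc}, for every $i\in\{1,\ldots,n\}$ one has in ${}_kA$
\[
\Disc(f_1,\ldots,f_{n-1})\,\Res(f_1,\ldots,f_{n-1},X_i)=\Res(f_1,\ldots,f_{n-1},J_i).
\]
The relation \eqref{eq:discRes2} says $J_i=X_i\Delta+\delta\,\rho_i$ for some $\rho_i\in{}_kA[X_1,\ldots,X_n]$ homogeneous of the same degree as $J_i$. Because the resultant is a universal polynomial with integer coefficients in the coefficients of its arguments, replacing $J_i$ by $X_i\Delta$ alters $\Res(f_1,\ldots,f_{n-1},J_i)$ only by an element of $\delta\cdot{}_kA$, so
\[
\Res(f_1,\ldots,f_{n-1},J_i)\equiv\Res(f_1,\ldots,f_{n-1},X_i\Delta)\pmod{\delta}.
\]

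Next I would apply the multiplicativity of the resultant in its last slot \cite[\S 5.7]{J91} to obtain
\[
\Res(f_1,\ldots,f_{n-1},X_i\Delta)=\Res(f_1,\ldots,f_{n-1},X_i)\,\Res(f_1,\ldots,f_{n-1},\Delta),
\]
where $\Delta$ is regarded as homogeneous of degree $\deg(J_i)-1$. Combining with the previous congruence and the defining identity of $\Disc$ yields
\[
\Res(f_1,\ldots,f_{n-1},X_i)\,\bigl(\Disc(f_1,\ldots,f_{n-1})-\Res(f_1,\ldots,f_{n-1},\Delta)\bigr)\equiv 0\pmod{\delta}
\]
in ${}_kA$.

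The final step, and the only delicate point, is to cancel the factor $\Res(f_1,\ldots,f_{n-1},X_i)$ modulo $\delta$. By Remark~\ref{fbarre} this resultant equals $\Res(f_1^{(i)},\ldots,f_{n-1}^{(i)})$, a resultant of $n-1$ generic polynomials in $n-1$ variables. Specializing each $f_j^{(i)}$ to a pure monomial $X_{\sigma(j)}^{d_j}$ for a bijection $\sigma\colon\{1,\ldots,n-1\}\to\{1,\ldots,n\}\setminus\{i\}$ produces the value $\pm 1$ by the normalization \eqref{normres}; hence $\Res(f_1,\ldots,f_{n-1},X_i)$ is a primitive polynomial in ${}_\ZZ A$. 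Primitivity is preserved under the canonical surjection ${}_\ZZ A\to{}_kA/\delta{}_kA$, and by the corollary to the Dedekind-Mertens lemma a primitive polynomial is a nonzero divisor. This allows the cancellation in ${}_kA/\delta{}_kA$, which is what remains to conclude.
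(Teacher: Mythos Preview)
Your proof is correct and follows essentially the same route as the paper: reduce the defining identity modulo $\delta$, apply multiplicativity of the resultant to $X_i\Delta$, and cancel the factor $\Res(f_1,\ldots,f_{n-1},X_i)$. The only difference is cosmetic --- the paper cancels by invoking Proposition~\ref{prop:resultant} (applied over the ring $k/\delta k$), whereas you unpack this by exhibiting a specialization to $\pm 1$ and appealing to Dedekind--Mertens, which is exactly the mechanism behind that proposition.
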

\begin{proof} From \eqref{eq:discRes2} and the multiplicativity of the resultant, we obtain that
	$$\Res(f_1,\ldots,f_{n-1},J_n)=\Res(f_1,\ldots,f_{n-1},X_n)\Res(f_1,\ldots,f_{n-1},\Delta) \mod \delta.$$
By definition of the discriminant, it follows that
\begin{multline*}
\Res(f_1,\ldots,f_{n-1},X_n)\Disc(f_1,\ldots,f_{n-1})= \\ 
\Res(f_1,\ldots,f_{n-1},X_n)\Res(f_1,\ldots,f_{n-1},\Delta) \mod \delta	
\end{multline*}
from we deduce the claimed equality since $\Res(f_1,\ldots,f_{n-1},X_n)$ is a nonzero divisor in $A/\delta.A[X_1,\ldots,X_n]$ by Proposition \ref{prop:resultant}.
\end{proof}

Obviously, this result is useless if $\delta=1$, but as soon as $\delta>1$ it allows to explicit the discriminant as a single resultant modulo $\delta$. For instance, suppose given the two quadrics
\begin{align*}
	f_1 & := a_0X_1^2+a_1X_1X_2+a_2X_1X_3+a_3X_2^2+a_4X_2X_3+a_5X_3^2, \\
	f_2 & := b_0X_1^2+b_1X_1X_2+b_2X_1X_3+b_3X_2^2+b_4X_2X_3+b_5X_3^2.
\end{align*}
We have $\delta=2$ and it is not hard to see that $J_i=X_i\Delta \mod 2$, $i=1,2,3$,  where
$$\Delta=X_1
\left| 
\begin{array}{cc}
	a_1 & a_2 \\
	b_1 & b_2
\end{array}
\right| + X_2
\left|
\begin{array}{cc}
	a_1 & a_4 \\
	b_1 & b_4
\end{array}
\right| + X_3
\left|
\begin{array}{cc}
	a_2 & a_4 \\
	b_2 & b_4
\end{array}
\right|.
$$
It follows that
$$\Disc(f_1,f_2)=\Res(f_1,f_2,\Delta) \mod 2.\ZZ[a_0,\ldots,a_5,b_0,\ldots,b_5].$$

\subsection{Inertia forms and the discriminant}\label{inform}

The resultant was originally built to provide a condition
for the existence of a common root to a polynomial system. For its
part, the discriminant was introduced to give a condition for the
existence of a singular root in such a polynomial system.  The aim of
this section is to show that the definition we gave of the discriminant
of $n-1$ homogeneous polynomials in $n$ variables (i.e.~Definition \ref{defdisc}) fits this goal.

\medskip

Hereafter we take again the notation of Section
\ref{sec:def}: $k$ is a commutative ring and for all $i=1,\ldots,n-1$, $n\geq 2$, we set
$$f_i(X_1,\ldots,X_n):=\sum_{|\alpha|=d_i\geq 1}U_{i,\alpha}X^\alpha \in {}_kA[X_1,\ldots,X_n]_{d_i}$$
where ${}_k A:=k[U_{i,\alpha}\,|\, |\alpha|=d_i, \, i=1,\ldots,n-1]$. Notice that we will often omit the subscript $k$ to not overload the notation, but we will print it whenever there is a confusion or a need to emphasis it.

Now, we define the ideals of $C=A[X_1,\ldots,X_n]$
$$\Dc=(f_1,\ldots,f_{n-1},J_1,\ldots,J_n), \ \ \mm=(X_1,\ldots,X_n)$$
and set $B:=C/\Dc$.
The ring $B$ is graded (setting $\mathrm{weight}(X_i)=1$) and we can thus
consider the projective scheme $\mathrm{Proj}(B)\subset
\mathbb{P}_{A}^{n-1}$ that corresponds set-theoretically to the
points $((u_{i,\alpha})_{i,\alpha},x) \in \mathrm{Spec}(A)\times
\mathbb{P}_{k}^{n-1}$ such that the $f_i$'s and the $J_i$'s
vanish simultaneously. The scheme-theoretic image of the projection
$$\mathrm{Proj}(B) \rightarrow \mathrm{Spec}(A)$$
is a closed subscheme of $\mathrm{Spec}(A)$ whose defining ideal is exactly
$$\pp:=H^0_\mm(B)_0=\TF_\mm(\Dc)_0$$ 
where we recall that
\begin{equation}\label{TFD}
\TF_\mm(\Dc)=\ker \left( C \rightarrow \prod_{i=1}^n
B_{X_i}\right).\end{equation}
 
\begin{prop}\label{prop:Bdomain} If $k$ is a domain then for all $i=1,\ldots,n$ the ring $B_{X_i}$ is a domain. 
\end{prop}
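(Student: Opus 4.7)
The plan is to identify $B_{X_n}$ with an explicit quotient of a polynomial ring via the Kronecker isomorphism \eqref{BX}, and then invoke Lemma \ref{Jirred} iii) to conclude. By symmetry of the construction with respect to the variables $X_1,\ldots,X_n$ (the Kronecker substitution can be set up around any $X_i$), it suffices to treat the case $i=n$.

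First, I would apply Lemma \ref{lem1} ii), which gives $X_iJ_n \equiv X_nJ_i \pmod{(f_1,\ldots,f_{n-1})}$ for every $i$, so that in the localization $C_{X_n}/(f_1,\ldots,f_{n-1})$ the ideal $(J_1,\ldots,J_n)$ collapses to the principal ideal $(J_n)$. Next, by the isomorphism \eqref{BX} (applied with $r=n-1$), the ring $C_{X_n}/(f_1,\ldots,f_{n-1})$ is identified with $k[U_{j,\alpha} : U_{j,\alpha}\neq \mathcal{E}_j]_{j=1,\ldots,n-1}[X_1,\ldots,X_n][X_n^{-1}]$. The crucial observation is that $J_n$ is fixed by this isomorphism: $J_n$ involves only the partial derivatives $\partial f_j/\partial X_l$ for $l<n$, and since $\partial_{X_l}(\mathcal{E}_jX_n^{d_j})=0$ for $l<n$, none of the distinguished coefficients $\mathcal{E}_j$ appears in $J_n$, so the substitution $\mathcal{E}_j \mapsto \mathcal{E}_j - f_j/X_n^{d_j}$ leaves $J_n$ unchanged.

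Since $X_n$ is a unit and $J_n$ is homogeneous of positive degree in $X_1,\ldots,X_n$, the change of variables $Y_l = X_l/X_n$ for $l<n$ yields
\[
B_{X_n} \cong \bigl(k[U_{j,\alpha}\neq \mathcal{E}_j][Y_1,\ldots,Y_{n-1}]/\bigl(J_n(Y_1,\ldots,Y_{n-1},1)\bigr)\bigr)[X_n,X_n^{-1}].
\]
By Lemma \ref{Jirred} iii), $J_n(X_1,\ldots,X_{n-1},1)$ is prime in ${}_kA[X_1,\ldots,X_{n-1}]$. Now ${}_kA[X_1,\ldots,X_{n-1}]$ is a polynomial extension of the subring $k[U_{j,\alpha}\neq \mathcal{E}_j][X_1,\ldots,X_{n-1}]$ by the indeterminates $\mathcal{E}_1,\ldots,\mathcal{E}_{n-1}$, and $J_n(X_1,\ldots,X_{n-1},1)$ lies in this subring, so it remains prime there (quotienting by such an element in the extension gives a polynomial ring over the quotient in the subring, which is a domain iff the latter is). Therefore the inner quotient is a domain and its Laurent extension is a domain as well, giving the claim.

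The one potentially delicate point, and really the heart of the argument, is noticing that $J_n$ does not involve the $\mathcal{E}_j$'s, so that it survives the Kronecker substitution and reduces to the concrete primality statement of Lemma \ref{Jirred} iii); the rest is bookkeeping.
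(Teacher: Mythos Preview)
Your proof is correct and follows essentially the same approach as the paper: reduce $\Dc_{X_n}$ to $(f_1,\ldots,f_{n-1},J_n)$ via Lemma~\ref{lem1}~ii), pass through the Kronecker isomorphism \eqref{BX}, and invoke the primality of $\tilde{J}_n$ from Lemma~\ref{Jirred}~iii). The paper phrases this as an element-by-element argument (take $h_1h_2=0$ in $B_{X_n}$, apply the substitution, and use primality of $\tilde{J}_n$ to force one factor to vanish), whereas you package the same ingredients into an explicit ring description of $B_{X_n}$; the key observation that $J_n$ is free of the $\EE_j$'s and hence fixed by the substitution is exactly what the paper uses implicitly when it writes $h_1h_2(\EE_i-\tilde{f}_i)\in(\tilde{J}_n)$.
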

\begin{proof} For simplicity, we prove the claim for $i=n$; the
  other cases can be treated exactly in the same way. 
  
  Let $h_1,h_2$ be two elements in $C$ such that their product 
  $h_1h_2$ vanishes in $B_{X_n}$ (recall that we have the canonical
  projection $C \rightarrow B=C/\Dc$). 
  This means that, up to multiplication by some power of
  $X_n$, this product is in the ideal $\Dc$. Thus, 
  using Lemma \ref{lem1}, ii), we deduce that there
  exists $\nu \in \mathbb{N}$ such that
  $$X_n^{\nu} h_1h_2 \in (f_1,\ldots,f_{n-1},J_n).$$
  Now, taking the additional notation of the subsection \ref{in-form},
  we substitute each $\EE_i$ by $\EE_i-\tilde{f}_i$ and obtain that
  $h_1h_2(\EE_i-\tilde{f}_i) \in (\tilde{J}_n)$ in
  $A[X_1,\ldots,X_{n-1}]$ (since $f_i(\EE_i-\tilde{f}_i)=0$). But by Lemma \ref{Jirred} 
  $\tilde{J}_n$ is prime in $A[X_1,\ldots,X_{n-1}]$ 
 and it follows that it divides $h_1(\EE_i-\tilde{f}_i)$ or $h_2(\EE_i-\tilde{f}_i)$, say $h_1(\EE_i-\tilde{f}_i)$. Therefore  
  there exists $\mu \in \mathbb{N}$ such that
  $$ X_n^{\mu}h_1 \in (f_1,\ldots,f_{n-1},J_n) \subset \Dc,$$
  that is to say $h_1$ equals 0 in $B_{X_n}$, and the claim is
  proved. 
\end{proof}
\begin{cor}\label{cor:TFprimedim1} 
	Moreover, for all $i=1,\ldots,n$ we have 
	$$\TF_\mm(\Dc)=\TF_{(X_i)}(\Dc)=\ker(C \rightarrow B_{X_i}), \ H^0_\mm(B)=H^0_{(X_i)}(B).$$
	In particular,  $$\pp=A\cap (\tilde{f}_1,\ldots,\tilde{f}_{n-1},\tilde{J}_n) \subset
	A[X_1,\ldots,X_{n-1}].$$
As a consequence, if $k$ is a domain then $\TF_\mm(\Dc)$ and $\pp$ are prime ideals
of ${}_k C$ and ${}_k A$ respectively. 
\end{cor}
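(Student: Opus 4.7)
The plan is to deduce the corollary from Proposition \ref{prop:Bdomain} together with the relations in Lemma \ref{lem1} ii), via a standard localization argument. The identification $\TF_{(X_i)}(\Dc) = \ker(C \to B_{X_i})$ is a direct translation of the definition of inertia forms, and the inclusion $\TF_\mm(\Dc) \subset \TF_{(X_i)}(\Dc)$ is obvious since $X_i^\nu \in \mm^\nu$. For the reverse inclusion, my plan is to show that for any two indices $i\neq j$, the localization map $B_{X_i} \to B_{X_i X_j}$ is injective, and by symmetry so is $B_{X_j} \to B_{X_i X_j}$. Then $\ker(C \to B_{X_i})$ and $\ker(C \to B_{X_j})$ both coincide with $\ker(C \to B_{X_i X_j})$, so $\TF_{(X_i)}(\Dc)$ is independent of $i$, hence equal to $\TF_\mm(\Dc)$, and analogously for $H^0_\mm(B)$.

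The heart of the argument, and the main obstacle, is showing that $X_j$ is a nonzero divisor in $B_{X_i}$. Since $B_{X_i}$ is a domain by Proposition \ref{prop:Bdomain}, it suffices to verify that $X_j \neq 0$ in $B_{X_i}$. To see this, I combine Lemma \ref{lem1} ii), which yields $X_i J_\ell \equiv X_\ell J_i$ modulo $(f_1,\ldots,f_{n-1})$ and hence collapses $(J_1,\ldots,J_n)$ to the principal ideal $(J_i)$ after inverting $X_i$, with the Kronecker isomorphism \eqref{BX} applied to the ideal $(f_1,\ldots,f_{n-1})$. This identifies $B_{X_i}$ with an explicit localized polynomial ring over $k$ modulo (the image of) $\tilde{J}_i$, which is prime by Lemma \ref{Jirred} iii). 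In this explicit description the coordinate $X_j$ is manifestly nonzero.

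For the description of $\pp$, I would mirror the derivation of \eqref{form1}: using $\TF_\mm(\Dc) = \TF_{(X_n)}(\Dc)$, an element $a \in A$ lies in $\pp$ iff $X_n^\nu a \in \Dc$ for some $\nu$, and by the standard dehomogenization argument this is equivalent to $a$ lying in $(\tilde{f}_1,\ldots,\tilde{f}_{n-1},\tilde{J}_1,\ldots,\tilde{J}_n) \subset A[X_1,\ldots,X_{n-1}]$. Dehomogenizing Lemma \ref{lem1} ii) at $X_n = 1$, one obtains $\tilde{J}_j \equiv X_j \tilde{J}_n$ modulo $(\tilde{f}_1,\ldots,\tilde{f}_{n-1})$ for each $j < n$, so $(\tilde{J}_1,\ldots,\tilde{J}_n) \subset (\tilde{f}_1,\ldots,\tilde{f}_{n-1},\tilde{J}_n)$, which gives the stated formula for $\pp$.

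Finally, when $k$ is a domain, Proposition \ref{prop:Bdomain} ensures that $B_{X_n}$ is a domain, so $\TF_\mm(\Dc) = \ker(C \to B_{X_n})$ is prime as the kernel of a ring morphism into a domain; contracting along the inclusion $A \hookrightarrow C$ then shows that $\pp = A \cap \TF_\mm(\Dc)$ is also prime.
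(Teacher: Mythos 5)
Your reduction is the same as the paper's: everything hinges on showing that $X_j$ is a nonzero divisor in $B_{X_i}$ for every pair $(i,j)$, after which the identification of the kernels, the formula for $\pp$ via dehomogenization and Lemma \ref{lem1} ii), and the primality statement all follow as you describe. The gap is that your argument for this key step only covers the case where $k$ is a domain: you invoke Proposition \ref{prop:Bdomain} (which assumes $k$ is a domain) to say that $B_{X_i}$ is a domain, and Lemma \ref{Jirred} iii) (which also assumes $k$ is a domain) to say that the relevant Jacobian is prime, and then reduce to checking $X_j\neq 0$. But the corollary asserts the equalities $\TF_\mm(\Dc)=\TF_{(X_i)}(\Dc)=\ker(C\to B_{X_i})$ and the identity $\pp=A\cap(\tilde f_1,\ldots,\tilde f_{n-1},\tilde J_n)$ for an \emph{arbitrary} commutative ring $k$; only the final primality claim is conditional on $k$ being a domain. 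For general $k$ the ring $B_{X_i}$ is not a domain and $\tilde J_i$ is only primitive (Lemma \ref{Jirred} ii)), so ``nonzero in a domain'' is not available and your central step does not go through as written.

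The paper closes exactly this gap with a base-change argument: from the domain case one knows that multiplication by $X_j$ is injective on ${}_\ZZ B_{X_i}$ and on ${}_{\ZZ/p\ZZ}B_{X_i}$ for every prime $p$; hence the cokernel ${}_\ZZ Q$ of $\times X_j$ on ${}_\ZZ B_{X_i}$ satisfies $\mathrm{Tor}_1^{\ZZ}(\ZZ/p\ZZ,{}_\ZZ Q)=0$ for all $p$, is therefore torsion-free and flat over $\ZZ$, and so $\mathrm{Tor}_1^{\ZZ}({}_\ZZ Q,k)=0$ for every commutative ring $k$, which is precisely the injectivity of $\times X_j$ on ${}_kB_{X_i}$. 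You need to supply this step (or an equivalent content/primitivity argument in the explicit Kronecker presentation of $B_{X_i}$) to cover general $k$; with it, the rest of your proof is correct.
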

\begin{proof} The only thing to prove in that for all couple of integers $(i,j) \in \{1,\ldots,n \}^2$ the variable $X_i$ is a nonzero divisor in the ring $B_{X_j}$. Indeed, this property implies immediately the equalities given in this corollary (similarly to \eqref{TF} and \eqref{form1} for the case of the resultant). From here, assuming moreover that $k$ is a domain we deduce that $\TF_\mm(\Dc)$ and $\pp$ are prime ideals by Proposition \ref{prop:Bdomain}.

\medskip
	
So let us fix a couple of integer $(i,j) \in \{1,\ldots,n \}^2$ and prove that $X_i$ is a nonzero divisor in ${}_k B_{X_j}$ (for any commutative ring $k$). 
By Proposition \ref{prop:Bdomain}, this property holds if $k$ is a domain. On the one hand, this implies that ${}_\ZZ B_{X_j}$ is a torsion-free abelian group, hence flat (as a $\ZZ$-module). On the other hand, this implies that the multiplications by $X_i$ in ${}_\ZZ B_{X_j}$ and ${}_{\ZZ/p\ZZ} B_{X_j}$, $p$ a prime integer, are all injective maps. Denoting by ${}_\ZZ Q$ the quotient abelian group of the multiplication by $X_i$ in ${}_\ZZ B_{X_j}$, we deduce that ${}_\ZZ Q$ is a torsion-free, hence flat, abelian group. Indeed, the exact sequence of abelian groups
\begin{equation}\label{eq:flatres}
	0 \rightarrow {}_\ZZ B_{X_j} \xrightarrow{\times  X_i} {}_\ZZ B_{X_j} \rightarrow {}_\ZZ Q \rightarrow 0
\end{equation}
is a flat resolution of ${}_\ZZ Q$ and it remains exact after tensorization by $\ZZ/p\ZZ$ over $\ZZ$ for all prime integer $p$. Therefore $\mathrm{Tor}^\ZZ_1(\ZZ/p\ZZ,{}_\ZZ Q)=0$ and hence ${}_\ZZ Q$ is torsion-free, hence flat. As a consequence, 
for any commutative ring $k$ we have $\mathrm{Tor}_1^{\ZZ}({}_\ZZ Q,k)=0$ and therefore the multiplication by $X_i$ in ${}_k B_{X_j}$ is an injective map, i.e.~ $X_i$ is a nonzero divisor in ${}_k B_{X_j}$. 
\end{proof}

\begin{lem}\label{lemcor} $\Disc(f_1,\ldots,f_{n-1})$ belongs to the ideal
  $\pp \subset {}_kA$.
\end{lem}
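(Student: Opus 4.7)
The plan is to combine the defining identity of the discriminant with the primality of $\pp$ in the generic case over $\ZZ$. Instantiating Definition \ref{defdisc} at $i=n$ gives
\[ \Disc(f_1,\ldots,f_{n-1})\cdot\Res(f_1,\ldots,f_{n-1},X_n)=\Res(f_1,\ldots,f_{n-1},J_n), \]
and the right-hand side belongs to $\pp$ because $\Res(f_1,\ldots,f_{n-1},J_n)$ lies in the inertia form ideal (in degree $0$) of the sub-ideal $(f_1,\ldots,f_{n-1},J_n)\subset\Dc$ by \eqref{form1}. Hence $\Disc\cdot\Res(f_1,\ldots,f_{n-1},X_n)\in\pp$, and the task reduces to showing that $\Res(f_1,\ldots,f_{n-1},X_n)$ itself is not in $\pp$.

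I would first reduce to the generic case $k=\ZZ$: since the discriminant is defined for an arbitrary $k$ by specialization from $\ZZ$, and since membership in the inertia form ideal is preserved under the canonical morphism $\lambda:{}_\ZZ A\to{}_k A$ (because $\mm^\nu a\subset{}_\ZZ\Dc$ implies $\mm^\nu\lambda(a)\subset{}_k\Dc$), it suffices to prove $\Disc(f_1,\ldots,f_{n-1})\in{}_\ZZ\pp$. Over $\ZZ$, Corollary \ref{cor:TFprimedim1} tells us that $\pp$ is a prime ideal, so once $\Res(f_1,\ldots,f_{n-1},X_n)\notin\pp$ is known, the primality combined with the identity above forces $\Disc\in\pp$, as desired.

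The hard part will be showing that $\Res(f_1,\ldots,f_{n-1},X_n)\notin{}_\ZZ\pp$. My approach is to exhibit an explicit specialization $\phi:{}_\ZZ A\to\ZZ$ that annihilates all of $\pp$ but not $\Res(f_1,\ldots,f_{n-1},X_n)$. Concretely, take $\phi$ defined by $f_i\mapsto X_i^{d_i}$ for $i=1,\ldots,n-1$. On one hand, by Remark \ref{fbarre} together with the normalization of the resultant,
\[ \phi(\Res(f_1,\ldots,f_{n-1},X_n))=\Res(X_1^{d_1},\ldots,X_{n-1}^{d_{n-1}})=1. \]
On the other hand, the Jacobian matrix $(\partial_{X_j}X_i^{d_i})$ is diagonal with entries $d_iX_i^{d_i-1}$; therefore $\phi(J_n)=d_1\cdots d_{n-1}\prod_{i=1}^{n-1}X_i^{d_i-1}$, while $\phi(J_\ell)=0$ for every $\ell<n$, because deleting column $\ell$ from this diagonal matrix leaves row $\ell$ identically zero. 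Consequently $\phi(\Dc)\subset(X_1,\ldots,X_{n-1})\subset\ZZ[X_1,\ldots,X_n]$. For any $a\in\pp$, picking $\nu$ with $X_n^\nu a\in\Dc$ and pushing forward via $\phi$ gives $X_n^\nu\phi(a)\in(X_1,\ldots,X_{n-1})$; reducing modulo $(X_1,\ldots,X_{n-1})$ then forces $\phi(a)=0$. Thus $\phi(\pp)=0$ while $\phi(\Res(f_1,\ldots,f_{n-1},X_n))=1\neq 0$, which yields $\Res(f_1,\ldots,f_{n-1},X_n)\notin\pp$ and, combined with the first two paragraphs, completes the argument.
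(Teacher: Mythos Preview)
Your proof is correct and follows a route that differs in execution from the paper's. Both arguments start from $\Disc\cdot\rho=\Res(f_1,\ldots,f_{n-1},J_n)$ with $\rho=\Res(f_1,\ldots,f_{n-1},X_n)$, note that the right side is an inertia form of $(f_1,\ldots,f_{n-1},J_n)\subset\Dc$ and hence lies in $\pp$, and then cancel the factor $\rho$. The paper performs this cancellation by unwinding through the Kronecker substitution $\EE_i\mapsto\EE_i-\tilde f_i$: the relation becomes $\rho\cdot(\text{image of }\Disc)\in(\tilde J_n)$ in $A[X_1,\ldots,X_{n-1}]$, and since $\tilde J_n$ is prime of positive $X$-degree (Lemma~\ref{Jirred}) while $\rho\in A$, one concludes $\tilde J_n$ divides the image of $\Disc$, which unwinds to $X_n^\mu\Disc\in(f_1,\ldots,f_{n-1},J_n)$. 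You instead invoke the primality of $\pp$ directly (Corollary~\ref{cor:TFprimedim1}) and establish $\rho\notin\pp$ via the explicit specialization $f_i\mapsto X_i^{d_i}$, which sends $\rho$ to $1$ while annihilating every element of $\pp$. Your argument is more modular, consuming Corollary~\ref{cor:TFprimedim1} as a black box rather than replaying its Kronecker-substitution ingredients; the paper's argument records the slightly finer intermediate relation \eqref{eq:firstrelation}, though in view of the description $\pp=A\cap(\tilde f_1,\ldots,\tilde f_{n-1},\tilde J_n)$ from Corollary~\ref{cor:TFprimedim1} this is equivalent to $\Disc\in\pp$.

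One small remark: your containment $\phi(J_n)=\prod d_iX_i^{d_i-1}\in(X_1,\ldots,X_{n-1})$ tacitly assumes that some $d_i\geq 2$, i.e.\ $\sum(d_i-1)\geq 1$. This is precisely the hypothesis under which the defining equation \eqref{defeq} is available, so the restriction is harmless; the paper's proof makes the same implicit assumption.
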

\begin{proof} By specialization, it is sufficient to prove this property under the assumption that $k=\ZZ$.
	Denote $\rho:=\Res(f_1,\ldots,f_{n-1},X_n)$. From 
	Definition \ref{defdisc} and Lemma  \ref{lem1}, ii) we deduce that
   there exists $\nu$ such that
  $$X_n^{\nu}\rho \Disc(f_1,\ldots,f_{n-1}) \in
  (f_1,\ldots,f_{n-1},J_n).$$
  Now, taking again the notation of subsection \ref{in-form} and 
  substituting each $\EE_i$ by $\EE_i-\tilde{f}_i$ we deduce that
  $\rho\Disc(f_1,\ldots,f_{n-1})(\EE_i-\tilde{f}_i) \in (\tilde{J}_n)$ in
  $A[X_1,\ldots,X_{n-1}]$. But   $\tilde{J}_n$ is prime in $A[X_1,\ldots,X_{n-1}]$ by Lemma
  \ref{Jirred}, and it is coprime
  with $\rho$ since $\rho$ does not depend on the variables $X_1,\ldots,X_n$
  and is also prime. Therefore $\tilde{J}_n$ 
   must divide $\Disc(f_1,\ldots,f_{n-1})(\EE_i-\tilde{f}_i)$ and we obtain that there exists $\mu \in \mathbb{N}$ such that
\begin{equation}\label{eq:firstrelation}
	X_n^{\mu}\Disc(f_1,\ldots,f_{n-1}) \in (f_1,\ldots,f_{n-1},J_n)
	  \subset \Dc.
\end{equation}
 In other words, $\Disc(f_1,\ldots,f_{n-1}) \in \TF_{(X_n)}(\Dc)=\TF_\mm(\Dc)$.  
\end{proof}
 
\begin{thm}\label{thm:2<>0} If $2$ is a nonzero divisor in $k$ then $\pp$ is generated by the discriminant $\Disc(f_1,\ldots,f_{n-1})$. In particular, if $k$ is moreover assumed to a domain then $\Disc(f_1,\ldots,f_{n-1})$ is a prime polynomial in ${}_kA$.
\end{thm}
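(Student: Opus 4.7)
The plan is to establish the double containment $(\Disc)\subseteq \pp \subseteq \sqrt{(\Disc)}$ over the universal base $k=\ZZ$, deduce from it that $\pp=(p)$ for a single irreducible $p\in {}_\ZZ A$ with $\Disc=\pm p^e$ for some $e\geq 1$, and finally show that $e=1$ whenever $2$ is a non-zero divisor. The first containment is already Lemma \ref{lemcor}, and the primality of $\pp$ is granted by Corollary \ref{cor:TFprimedim1}.

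For the inclusion $\pp\subseteq \sqrt{(\Disc)}$, I would take $a\in \pp$ and use Corollary \ref{cor:TFprimedim1} to write $a=\sum_{i=1}^{n-1} c_i\tilde{f}_i+c_n\tilde{J}_n$ in $A[X_1,\ldots,X_{n-1}]$. Setting $R:=\Res(f_1,\ldots,f_{n-1},X_n)$ and inverting it makes $S:=A[R^{-1}][X_1,\ldots,X_{n-1}]/(\tilde{f}_1,\ldots,\tilde{f}_{n-1})$ a free $A[R^{-1}]$-module of rank $\prod_i d_i$. Taking norms of the congruence $a\equiv c_n\tilde{J}_n$ modulo $(\tilde{f}_i)$ yields $a^{\prod_i d_i}=N(c_n)\,N(\tilde{J}_n)$; combining the Poisson formula used in the proof of Proposition \ref{poi} with the defining relation \eqref{defeq} gives $R^{\deg J_n-1}\,N(\tilde{J}_n)=\Disc$. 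Clearing denominators shows that $\Disc$ divides $R^M\,a^{\prod_i d_i}$ in ${}_\ZZ A$ for some $M\geq 0$. A direct specialization (for instance $f_i=X_i^{d_i}+X_n^{d_i}$ makes both $R$ and $\Disc$ nonzero) shows that $R$ and $\Disc$ are coprime in the UFD ${}_\ZZ A$, whence $\Disc\mid a^{\prod_i d_i}$ and thus $a\in \sqrt{(\Disc)}$.

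Combining both containments with the primality of $\pp$ forces $\pp=\sqrt{(\Disc)}$; but in a UFD the radical of a nonzero principal ideal is prime only when it is generated by a single irreducible factor, so $\pp=(p)$ and $\Disc=\pm p^e$ for some $e\geq 1$. The main obstacle is now to show $e=1$ whenever $2$ is a non-zero divisor in $k$. I would do this by a specialization that collapses the configuration to the classical univariate setting: using the multiplicativity formula (Proposition \ref{mult}) to split $f_1,\ldots,f_{n-2}$ into products of linear forms, then covariance (Proposition \ref{prop:covariance}) and the reduction formula (Proposition \ref{prop:red}) to eliminate $X_1,\ldots,X_{n-2}$, one sees that $\Disc(f_1,\ldots,f_{n-1})$ specializes (up to a nonzero factor) to the classical univariate discriminant of the restriction of $f_{n-1}$ to the residual line, which is a prime polynomial over $\ZZ$ (see \S\ref{n=2}). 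A nontrivial power $p^e$ with $e\geq 2$ cannot specialize to such an irreducible element with value coprime to $2$, forcing $e=1$ and hence $\pp=(\Disc)$ over $\ZZ$. Finally, the transfer to an arbitrary $k$ in which $2$ is a non-zero divisor proceeds by the flatness argument used in the proof of Corollary \ref{cor:TFprimedim1}: ${}_\ZZ A/\pp$ is a domain containing $\ZZ$, hence $\ZZ$-torsion-free and flat, so the exact sequence $0\to \pp\to {}_\ZZ A\to {}_\ZZ A/\pp\to 0$ remains exact after tensoring with $k$, yielding $\pp=(\Disc)$ in ${}_kA$. The primality of $\Disc$ in ${}_kA$ when $k$ is a domain then follows at once from the primality of $\pp$.
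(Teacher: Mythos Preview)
Your overall architecture matches the paper's: prove $(\Disc)\subseteq\pp\subseteq\sqrt{(\Disc)}$ over a UFD, deduce $\Disc=\pm p^{e}$ with $\pp=(p)$, pin down $e=1$, and then transfer to general $k$. The norm computation for the radical containment is correct and essentially equivalent to the paper's resultant--divisibility argument. There are, however, two genuine gaps.

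\textbf{Coprimality of $R$ and $\Disc$.} Your specialization $f_i=X_i^{d_i}+X_n^{d_i}$ sends $R=\Res(f_1,\ldots,f_{n-1},X_n)=\Res(X_1^{d_1},\ldots,X_{n-1}^{d_{n-1}})$ to~$1$, so it cannot detect whether the irreducible element $R$ divides $\Disc$. The paper instead specializes each $f_i$ to a product of generic linear forms and compares explicit factorizations: by Corollary~\ref{det2} every irreducible factor of $\Disc$ involves the coefficients of $X_n$, while no irreducible factor of $R$ does.

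\textbf{Transfer to general $k$.} This is the more serious issue. Flatness of ${}_\ZZ A/\pp$ over~$\ZZ$ only tells you that the image of ${}_\ZZ\pp$ in ${}_kA$ equals $(\Disc)$; it does \emph{not} identify this with ${}_k\pp:=\ker({}_kA\to{}_kB_{X_n})$. What is actually needed is $\mathrm{Tor}_1^{\ZZ}(k,E)=0$, where $E$ is the cokernel of ${}_\ZZ A\to{}_\ZZ B_{X_n}$. The paper obtains this by running the entire UFD argument not only over $\ZZ$ but over every $\ZZ/p\ZZ$ with $p$ odd, so that the four-term sequence
\[
0\longrightarrow{}_\ZZ A\xrightarrow{\ \times\Disc\ }{}_\ZZ A\longrightarrow{}_\ZZ B_{X_n}\longrightarrow E\longrightarrow 0
\]
remains exact after reduction modulo each odd prime; hence $E$ has only $2$-primary torsion, and a localization argument kills $\mathrm{Tor}_1^{\ZZ}(k,E)$ whenever $k$ has no $2$-torsion. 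Your proof, as written, establishes the UFD case only over~$\ZZ$ and omits this step entirely.

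Finally, your route to $e=1$ (reduce to the bivariate discriminant via Propositions~\ref{mult}, \ref{prop:covariance}, \ref{prop:red}) differs from the paper's, which first bounds $e\le 2$ by multiplicativity and then handles an explicit quadratic base case $d_1=2$, $d_2=\cdots=d_{n-1}=1$, yielding the visibly irreducible polynomial $U_{1,2}^2-4U_{2,2}(\cdots)$. Your route is workable, but note that \S\ref{n=2} does not assert irreducibility of the bivariate discriminant---that is exactly the $n=2$ instance of the theorem you are proving---so you must either prove it separately or cite it externally; and the phrase ``up to a nonzero factor'' has to be sharpened to the statement that some irreducible factor of the specialization occurs with multiplicity exactly one.
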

\begin{proof} We first prove this theorem under the assumption that $k$ is a UFD. 
So assume that $k$ is a UFD and
  let $a \in \pp=\TF_\mm(\Dc)\cap A$. Then
  there exists $\nu \in \mathbb{N}$ such that $X_n^\nu a \in
  (f_1,\ldots,f_{n-1},J_n).$ Therefore we have the inclusion
  $$(f_1,\ldots,f_{n-1},X_n^\nu a) \subset (f_1,\ldots,f_{n-1},J_n)$$
  from we deduce, using the divisibility property of the resultant
  \cite[\S 5.6]{J91}, that
  $$\Res(f_1,\ldots,f_{n-1},J_n) \text{ divides }
  \Res(f_1,\ldots,f_{n-1},X_n^\nu a).$$
Let us denote by $\rho:=
\Res(f_1,\ldots,f_{n-1},X_n)=\Res(f_1^{(n)},\ldots,f_{n-1}^{(n)})$
(see Remark \ref{fbarre}). 
From Definition
  \ref{defdisc} and the multiplicativity property of the resultant
  \cite[\S 5.7]{J91} we obtain that
  \begin{equation}\label{div1}
  \Disc(f_1,\ldots,f_{n-1}) \text{ divides } a^{d_1\ldots
    d_{n-1}}\rho^{\nu-1}
  \end{equation}
for all $a \in \pp$. But it turns out that 
 $\Disc(f_1,\ldots,f_{n-1})$ and
  $\rho$ are coprime in $A$.
  Indeed, since $\rho$ is irreducible, if
  $D:=\Disc(f_1,\ldots,f_{n-1})$ and $\rho$ are not coprime then
  $\rho$ must divide $D$. Consider the specialization where each
  polynomial $f_i$ is specialized to a product of generic linear
  forms. Then,  $\rho$ specializes to a product of determinants where
  each determinant is a prime polynomial (see for instance \cite[Theorem 2.10]{BrVe88}) in the coefficients of
  these linear forms except the ones of the variables $X_n$. On the
  other hand, $D$ specializes to a product of square of determinants
  (see Corollary \ref{det2}), where each determinant is a prime
  polynomial in the coefficients of the generic linear form and does 
  depend on the ones of the variable $X_n$. We thus obtain a
  contradiction and deduce that $\rho$ and $D$ are coprime. \cite[Theorem 2.10]{BrVe88}
Therefore, from \eqref{div1} and the fact that $\rho$ is prime in $A$ we deduce that  for all $a \in
\pp$ the discriminant $D$ divides  $a^{d_1\ldots
  d_{n-1}}$ and hence that
$$\pp^{d_1\ldots d_{n-1}} \subset (D) \subset \pp.$$
 Since $\pp$ is prime, we 
deduce that $D=c.P^p$ where $c$ is an invertible element in $k$, $p$ is a positive integer and $P$ is an irreducible element in $A$ such that $\pp$ is a principal ideal generated by $P$.

Now, always under the assumption that $k$ is UFD, we will prove that $p=1$ if $2\neq 0$ in $k$. Notice that we can assume $d_1\geq 2$ because if $d_1=\cdots=d_{n-1}=1$ then $\pp=(D)=A$ and we can permute polynomials by Proposition \ref{perm}, i). To begin with,  consider the specialization of the polynomial $f_1$ to a product of a
generic linear form $l$ and a generic polynomial $f_1'$ of degree
$d_1-1$. By Proposition \ref{mult}, $D$ specializes, up to sign, to the product
\begin{equation}\label{eq:productDDR2}
	\Disc(l,f_2,\ldots,f_{n-1})
    \Disc(f_1',f_2,\ldots,f_{n-1})
    \Res(l,f_1',f_2,\ldots,f_{n-1})^2.
\end{equation}
Since all the polynomials $l,f_1',f_2,\ldots,f_{n-1}$ are generic of positive degree, 
this product is nonzero. Moreover, the factor $\Res(l,f_1',f_2,\ldots,f_{n-1})$ is irreducible and is clearly coprime with the two discri\-mi\-nants appearing in \eqref{eq:productDDR2}. It follows that necessarily $p\leq 2$, i.e.~$p=1$ or $p=2$.

To prove that $p=1$, equivalently that $D$ is irreducible, we proceed by induction on the integer $r:=\sum_{i=1}^{n-1}d_i$. The intricate point is actually the initialization step. Indeed, assume that $D$ is irreducible for $r=n$ (observe that $D=1$ if $r=n-1$). Then, using the specialization \eqref{eq:productDDR2}, we deduce immediately by induction that both discriminants in \eqref{eq:productDDR2} are irreducible and  coprime, and consequently that $D$ is also irreducible. Therefore, we have to show that if $d_1=2$ and $d_2=\cdots=d_{n-1}=1$ then $D$ is irreducible. For that purpose, we consider the  specialization
$$\left\{\begin{array}{rcl} 
	f_1 &=& U_{1,1}X_1^2+U_{1,2}X_1X_2+U_{2,2}X_2^2+\sum_{i=3}^n U_{i,i}X_i^2 \\
	f_2 &=& X_3-V_{3}X_1 \\
	\vdots \\
	f_{n-1} &=& X_n-V_{n}X_1
\end{array}
\right.$$
and the matrix 
$$\varphi=\left[
\begin{array}{ccccc}
	1 & 0 & 0 & \cdots & 0 \\
	0 & 1 & 0 & \cdots & 0 \\
	-V_3 & 0 & 1 & \ddots & \vdots\\
	\vdots & \vdots &\ddots & \ddots & 0 \\
	-V_n & 0 & \cdots & 0 & 1
\end{array}\right]
$$
that corresponds to a linear change of coordinates such that $f_i=X_{i+1}\circ \varphi$ for all $i=2,\ldots,n-1$. Applying Proposition \eqref{chgcoor} then Proposition \ref{prop:red}, we get
\begin{align*}
\Disc(f_1,f_2,\ldots,f_{n-1}) &= \Disc(f_1\circ \varphi^{-1},X_3,\ldots,X_n) \\
&= \Disc\left(U_{1,1}X_1^2+U_{1,2}X_1X_2+U_{2,2}X_2^2+\sum_{i=3}^n U_{i,i}V_i^2X_1^2 \right) \\
&= \Disc\left( \left(U_{1,1}+ \sum_{i=3}^n U_{i,i}V_i^2\right)X_1^2 +U_{1,2}X_1X_2+U_{2,2}X_2^2 \right) \\
&= U_{1,2}^2 - 4 U_{2,2}\left(U_{1,1}+ \sum_{i=3}^n U_{i,i}V_i^2\right).
\end{align*}
Since $2\neq 0$ in $k$, this is an irreducible polynomial. Therefore, we deduce that necessarily $p=1$, i.e.~that $D=c.P$. Since $c.P$ also generates $\pp=(P)$, we conclude that $D$ is an irreducible polynomial that generates $\pp$. This concludes the proof of the theorem under the assumptions that $k$ is a UFD and $2\neq 0$ in $k$.

\medskip

It remains to show that this theorem holds with the single assumption that $2$ is a nonzero divisor in $k$, $k$ being an arbitrary commutative ring. For that purpose, consider the exact sequence of abelian groups
\begin{equation}\label{eq:exactseqE}
	0 \rightarrow {}_\ZZ A \xrightarrow{\times D} {}_\ZZ A \rightarrow {}_\ZZ B_{X_n} \rightarrow E \rightarrow 0
\end{equation}
where the map on the left is the multiplication by $D$, the map on the middle is the canonical one and where $E$ is the cokernel of this latter. By what we have just proved above, this sequence is exact and remains exact after tensorization by $\ZZ/p\ZZ$ over $\ZZ$ for all prime integer $p\neq 2$ (they are all UFD). Since ${}_\ZZ A$ and ${}_\ZZ B_{X_n}$ are torsion-free, the exact sequence \eqref{eq:exactseqE} is a flat resolution of $E$ and therefore for all integer $i\geq 2$ the abelian group $\mathrm{Tor}_i^{\ZZ}(-,E)$ is supported on $V((2))$. Now, let $M$ be an abelian group without $2$-torsion. The abelian group $M_{(2)}$ is a flat $\ZZ_{(2)}$-module and hence for all $i\geq 1$ we have
$$\mathrm{Tor}_i^{\ZZ}(M,E)_{(2)}\simeq \mathrm{Tor}_i^{\ZZ_{(2)}}(\ZZ_{(2)}\otimes M, \ZZ_{(2)}\otimes E) \simeq \mathrm{Tor}_i^{\ZZ_{(2)}}(M_{(2)},\ZZ_{(2)}\otimes E)= 0.$$
It follows that $\mathrm{Tor}_i^{\ZZ}(M,E)_{(p)}=0$ for all $i\geq 2$ and all prime integer $p$, so that $\mathrm{Tor}_i^{\ZZ}(M,E)=0$ for all $i\geq 2$. Consequently, since $2$ is a nonzero divisor in $k$, $k$ has no $2$-torsion and we deduce that the sequence obtained by tensorization of \eqref{eq:exactseqE} by $k$ over $\ZZ$
\begin{equation*}
	0 \rightarrow {}_k A \xrightarrow{\times D} {}_k A \rightarrow {}_k B_{X_n} \rightarrow k\otimes E \rightarrow 0
\end{equation*}
is exact and the theorem is proved.
\end{proof}

It is reasonable to ask what happens if $2$ is a zero divisor in $k$. As shown in \cite[\S 8.5.2]{ApJo}, one can not expect in this case that the discriminant generates $\pp$, nor even that $\pp$ is a principal ideal. Indeed, in \emph{loc.~cit.~}the authors exhibit an example where $\pp$ is not a principal ideal with the settings $n=2$, $d_1=2$ and $k=\ZZ/2^r\ZZ$ with $r\geq 2$. Nevertheless, we will show in the following theorem that the situation is not so bad if $k$ is assumed to be a domain.

\begin{thm}\label{thm:2=0} Assume that $k$ is a domain and that $2=0$ in $k$. Then 
	$$\Disc(f_1,\ldots,f_{n-1})=P^2$$ 
where $P$ is a prime polynomial that generates $\pp$.
\end{thm}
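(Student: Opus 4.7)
The plan is to follow the structure of Theorem \ref{thm:2<>0} closely, diverging only at the step that pins down the multiplicity $m$ in the factorization $D=cP^m$.

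First, I would reduce to the case where $k=K$ is a field of characteristic $2$ by the flatness argument at the end of the proof of Theorem \ref{thm:2<>0} (the same short exact sequence of $\ZZ$-modules, noting that a domain of characteristic $2$ embeds into its fraction field, which we may further enlarge to a perfect field). Over such a $K$, $A=K[U_{i,\alpha}]$ is a UFD and $\pp$ is prime by Corollary \ref{cor:TFprimedim1}. The divisibility argument from Theorem \ref{thm:2<>0} goes through verbatim: for $a\in \pp$ there exists $\nu$ with $X_n^\nu a \in (f_1,\ldots,f_{n-1},J_n)$, and the divisibility property of the resultant combined with the coprimality of $D$ and $R:=\Res(f_1,\ldots,f_{n-1},X_n)$ (established via Corollary \ref{det2} exactly as in Theorem \ref{thm:2<>0}) gives $\pp^{d_1\cdots d_{n-1}}\subset (D)$. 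Since $A$ is a UFD and $\pp$ is prime, this forces $\pp=(P)$ with $P$ prime and $D=cP^m$ for some $c\in K^\times$ and $m\geq 1$. The multiplicativity specialization $f_1\to l\cdot f_1'$ and Proposition \ref{mult} then give $m\leq 2$, as in Theorem \ref{thm:2<>0}.

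The main step is to show $m=2$, for which I would prove that $D$ is a square in $A$; the conclusion $m=2$ then follows from unique factorization (if $D=cP$ were irreducible up to unit, it could not be a square, contradicting $P$ prime). I would argue by induction on $r:=\sum(d_i-1)$. The base case $r=1$, after permuting the $f_i$'s via Proposition \ref{perm}, reduces to $d_1=2$ and $d_i=1$ for $i\geq 2$; eliminating the linear polynomials with Proposition \ref{prop:red} and using the change of variables from Theorem \ref{thm:2<>0}, we obtain $D\equiv U_{1,2}^2\pmod 2$, manifestly a square (and directly one sees $\pp=(U_{1,2})$). For the inductive step $r\geq 2$ (WLOG $d_1\geq 2$), consider the splitting specialization $f_1=f_1'\cdot f_1''$ with $d_1'+d_1''=d_1$, both $\geq 1$. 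Proposition \ref{mult} in characteristic $2$ yields
\[\rho(D)=\Disc(f_1', f_2, \ldots)\,\Disc(f_1'', f_2, \ldots)\,\Res(f_1', f_1'', f_2, \ldots)^2,\]
and by induction the two discriminant factors are squares, so $\rho(D)$ is a square. In characteristic $2$ a polynomial over a perfect field is a square if and only if all its formal partial derivatives vanish; differentiating the above identity with respect to the coefficients of $f_1'$ and $f_1''$, combining with the chain rule (viewing the splitting as the substitution $U_{1,\alpha}\mapsto\sum_{\beta+\gamma=\alpha}W'_\beta W''_\gamma$), and varying the splittings (and the analogous splittings of the other $f_j$'s) would produce enough relations on the $\rho(\partial D/\partial U_{j,\alpha})$ to force $\partial D/\partial U_{j,\alpha}=0$ in $A$ for every $(j,\alpha)$. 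Hence $D$ is a square in $A$.

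To conclude, $D=cP^m$ with $D$ a square and $P$ prime forces $m$ even, so $m=2$. Writing $c=c_0^2$ in the perfect field $K$, we get $D=(c_0P)^2$, with $c_0P$ a prime generator of $\pp$. The descent from $K$ back to the arbitrary domain $k$ with $2=0$ follows by the flatness argument used at the end of Theorem \ref{thm:2<>0}. The principal obstacle is the chain-rule bookkeeping in the inductive step: extracting from the vanishing identities $\partial\rho(D)/\partial W'_\beta=0$ enough independent relations to force $\partial D/\partial U_{j,\alpha}=0$ generically in $A$; I expect this requires exploiting the free parameters of the linear factor in the splitting (taking $l$ with generic coefficients) to generate a system of relations that determines all the partial derivatives of $D$ simultaneously.
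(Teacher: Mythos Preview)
Your reduction to a perfect field $K$ of characteristic $2$, the identification $D=cP^m$ with $m\in\{1,2\}$, and the base case $r=1$ are all fine and mirror the paper's setup. The gap is precisely the one you flag as the ``principal obstacle,'' and it is genuine rather than merely bookkeeping. For $n\geq 3$, the multiplication map
\[
\{\text{degree }d_1'\text{ forms}\}\times\{\text{degree }d_1''\text{ forms}\}\longrightarrow\{\text{degree }d_1\text{ forms}\}
\]
has image equal to the locus of reducible forms, which is a proper closed subvariety. Hence the ring map $\rho$ has nontrivial kernel, and the chain-rule identities $\partial\rho(D)/\partial W'_\beta=0$ only constrain the restrictions $\rho(\partial D/\partial U_{1,\alpha})$, not the polynomials $\partial D/\partial U_{1,\alpha}\in A$ themselves. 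Varying the degree split, or splitting other $f_j$'s, does not help: the union of all reducible loci is still proper for $n\geq 3$, so you are only ever probing $D$ on a closed subset and cannot conclude anything at the generic point. Concretely, any nonzero polynomial vanishing on the reducible locus (e.g.\ the equation of that locus when it is a hypersurface) satisfies $\rho(Q)=0$ for every splitting $\rho$, hence $\rho(Q)$ is trivially a square, yet $Q$ is irreducible and not a square. So the implication ``$\rho(D)$ square for all splittings $\Rightarrow$ $D$ square'' fails.

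The paper takes a different route to force $m=2$: rather than inducting on degrees via multiplicativity, it reduces to the bivariate case via the Mertens formulae in the appendix. One introduces $\theta(U_1,\ldots,U_n)=\Res(f_1,\ldots,f_{n-1},\sum U_iX_i)$ and its bivariate specialization $\overline\rho(\theta)\in A[V_\bullet,W_\bullet][X,Y]$. Using Corollary~\ref{cor:mertenshom} and the second Mertens formula, the paper produces a divisibility
\[
\Disc_{X,Y}(\overline\rho(\theta)) \;=\; H\cdot \Disc(f_1,\ldots,f_{n-1})
\]
and then checks, by specializing each $f_i$ to a product of generic linear forms (Corollary~\ref{det2} and the explicit formula \eqref{eq:discrhotheta} in the appendix), that $H$ and $\Disc(f_1,\ldots,f_{n-1})$ are coprime. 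Since the left side is a bivariate discriminant, it is a square in characteristic $2$ by the $n=2$ case; coprimality then forces $m=2$. This argument never leaves the generic point of $\Spec A$, avoiding the injectivity problem entirely. Your induction works cleanly only when the splitting map is dominant, i.e.\ for $n=2$; for $n\geq 3$ you would need a replacement for the inductive step along the lines of the Mertens reduction.
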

\begin{proof} We first prove this theorem under the stronger assumption that $k$ is a UFD such that $2=0$ in $k$. To begin with, recall that in the proof of Theorem \ref{thm:2<>0} it is shown that there exists a prime element $P\in A$ and an integer $p\leq 2$ such that the discriminant $D:=\Disc(f_1,\ldots,f_{n-1})$ satisfies $D=c.P^p$, $P$ being a generator of the prime and principal ideal $\pp$. We will show that $p=2$ under our assumptions. Our strategy is based on the use of a Mertens' formula that allows to rely on a discriminant of a unique bivariate and homogeneous polynomial. Indeed, in this case (i.e.~$n=2$) it is known that the claimed result holds \cite[Proposition 60]{ApJo} (see also Theorem \ref{thm:disc-irred} in the case $n=2$ for a self-contained reference).
	
	Introduce some notation related to the Mertens' formulae given in the appendix at the end of this paper. Let $U_1,\ldots,U_n$ be new indeterminates and define
	$$\theta(U_1,\ldots,U_n):=\Res(f_1,\ldots,f_{n-1},\sum_{i=1}^n U_iX_i) \in A[U_1,\ldots,U_n]$$
	and $\theta_i(U_1,\ldots,U_n):=\partial \theta/\partial U_i \in A[U_1,\ldots,U_n]$ for all $i=1,\ldots,n$. In addition, let $V_1,\ldots, V_n$, $W_1,\ldots,W_n$, $X,Y$ be a collection of some other new indeterminates and consider the ring morphisms
	\begin{eqnarray*}
		\rho : A[U_1,\ldots,U_n] & \rightarrow & A[V_1,\ldots,V_n,W_1,\ldots,W_n][X_1,\ldots,X_n] \\
		U_i & \mapsto & V_i(\sum_{j=1}^nW_jX_j)-W_i(\sum_{j=1}^nV_jX_j)
	\end{eqnarray*}
	and 
	\begin{eqnarray*}
		\overline{\rho} : A[U_1,\ldots,U_n] & \rightarrow & A[V_1,\ldots,V_n,W_1,\ldots,W_n][X,Y] \\
		U_i & \mapsto & V_iX+W_iY.
	\end{eqnarray*}
To not overload the notation, we will sometimes denote a collection of variable with its corresponding letter underlined. For instance,  $V_1,\ldots,V_n$ will be shortcut by $\underline{V}$.

Our aim is to show that the multivariate discriminant $\Disc(f_1,\ldots,f_n) \in A$ divides the bivariate discriminant $\Disc_{X,Y}(\overline{\rho}) \in  A[\underline{V},\underline{W}]$. To begin with, introduce two collections of new indeterminates $t_1,\ldots,t_n$ and $Z_1,\ldots,Z_n$, and define the matrix  
$$\varphi:=\left[\begin{array}{ccccc}
 t_n & 0 &  \cdots & 0 & Z_1 \\
 0 & t_n & \vdots & \vdots & Z_2 \\
\vdots  & \ddots & \ddots & 0  & \vdots \\
0 &\cdots & 0 & t_n & Z_{n-1} \\
-t_1 & -t_2& \cdots &-t_{n-1} & Z_n 
\end{array}\right].$$
Applying the base change formula for the resultant \cite[\S 5.12]{J91}, we get 
\begin{multline} \label{eq:thetaZ}
\theta_Z:=	\Res(f_1\circ \varphi, \ldots, f_{n-1}\circ \varphi, (\sum_{i=1}^n U_iX_i)\circ\varphi) \\ 
= \det(\varphi)^{d_1\ldots d_{n-1}}\theta(\underline{U}) = t_n^{(n-2)d_1\ldots d_{n-1}}\left(\sum_{i=1}^n t_iZ_i\right)^{d_1\ldots d_{n-1}}\theta(\underline{U})
\end{multline}
in the extended ring $A[\underline{U},\underline{t},\underline{Z}]$. Now, set $f_n:=\sum_{i=1}^n U_iX_i$. Having in mind to use Corollary \ref{cor:mertenshom}, we need to identify for all $i,j=1,\ldots,n$ the coefficient, say $V_{i,j}$, of the monomial $X_jX_n^{d_i-1}$ in the polynomial $f_i\circ\varphi$. The coefficients $V_{i,n}$ are easily seen to be equal to $f_i(Z_1,\ldots,Z_n)$ since one only has to evaluate $f_i\circ\varphi$ at $X_1=\cdots=X_{n-1}=0$ and $X_n=1$. Then, to get the coefficients $V_{i,j}$ with $j\neq n$, we have to differentiate $f_i\circ\varphi$ with respect to $X_j$ and finally evaluate the result at $X_1=\cdots=X_{n-1}=0$ and $X_n=1$; we find
$$V_{i,j}=t_n\frac{\partial f_i}{\partial X_j}(Z_1,\ldots,Z_n) - t_j\frac{\partial f_i}{\partial X_n}(Z_1,\ldots,Z_n), \ j\neq n.$$
We claim that
\begin{equation}\label{eq:Dc}
\mathcal{D}:=\det\left(V_{i,j} \right)_{i,j=1,\ldots n}=\left(\sum_{i=1}^n U_iZ_i\right) t_n^{n-2}\Delta_t \textrm{ mod } (f_1(\underline{Z}),\ldots,f_{n-1}(\underline{Z}))	
\end{equation}
in $A[\underline{U},\underline{t},\underline{Z}]$, where $\Delta_t$ stands for the Jacobian matrix 
$$\Delta_t:=\frac{\partial (f_1,\ldots,f_{n-1},\sum_{i=1}^{n}t_iX_i)}{\partial (X_1,\ldots,X_n)}(Z_1,\ldots,Z_n) \in A[\underline{t},\underline{Z}].$$
Indeed, from the definition, it is easy to see that
\begin{multline*}
	\mathcal{D}=(\sum_{i=1}^n U_iZ_i) \left| 
	\begin{array}{ccc}
	t_n\frac{\partial f_1}{\partial X_1}(\underline{Z})-t_1\frac{\partial f_1}{\partial X_n}(\underline{Z}) & \cdots & t_n\frac{\partial f_1}{\partial X_{n-1}}(\underline{Z})-t_{n-1}\frac{\partial f_{1}}{\partial X_n}(\underline{Z})	\\
	\vdots & & \vdots \\
	t_n\frac{\partial f_{n-1}}{\partial X_1}(\underline{Z})-t_1\frac{\partial f_{n-1}}{\partial X_n}(\underline{Z}) & \cdots & t_n\frac{\partial f_{n-1}}{\partial X_{n-1}}(\underline{Z})-t_{n-1}\frac{\partial f_{n-1}}{\partial X_n}(\underline{Z})
	\end{array}
	\right| \\
	\textrm{ mod } (f_1(\underline{Z}),\ldots,f_{n-1}(\underline{Z})).	
\end{multline*}
Denote by $M$ the determinant appearing in this equality. Then, it is clear that
$$t_nM= 
\left| 
\begin{array}{cccc}
t_n\frac{\partial f_1}{\partial X_1}(\underline{Z}) & \cdots & t_n\frac{\partial f_1}{\partial X_{n-1}}(\underline{Z}) & \frac{\partial f_{1}}{\partial X_n}(\underline{Z})	\\
\vdots & & \vdots & \vdots \\
t_n\frac{\partial f_{n-1}}{\partial X_1}(\underline{Z}) & \cdots & t_n\frac{\partial f_{n-1}}{\partial X_{n-1}}(\underline{Z}) & \frac{\partial f_{n-1}}{\partial X_n}(\underline{Z}) \\
t_nt_1 & \cdots & t_nt_{n-1} & t_n
\end{array}
\right|
=t^{n-1}\Delta_t$$  
and \eqref{eq:Dc} is proved. Therefore, by Corollary \ref{cor:mertenshom} there exists $H_1\in A[\underline{U},\underline{t},\underline{Z}]$ such that 
$$\theta_Z-\mathcal{D}H_1 \in (f_1(\underline{Z}),\ldots,f_{n-1}(\underline{Z}),\sum_{i=1}^n U_iZ_i)^2$$
and hence, using \eqref{eq:thetaZ} and \eqref{eq:Dc}, we obtain that
\begin{multline*}
	t_n^{(n-2)d_1\ldots d_{n-1}}\left(\sum_{i=1}^n t_iZ_i\right)^{d_1\ldots d_{n-1}}\theta(\underline{U}) \in \\
	\left(f_1(\underline{Z}),\ldots,f_{n-1}(\underline{Z}),\left(\sum_{i=1}^n U_iZ_i\right) t_n^{n-2}\Delta_t,\left(\sum_{i=1}^n U_iZ_i\right)^2\right).
\end{multline*}
Applying the operator $\sum_{i=1}^n t_i\partial(-)/\partial U_i$, we get
\begin{multline*}
	t_n^{(n-2)d_1\ldots d_{n-1}}\left(\sum_{i=1}^n t_iZ_i\right)^{d_1\ldots d_{n-1}}\sum_{i=1}^n t_i \theta_i(\underline{U}) \in \\
	\left(f_1(\underline{Z}),\ldots,f_{n-1}(\underline{Z}),\left(\sum_{i=1}^n t_iZ_i\right) t_n^{n-2}\Delta_t,\left(\sum_{i=1}^n U_iZ_i\right)\right).	
\end{multline*}
Now, we send this relation through the morphism $\rho$ and substitute $\underline{X}$ to $\underline{Z}$. It turns out that $\sum_{i=1}^n U_iZ_i$ is sent to zero and hence we obtain that
$$t_n^{(n-2)d_1\ldots d_{n-1}}\left(\sum_{i=1}^n t_iZ_i\right)^{d_1\ldots d_{n-1}}\sum_{i=1}^n t_i\rho(\theta_i)(\underline{Z}) \in
\left(f_1(\underline{Z}),\ldots,f_{n-1}(\underline{Z}),\Delta_t\right).$$
By the divisibility property of the resultant \cite[\S 5.6]{J91}, we deduce that the resultant $\Res(f_1,\ldots,f_{n-1},\Delta_t)$ divides
$$\Res\left(f_1,\ldots,f_{n-1},t_n^{(n-2)d_1\ldots d_{n-1}}\left(\sum_{i=1}^n t_iZ_i\right)^{d_1\ldots d_{n-1}}\sum_{i=1}^n t_i\rho( \theta_i)(\underline{Z})\right).$$
But by definition, 
$$\Res(f_1,\ldots,f_{n-1},\Delta_t)=\Disc(f_1,\ldots,f_{n-1})\Res(f_1,\ldots,f_{n-1},\sum_{i=1}^n t_iX_i)$$
and by the second Mertens' formula and the multiplicativity property of the resultant we have
\begin{multline*}
	\Res\left(f_1,\ldots,f_{n-1},t_n^{(n-2)d_1\ldots d_{n-1}}\left(\sum_{i=1}^n t_iZ_i\right)^{d_1\ldots d_{n-1}}\sum_{i=1}^n t_i\rho(\theta_i)(\underline{Z})\right) = \\
	(-1)^{d_1\ldots d_{n-1}}t_n^{(n-2)d_1^2\ldots d_{n-1}^2} \Disc_{X,Y}(\overline{\rho}(\theta))\Res(f_1,\ldots,f_{n-1},\sum_{i=1}^n t_iX_i)^{d_1\ldots d_{n-1}+1}.
\end{multline*}
Since $\Disc(f_1,\ldots,f_{n-1})$ and $\Res(f_1,\ldots,f_{n-1},\sum_{i=1}^n t_iX_i)$ are coprime (the latter is irreducible and depends on $\underline{t}$ which is not the case of the discriminant) we deduce that there exists $H\in A[\underline{V},\underline{W}]$ such that
$$\Disc_{X,Y}(\overline{\rho}(\theta))=H\Disc(f_1,\ldots,f_n).$$

To finish the proof, we will show that $H$ and $\Disc(f_1,\ldots,f_n)$ are coprime, so that $p$ must be equal to 2 since $\Disc(\overline{\rho}(\theta))$ is a square, as a specialization of a square. For that purpose, we proceed as in the proof 
of Lemma A (in the appendix): we specialize each polynomial $f_i$, $i=1,\ldots,n-1$ to the product of $d_i$ generic linear forms 
	$$l_{i,j}:=U_{i,j,1}X_1+U_{i,j,1}X_2+\cdots+U_{i,j,n}X_n=\sum_{r=1}^{d_i}U_{i,j,r}X_r, \ \ i=1,\ldots,n, \ j=1,\ldots,d_i.$$ 
	After this specialization, we get (see the proof of Lemma A)
\begin{multline*}
	\Disc(\overline{\rho}(\theta))= \\ \pm\prod_{\lambda<\mu} \left(
	\Delta_\lambda(V_1,\ldots,V_n) \Delta_\mu(W_1,\ldots,W_n)-\Delta_\lambda(W_1,\ldots,W_n)\Delta_\mu(V_1,\ldots,V_n)
	\right)^2.		
\end{multline*}
On the other hand, Proposition \ref{det2} yields
\begin{equation*}
    \Disc\left(\prod_{j=1}^{d_1}l_{1,j},\ldots,\prod_{j=1}^{d_{n-1}}l_{n-1,j}\right)=\pm \prod_{I}
    \det(l_{1,j_1},l_{2,j_2},\ldots,l_{n-1,j_{n-1}},l_{i,j})^2.
    \end{equation*}
	Moreover, if $\lambda=(j_1,\ldots,j_{n-2},j_{n-1}')$ and $\mu=(j_1,\ldots,j_{n-2},j_{n-1}'')$ then we have the equality
	\begin{multline*}
	\Delta_\lambda(V_1,\ldots,V_n) \Delta_\mu(W_1,\ldots,W_n)-\Delta_\lambda(W_1,\ldots,W_n)\Delta_\mu(V_1,\ldots,V_n) = \\
		\det(l_{1,j_1},l_{2,j_2},\ldots,l_{n-1,j_{n-1}'},l_{n-1,j_{n-1}''})\times \det(l_{1,j_1},l_{2,j_2},\ldots,V,W)
	\end{multline*}
(it is easy to check this formula in the case $n=2$; then the general case can be deduced from this by developing each determinant in this equality with respect to their two last columns). 	
	Therefore, $H$ and $\Disc(f_1,\ldots,f_n)$ are coprime. So we have proved that $D=c.P^2$ under the assumptions $k$ is a UFD and $2=0$ in $k$.
	
\medskip

Now, assume that $k$ is a domain such that $2=0$, and set $F:=\ZZ/2\ZZ$ for simplicity. The injective map $F\hookrightarrow k$  is flat 
for $k$ is a torsion-free $F$-module ($k$ is not the trivial ring). 
Therefore, the canonical exact sequence (see Corollary \ref{cor:TFprimedim1}) 
$$0 \rightarrow {}_F\TF_\mm(\Dc) \rightarrow {}_F C \rightarrow {}_F B_{X_n}$$ 
remains exact after tensorization by $k$ over $F$. Since ${}_F C\otimes_F k\simeq {}_kC$ and ${}_F B_{X_n}\otimes_F k \simeq {}_k B_{X_n}$ we deduce that
\begin{equation*}
	{}_k\TF_\mm(\Dc)\simeq {}_F\TF_\mm(\Dc)\otimes_F k
\end{equation*}
and hence that ${}_k \pp\simeq {}_F \pp \otimes_F k$. Moreover, $F$ is a UFD and hence we have proved that ${}_F D=P^2$ where $P$ is 
a prime element that generates ${}_F \pp$ (observe that the unit $c$ is necessarily equal to $1$ in $F$). Considering the specialization $\rho : {}_F A \rightarrow {}_k A$, it follows that $\rho(P)$ generates ${}_k \pp$ and ${}_k D=\rho({}_F D)=\rho(P)^2$ (by definition of the discriminant) and this concludes the proof of this theorem.
\end{proof}

% \subsubsection{A refined relationship for discriminants}\label{DCJ2}

Before closing this section, we give a refined relationship for the discriminant. Let $R$ be a commutative ring and suppose given $f_1,\ldots,f_{n-1}$
homogeneous polynomials in $R[X_1,\ldots,X_n]$ of respective positive degree
$d_1,\ldots,d_{n-1}$. Recall the notation $\tilde{f}_i(X_1,\ldots,X_{n-1}):=f_i(X_1,\ldots,X_{n-1},1) \in
R[X_,\ldots,X_{n-1}]$ (and similarly for $\tilde{J}_n$). An immediate consequence of the proof of Lemma  \ref{lemcor} (see 
\eqref{eq:firstrelation}) is that 
\begin{equation*}
  \Disc(f_1,\ldots,f_{n-1}) \in
  (\tilde{f}_1,\ldots,\tilde{f}_{n-1},\tilde{J}_n) \subset
  A[X_1,\ldots,X_{n-1}]. \end{equation*} 
The following theorem, which appears in \cite{ApJo} for the case $n=2$, improves this result.

\begin{thm}
  With the above notation we have 
  $$\Disc(f_1,\ldots,f_{n-1}) \in R \cap 
  \left(\tilde{f}_1,\ldots,\tilde{f}_{n-1},
 \tilde{J_n}^2\right) \subset R[X_1,\ldots,X_{n-1}].$$
\end{thm}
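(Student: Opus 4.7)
The plan is, by Definition \ref{defdisc} and specialization, to reduce to the universal case $R = {}_\ZZ A$. The Kronecker substitution then identifies ${}_\ZZ A[X_1,\ldots,X_{n-1}]/(\tilde{f}_1,\ldots,\tilde{f}_{n-1})$ with $A_0[X_1,\ldots,X_{n-1}]$, where $A_0$ is the subring of ${}_\ZZ A$ obtained by deleting the coefficients $\EE_i$; under this identification the desired inclusion $\Disc(f_1,\ldots,f_{n-1}) \in (\tilde{f}_1,\ldots,\tilde{f}_{n-1},\tilde{J}_n^2)$ becomes $D^\circ \in (\tilde{J}_n^2) \subset A_0[X_1,\ldots,X_{n-1}]$, where $D^\circ$ denotes the image of $\Disc(f_1,\ldots,f_{n-1})$ under the substitution. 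From Lemma \ref{lemcor} one already has $D^\circ \in (\tilde{J}_n)$, and since $\tilde{J}_n$ is prime in the UFD $A_0[X_1,\ldots,X_{n-1}]$ (Lemma \ref{Jirred}), the task reduces to showing that $D^\circ$ vanishes with multiplicity at least $2$ along the prime divisor cut out by $\tilde{J}_n$.

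We will establish this refined divisibility by distinguishing characteristics. In characteristic $2$: Theorem \ref{thm:2=0} gives $\Disc(f_1,\ldots,f_{n-1}) = P^2$ for a prime generator $P$ of $\pp$, and since $\pp \subset (\tilde{f}_1,\ldots,\tilde{f}_{n-1},\tilde{J}_n) \subset A[X_1,\ldots,X_{n-1}]$ by Corollary \ref{cor:TFprimedim1}, squaring yields $\Disc(f_1,\ldots,f_{n-1}) \in (\tilde{f}_1,\ldots,\tilde{f}_{n-1},\tilde{J}_n)^2 \subset (\tilde{f}_1,\ldots,\tilde{f}_{n-1},\tilde{J}_n^2)$, where the final inclusion uses the elementary identity $(I+J)^2 \subset I + J^2$. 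In odd characteristic, we imitate the Mertens-based reduction used in the proof of Theorem \ref{thm:2=0}: the second Mertens formula from the appendix produces a bivariate polynomial $\overline{\rho}(\theta)(X,Y)$ whose discriminant $\Disc_{X,Y}(\overline{\rho}(\theta))$ is a known multiple of $\Disc(f_1,\ldots,f_{n-1})$; the bivariate case $n=2$ of the present theorem is classical and proved in \cite{ApJo}, and a careful transfer through the Mertens identity delivers the multivariate statement.

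Finally, the two characteristic statements combine into the universal statement over $\ZZ$ by a $\mathrm{Tor}$-vanishing argument analogous to the final step in the proof of Theorem \ref{thm:2<>0}: the quotient $A[X_1,\ldots,X_{n-1}]/(\tilde{f}_1,\ldots,\tilde{f}_{n-1},\tilde{J}_n^2)$ is $\ZZ$-flat, since it fits in a flat extension of the flat complete-intersection quotient $A[X]/(\tilde{f}_1,\ldots,\tilde{f}_{n-1},\tilde{J}_n)$. The main obstacle is carrying out the Mertens-based reduction in odd characteristic so as to preserve the second-order information encoded by $\tilde{J}_n^2$ rather than merely the first-order information of $\tilde{J}_n$ that a naive reduction would yield (and that would only reproduce Lemma \ref{lemcor}).
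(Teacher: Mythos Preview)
Your proposal is not a complete proof: you explicitly identify ``the main obstacle'' --- the Mertens-based transfer in characteristic $\neq 2$ that would upgrade Lemma~\ref{lemcor} from first-order to second-order information --- and you do not carry it out. The sentence ``a careful transfer through the Mertens identity delivers the multivariate statement'' is a hope, not an argument. Since the universal ring ${}_\ZZ A$ has characteristic~$0$, this unproved case is exactly the one you need; the characteristic-$2$ detour and the flatness patching are irrelevant until the main case is settled. (Your characteristic-$2$ observation is correct, though: once $\Disc=P^2$ with $P\in\pp$, squaring the inclusion $\pp\subset(\tilde f_1,\ldots,\tilde f_{n-1},\tilde J_n)$ does give the claim in that characteristic.)

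The paper's proof is quite different and does not split on the characteristic at all. It introduces
\[
\mathcal{R}(T_1,\ldots,T_n):=\Res\bigl(f_1-T_1X_n^{d_1},\ldots,f_{n-1}-T_{n-1}X_n^{d_{n-1}},\,J_n-T_nX_n^{\delta}\bigr),
\]
shows that $\rho:=\Res(\overline f_1,\ldots,\overline f_{n-1})$ divides $\mathcal R$, and writes $\mathcal R=\rho\,H(T_1,\ldots,T_n)$. One then has $H(0,\ldots,0)=\Disc(f_1,\ldots,f_{n-1})$ and $H(\tilde f_1,\ldots,\tilde f_{n-1},\tilde J_n)=0$ by \eqref{restilde}, so $H$ already exhibits the discriminant as a polynomial in $\tilde f_1,\ldots,\tilde f_{n-1},\tilde J_n$ with zero constant term. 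The second-order refinement comes from showing that the coefficient of $T_n$ in $H$ (equivalently, in $\Res(f_1,\ldots,f_{n-1},J_n-T_nX_n^\delta)$) vanishes. By the Poisson formula over $\overline{\Frac(A)}$ this coefficient is, up to a nonzero factor, $\bigl(\prod_\xi \tilde J_n(\xi)\bigr)\bigl(\sum_\xi 1/\tilde J_n(\xi)\bigr)$, where $\xi$ runs over the zeros of $\tilde f_1,\ldots,\tilde f_{n-1}$; and the sum is zero by the classical Jacobi formula. This single identity replaces your entire Mertens/flatness machinery and works uniformly over~$\ZZ$.
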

\begin{proof} As always, it is sufficient to prove this theorem in the
  generic case of Section \ref{sec:def}; $f_1,\ldots,f_{n-1}$ are
  supposed to be homogeneous polynomials in $A[X_1,\ldots,X_n]$, where
  $A$ is the universal coefficient ring, of respective positive degree
  $d_1,\ldots,d_{n-1}$. We recall that $J_n$ denotes the Jacobian
  determinant $|\frac{\partial (f_1,\ldots,f_{n-1}) }{\partial
    (X_1,\ldots,X_{n-1}) }|$ and that for any polynomial $P$ in
  $X_1,\ldots,X_n$ we denote by $\tilde{P}$ (resp.~$\overline{P}$) the
  polynomial in $X_1,\ldots,X_{n-1}$ obtained by substituting $X_n$ by
  1 (resp.~0) in $P$.

  Let us introduce the new indeterminates $T_1,\ldots,T_n$ and,
  setting $\delta:=\deg(J_n)=\sum_{i=1}^{n-1}(d_i-1)$, 
  consider both resultants
 $$\rho:=\Res(f_1,\ldots,f_{n-1},X_n)=\Res(\overline{f}_1,\ldots,\overline{f}_{n-1}) \in A,$$
  $$\mathcal{R}:=\Res(f_1-T_1X_n^{d_1},\ldots,f_{n-1}-T_{n-1}X_n^{d_{n-1}},J_n-T_nX_n^\delta)
  \in A[T_1,\ldots,T_n].$$
Since the $f_i$'s are generic polynomials, we know that $\rho$ is an
irreducible element in $A$ generating the inertia forms ideal 
$$\mathcal{T}:=\TF_{(X_1,\ldots,X_{n-1})}(\overline{f}_1,\ldots\overline{f}_{n-1})_0=\TF_{(X_1,\ldots,X_n)}(f_1,\ldots,f_{n-1},X_n)_0
\subset A.$$
From Lemma \ref{lem1}, ii) (take $i=1,\ldots,n-1$ and $j=n$), we deduce
that $J_n \in \mathcal{T}$. Consequently, polynomials $
f_1-T_1X_n^{d_1},\ldots,f_{n-1}-T_{n-1}X_n^{d_{n-1}}$ and $J_n-T_nX_n^\delta$
are in $\mathcal{T}\otimes_AA[T_1,\ldots,T_n]$ and it follows that
$\mathcal{R}$ itself is in $\mathcal{T}\otimes_AA[T_1,\ldots,T_n]$. This implies that 
$\rho$ divides $\mathcal{R}$: there exists $H(T_1,\ldots,T_n)\in
A[T_1,\ldots,T_n]$ such that 
$$\mathcal{R}=\rho H(T_1,\ldots,T_n) \in A[T_1,\ldots,T_n].$$
This polynomial $H$ have the two following important properties:
\begin{itemize}
\item $H(0,\ldots,0)=\Disc(f_1,\ldots,f_{n-1}) \in A$  (by
  \eqref{defeq}),
  \item $H(\tilde{f}_1,\ldots,\tilde{f}_{n-1},\tilde{J}_n)=0 \in
    A[X_1,\ldots,X_{n-1}]$ (by \eqref{restilde}). 
  \end{itemize}
Therefore $H(T_1,\ldots,T_n)$ gives (similarly to
\eqref{restilde} for the resultant)
an \emph{explicit expression} of the
  discriminant of $f_1,\ldots f_{n-1}$ as a polynomial in
  $\tilde{f}_1,\ldots,\tilde{f}_{n-1},\tilde{J}_n$ with coefficients
  in $A$ and without constant term; in other words as an element in
  $\tilde{J}_nA[\tilde{f}_1,\ldots,\tilde{f}_{n-1},\tilde{J}_n] + \sum_{i=1}^{n-1}
  \tilde{f}_iA[\tilde{f}_1,\ldots,\tilde{f}_{n-1},\tilde{J}_n].$
  We claim that the coefficient of $H$ (seen as a polynomial in the
  $T_i$'s) of the monomial $T_n$ is zero, and this implies our theorem.

  To prove this claim, it is sufficient to prove the same claim for
  $\mathcal{R} \in A[T_1,\ldots,T_n]$, and even, by performing the
  specialization (which leaves $J_n$ invariant)
  $$f_i \mapsto f_i + T_iX_n^{d_i} \text{ for all } i=1,\ldots,n-1,$$
  for the resultant 
  $$\Res(f_1,\ldots,f_{n-1},J_n-T_nX_n^\delta) \in A[T_n].$$
Let $K$ be the quotient field of $A$ and $\overline{K}$ its algebraic
closure. Then the $f_i$'s have $d_1\ldots d_{n-1}$ simple roots, none
at infinity, in $\mathbb{P}_{\overline{K}}^{n-1}$. As in
the proof of Proposition \ref{poi}, the Poisson's formula gives
$$\frac{\Res(f_1,\ldots,f_{n-1},J_n-T_nX_n^\delta)}
{\Res(\overline{f}_1,\ldots,\overline{f}_{n-1})^\delta}=\prod_{\xi \in I}(\tilde{J}_n(\xi)-T_n),$$
where $I:=\{\xi \in \mathbb{A}^{n-1}_{\overline{K}} :
  f_1(\xi)=\cdots=f_{n-1}(\xi)=0\}$.
But the coefficient of $T_n$, up to a nonzero multiplicative constant,
equals
$$\left(\prod_{\xi \in I}\tilde{J}_n(\xi)\right).\left(\sum_{\xi \in I}\frac{1}{\tilde{J}_n(\xi)}\right).$$
This latter quantity vanishes since its second factor is zero by the well known Jacobi formula.
\end{proof}

\begin{rem}
	Observe that we actually proved that
	  $$\Disc(f_1,\ldots,f_{n-1}) \in \tilde{J_n}^2 A[\tilde{f}_1,\ldots,\tilde{f}_{n-1},\tilde{J}_n] + \sum_{i=1}^{n-1}
	  \tilde{f}_iA[\tilde{f}_1,\ldots,\tilde{f}_{n-1},\tilde{J}_n].$$	
\end{rem}

%%%%%%%%%%%%%%%%%%%%%%%%%%%%%%%%%%%%%%
%%%%%%%%%%%%%%%%%%%%%%%%%%%%%%%%%%%%%%

\subsection{The base change formula}

In this section, we  investigate
 the behavior of the discriminant
  of $n-1$ homogeneous polynomials in $n$ variables
  under polynomial compositions. Although the situation is much more
  involved compared to the case of the resultant \cite[\S 5.12]{J91}, we provide a detailed 
  base change formula. We begin with the case of a linear change of coordinates.   

  \begin{prop}\label{chgcoor} Let $R$ be a commutative ring and  $f_i$
  ($i=1,\ldots,n-1$) be a homogeneous polynomial of degree $d_i\geq 1$
  in $R[X_1,\ldots,X_n]$. Given a matrix $\varphi=\left[c_{i,j} \right]_{1\leq
        i,j \leq n}$ with entries in $R$ and denoting, for all $f\in R[X_1,\ldots,X_n]$,
      $$f\circ
      \varphi(X_1,\ldots,X_n):=f\left(c_{1,1}X_1+\cdots+c_{1,n}X_n,\ldots,\sum_{j=1}^nc_{i,j}X_j,\ldots,\sum_{j=1}^nc_{n,j}X_n\right),$$ 
      we have
      $$\Disc(f_1\circ\varphi,\ldots,f_{n-1}\circ\varphi)=\det(\varphi)^{d_1\ldots
        d_{n-1}(\sum_{i=1}^{n-1}(d_i-1))} \Disc(f_1,\ldots,f_{n-1}).$$
    \end{prop}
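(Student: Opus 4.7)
By the universal property of the discriminant (Definition \ref{defdisc}), it suffices to prove the identity in the generic setting: we may take $R = \ZZ[U_{i,\alpha},c_{i,j}]$ and then specialize. Set $D := d_1\cdots d_{n-1}$ and $\delta := \sum_{i=1}^{n-1}(d_i-1)$ (the case $\delta = 0$ is trivial since $\Disc = 1$ by convention, so assume $\delta \geq 1$). Introduce a new generic linear form $F := U_1 X_1 + \cdots + U_n X_n$ with fresh indeterminates $U_i$, and write $g_i := f_i\circ\varphi$.

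The key computation is a chain-rule identity for the augmented Jacobian $J$ defined in \eqref{J}. Since $\nabla(h\circ\varphi) = (\nabla h)(\varphi(X))\cdot\varphi$ for any $h$, the full Jacobian matrix of $(g_1,\ldots,g_{n-1},F\circ\varphi)$ equals the Jacobian matrix of $(f_1,\ldots,f_{n-1},F)$ evaluated at $\varphi(X)$, right-multiplied by the matrix $\varphi$. Taking determinants gives
$$J(g_1,\ldots,g_{n-1},F\circ\varphi)(X) = \det(\varphi)\cdot J(f_1,\ldots,f_{n-1},F)(\varphi(X)).$$
Note that $K := J(f_1,\ldots,f_{n-1},F)$ is homogeneous of degree $\delta$.

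Now apply formula \eqref{J(F)} on both sides. For the $g_i$'s with the polynomial $F\circ\varphi$ (of degree $1$):
$$\Res(g_1,\ldots,g_{n-1},J(g_1,\ldots,g_{n-1},F\circ\varphi)) = \Disc(g_1,\ldots,g_{n-1})\,\Res(g_1,\ldots,g_{n-1},F\circ\varphi).$$
Combining the chain-rule identity above with the multi-homogeneity of the resultant (scaling the last argument by $\det(\varphi)$ contributes $\det(\varphi)^D$) and the base change formula for the resultant \cite[\S5.12]{J91} applied to $n$ polynomials of degrees $d_1,\ldots,d_{n-1},\delta$ (resp.~$d_1,\ldots,d_{n-1},1$), the left-hand side rewrites as
$$\det(\varphi)^{D}\cdot\det(\varphi)^{D\delta}\,\Res(f_1,\ldots,f_{n-1},K) = \det(\varphi)^{D(\delta+1)}\,\Disc(f_1,\ldots,f_{n-1})\,\Res(f_1,\ldots,f_{n-1},F),$$
where the last equality uses \eqref{J(F)} applied to $(f_1,\ldots,f_{n-1},F)$. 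Similarly the right-hand side equals $\Disc(g_1,\ldots,g_{n-1})\,\det(\varphi)^{D}\,\Res(f_1,\ldots,f_{n-1},F).$ Equating and cancelling the common factor $\det(\varphi)^{D}\Res(f_1,\ldots,f_{n-1},F)$ — which is a nonzero divisor in the generic setting since $\det(\varphi)$ and the resultant of generic polynomials are nonzero divisors — yields the claimed equality $\Disc(g_1,\ldots,g_{n-1}) = \det(\varphi)^{D\delta}\Disc(f_1,\ldots,f_{n-1})$.

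The main conceptual step is choosing to work with the augmented Jacobian $J(f_1,\ldots,f_{n-1},F)$ rather than the individual $J_i$'s: only the former has a clean transformation law under $\varphi$ (the $J_i$'s mix together as linear combinations under composition with $\varphi$). Once this observation is in place, all remaining steps are standard applications of the formalism of the resultant: the base change formula and multi-homogeneity, combined with formula \eqref{J(F)} to convert between resultants involving $J$ and the discriminant. Tracking the exponents of $\det(\varphi)$ carefully — namely $D$ from scaling, $D\delta$ from changing variables in a degree-$\delta$ polynomial, and $D$ from changing variables in a linear form — is the only bookkeeping obstacle.
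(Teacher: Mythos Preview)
Your proof is correct and follows essentially the same strategy as the paper's own argument: both use the chain rule for the full $n\times n$ Jacobian, the base change formula for the resultant, and formula \eqref{J(F)} with a linear form, then cancel a nonzero divisor in the generic setting. The only cosmetic difference is that the paper takes the specific linear form $X_n$ (so that $J(f_1,\ldots,f_{n-1},X_n)=J_n$) rather than a generic $F$; your choice to work directly with the augmented Jacobian $J$ arguably makes the chain-rule step more transparent.
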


    \begin{proof} We prove this proposition in the generic case. By
      Definition \ref{defdisc}, we have
\begin{multline*}
	     \Res(f_1\circ\varphi,\ldots,f_{n-1}\circ\varphi,X_n\circ\varphi)\Disc(f_1\circ\varphi,\ldots,f_{n-1}\circ\varphi)\\
	=\Res(f_1\circ\varphi,\ldots,f_{n-1}\circ\varphi,J_n(f\circ \varphi)).	
\end{multline*}
Now, since $J_n(f\circ
\varphi)=J_n(f_1,\ldots,f_{n-1})\circ [\varphi].\det(\varphi)$ (the
classical formula for changing variables), we deduce from \cite[\S 5.12]{J91} and the homogeneity of the resultant that the numerator of the previous display is equal to
$$\det(\varphi)^{d_1\ldots d_{n-1}}\Res(f_1,\ldots,f_{n-1},J_n)
\det(\varphi)^{d_1\ldots d_{n-1}\sum_{i=1}^{n-1}(d_i-1)},$$
and the denominator is equal to
$$\Res(f_1,\ldots,f_{n-1},X_n)\det(\varphi)^{d_1,\ldots,d_{n-1}}.$$
The result follows by simplifying $
\det(\varphi)^{d_1,\ldots,d_{n-1}}$ in both previous equalities.
\end{proof}

    \begin{cor} Take again the notation of \S
      \ref{defsubsec}.  Let $m$ be a fixed integer in $\{1,\ldots,n \}$ and
       define a grading on the ring ${}_k A=k[U_{i,\alpha} \,|\, |\alpha|=d_i]$ by
      $$\mathrm{weight}(U_{i,\alpha_1,\ldots,\alpha_n}):=\alpha_m.$$
      Then $\Disc(f_1,\ldots,f_{n-1}) \in {}_k A$ is homogeneous of total weight
$$d_1\ldots d_{n-1}\sum_{i=1}^{n-1} (d_i-1).$$      
    \end{cor}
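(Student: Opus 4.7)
The plan is to apply Proposition \ref{chgcoor} to a very simple linear change of coordinates, namely the diagonal matrix that rescales only the $m$-th variable. Concretely, introduce an indeterminate $t$ and let $\varphi$ be the diagonal matrix of size $n$ with entries $c_{j,j}=1$ for $j\neq m$ and $c_{m,m}=t$, so that $\det(\varphi)=t$. Then for each $i$,
\[
f_i\circ\varphi \;=\; f_i(X_1,\ldots,tX_m,\ldots,X_n) \;=\; \sum_{|\alpha|=d_i} t^{\alpha_m}U_{i,\alpha}X^\alpha,
\]
which shows that $f_i\circ\varphi$ is obtained from $f_i$ by the specialization $U_{i,\alpha}\mapsto t^{\alpha_m}U_{i,\alpha}$. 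By the stability of the discriminant under specialization (Definition \ref{defdisc}), this gives
\[
\Disc(f_1\circ\varphi,\ldots,f_{n-1}\circ\varphi) \;=\; \Disc(f_1,\ldots,f_{n-1})\bigl|_{U_{i,\alpha}\mapsto t^{\alpha_m}U_{i,\alpha}}.
\]

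Next I would apply Proposition \ref{chgcoor} directly to $\varphi$: since $\det(\varphi)=t$, it yields
\[
\Disc(f_1\circ\varphi,\ldots,f_{n-1}\circ\varphi) \;=\; t^{\,d_1\ldots d_{n-1}\sum_{i=1}^{n-1}(d_i-1)}\,\Disc(f_1,\ldots,f_{n-1}).
\]

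Finally, decompose $\Disc(f_1,\ldots,f_{n-1})=\sum_w D_w$ into its weight-homogeneous components with respect to the Zariski-type grading $\mathrm{weight}(U_{i,\alpha})=\alpha_m$. The specialization $U_{i,\alpha}\mapsto t^{\alpha_m}U_{i,\alpha}$ turns $D_w$ into $t^w D_w$, so combining the two displayed formulas gives the identity
\[
\sum_w t^w D_w \;=\; t^{\,d_1\ldots d_{n-1}\sum_{i=1}^{n-1}(d_i-1)} \sum_w D_w
\]
in ${}_kA[t]$. Since $t$ is a free indeterminate and the $D_w$ lie in distinct weight components (linearly independent in ${}_kA$), all $D_w$ must vanish except when $w=d_1\ldots d_{n-1}\sum_{i=1}^{n-1}(d_i-1)$, which proves both the homogeneity and the asserted total weight.

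No serious obstacle is expected: the whole argument is a direct comparison of coefficients of $t$ after invoking Proposition \ref{chgcoor}. The only mild care needed is to justify that the base-change formula can be applied over the polynomial ring ${}_kA[t]$ (valid since Proposition \ref{chgcoor} is stated for an arbitrary commutative ring $R$) and that the weight decomposition of $\Disc$ is well-defined, which is automatic because ${}_kA$ is a polynomial ring graded by the weights.
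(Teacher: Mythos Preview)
Your proof is correct and follows exactly the same approach as the paper: apply Proposition \ref{chgcoor} with the diagonal matrix having $t$ in the $m$-th slot and $1$ elsewhere, then compare powers of $t$. The paper's proof is a one-line sketch of precisely this argument, and you have simply written out the details.
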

    \begin{proof} It is an immediate corollary of Proposition \ref{chgcoor} by
      taking  the diagonal matrix
      $\varphi=[c_{i,j}]$ where $c_{m,m}=t$,     where   $t$ be a new
      indeterminate,
      and $c_{i,i}=1$ if $i\neq m$. 
    \end{proof}

We now turn to the general situation.

  \begin{prop}\label{prop:basechange} For all $i=1,\ldots,n-1$, let  $f_i$
  be a homogeneous polynomial of degree $d_i\geq 1$
  in $R[X_1,\ldots,X_n]$, where $R$ is a commutative ring.
  If  $g_1,\ldots,g_n$ are  $n$ homogeneous polynomials of the
    same degree $d\geq 2$ in $R[X_1,\ldots,X_n]$ then, denoting 
    $f_i\circ g:=f_i(g_1,\ldots,g_n)$ for
    all $i=1,\ldots,n-1$,  we have
    \begin{multline}
      d^{d^{n-1}\prod_{i=1}^{n-1}d_i}
      \Disc(f_1\circ g,\ldots,f_{n-1}\circ g)
      = \Disc(f_1,\ldots,f_{n-1})^{d^{n-1}} \notag \\
      \Res(g_1,\ldots,g_n)^{d_1\ldots d_{n-1}
        ((\sum_{i=1}^{n-1}(d_i-1))-1)}
      \Res(f_1\circ g,\ldots,f_{n-1}\circ g,J(g_1,\ldots,g_n)).
      \end{multline}
  \end{prop}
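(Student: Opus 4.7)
The plan is to apply the identity \eqref{J(F)} twice—once to the composed system $f_1\circ g,\ldots,f_{n-1}\circ g$ and once to the original $f_1,\ldots,f_{n-1}$—and tie the two applications together through the base change formula for the resultant \cite[\S 5.12]{J91}. After reducing to the generic case by specialization, I would introduce a generic linear form $F_0 := \sum_{i=1}^n U_iX_i$ with new indeterminates $U_i$ and set $F := F_0\circ g = \sum_i U_i g_i$, which is homogeneous of degree $d$. The first application of \eqref{J(F)}, to $f_1\circ g,\ldots,f_{n-1}\circ g$ (whose degrees are $dd_i$) with this $F$, produces directly the factor $d^{d^{n-1}\prod_i d_i}\Disc(f\circ g)$ that is expected on the left-hand side of the target formula.

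The key algebraic input is the chain rule for Jacobians, $J(f\circ g, F_0\circ g) = J(f_1,\ldots,f_{n-1},F_0)\circ g \cdot J(g_1,\ldots,g_n)$, obtained by factoring the $n\times n$ Jacobian of $(f\circ g, F_0\circ g)$ as the product of the Jacobian of $(f, F_0)$ evaluated at $g$ and the Jacobian of $g$. Combined with multiplicativity of the resultant \cite[\S 5.7]{J91}, this splits $\Res(f\circ g, J(f\circ g, F))$ as $\Res(f\circ g, J(f,F_0)\circ g)\cdot\Res(f\circ g, J(g_1,\ldots,g_n))$, the second factor being precisely the one appearing in the target formula. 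The base change formula then rewrites both $\Res(f\circ g, J(f,F_0)\circ g)$ and $\Res(f\circ g, F)$ as powers of $\Res(g_1,\ldots,g_n)$ multiplied respectively by $\Res(f, J(f,F_0))^{d^{n-1}}$ and $\Res(f, F_0)^{d^{n-1}}$. A second application of \eqref{J(F)}, this time to $f_1,\ldots,f_{n-1}$ with the linear form $F_0$ (of degree $1$, so no extra $D$-factor appears), gives $\Res(f, J(f,F_0)) = \Disc(f)\Res(f,F_0)$, which brings $\Disc(f)^{d^{n-1}}$ into the picture.

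Gathering everything yields an identity in which both sides carry common factors $\Res(f,F_0)^{d^{n-1}}$ and $\Res(g_1,\ldots,g_n)^{d_1\ldots d_{n-1}}$; these may be cancelled because $\Res(f,F_0)$ and $\Res(g_1,\ldots,g_n)$ are nonzero divisors by Proposition \ref{prop:resultant}. The exponent of $\Res(g_1,\ldots,g_n)$ that survives on the right is $d_1\ldots d_{n-1}(\deg J(f,F_0) - 1) = d_1\ldots d_{n-1}(\sum_i(d_i-1) - 1)$, matching the statement. The main obstacle is the careful bookkeeping of exponents across the two applications of \eqref{J(F)} and the base change formula; the crucial choice that makes all the parasitic factors cancel is taking $F_0$ generic linear, so that $\deg F_0 = 1$ and $\Res(f,F_0)$ is both irreducible and a nonzero divisor in the universal ring.
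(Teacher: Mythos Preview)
Your proposal is correct and follows essentially the same route as the paper's own proof: both reduce to the generic case, introduce the linear form $F_0=\sum U_iX_i$ with $F=F_0\circ g$, apply \eqref{J(F)} to the composed system, use the chain rule $J(f\circ g,F_0\circ g)=J(f,F_0)\circ g\cdot J(g)$ together with multiplicativity and the base change formula \cite[\S 5.12]{J91}, and then cancel. Your write-up is slightly more explicit than the paper in spelling out the second application of \eqref{J(F)} and the nonzero-divisor justification for the cancellation, but there is no substantive difference in approach.
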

  \begin{proof} As always, we assume that we are in the generic
    situation over the integers, which is sufficient to prove this formula. Let us
    introduce the polynomials $F:=U_1g_1+\cdots +U_ng_n$ which is
    homogeneous of degree $d$ in the variables $X_1,\ldots,X_n$. Then
    by \eqref{J(F)} we get
    $$d^{d^{n-1}\prod_{i=1}^{n-1}d_i}\Disc(f_1\circ
    g,\ldots,f_{n-1}\circ g)=
    \frac{\Res(f_1\circ g,\ldots, f_{n-1}\circ g,J(f\circ g,F))}
    {\Res(f_1\circ g,\ldots, f_{n-1}\circ g,F)}.$$
    But $J(f_1\circ g,\ldots,f_{n-1}\circ g,
    F)=J(f_1,\ldots,f_{n-1},\sum_{i=1}^n U_iX_i)\circ g \times
    J(g_1,\ldots,g_n)$ and $\deg(J(g_1,\ldots,g_n))=n(d-1)\geq 1$. By
    the  base change formula for the resultant \cite[\S 5.12]{J91} we deduce
    that, denoting $l:=\sum_{i=1}^n U_iX_i$ and using obvious notation,
    \begin{multline*}
      \Res(f\circ g,J(f\circ g,F))=\Res(f\circ 
        g,J(f_1,\ldots,f_{n-1},l)\circ g)\Res(f\circ g,J(g))= \\ 
        \Res(f,J(f_1,\ldots,f_{n-1},l))^{d^{n-1}}\Res(g_1,\ldots,g_n)^{d_1\ldots d_{n-1}(\sum_{i=1}^{n-1}(d_i-1))}
        \Res(f\circ g,J(g))  
\end{multline*}
and
\begin{align}
 \Res(f_1\circ g,\ldots, f_{n-1}\circ g,F) =
 \Res(f_1,\ldots,f_{n-1},l)^{d^{n-1}}
 \Res(g_1,\ldots,g_{n})^{d_1,\ldots d_{n-1}}. \notag
      \end{align}
Therefore the claimed formula follows.    
  \end{proof}

This first base change formula is not completely
factorized. Indeed, it is not hard to see   
    that $\Res(g_1,\ldots,g_n)^{d_1\ldots d_{n-1}}$ divides
    $\Res(f\circ g, J(g_1,\ldots, g_n))$ and this latter must contain other
    factors by degree evidence. Let us state this property more precisely.

\begin{lem}\label{lem:basechange} There exists a polynomial in the coefficients of the $f_i$'s and
	 the $g_i$'s, denoted $K(f,g)$, such that
	\begin{equation*}
		\Res(f\circ g, J(g_1,\ldots, g_n))=d^{d^{n-1}\prod_{i=1}^{n}d_i}\Res(g_1,\ldots,g_n)^{\prod_{i=1}^{n}d_i}K(f,g).
	\end{equation*}
\end{lem}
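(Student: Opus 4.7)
Plan: The strategy is to combine Proposition \ref{prop:basechange} with a divisibility argument for $\Res(f\circ g, J(g))$ in the universal coefficient ring $A = \ZZ[U_{i,\alpha}, V_{j,\beta}]$ over $\ZZ$, where $f_i$ and $g_j$ carry generic coefficients; the statement for a general ring $R$ then follows by specialization, since both sides of the proposed identity are stable under it.

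The first step is to establish that $\Res(g_1,\ldots,g_n)^{\prod_{i=1}^{n-1}d_i}$ divides $\Res(f\circ g, J(g))$ in $A$. Since $\Res(g_1,\ldots,g_n)$ is irreducible in the UFD $A$, this reduces to lower-bounding its $\Res(g)$-adic valuation. Two observations are key: because each $f_i$ is homogeneous of positive degree $d_i$ with no constant term, $f_i\circ g \in (g_1,\ldots,g_n)^{d_i}$ as ideals of $A[X_1,\ldots,X_n]$; and by Euler's identity applied to each $g_j$, $J(g_1,\ldots,g_n)$ vanishes at any common projective zero of $g_1,\ldots,g_n$, such a zero lying in the kernel of the Jacobian matrix. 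Pass to a generic point $\eta$ of the irreducible hypersurface $V(\Res(g)) \subset \Spec(A)$; the specialized system $(g_1(\eta),\ldots,g_n(\eta))$ admits a single simple common projective root $\xi$, and $(g_1,\ldots,g_n) = \mathfrak m_\xi$ in the regular local ring $\mathcal O_\xi$. Hence $f_i\circ g \in \mathfrak m_\xi^{d_i}$ and $J(g)\in\mathfrak m_\xi$, and the classical multiplicity estimate in a regular local ring yields
\[
\ell_{\mathcal O_\xi}\bigl(\mathcal O_\xi/(f_1\circ g, \ldots, f_{n-1}\circ g, J(g))\bigr) \;\geq\; \prod_{i=1}^{n-1}d_i.
\]
Via the standard relation between the $\Res(g)$-adic valuation of $\Res(f\circ g, J(g))$ at $\eta$ and this intersection multiplicity at $\xi$ \cite[\S 5.6]{J91}, the divisibility follows. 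We may therefore write $\Res(f\circ g, J(g)) = \Res(g)^{\prod_{i=1}^{n-1}d_i}\widetilde K(f,g)$ with $\widetilde K \in A$.

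For the second step, substituting into Proposition \ref{prop:basechange} gives
\[
d^{d^{n-1}\prod_{i=1}^{n-1} d_i}\Disc(f\circ g) = \Disc(f)^{d^{n-1}}\,\Res(g)^{d_1\cdots d_{n-1}\sum_{i=1}^{n-1}(d_i-1)}\,\widetilde K(f,g).
\]
Both $\Disc(f)\in\ZZ[U_{i,\alpha}]$ and $\Res(g)\in\ZZ[V_{j,\beta}]$ are primitive polynomials over $\ZZ$ (via the normalization $\Res(X_1^d,\ldots,X_n^d)=1$ for the latter, and for the former via the specialization to products of generic linear forms together with Corollary \ref{det2}, as in the proof that $\Disc(f_1,\ldots,f_{n-1})$ is a nonzero divisor), hence coprime to any prime dividing $d$ in the UFD $A$. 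Comparing $d$-adic valuations in the displayed identity therefore forces $d^{d^{n-1}\prod_{i=1}^{n-1}d_i}$ to divide $\widetilde K(f,g)$ in $A$. Setting $K(f,g) := \widetilde K(f,g)/d^{d^{n-1}\prod_{i=1}^{n-1} d_i}\in A$ yields the lemma.

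The main obstacle is Step 1: rigorously identifying the $\Res(g)$-adic valuation of $\Res(f\circ g, J(g))$ with a local intersection multiplicity at the simple common projective root attached to the generic point of $V(\Res(g))$. Once this identification is in place, the multiplicity lower bound is the classical estimate that an $\mathfrak m$-primary ideal $(a_1,\ldots,a_n)$ in a regular local ring of dimension $n$ with $a_i\in\mathfrak m^{e_i}$ has colength at least $\prod_i e_i$, and the $d$-power extraction in Step 2 is a routine valuation argument.
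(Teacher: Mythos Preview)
Your Step~1 follows a genuinely different route from the paper. The paper avoids intersection multiplicities entirely: it observes that $X_nJ(g_1,\ldots,g_n)\in(g_1,\ldots,g_n)$ (Euler), so together with $f_i\circ g\in(g_1,\ldots,g_n)^{d_i}$ the generalized divisibility lemma \cite[Proposition~6.2.1]{J91} gives directly that $\Res(g)^{\prod d_i}$ divides
\[
\Res(f\circ g,\,X_nJ(g))=\Res(f\circ g,\,J(g))\cdot\Res(f\circ g,\,X_n).
\]
A short specialization ($f_i\mapsto X_i^{d_i}$) then shows $\Res(g)$ and $\Res(f\circ g,X_n)$ are coprime --- the latter becomes a power of $\Res(g_1,\ldots,g_{n-1},X_n)$, independent of $g_n$ --- and Step~1 is finished purely algebraically in two lines.

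Your geometric approach is plausible in outline but has a real gap exactly where you flag it. The relation you invoke between the $\Res(g)$-adic valuation of $\Res(f\circ g,J(g))$ and the local length at $\xi$ is not supplied by \cite[\S5.6]{J91}, which is only a divisibility criterion; making it precise amounts to proving a length formula for the resultant over a DVR, essentially equivalent in strength to the very divisibility lemma the paper invokes. There is also an ambiguity in what $\mathcal O_\xi$ means. If it is the local ring of $\mathbb P^{n-1}_{\kappa(\eta)}$ at $\xi$, its dimension is $n-1$, and the bound $\ell\geq\prod e_i$ for $n$ elements in dimension $n-1$ is simply false (take $(x^{2},x)$ in $k[[x]]$: colength $1$, product of orders $2$). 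If instead $\mathcal O_\xi$ is the local ring in $\mathbb P^{n-1}_R$ with $R=A_{(\Res(g))}$, then the dimension is $n$ and the estimate applies, but you still owe the identification of that length with the valuation, a check that the ideal is $\mathfrak m_\xi$-primary, and control of residue degrees. None of this is insurmountable, but the paper's $X_nJ(g)$ trick bypasses it completely.

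Your Step~2, by contrast, is correct and actually supplies something the paper's own proof omits: the paper only establishes the $\Res(g)$-power factor and says nothing about the factor $d^{\,d^{n-1}\prod d_i}$. Your primitivity argument --- $\Disc(f)$ and $\Res(g)$ are primitive over $\ZZ$, hence not divisible by any integer prime in the UFD $A$, forcing the entire $d$-power on the left of Proposition~\ref{prop:basechange} into $\widetilde K$ --- is precisely what is needed to complete it.
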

\begin{proof} As always, we assume that we are in the generic
    situation over the integers, which is sufficient to prove this formula. For all $i=1,\ldots,n-1$, it is clear that $f_i\circ g \in (g_1,\ldots,g_n)^{d_i}$. Moreover, we also have that $X_nJ(g_1,\ldots, g_n) \in (g_1,\ldots,g_n)$. Therefore, applying the general divisibility lemma for the resultant \cite[Proposition 6.2.1]{J91}, we deduce that
$\Res(g_1,\ldots,g_n)^{\prod_{i=1}^{n}d_i}$ divides 
$$\Res(f\circ g, X_nJ(g_1,\ldots, g_n))=\Res(f\circ g, J(g_1,\ldots, g_n))\Res(f\circ g, X_n).$$
Now, we claim that $\Res(g_1,\ldots,g_n)$ and $\Res(f\circ g, X_n)$	are relatively prime, which concludes the proof. Indeed, $\Res(g_1,\ldots,g_n)$ being irreducible, if it divides $\Res(f\circ g, X_n)$, then it must divides any specialization of this latter resultant where the $g_i$'s are left generic. So, if we specialize each polynomial $f_i$ to $X_i^{d_i}$ then this resultant specialize to $\Res(g_1,\ldots,g_{n-1},X_n)$ which is irreducible and independent of the polynomial $g_n$. Therefore, we obtain a contradiction.
\end{proof}

By gathering Proposition \ref{prop:basechange} and Lemma \ref{lem:basechange}, we are ready to give a base change formula which is completely factorized.

 \begin{thm}\label{thm:bcf} With the notation of Proposition \ref{prop:basechange} and Lemma \ref{lem:basechange}, we have
	\begin{multline}
      \Disc(f_1\circ g,\ldots,f_{n-1}\circ g)
      = \Disc(f_1,\ldots,f_{n-1})^{d^{n-1}} \notag \\
      \Res(g_1,\ldots,g_n)^{d_1\ldots d_{n-1}
        \sum_{i=1}^{n-1}(d_i-1)}
      K(f_1,\ldots,f_{n-1},g_1,\ldots,g_n).
      \end{multline}
The polynomial $K(f,g)$ is homogeneous with respect to the coefficients of the polynomials $g_1,\ldots,g_n$ of degree $$n(n-1)(d-1)d^{n-2}\prod_{i=1}^nd_i$$
and, for all $î=1,\ldots,n-1$, it is homogeneous with respect to the coefficients of the polynomial $f_i$ of degree
$$n(d-1)d^{n-2}\left(\frac{d_1\ldots d_n}{d_i}\right).$$ 
Moreover, if $k$ is a domain then $K(f_1,\ldots,f_{n-1},g_1,\ldots,g_n) \in {}_kA$ satisfies to the following properties:
\begin{itemize}
 \item[i)] $K(f,g)$ is irreducible if $2\neq 0$ in $k$,
\item[ii)] $K(f,g)$ is the square of an irreducible polynomial if $2=0$ in $k$.
 \end{itemize}
 \end{thm}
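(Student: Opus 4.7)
The proof proceeds in three stages. First, I would derive the main identity by substituting the expression for $\Res(f \circ g, J(g_1,\ldots,g_n))$ provided by Lemma \ref{lem:basechange} into the formula of Proposition \ref{prop:basechange}: the factor $d^{d^{n-1}\prod_{i=1}^{n-1}d_i}$ appears on both sides and cancels (the cancellation being first performed over $\ZZ$, where this integer is a nonzero divisor, then propagated by specialization to an arbitrary $k$), and the two exponents of $\Res(g_1,\ldots,g_n)$ combine into $d_1 \cdots d_{n-1}\sum_{i=1}^{n-1}(d_i-1)$.

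Second, the multi-degrees of $K(f,g)$ follow by comparing the multi-degrees of the two sides of this identity. For each index $i$, the partial degree of $\Disc(f_1 \circ g,\ldots,f_{n-1}\circ g)$ in the coefficients $U_{i,\alpha}$ of $f_i$ is computed from Proposition \ref{degree-prop} applied to polynomials of degrees $dd_1,\ldots,dd_{n-1}$, using that each $U_{i,\alpha}$ appears linearly in the coefficients of $f_i \circ g$; subtracting the partial degree of $\Disc(f)^{d^{n-1}}$ yields the claimed partial degree of $K$ in the $U_{i,\alpha}$. A parallel computation, using that each coefficient of $f_i \circ g$ is of $V$-degree $d_i$ and that $\Res(g_1,\ldots,g_n)$ has total $V$-degree $n d^{n-1}$, gives the total $V$-degree of $K$.

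For the irreducibility properties, I would first reduce to the case where $k$ is a UFD via a flatness argument in the spirit of the arguments used at the end of the proofs of Theorems \ref{thm:2<>0} and \ref{thm:2=0}. In the UFD case, $\Disc(f_1,\ldots,f_{n-1})\in k[U]$ is irreducible (by Theorem \ref{thm:2<>0}) and $\Res(g_1,\ldots,g_n)\in k[V]$ is irreducible (by Proposition \ref{prop:resultant}); both therefore remain irreducible in $k[U,V]$ and are pairwise coprime. When $2\neq 0$ in $k$, $\Disc(f\circ g)$ is the image under specialization of the generic prime discriminant of $n-1$ polynomials of degrees $dd_i$, and the strategy is to exhibit specializations separating the three irreducible contributions, for instance taking $g_j=X_j^d$ (which reduces $\Res(g)$ to $1$ by the normalization \eqref{normres} and allows direct analysis of $\Disc(f(X_1^d,\ldots,X_n^d))$), or specializing the $f_i$'s and the $g_j$'s to products of generic linear forms in the spirit of Corollary \ref{det2}, so as to force $K(f,g)$ to be irreducible. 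When $2=0$ in $k$, Theorem \ref{thm:2=0} gives $\Disc(f\circ g)=Q^2$ and $\Disc(f)=P^2$ for prime polynomials $Q\in k[U,V]$ and $P\in k[U]$; observing that the exponent $N:=d_1\cdots d_{n-1}\sum(d_i-1)$ is always even (either some $d_i$ is even, so $\prod d_i$ is even, or all $d_i$ are odd and hence all $d_i-1$ are even, making their sum even), unique factorization in $k[U,V]$ applied to $P^{2d^{n-1}}\Res(g)^N K(f,g)=Q^2$ forces $Q=\pm P^{d^{n-1}}\Res(g)^{N/2}K'$ for some $K'\in k[U,V]$, hence $K=(K')^2$, and the irreducibility of $K'$ is inherited from that of $Q$.

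The principal obstacle is the irreducibility step in the case $2\neq 0$: one must rule out spurious irreducible factors of $K$ depending simultaneously on both sets of variables $U$ and $V$. This will require a careful interplay between two specializations (one collapsing $\Res(g)$, one collapsing $\Disc(f)$ to a known decomposition) combined with the already-computed multi-degrees of $K$, and possibly an induction on the pair $(n,d)$ analogous to the induction on $\sum d_i$ used in the proof of Theorem \ref{thm:2<>0}.
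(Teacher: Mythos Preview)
Your derivation of the identity and the degree computations match the paper. The gap is in the irreducibility arguments.

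In the $2=0$ case you invoke Theorem~\ref{thm:2=0} to write $\Disc(f\circ g)=Q^2$ with $Q$ \emph{prime} in $k[U,V]$. This is not justified: the polynomials $f_i\circ g$ are not generic of degree $dd_i$ (their coefficients are polynomials in the $U$'s and $V$'s, not independent indeterminates), so Theorem~\ref{thm:2=0} only tells you that $\Disc(f\circ g)$ is a square, namely the square of the \emph{specialization} of the generic square root, with no control on its irreducibility. Your own factorization $Q=\pm P^{d^{n-1}}\Res(g)^{N/2}K'$, with $P$ and $\Res(g)$ non-units and $d^{n-1}\geq 1$, is already incompatible with $Q$ being prime. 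Thus while your parity argument correctly yields $K=(K')^2$, the sentence ``the irreducibility of $K'$ is inherited from that of $Q$'' carries no weight. In the $2\neq 0$ case you acknowledge the obstacle yourself; the specializations you propose ($g_j=X_j^d$, or products of linear forms) do not by themselves exclude an irreducible factor of $K$ genuinely mixing the $U$'s and the $V$'s.

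The paper's route is different and treats both characteristics uniformly. It runs an induction on $r=\sum_{i=1}^{n-1}d_i$. The inductive step rests on a multiplicativity property of $K$ (and of its square root $\chi$ when $2=0$) in each slot, inherited directly from the resultant defining $K$ in Lemma~\ref{lem:basechange}: specializing $f_1\mapsto l\cdot f_1'$ sends $K$ to the product of two instances with strictly smaller $r$, which forces irreducibility. The base case $r=n-1$ (all $d_i=1$) is the crux: there one proves the identity $K(f,g)=\Disc(f_1\circ g,\ldots,f_{n-1}\circ g)$ (equation~\eqref{eq:KDisc}), and then three targeted specializations --- $f_i\mapsto X_i$, then $g_n\mapsto 0$, then $g_1\mapsto 0$ --- combined with the covariance formula of Proposition~\ref{prop:covariance} show that any hypothetical factor independent of the $g_j$'s would have to equal two distinct irreducible determinants at once, a contradiction.
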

\begin{proof} The first equality follows directly from Proposition \ref{prop:basechange} and Lemma \ref{lem:basechange}. The computations of the degrees of $K$ can be deduced from this formula and the  degrees for the discriminant and the resultant. Indeed, since for all $i=1,\ldots,n-1$ the polynomial $f_i\circ g$ is homogeneous of degree $dd_i$ in the $X_i$'s, by Proposition \ref{degree-prop} we deduce that $\Disc(f_1\circ g,\ldots,f_{n-1}\circ g)$ is homogeneous of degree 
$$D_i:=d^{n-2}\frac{\prod_{j=1}^{n-1}d_j}{d_i}\left(
(dd_i-1)+\sum_{j=1}^{n-1}(dd_j-1)
\right)$$
with respect to the coefficients of the polynomial $f_i$ and of degree
$$D:=\sum_{i=1}^{n-1}d_iD_i=nd^{n-2}\left(\prod_{j=1}^{n-1}d_j\right)\sum_{i=1}^{n-1}(dd_i-1)$$
with respect to the coefficients of the polynomials $g_1,\ldots,g_n$. Therefore, it follows that $K$ is homogeneous with respect to the coefficients of the polynomial $f_i$ of degree
$$D_i-d^{n-1}\frac{\prod_{j=1}^{n-1}d_j}{d_i}\left(
(d_i-1)+\sum_{j=1}^{n-1}(d_j-1)
\right)=n(d-1)d^{n-2}\left(\frac{d_1\ldots d_n}{d_i}\right)$$
and is homogeneous with respect to the coefficients of the polynomials $g_1,\ldots,g_n$ of degree
$$D-nd^{n-1}\left(\prod_{i=1}^{n-1}d_i\right)\sum_{i=1}^{n-1}(d_i-1)=n(n-1)(d-1)d^{n-2}\prod_{i=1}^nd_i$$
since $\Res(g_1,\ldots,g_n)$ is homogeneous of degree $nd^{n-1}$ with respect to the coefficients of the polynomials $g_1,\ldots,g_n$.

\medskip

Now, we turn to the proof of the irreducibility of $K$. First we observe that it is sufficient to prove the claimed properties in $\Frac(k)$ so that we will always work in a UFD. We begin with the case where $2\neq 0$ in $k$. We will proceed by induction on the integer $r=d_1+d_2+\cdots+d_{n-1}$. The difficult point is actually to prove this irreducibility property for $r=n-1$, that is to say for the case $d_1=\cdots=d_{n-1}=1$. Indeed, let us assume this for a moment and suppose that $r>n-1$. Then,  at least one of the degree $d_i$ is greater or equal to 2 and we can assume without loss of generality that it is $d_1$ by permuting the $f_i$'s if necessary. Consider the specialization that sends $f_1$ to the product of a generic form $l$ and a generic polynomial $f_1'$ of degree $d_1-1$. Lemma \ref{lem:basechange} implies that $K$ has a multiplicativity property with respect to the polynomial $f_1,\ldots,f_n$, so that this specialization sends $K(f_1,\ldots,f_n)$ (we omit the $g_i$'s in the notation for simplicity) to the product $K(l,f_2,\ldots,f_n)K(f_1',f_2,\ldots,f_n).$  Now, if $K$ is reducible then all its irreducible factors depending on the polynomial $f_1$ must depend on $l$ and $f_1'$ after the above specialization. Therefore, since $K(l,f_2,\ldots,f_n)$ are $K(f_1',f_2,\ldots,f_n)$ are both irreducible by our inductive hypothesis and distinct, we deduce that $K(f_1,\ldots,f_n)$ is also irreducible. 

So, it remains to prove that $K$ is irreducible in the case $d_1=\cdots=d_{n-1}=1$. Set $f_i=\sum_{j=1}^n U_{i,j}X_j$ for all $i=1,\ldots,n-1$, introduce new indeterminates $W_1,\ldots,W_n$ and define the determinant
$$\Lambda:=\left| 
\begin{array}{cccc}
	U_{1,1} & U_{1,2} & \cdots & U_{1,n} \\
	U_{2,1} & U_{2,2} & \cdots & U_{2,n} \\
	\vdots  & \vdots &  & \vdots \\
	U_{n-1,1} & U_{n-1,2} & \cdots & U_{n-1,n} \\
	W_1 & W_2 & \cdots & W_n
\end{array}
\right|.$$
By \eqref{J(F)} and the covariance property of resultants \cite[\S 5.11]{J91}, we have 
\begin{multline*}
	\Res\left(f_1\circ g, \ldots, f_{n-1}\circ g,J\left(f_1\circ g, \ldots, f_{n-1}\circ g,\sum_{i=1}^n W_ig_i\right)\right) \\
	=d^{d^{n-1}}	\Res\left(f_1\circ g, \ldots, f_{n-1}\circ g,\sum_{i=1}^n W_ig_i\right)\Disc(f_1\circ g, \ldots, f_{n-1}\circ g) \\
	= d^{d^{n-1}} \Lambda^{d^{n-1}}\Res(g_1,\ldots,g_n) \Disc(f_1\circ g, \ldots, f_{n-1}\circ g).
\end{multline*}
On the other hand, since $\Lambda.J(g_1,\ldots,g_n)=J(f_1\circ g, \ldots, f_{n-1}\circ g,\sum_{i=1}^n W_ig_i)$ we obtain that
\begin{align*}
	\lefteqn{\Res\left(f_1\circ g, \ldots, f_{n-1}\circ g,J\left(f_1\circ g, \ldots, f_{n-1}\circ g,\sum_{i=1}^n W_ig_i\right)\right)} \\
	& =\Res\left(f_1\circ g, \ldots, f_{n-1}\circ g,\Lambda.J(g_1,\ldots,g_n)\right) \\
	& = \Lambda^{d^{n-1}}\Res(f_1\circ g, \ldots, f_{n-1}\circ g,J(g_1,\ldots,g_n)) \\
	& =  \Lambda^{d^{n-1}}d^{d^{n-1}}\Res(g_1,\ldots,g_n)K(f_1,\ldots,f_n,g_1,\ldots,g_n).
 \end{align*}
where the last equality follows from Lemma \ref{lem:basechange}. Therefore, by comparison of these two computations (in the generic case over the integers and then by specialization) we deduce that
\begin{equation}\label{eq:KDisc}
	K(f_1,\ldots,f_n,g_1,\ldots,g_n)=\Disc(f_1\circ g, \ldots, f_{n-1}\circ g)
\end{equation}
under our assumption $d_1=\cdots=d_{n-1}=1$. In order to show that this discriminant is irreducible, we will compare several specializations. 

We begin with the specialization of the polynomials $f_1,\ldots,f_{n-1}$ to $X_1,\ldots,X_{n-1}$ respectively. Under this specialization, the polynomial $f_i\circ g$ is sent to $g_i$ for all $i=1,\ldots,n-1$ and hence $\Disc(f_1\circ g,\ldots,f_{n-1}\circ g)$ is sent to $\Disc(g_1,\ldots,g_{n-1})$ which is known to be an irreducible polynomial in the coefficients of the polynomials $g_1,\ldots,g_{n-1}$ by Theorem \ref{thm:2<>0}.  It follows that if $\Disc(f_1\circ g,\ldots,f_{n-1}\circ g)$ is reducible, then necessarily there exists a non constant and irreducible polynomial $P(U_{i,j})$ which is independent of the coefficients of the polynomials $g_1,\ldots,g_n$ and that divides $\Disc(f_1\circ g,\ldots,f_{n-1}\circ g)$.

Now, consider the specialization that sends the polynomial $g_n$ to $0$. Then, $\Disc(f_1\circ g,\ldots,f_{n-1}\circ g)$ is sent to 
\begin{multline*}
	\Disc\left(\sum_{i=1}^{n-1}U_{1,j}g_j,\ldots,\sum_{i=1}^{n-1}U_{n-1,j}g_j \right)=\\
	\left| 
	\begin{array}{ccc}
		U_{1,1} & \cdots & U_{1,n-1} \\
		\vdots & & \vdots \\
		U_{n-1,1} & \cdots & U_{n-1,n-1}
	\end{array}
	\right|^{n(d-1)d^{n-2}}\times \Disc(g_1,\ldots,g_{n-1})	
\end{multline*}
where the equality holds by the covariance property given in Proposition \ref{prop:covariance}. We deduce that $P(U_{i,j})$ is equal to the determinant of the matrix $(U_{i,j})_{1\leq i,j\leq n-1}$ up to multiplication by an invertible element in $k$. But if we consider the specialization that sends the polynomial $g_1$ to $0$, then by a similar argument we get that 
$\Disc(f_1\circ g,\ldots,f_{n-1}\circ g)$ is sent to 
\begin{multline*}
	\Disc\left(\sum_{i=2}^{n}U_{1,j}g_j,\ldots,\sum_{i=2}^{n}U_{n-1,j}g_j \right)=\\
	\left| 
	\begin{array}{ccc}
		U_{1,2} & \cdots & U_{1,n} \\
		\vdots & & \vdots \\
		U_{n-1,2} & \cdots & U_{n-1,n}
	\end{array}
	\right|^{n(d-1)d^{n-2}}\times \Disc(g_2,\ldots,g_{n}).	
\end{multline*}
Therefore, we deduce that $P(U_{i,j})$ should also be equal to the determinant of the matrix $(U_{i,j})_{1\leq i,j\leq n-1}$ up to multiplication by an invertible element in $k$ and hence we get a contradiction. This concludes the proof of the irreducibility of $K$ when $2\neq 0$ in $k$. 

\medskip

Now, we turn to the proof that $K$ is the square of an irreducible polynomial under the assumption $2= 0$ in $k$.
By Theorem \ref{thm:2=0}, the discriminant is the square of a polynomial, irreducible in the generic case, that we will denote by $\Delta$. Now, define the polynomial $\chi$ by the equality
  	\begin{multline}
    \Delta(f_1\circ g,\ldots,f_{n-1}\circ g)
    = \Delta(f_1,\ldots,f_{n-1})^{d^{n-1}} \notag \\
    \Res(g_1,\ldots,g_n)^{\frac{1}{2}d_1\ldots d_{n-1}
      \sum_{i=1}^{n-1}(d_i-1)}
    \chi(f_1,\ldots,f_{n-1},g_1,\ldots,g_n)
    \end{multline}
so that $K(f_1,\ldots,f_{n-1},g_1,\ldots,g_n)=\chi(f_1,\ldots,f_{n-1},g_1,\ldots,g_n)^2$. To prove that $\chi$ is an irreducible polynomial we can proceed similarly to the case where $2\neq 0$ in $k$: we proceed by induction on the integer $r=d_1+\cdots+d_{n-1}\geq n-1$. Assuming for a moment that the statement holds for $r=n-1$, then the reasoning is exactly the same: $\chi$ inherits of a multiplicative property from $K$ and hence by specializing one polynomial of degree $\geq 2$, say $f_1$, to the product of a linear form $l$ and a polynomial $f_1'$ of degree $d_1-1$ then we conclude that $\chi$ is irreducible. 

To prove that $\chi$ is indeed irreducible when $d_1=\cdots=d_{n-1}=1$, we also proceed similarly to the case where $2\neq 0$. Using \eqref{eq:KDisc} that holds in the generic case other the integers, we deduce that there exists $\epsilon \in k$ such that $\epsilon^2=1$ and
$$\chi(f_1,\ldots,f_{n-1},g_1,\ldots,g_n)=\epsilon \Delta(f_1\circ g,\ldots,f_{n-1}\circ g)$$
in $k$ under the assumption $d_1=\cdots=d_{n-1}=1$. From here, we conclude that $\chi$ is irreducible by exploiting, as in the case $2\neq 0$, the three specializations $f_i\mapsto X_i$ for all $i=1,\ldots,n-1$, then $g_n\mapsto 0$ and finally $g_1\mapsto 0$, the argumentation being the same.
\end{proof}

\section{The discriminant of a hypersurface}\label{codim1}

In this section we study the discriminant of a single homogeneous polynomial in several
variables.
 
\medskip

Let $k$ be a commutative ring and $f$ be a
homogeneous polynomial of degree $d \geq 2$ in the polynomial ring
$k[X_1,\ldots,X_n]$  $(n\geq 1$). We will denote by $\partial_i f$ the partial
derivative of  $f$ with respect to the variable $X_i$. Recall the classical Euler identity  
$$df=\sum_{i=1}^n X_i\partial_i f.$$
We will also often denote by $\overline{f}$, respectively $\tilde{f}$, the polynomial
$f(X_1,\ldots,X_{n-1},0)$, respectively $f(X_1,\ldots,X_{n-1},1)$, in
$k[X_1,\ldots,X_{n-1}]$. 

We aim to study the quotient ring
$$\quotient{k[X_1,\ldots,X_n]}{(f,\partial_1 f,\cdots,\partial_{n} f)}$$ 
and its associated inertia forms of degree 0 with respect to  $\mm:=(X_1,\ldots,X_n)$. 
The geometric interpretation of the {\it generic case} over the commutative ring $k$ is the following. Let $d$ be an integer greater or equal to 2. We suppose that 
$$f(X_1,\ldots,X_n)=\sum_{|\alpha|=d} U_\alpha X^\alpha$$
and denote ${}_kA:=k[U_\alpha \,|\,  |\alpha|=d]$, ${}_kC:= {}_kA[X_1,\ldots,X_n]$ and
$${}_kB:= \quotient{{}_kC}{(f,\partial_1 f,\cdots,\partial_{n} f)}$$
The closed image of the canonical projection $\pi$ of $\Proj( _kB)$ to $\Spec(_kA)$ is defined by the ideal $H^0_\mm(_kB)_0$; roughly speaking, it parameterizes all the homogeneous forms of degree $d$ with coefficients in $k$ whose zero locus has a singular point.

%%%%%%%%%%%%%%%%%%%%%%%%%%%%%%%%%%%%%
\subsection{Regularity of certain sequences}
We suppose that we are in the generic case over the commutative ring $k$. We begin with two technical results. Given a sequence of elements $r_1,\ldots,r_s$ in a ring $R$, we will denote by $H_i(r_1,\ldots,r_s;R)$ the $i^{\textrm{th}}$ homology group of the Koszul complex associated to this sequence.

\begin{lem}\label{lem:Hi>2=0} For all $i\geq 2$ we have $H_i(f,\partial_1 f,\ldots,\partial_{n} f;_kC)=0$.
\end{lem}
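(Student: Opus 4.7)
The plan is to use the mapping-cone decomposition $K_\bullet(f,\partial_1 f,\ldots,\partial_n f;{}_kC)=\mathrm{Cone}\bigl(f\cdot\colon K_\bullet(\partial_\bullet f;{}_kC)\to K_\bullet(\partial_\bullet f;{}_kC)\bigr)$ and the associated long exact sequence
\[ \cdots \to H_i(\partial_\bullet f;{}_kC)\xrightarrow{f\cdot}H_i(\partial_\bullet f;{}_kC)\to H_i(f,\partial_\bullet f;{}_kC)\to H_{i-1}(\partial_\bullet f;{}_kC)\xrightarrow{f\cdot}\cdots \]
to reduce the lemma to two sub-claims: (a) $H_i(\partial_1 f,\ldots,\partial_n f;{}_kC)=0$ for every $i\geq 2$, and (b) multiplication by $f$ is injective on $H_1(\partial_1 f,\ldots,\partial_n f;{}_kC)$.

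For (a) I would prove the stronger statement that $\partial_1 f,\ldots,\partial_{n-1} f$ (dropping one of the $n$ partials) already form a regular sequence in ${}_kC$; this forces the ideal $(\partial_1 f,\ldots,\partial_n f)$ to have grade $\geq n-1$ on ${}_kC$, whence the standard Koszul estimate yields $H_i(\partial_\bullet f;{}_kC)=0$ for $i>n-(n-1)=1$. The regularity of $\partial_1 f,\ldots,\partial_{n-1} f$ is obtained by a triangular Kronecker-type substitution analogous to \eqref{BX}: since $\EE_i:=U_{e_i+(d-1)e_n}$ appears in $\partial_i f$ with coefficient $X_n^{d-1}$ and in no other $\partial_j f$ with $j\in\{1,\ldots,n-1\}\setminus\{i\}$, the assignment $V_i:=\partial_i f/X_n^{d-1}$ gives a unipotent change of generators of ${}_kC_{X_n}$ under which our sequence becomes $X_n^{d-1}V_1,\ldots,X_n^{d-1}V_{n-1}$, trivially regular because $X_n$ is a unit and $V_1,\ldots,V_{n-1}$ are independent polynomial variables. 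Transferring the regularity back to ${}_kC$ reduces to showing that $X_n$ is a non-zero-divisor on each intermediate quotient ${}_kC/(\partial_1 f,\ldots,\partial_{i-1} f)$, which in turn follows from an inertia-form argument parallel to Proposition \ref{prop:Bdomain} and Corollary \ref{cor:TFprimedim1}.

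For (b), Euler's identity $df=\sum_i X_i\partial_i f\in(\partial_1 f,\ldots,\partial_n f)$ makes multiplication by $df$ zero on $H_1(\partial_\bullet f;{}_kC)$. When $d$ is invertible in $k$ the partials form a genuine regular sequence, $H_1(\partial_\bullet f;{}_kC)=0$, and (b) is vacuous. When $d$ is a zero-divisor in $k$, however, the Euler cycle $(X_1,\ldots,X_n)$ produces nontrivial classes in $H_1(\partial_\bullet f;{}_kC)$: in the test case $n=d=2$, $k=\mathbb{F}_2$ one computes $H_1(\partial_\bullet f;{}_kC)\cong{}_kC/(U_{11})$, and there $f\equiv U_{20}X_1^2+U_{02}X_2^2\pmod{U_{11}}$, which is manifestly a non-zero-divisor.

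The main obstacle of the proof is precisely sub-claim (b) in this non-invertible-$d$ regime: one must give a clean universal description of $H_1(\partial_\bullet f;{}_kC)$, essentially identifying its annihilator with an explicit ideal of coefficients, so that $f$-injectivity reduces to the primitivity of $f$ modulo that annihilator, uniformly in $k$. Granting this, the two sub-claims feed into the long exact sequence to deliver the stated vanishing $H_i(f,\partial_\bullet f;{}_kC)=0$ for all $i\geq 2$.
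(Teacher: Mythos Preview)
Your reduction via the cone decomposition is correct, and given your sub-claim (a) the long exact sequence shows that $H_i(f,\partial_\bullet f;{}_kC)=0$ for $i\geq 3$ automatically, while $H_2(f,\partial_\bullet f;{}_kC)\cong\ker\bigl(f\cdot\mid H_1(\partial_\bullet f;{}_kC)\bigr)$. So your sub-claim (b) is not a side condition: it \emph{is} the case $i=2$ of the lemma (modulo (a)). You acknowledge that (b) is the ``main obstacle'' and offer only a test case; as written, the proof is incomplete precisely at the point that carries the content.

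The paper avoids this difficulty by not peeling off $f$. It keeps $f$ in the sequence and observes that in ${}_kC_{X_n}$ the $n$ elements $\partial_1 f,\ldots,\partial_{n-1}f,f$ (rather than $\partial_1 f,\ldots,\partial_n f$) form a regular sequence: the Kronecker-style elimination uses the coefficients $\EE_1,\ldots,\EE_{n-1}$ for the partials \emph{and} $\EE_n$ for $f$ itself. Hence $H_i(f,\partial_\bullet f;{}_kC)_{X_j}=0$ for every $i\geq 2$ and every $j$, and a spectral-sequence comparison of $H^\bullet_\mm$ applied to the Koszul complex (whose terms are free, so have no $\mm$-torsion) forces the global vanishing. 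No analysis of $H_1(\partial_\bullet f;{}_kC)$ is required. In fact the paper later \emph{derives} your (b) from the lemma (see the proof of Proposition~\ref{prop:regular-sequences}(ii), where $H_2(K_\bullet)=0$ is used to conclude that $f$ is a nonzero divisor on $H_1(M_\bullet)$); your approach therefore reverses the logical order of the paper, and the step you are missing is exactly the one the paper obtains as a consequence.

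A secondary issue: your transfer of regularity of $\partial_1 f,\ldots,\partial_{n-1}f$ from ${}_kC_{X_n}$ back to ${}_kC$ needs $X_n$ to be a nonzero divisor on each intermediate quotient ${}_kC/(\partial_1 f,\ldots,\partial_{i}f)$, not just on the full quotient. The results you cite (Proposition~\ref{prop:Bdomain} and Corollary~\ref{cor:TFprimedim1}) concern a different ideal in Section~\ref{dim0} and do not address these intermediate quotients. This can likely be repaired, but the cleaner route---and the one the paper takes---is not to transfer at all: show the Koszul homology vanishes after localizing at every $X_j$ and invoke the spectral-sequence argument once.
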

\begin{proof} Let us emphasize some coefficients of $f$ by rewriting it as
$$f(X_1,\ldots,X_n)=g(X_1,\ldots,X_n)+\sum_{i=1}^{n}\EE_i X_iX_n^{d-1}$$
where $g \in {}_kC$. Then, it appears that the sequence $(\partial_1 f,\ldots,\partial_{n-1} f,f)$ is, in this order, regular in the ring ${}_kC_{X_n}$. Indeed, the quotient by $\partial_1 f$ amounts to express $\EE_1$ in $k[U_\alpha \,|\, U_\alpha\neq \EE_1,\ldots,\EE_n][X_1,\ldots,X_n]_{X_n}$, then the quotient by $\partial_2 f$ amounts to express $\EE_2$ and so on, to end with the quotient by $f$ that amounts to express $\EE_n$. From this property and the well known properties of the Koszul complex, it follows that 
\begin{equation*}
H_i(f,\partial_1 f,\ldots,\partial_{n} f;{}_kC)_{X_n}=0 \text{ for all } i\geq 2.
\end{equation*}
But we can argue similarly by choosing another variable $X_j$ instead of $X_n$ and therefore we actually deduce that
\begin{equation}\label{Hi>2=0}
H_i(f,\partial_1 f,\ldots,\partial_{n} f;{}_kC)_{X_j}=0 \text{ for all } i\geq 2 \text{ and } j=1,\ldots,n.
\end{equation}

Now, the consideration of the two spectral sequences 
\begin{align*} 
'E^{p,q}_1  =  H^q_\mm(K^\bullet(f,\partial_1 f,\ldots,\partial_{n} f;{}_kC)) \Longrightarrow E^n=H^n_\mm(K^\bullet(f,\partial_1 f,\ldots,\partial_{n} f;{}_kC)) \\
 ''E^{p,q}_2  =  H^p_\mm(H^q(f,\partial_1 f,\ldots,\partial_{n} f;{}_kC)) \Longrightarrow E^n=H^n_\mm(K^\bullet(f,\partial_1 f,\ldots,\partial_{n} f;{}_kC))
\end{align*}
shows that for all $i\geq 2$ we have $H_i(f,\partial_1 f,\ldots,\partial_{n} f;{}_kC)=0$, as claimed.
\end{proof}

\begin{prop}\label{prop:regular-sequences} The two following statements hold:
 \begin{itemize}
  \item[\rm(i)] For all  $i\in \{1,\ldots,n\}$ the sequence 
$(f,\partial_1 f,\ldots,\widehat{\partial_i f},\ldots,\partial_{n} f)$
is regular in the ring ${}_kC$.
\item[\rm (ii)] If $d$ is a nonzero divisor in $k$ then the sequence
$(\partial_1 f,\ldots,\partial_n f)$
is regular in the ring ${}_kC$.
 \end{itemize}
\end{prop}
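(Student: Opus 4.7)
The plan is to mirror the two-step argument of Lemma \ref{lem:Hi>2=0}: first establish regularity in each localization ${}_kC_{X_j}$ by direct elimination of suitable pivot coefficients, then globalize by comparing the two hypercohomology spectral sequences of $\mathbf{R}\Gamma_\mm$ applied to the corresponding Koszul complex.

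For step one, fix $j \in \{1,\ldots,n\}$ and let $\mathcal{H}_k$ denote the coefficient of $X_k X_j^{d-1}$ in $f$ for $k \neq j$, and $\mathcal{H}_j$ the coefficient of $X_j^d$ in $f$. Direct differentiation gives
$$\partial_k f = \mathcal{H}_k X_j^{d-1} + H_k \ (k \neq j), \qquad \partial_j f = d\mathcal{H}_j X_j^{d-1} + (d-1)\sum_{k\neq j}\mathcal{H}_k X_k X_j^{d-2} + H_j,$$
$$f = \mathcal{H}_j X_j^d + \sum_{k\neq j}\mathcal{H}_k X_k X_j^{d-1} + G,$$
where $H_1,\ldots,H_n,G$ are free of any $\mathcal{H}_\ell$. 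In ${}_kC_{X_j}$ the factor $X_j^{d-1}$ is a unit, so each relation $\partial_k f = 0$ with $k \neq j$ successively eliminates $\mathcal{H}_k$, and the resulting quotient is a localization of a polynomial ring over $k$, hence flat over $k$. For (ii), after eliminating every such $\mathcal{H}_k$, the element $\partial_j f$ reduces to $d\mathcal{H}_j X_j^{d-1} + (\text{terms independent of } \mathcal{H}_j)$; its leading coefficient $dX_j^{d-1}$ in $\mathcal{H}_j$ is a nonzero divisor exactly when $d$ is one in $k$, so $(\partial_1 f,\ldots,\partial_n f)$ is a regular sequence in every ${}_kC_{X_j}$. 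For (i), by permutation of the variables we may take the omitted index to be $i = n$; the case $j = n$ is precisely the regularity established in Lemma \ref{lem:Hi>2=0}. For $j \neq n$, I order the sequence as $(\partial_k f)_{k \notin \{j,n\}},\, f,\, \partial_j f$, use the $\partial_k f$'s to eliminate the $\mathcal{H}_k$'s with $k \notin \{j,n\}$, then use $f$ to eliminate $\mathcal{H}_j$ (its leading coefficient $X_j^d$ in $\mathcal{H}_j$ being a unit), and finally examine $\partial_j f$: substituting the expression obtained for $\mathcal{H}_j$ into $d\mathcal{H}_j X_j^{d-1}$ produces the contribution $-d\mathcal{H}_n X_n X_j^{d-2}$, which combines with the surviving $(d-1)\mathcal{H}_n X_n X_j^{d-2}$ to leave $-\mathcal{H}_n X_n X_j^{d-2} + (\text{terms independent of } \mathcal{H}_n)$; the leading coefficient $-X_n X_j^{d-2}$ in $\mathcal{H}_n$ is a nonzero divisor \emph{independently of} $d$, so $\partial_j f$ is a nonzero divisor and (i) also holds in every ${}_kC_{X_j}$.

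For step two, let $K^\bullet$ denote the cohomological Koszul complex of the $n$-element sequence under consideration. From step one, each $H^q(K^\bullet)$ with $q < n$ vanishes after localization at every $X_j$, so every element of it is killed by some power of each $X_j$ and therefore by some power of $\mm$ by pigeonhole. The \v{C}ech-complex computation of local cohomology then forces $H^p_\mm(H^q(K^\bullet)) = 0$ for $p \geq 1$ and $q < n$, with no Noetherian hypothesis on $k$. Exactly as in the proof of Lemma \ref{lem:Hi>2=0}, the two spectral sequences converging to $\mathbf{R}\Gamma_\mm(K^\bullet)$ now force the vanishing globally: the first, $'E_1^{p,q} = H^q_\mm(K^p)$, collapses onto the row $q = n$ because $H^q_\mm({}_kC) = 0$ for $q \neq n$ (the variables $X_1,\ldots,X_n$ forming a regular sequence in ${}_kC$), which gives $H^m_\mm(K^\bullet) = 0$ for $m < n$; the second, $''E_2^{p,q} = H^p_\mm(H^q(K^\bullet))$, reduces to $H^m_\mm(K^\bullet) = H^m(K^\bullet)$ for $m < n$. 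Combining, $H^q(K^\bullet) = 0$ for every $q < n$, which is the regularity of the sequence in ${}_kC$.

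The only delicate point is the cancellation $-d + (d-1) = -1$ appearing in step one for (i) when $j \neq i$: it is precisely this cancellation of $d$ that allows (i) to hold without any hypothesis on $d$, while the absence of such a cancellation in the analogous computation for (ii) makes the nonzero-divisor hypothesis on $d$ essential there.
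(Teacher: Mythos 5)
Your proof is correct, but it takes a genuinely different route from the paper's. The paper deduces both statements from Lemma \ref{lem:Hi>2=0} by homological bootstrapping: it writes the full Koszul complex $K_\bullet(f,\partial_1 f,\ldots,\partial_n f)$ as $L_\bullet\otimes K_\bullet(\partial_n f)$ for (i) (resp.\ $M_\bullet\otimes K_\bullet(f)$ for (ii)), uses $H_2(K_\bullet)=0$ to see that $\partial_n f$ (resp.\ $f$) is a nonzero divisor on $H_1(L_\bullet)$ (resp.\ $H_1(M_\bullet)$), invokes the Euler identity to show that $X_n$ (resp.\ the integer $d$) annihilates that $H_1$, and then kills $H_1$ via the short exact sequence for multiplication by $X_n$ (resp.\ by $d$) plus a graded argument. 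You instead re-run the Lemma's own two-step strategy directly on each of the two $n$-element sequences: explicit elimination of pivot coefficients in every localization ${}_kC_{X_j}$, followed by the same comparison of the two spectral sequences for $\mathbf{R}\Gamma_\mm$ applied to the Koszul complex. Your local computations check out --- the cancellation $-d+(d-1)=-1$ in (i) and the leading coefficient $dX_j^{d-1}$ in (ii) are exactly right, and they make transparent where the hypothesis on $d$ enters and where it does not --- and the globalization is valid over an arbitrary $k$, just as in the Lemma. The paper's route buys brevity (the Lemma is reused as a black box, no further local analysis); yours buys uniformity and a visible arithmetic explanation of the two hypotheses. Two minor points: ``exactly when $d$ is one in $k$'' should read ``when $d$ is a nonzero divisor in $k$''; and both your argument and the paper's tacitly use, at the last step, that acyclicity of the Koszul complex of homogeneous elements of positive degree in a graded ring forces the sequence to be regular (graded Nakayama in the standard induction) --- a sentence acknowledging this would make the proof airtight.
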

\begin{proof} We prove (i) in the case $i=n$ to not overload the notation; the other cases can be treated similarly. For simplicity, we set $$K_\bullet:=K_\bullet(f,\partial_1 f,\ldots,\partial_n f;{}_kC), \ \ 
L_\bullet:=K(f,\partial_1 f,\ldots,\partial_{n-1} f;{}_kC).$$

Since $K_\bullet=L_\bullet\otimes_{{}_kC} K_\bullet(\partial_nf;{}_kC)$, we deduce, using the two spectral sequences associated to the two filtrations of a double complex having only two rows, that we have an exact sequence
$$0 \rightarrow H_0(\partial_nf;H_2(L_\bullet))\rightarrow H_2(K_\bullet) \rightarrow H_1(\partial_nf;H_1(L_\bullet)) \rightarrow 0.$$
But by Lemma \ref{lem:Hi>2=0}, we know that $H_2(K_\bullet)=0$; it follows that $\partial_n f$ is a nonzero divisor in $H_1(L)$. The homology of $L_\bullet$ is annihilated by the ideal generated by $(f,\partial_1 f,\ldots,\partial_{n-1} f)$. So, by the Euler identity we deduce that $X_n\partial_nf$ annihilates $H_1(L)$. But since we have just proved that $\partial_n f$ is a nonzero divisor in $H_1(L)$ we obtain $X_nH_1(L)=0$.

Denoting $\bar{f}(X_1,\ldots,X_{n-1}):=f(X_1,\ldots,X_{n-1},0) \in {}_kA[X_1,\ldots,X_{n-1}]$, we have the exact sequence of complexes 
$$0\rightarrow L_\bullet \xrightarrow{\times X_n} L_\bullet \rightarrow L_\bullet/X_nL_\bullet \rightarrow 0$$
where the complex $L_\bullet/X_nL_\bullet$ is nothing but the Koszul complex 
$$L_\bullet/X_nL_\bullet=K_\bullet(\bar{f},\partial_1\bar{f},\ldots,\partial_{n-1}\bar{f};{}_kA[X_1,\ldots,X_{n-1}]).$$
It follows that $H_2(L_\bullet/X_nL_\bullet)=0$ and hence the long exact sequence of homology
$$\cdots \rightarrow H_2(L_\bullet/X_nL_\bullet) \rightarrow H_1(L_\bullet) \xrightarrow{\times X_n} H_1(L_\bullet) \rightarrow H_1(L_\bullet/X_nL_\bullet) \rightarrow \cdots$$
shows that $X_n$ is a nonzero divisor in $H_1(L)$. This, with the equality $X_nH_1(L_\bullet)=0$ obtained above,  implies that $H_1(L_\bullet)=0$ which means that $(f,\partial_1 f,\ldots,\partial_{n-1} f)$ is a regular sequence in ${}_kC$.

Setting $M_\bullet:=K_\bullet(\partial_1 f, \ldots, \partial_n f;{}_kC)$, we will prove the point (ii) by showing that $H_1(M_\bullet)=0$. Since $K_\bullet=M_\bullet\otimes_{{}_kC} K_\bullet(f;{}_kC)$ we have the two  exact sequences
\begin{align}
0 \rightarrow H_0(f;H_2(M_\bullet))\rightarrow H_2(K_\bullet) \rightarrow H_1(f;H_1(M_\bullet)) \rightarrow 0  \label{Mseq1} \\ 
0 \rightarrow H_0(f;H_1(M_\bullet))\rightarrow H_1(K_\bullet) \rightarrow H_1(f;H_1(M_\bullet)) \rightarrow 0. \label{Mseq2} 
\end{align}
First, by \eqref{Hi>2=0} we know that $H_2(K_\bullet)=0$ and hence the exact sequence \eqref{Mseq1} shows that $H_1(f,H_1(M))=0$, that is to say that $f$ is a nonzero divisor in $H_1(M)$. But the Euler identity implies that $df$ annihilates $H_1(M_\bullet)$, so  $dH_1(M)=0$. 
Second, from the exact sequence of complexes
$$0\rightarrow K_\bullet \xrightarrow{\times d} K_\bullet \rightarrow K_\bullet/dK_\bullet \rightarrow 0$$ we get the long exact sequence
$$\cdots \rightarrow H_2(K_\bullet/dK_\bullet) \rightarrow H_1(K_\bullet) \xrightarrow{\times d} H_1(K_\bullet) \rightarrow \cdots$$
which shows, since $ H_2(K_\bullet/dK_\bullet)=0$, that $d$ is a nonzero divisor in $H_1(K_\bullet)$. 

Finally, the exact sequence \eqref{Mseq2} combined with the two facts $dH_1(M)=0$ and $d$ is a nonzero divisor in $H_1(K_\bullet)$, implies that  $H_0(f;H_1(M_\bullet))=0$, that is to say that the multiplication map $\times f:H_1(M_\bullet)\rightarrow H_1(M_\bullet)$ is surjective. It follows that, by composition, for any integer $m\geq 1$ the multiplication map $\times f^m:H_1(M_\bullet)\rightarrow H_1(M_\bullet)$ is also surjective. But $H_1(M)$ is a $\ZZ$-graded module and $f$ has degree $d$ for this graduation, so we have, for any $\nu \in\ZZ$ and $m\in \NN^*$, a surjective map
$$H_1(M_\bullet)_{\nu-dm} \xrightarrow{\times f^m} H_1(M)_\nu.$$
As $H_1(M)_\mu=0$ for $\mu \ll 0$ we finally get, by choosing $m \gg 0$, that $H_1(M)_\nu=0$ for all $\nu\in \ZZ$.
\end{proof}

\begin{cor}\label{ResD} For all $i\in\{1,\ldots,n\}$, the resultant  
$$\Res(\partial_1
     f,\ldots,\widehat{\partial_i f},\ldots,\partial_{n-1}f,f) \in {}_kA$$ is a primitive polynomial, hence nonzero divisor, in ${}_k A$.
   \end{cor}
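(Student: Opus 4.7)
The plan is to reduce the primitivity statement to reductions modulo primes, and there invoke Proposition \ref{prop:regular-sequences}(i). By stability of the generic resultant under specialization (Proposition \ref{prop:resultant}), the generic resultant
$$R_k := \Res(\partial_1 f,\ldots,\widehat{\partial_i f},\ldots,\partial_n f, f) \in {}_kA$$
is the image of $R_\ZZ$ under the canonical ring morphism $\lambda:\ZZ[U_\alpha]\to k[U_\alpha]$. Since $\lambda(1)=1$, the image of a primitive polynomial under $\lambda$ is primitive, so it is enough to establish primitivity of $R_\ZZ$ in $\ZZ[U_\alpha]$.

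The content of $R_\ZZ$ over $\ZZ$ equals $\ZZ$ if and only if, for every prime integer $p$, the reduction of $R_\ZZ$ modulo $p$ is nonzero in $\mathbb{F}_p[U_\alpha]$. Again by Proposition \ref{prop:resultant}, this reduction coincides with the generic resultant $R_{\mathbb{F}_p}$ over $\mathbb{F}_p$, so the task reduces to showing $R_{\mathbb{F}_p}\neq 0$ for each prime $p$.

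To see this, I will apply Proposition \ref{prop:regular-sequences}(i) with $k=\mathbb{F}_p$: the sequence $(f,\partial_1 f,\ldots,\widehat{\partial_i f},\ldots,\partial_n f)$ is regular in $\mathbb{F}_p[U_\alpha][X_1,\ldots,X_n]$. By the flat base change from $\mathbb{F}_p[U_\alpha]$ to its fraction field $\mathbb{F}_p(U_\alpha)$, regularity is preserved, so the sequence remains regular in $\mathbb{F}_p(U_\alpha)[X_1,\ldots,X_n]$. Over this field, a regular sequence of $n$ homogeneous polynomials of positive degree in $n$ variables has its affine common zero locus reduced to the origin (being a $0$-dimensional cone), and consequently a power of the irrelevant ideal $(X_1,\ldots,X_n)$ lies in the ideal they generate. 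This forces the inertia forms ideal in degree zero to be the unit ideal of $\mathbb{F}_p(U_\alpha)$, so the resultant is nonzero in $\mathbb{F}_p(U_\alpha)$, and hence $R_{\mathbb{F}_p}\in\mathbb{F}_p[U_\alpha]$ is nonzero.

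Once primitivity is established over an arbitrary $k$, the ``hence nonzero divisor'' assertion follows directly from the corollary of the Dedekind-Mertens Lemma recalled at the end of Section \ref{prem}. The main subtlety in the argument is the passage through characteristic $p$: it crucially relies on Proposition \ref{prop:regular-sequences}(i) being valid over any commutative ring (including $\mathbb{F}_p$), which is precisely where the regularity of the sequence comprising $f$ together with all partial derivatives except $\partial_i f$ pays off.
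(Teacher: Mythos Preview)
Your proof is correct and follows essentially the same approach as the paper: both reduce primitivity over $\ZZ$ to nonvanishing modulo every prime $p$, and both invoke Proposition \ref{prop:regular-sequences}(i) over $\mathbb{F}_p$ to secure this. The only difference is that the paper delegates the implication ``regular sequence $\Rightarrow$ resultant is a nonzero divisor'' to an external reference \cite[Proposition 3.12.4.2]{J92}, whereas you unpack it explicitly by passing to the fraction field $\mathbb{F}_p(U_\alpha)$ and using the geometric characterization of the resultant over a field; this makes your argument self-contained at the cost of a few more lines.
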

   \begin{proof} This result is a consequence of Proposition \ref{prop:regular-sequences} and \cite[Proposition 3.12.4.2]{J92}. The last claim is obtained by observing that this resultant is a nonzero divisor in ${}_{\ZZ/p\ZZ} A$ for all integer $p$, which implies that it is a  primitive polynomial in ${}_\ZZ A$, hence in ${}_k A$. 
   \end{proof}

\subsection{Definition of the discriminant}

\begin{lem}\label{cd1:form1}
Let $k$ be a commutative ring and $f\in k[X_1,\ldots,X_n]$ be a
homogeneous polynomial of degree $d\geq 2$. Then, we have the
following equality in $k$:
$$d^{(d-1)^{n-1}}\Res(\partial_1 f,\ldots,\partial_{n-1}f,f)=
\Res(\partial_1f,\ldots,\partial_nf)\Res(\partial_1\overline{f},\ldots,\partial_{n-1}\overline{f}).$$
   \end{lem}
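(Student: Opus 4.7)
The plan is to rewrite everything in terms of a single auxiliary resultant, namely $\Res(\partial_1 f,\ldots,\partial_{n-1}f, X_n\partial_n f)$, and to evaluate it in two different ways using Euler's identity on one hand, and the multiplicativity of the resultant on the other.

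First, the Euler identity gives $X_n\partial_n f = df - \sum_{i=1}^{n-1} X_i \partial_i f$, so inside the ring ${}_kC$ we have the congruence
$$ X_n\partial_n f \equiv d f \pmod{(\partial_1 f,\ldots,\partial_{n-1}f)}. $$
Since both $X_n\partial_n f$ and $df$ are homogeneous of the same degree $d$, the invariance of the resultant modulo the ideal generated by the other arguments (see \cite[\S5.9]{J91}, already used in Proposition \ref{perm}) yields
$$ \Res(\partial_1 f,\ldots,\partial_{n-1}f, X_n\partial_n f) = \Res(\partial_1 f,\ldots,\partial_{n-1}f, df). $$
On the right-hand side, the multi-homogeneity of the resultant in its last argument (the other $n-1$ arguments each having degree $d-1$) gives
$$ \Res(\partial_1 f,\ldots,\partial_{n-1}f, df) = d^{(d-1)^{n-1}}\Res(\partial_1 f,\ldots,\partial_{n-1}f, f). $$

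Next, I would apply the multiplicativity property of the resultant \cite[\S5.7]{J91} to the factorization $X_n\partial_n f$ on the left-hand side, obtaining
$$ \Res(\partial_1 f,\ldots,\partial_{n-1}f, X_n\partial_n f) = \Res(\partial_1 f,\ldots,\partial_{n-1}f, X_n)\cdot \Res(\partial_1 f,\ldots,\partial_{n-1}f, \partial_n f). $$
Finally, using Remark \ref{fbarre} (setting $X_n=0$ and noting that $(\partial_i f)^{(n)} = \partial_i \overline{f}$ for $i<n$), we identify
$$ \Res(\partial_1 f,\ldots,\partial_{n-1}f, X_n) = \Res(\partial_1 \overline{f},\ldots,\partial_{n-1}\overline{f}). $$

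Putting the four displays together yields the desired equality. No serious obstacle is expected; one only has to keep track of the degrees so that multi-homogeneity produces the correct exponent $(d-1)^{n-1}$. The fact that specialization $X_n\mapsto 0$ commutes with $\partial_i$ for $i<n$ is what makes the right-hand side of the claimed identity come out cleanly, and by the generic principle it suffices to carry out the argument in the universal case over $\ZZ$ before specializing to arbitrary $k$.
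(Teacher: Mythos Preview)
Your proof is correct and is essentially identical to the paper's own argument: both compute the auxiliary resultant $\Res(\partial_1 f,\ldots,\partial_{n-1}f,\,df)=\Res(\partial_1 f,\ldots,\partial_{n-1}f,\,X_n\partial_n f)$ in two ways, using Euler's identity together with \cite[\S5.9]{J91} on one side and multiplicativity \cite[\S5.7]{J91} plus the reduction \cite[Lemma~4.8.9]{J91} (i.e.\ Remark~\ref{fbarre}) on the other. The only difference is cosmetic---you start from $X_n\partial_n f$ whereas the paper starts from $df$---but the steps and tools are the same.
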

   \begin{proof} On the one hand we have, using the homogeneity of
     the resultant, 
     $$\Res(\partial_1 f, \ldots, \partial_{n-1}f,
     df)=d^{(d-1)^{n-1}}\Res(\partial_1 f,\ldots,\partial_{n-1}f,f),$$
     and on the other hand we have, using successively \cite[\S
     5.9]{J91}, \cite[\S 5.7]{J91} and \cite[Lemma 4.8.9]{J91},
     \begin{align*}
       \Res(\partial_1 f, \ldots, \partial_{n-1}f,df) & =
       \Res(\partial_1 f, \ldots, \partial_{n-1}f,X_n\partial_nf) \\
       & = \Res(\partial_1f,\ldots,\partial_nf)\Res(\partial_1 f,\ldots ,\partial_{n-1} f,X_n)\\
       & =
       \Res(\partial_1f,\ldots,\partial_nf)\Res(\partial_1\overline{f},\ldots,\partial_{n-1}\overline{f}).
       \end{align*}
 Comparing these two computations we deduce the claimed equality.    
   \end{proof}

   \begin{prop}\label{cd1:form2} Let $f(X_1,\ldots,X_n)=\sum_{|\alpha|=d}U_{\alpha}
     X^\alpha$ be the generic homogeneous polynomial of degree $d\geq
     2$ over the integers. Then the
resultant $\Res(\partial_1f,\ldots,\partial_nf)$ is divisible  by
     $d^{a(n,d)}$ in the ring ${}_\ZZ A$ where
     $$a(n,d):=\frac{(d-1)^n-(-1)^n}{d} \in \mathbb{Z}.$$     
   \end{prop}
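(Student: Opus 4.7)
The plan is to prove by induction on $n \geq 1$ a strengthened form of the statement: in the UFD ${}_\ZZ A = \ZZ[U_\alpha]$, one has a factorization
$$\Res(\partial_1 f,\ldots,\partial_n f) = d^{a(n,d)}\, P_n,$$
where $P_n \in {}_\ZZ A$ is coprime to $d$ (i.e., not divisible by any prime factor of $d$). The base case $n=1$ is immediate: $f = U_d X_1^d$ gives $\partial_1 f = d U_d X_1^{d-1}$, so $\Res(\partial_1 f) = d U_d$ and $P_1 = U_d$ is clearly coprime to $d$, while $a(1,d) = ((d-1)-(-1))/d = 1$ matches.

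For the inductive step, I would first verify the elementary arithmetic identity
$$a(n,d) + a(n-1,d) = \frac{(d-1)^n - (-1)^n + (d-1)^{n-1} - (-1)^{n-1}}{d} = \frac{(d-1)^{n-1}\cdot d}{d} = (d-1)^{n-1}.$$
Applying the inductive hypothesis to $\overline{f}$ (a generic homogeneous polynomial of degree $d$ in $n-1$ variables), I obtain $\Res(\partial_1\overline{f},\ldots,\partial_{n-1}\overline{f}) = d^{a(n-1,d)}\, T$ with $T$ coprime to $d$. Plugging this into Lemma \ref{cd1:form1} and using the above identity yields
$$d^{a(n,d)}\,\Res(\partial_1 f,\ldots,\partial_{n-1}f,f) = \Res(\partial_1 f,\ldots,\partial_n f)\cdot T.$$

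By Corollary \ref{ResD}, $R := \Res(\partial_1 f,\ldots,\partial_{n-1}f,f)$ is a primitive polynomial in ${}_\ZZ A$, so it is coprime to $d$. Thus both $R$ and $T$ are coprime to $d$ in the UFD ${}_\ZZ A$, and for every prime $p \mid d$ the $p$-adic valuation of the displayed equality gives
$$v_p\bigl(\Res(\partial_1 f,\ldots,\partial_n f)\bigr) = a(n,d)\,v_p(d) + v_p(R) - v_p(T) = a(n,d)\,v_p(d).$$
Consequently $d^{a(n,d)}$ divides $\Res(\partial_1 f,\ldots,\partial_n f)$, and the quotient $P_n$ is again coprime to $d$, completing the induction and in particular proving the divisibility asserted in the proposition.

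The main obstacle, and the reason for strengthening the inductive hypothesis, is that plain divisibility is not self-propagating: from $d^{a(n,d)} R = \Res(\partial_1 f,\ldots,\partial_n f)\cdot T$ one cannot deduce $d^{a(n,d)} \mid \Res(\partial_1 f,\ldots,\partial_n f)$ if $T$ is allowed to absorb powers of $d$. The primitivity result in Corollary \ref{ResD} is precisely what guarantees that neither $R$ nor (by induction) $T$ carries any factor of $d$, so the induction can propagate.
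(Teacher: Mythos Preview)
Your proof is correct and follows essentially the same route as the paper's: induction via Lemma~\ref{cd1:form1}, the primitivity from Corollary~\ref{ResD}, and the arithmetic identity $a(n,d)+a(n-1,d)=(d-1)^{n-1}$. The paper phrases the argument in terms of the content $c(n,d)$ of $\Res(\partial_1 f,\ldots,\partial_n f)$ (showing $c(n,d)c(n-1,d)=d^{(d-1)^{n-1}}$, hence $c(n,d)=d^{a(n,d)}$ exactly, so the quotient is in fact primitive rather than merely coprime to $d$) and initializes at $n=2$, but the logic is the same as yours.
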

   \begin{proof} By Corollary \ref{ResD}, we know that 
     $\Res(\partial_1f,\ldots,\partial_{n-1}f,f)$ is a primitive
     polynomial in ${}_\ZZ A$. Denoting by $c(n,d)$ the content of
     $\Res(\partial_1f,\ldots,\partial_nf)$ for all $n,d\geq 2$, Lemma 
     \ref{cd1:form1} implies that
     $$c(n,d)c(n-1,d)=d^{(d-1)^{n-1}} \ \text{ for all }  n\geq 3
     \text{ and } d\geq 2$$
     and also that $c(2,d)=d^{d-2}=d^{a(2,d)}$ for all $d\geq 2$
     (just remark that we have $\Res(dUX_1^{d-1})=dU$). Therefore, we
     can proceed by induction on $n$ to prove the claimed result:
     assume that $c(n-1,d)=d^{a(n-1,d)}$, which is true for $n=3$, then
     $$c(n,d)=d^{(d-1)^{n-1}-a(n-1,d)}=d^{a(n,d)}$$
     since it is immediate to check that $a(n-1,d)+a(n,d)=(d-1)^{n-1}$.
   \end{proof}

   We are now ready to define the discriminant of a homogeneous
   polynomial of degree $d\geq 2$. 
   \begin{defn}\label{defdiscr} Let $f(X_1,\ldots,X_n)=\sum_{|\alpha|=d}U_{\alpha}
     X^\alpha \in {}_\ZZ A$ be the generic homogeneous polynomial of degree $d\geq
     2$. The \emph{discriminant} of $f$, that will
be denoted $\Disc(f)$, is the unique element in ${}_\ZZ A$ such
that
 \begin{equation}\label{defeq1}
   d^{a(n,d)}\Disc(f)=\Res(\partial_1 f,\ldots,\partial_n
     f).
     \end{equation}
     
     Let $R$ be a commutative ring and
     $g=\sum_{|\alpha|=d}u_{\alpha}X^{\alpha}$ be a homogeneous
     polynomial of degree $d\geq 2$ in $R[X_1,\ldots,X_n]$. Then we
     define the discriminant of $g$ as $\Disc(g):=\lambda(\Disc(f))$
     where $\lambda$ is the canonical (specialization) morphism
     $\lambda : {}_\ZZ A \rightarrow R: U_\alpha \mapsto u_{\alpha}$. 
   \end{defn}

%%%%%%%%%%%%%%%%%%%%%%%%%%%%%%%%%%%%%%%

\subsection{Formal properties}  Up to a nonzero integer constant factor, the
discriminant of a homogeneous polynomial corresponds to a resultant. Consequently,
most of its properties follow from the properties of the resultant.

\begin{prop}\label{firstfp} Let $k$ be a commutative ring and $f$ be a homogeneous
  polynomial in $k[X_1,\ldots,X_n]$ of degree $d\geq 2$. 
  \begin{itemize}
    % \item[\rm (i)] For any ring morphism $h:k\rightarrow k'$, we have
    %   $\Disc({}^hf)=h(\Disc(f))$ where ${}^hf$ denotes the polynomial
    %   $h\otimes_k k[X_1,\ldots,X_n](f) \in k'[X_1,\ldots,X_n]$.
  \item[\rm (i)] For all $t\in k$, we have $\Disc(tf)=t^{n(d-1)^{n-1}}\Disc(f)$.
    \item[\rm (ii)]  For all $n\geq 2$, we have the equality in $k$  
   $$\Disc(f)\Disc(\overline{f})=\Res(\partial_1 f,\ldots,\partial_{n-1}f,f).$$ 
 \end{itemize} 
\end{prop}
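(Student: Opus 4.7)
The plan is to deduce both identities, as usual in this framework, from the generic case over $\ZZ$: if the formulas hold in the polynomial ring ${}_\ZZ A$ then they hold after any specialization by Definition \ref{defdiscr}. The advantage of working generically is that ${}_\ZZ A$ is an integral domain in which the nonzero integer $d$, and hence any of its powers, may be cancelled freely.

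For (i), the key observation is the elementary identity $\partial_i(tf) = t\,\partial_i f$ for every $i$, which merely scales the coefficients of each $\partial_i f$ by $t$ while preserving its degree $d-1$ in $X_1,\ldots,X_n$. By the multi-homogeneity of the resultant (\cite[\S 2.3(ii)]{J91}), which is homogeneous of degree $\prod_{j \neq i}(d-1) = (d-1)^{n-1}$ in the coefficients of the $i$-th entry, one gets
$$\Res\bigl(\partial_1(tf),\ldots,\partial_n(tf)\bigr) = t^{n(d-1)^{n-1}}\,\Res(\partial_1 f,\ldots,\partial_n f).$$
Applying the defining relation \eqref{defeq1} to both $tf$ and $f$, and cancelling $d^{a(n,d)}$ in the domain ${}_\ZZ A$, yields exactly (i).

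For (ii), start from Lemma \ref{cd1:form1}, which states
$$d^{(d-1)^{n-1}}\Res(\partial_1 f,\ldots,\partial_{n-1} f,f) = \Res(\partial_1 f,\ldots,\partial_n f)\,\Res(\partial_1\overline{f},\ldots,\partial_{n-1}\overline{f}).$$
The first factor on the right equals $d^{a(n,d)}\Disc(f)$ by \eqref{defeq1}. The second factor concerns $\overline{f} \in {}_kA[X_1,\ldots,X_{n-1}]$, a homogeneous polynomial of degree $d$ in $n-1$ variables, so by \eqref{defeq1} applied in $n-1$ variables it is $d^{a(n-1,d)}\Disc(\overline{f})$. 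Substituting, one obtains
$$d^{(d-1)^{n-1}}\Res(\partial_1 f,\ldots,\partial_{n-1}f,f) = d^{a(n,d)+a(n-1,d)}\Disc(f)\Disc(\overline{f}).$$

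The one arithmetic fact required is $a(n,d)+a(n-1,d)=(d-1)^{n-1}$, which was already observed in the proof of Proposition \ref{cd1:form2} and is an immediate calculation from the closed form $a(n,d)=((d-1)^n-(-1)^n)/d$. Once this is noted, both sides share the same factor $d^{(d-1)^{n-1}}$ which can be cancelled in the generic domain ${}_\ZZ A$, and specialization to an arbitrary $k$ yields (ii). No step is really an obstacle here; the only thing to be careful about is the legitimacy of the cancellations of powers of $d$, which is precisely why one argues generically before specialising.
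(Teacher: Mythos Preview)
Your proof is correct and follows essentially the same approach as the paper: for (i) both use the multi-homogeneity of the resultant applied to $\partial_i(tf)=t\,\partial_i f$, and for (ii) both combine Lemma \ref{cd1:form1} with the defining relation \eqref{defeq1} for $f$ and $\overline{f}$ together with the identity $a(n,d)+a(n-1,d)=(d-1)^{n-1}$, arguing generically over $\ZZ$ so that powers of $d$ can be cancelled before specialising. Your write-up is in fact a bit more explicit than the paper's about why the cancellations are legitimate.
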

\begin{proof} To
  prove (i), we use the 
homogeneity of the resultant: one obtains
$$\Res(t \partial_1 f, \ldots, t \partial_{n}f)=
t^{\sum_{i=1}^n(d-1)^{n-1}}\Res(\partial_1f,\ldots,\partial_{n}f).$$
 To prove (ii), we first assume that we are in the generic
 case, that is to say that $f=\sum_{|\alpha|=d}U_\alpha X^{\alpha}$ and
 $k={}_\ZZ A:=\ZZ[U_\alpha \,|\, |\alpha|=d]$.  Using the notation of
 Proposition \ref{cd1:form2}, we have $a(n,d)+a(n-1,d)=d^{(d-1)^{n-1}}$ for all $n\geq
 3$ and $d\geq 2$. Moreover, from Definition \ref{defdiscr}, we deduce that
  $$\Res(\partial_1 f, \ldots, \partial_n f)\Res(\partial_1
  \overline{f},\ldots,\partial_{n-1}\overline{f})=d^{(d-1)^{n-1}}\Disc(f)\Disc(\overline{f}).$$
Now, comparing with Lemma \ref{cd1:form1}, we get the claimed formula in
${}_\ZZ A$ and then over any commutative ring $k$ by specialization.  
\end{proof}

\begin{rem}\label{unidisc} In the case where $n=2$ and $d$ is a nonzero divisor of $k$
  (equivalently $\mathrm{char}(k)$ does not divide $d$), the point (ii) recovers a well known formula:  
set  $f:=U_0X_1^d+U_1X_1^{d-1}X_2+\cdots+U_dX_2^d$ for simplicity, then
\begin{equation*}
U_0\Disc(f)=\Res(\partial_1f,f)=\Res(f,\partial_1f).
\end{equation*}
This follows from Definition \ref{defdiscr} since we have
$$d^{a(1,d)}\Disc(\overline{f})=d\Disc(U_0X_1^d)=\Res(dU_0X_1^{d-1})=dU_0$$ 
in ${}_k A$. 
\end{rem}

\begin{cor}\label{cd1:nzd}Let $f(X_1,\ldots,X_n)=\sum_{|\alpha|=d}U_{\alpha}
     X^\alpha \in {}_kA$ be the generic homogeneous polynomial of degree $d\geq
     2$ over the commutative ring $k$. Then $\Disc(f)$ is a primitive polynomial, hence nonzero divisor, in ${}_k A$.
\end{cor}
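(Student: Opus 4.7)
The plan rests on the product identity of Proposition \ref{firstfp}(ii), which for $n\geq 2$ reads
$$\Disc(f)\Disc(\overline{f}) = \Res(\partial_1 f,\ldots,\partial_{n-1}f,f),$$
combined with Corollary \ref{ResD}, which asserts that the right-hand side is a primitive polynomial in ${}_kA$. The elementary content-of-product inclusion $\C_k(PQ) \subseteq \C_k(P)\C_k(Q)$, valid for any polynomials $P,Q\in k[\underline{U}]$, yields in particular $\C_k(PQ)\subseteq \C_k(P)$, so a primitive product forces each factor to be primitive. Applying this with $P=\Disc(f)$ and $Q=\Disc(\overline{f})$ gives primitivity of $\Disc(f)$ for all $n\geq 2$.

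The base case $n=1$ is handled by direct inspection: writing $f=UX_1^d$, Definition \ref{defdiscr} together with the computation already made in Remark \ref{unidisc} and in the proof of Proposition \ref{cd1:form2} gives $\Disc(f)=U$, which is plainly primitive in $k[U]$. To pass from primitivity to the nonzero divisor statement, the plan is to invoke the corollary of the Dedekind--Mertens Lemma from Section \ref{prem}, applied with $M=k$: the content ideal $\C_k(\Disc(f))$ equals $k$ by primitivity, and $k$ plainly does not annihilate any nonzero element of $k$, so $\Disc(f)$ is a nonzero divisor in ${}_kA$.

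No genuine obstacle is expected: the whole argument is a short bookkeeping exercise combining Proposition \ref{firstfp}(ii), Corollary \ref{ResD}, and the Dedekind--Mertens corollary. The only mild subtlety is ensuring that viewing $\Disc(\overline{f})$ inside the larger polynomial ring ${}_kA=k[U_\alpha]$ does not affect its content, which is clear since every coefficient of $\Disc(\overline{f})$ already lies in $k$.
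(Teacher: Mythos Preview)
Your proof is correct and follows exactly the approach of the paper, which simply cites Corollary~\ref{ResD} and Proposition~\ref{firstfp}(ii) for primitivity and then invokes the Dedekind--Mertens argument for the nonzero-divisor claim. Your version is slightly more detailed, in particular you handle the case $n=1$ explicitly (which the paper's terse proof does not mention), but the underlying idea is identical.
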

\begin{proof} The first claim is a combination of both Corollary \ref{ResD} and Proposition \ref{firstfp}, (ii). To prove the second claim 
 we can argue as in the proof of Corollary \ref{ResD}.
\end{proof}

We continue with some particular examples.

\begin{exmp}\label{ex:disc1} Let $h(X_1,\ldots,X_{n-1})=\sum_{|\alpha|} V_\alpha X^\alpha$ be the generic homogeneous polynomial of degree $d\geq 2$ in the variables $X_1,\ldots,X_{n-1}$ over the commutative ring $k$ and consider the homogeneous polynomial 
  $$g(X_1,\ldots,X_n)=UX_n^d+h(X_1,\ldots,X_{n-1})\in k[U, V_\alpha \,|\, |\alpha|=d][X_1,\ldots,X_n].$$
  Then, we have
  $$\Disc(g)=d^{(d-1)^{n-1}+(-1)^n}U^{(d-1)^{n-1}}\Disc(h)^{d-1}.$$
\end{exmp}
\begin{proof} Notice that without loss of generality, it is enough to prove this formula in the case $k=\ZZ$. Now, since $\partial_n g=dUX_n^{d-1}$ and $\partial_i g=\partial_i h$ for $i=1,\ldots,n-1$, we deduce that
	$$\Res(\partial_1 g,\ldots,\partial_n g)= (dU)^{(d-1)^{n-1}} \Res(\partial_1 h,\ldots, \partial_{n-1} h)^{d-1}.$$
	Therefore, from the definition of the discriminant we get
	$$d^{a(n,d)}\Disc(g)=(dU)^{(d-1)^{n-1}} d^{(d-1)a(n-1,d)}\Disc(h)^{d-1}$$
	and the claimed formula follows from a straightforward computation.
\end{proof}

\begin{exmp}[\mbox{\cite{Bou}}] Consider the homogeneous polynomial of degree $d\geq 2$
  $$g(X_1,\ldots,X_n)=A_1X_1^d+\cdots+A_nX_n^d \in \ZZ[A_1,\ldots,A_n][X_1,\ldots,X_n].$$
  Then, its discriminant consists of only one monomial; more precisely,
  $$\Disc(g)=d^{n(d-1)^{n-1}-a(n,d)}(A_1A_2\ldots A_n)^{(d-1)^{n-1}}
  \in \ZZ[A_1,\ldots,A_n].$$
\end{exmp}
\begin{proof}
Indeed, since $\partial_ig=dA_iX_i^{d-1}$ for all $i=1,\ldots,n$, from the 
classical properties of the resultant we get
\begin{align}
\Res(\partial_1g,\ldots,\partial_ng) &= 
d^{n(d-1)^{n-1}}\Res(A_1X_1^{d-1},\cdots,A_nX_n^{d-1}) \notag \\ 
&=  d^{n(d-1)^{n-1}}(A_1A_2\ldots A_n)^{(d-1)^{n-1}}. \notag  
\end{align}
The claimed result follows by comparing this equality with
\eqref{defeq1}.
\end{proof}
\begin{exmp}[\mbox{\cite{Bou}}]\label{ex:disc3}  Consider the homogeneous polynomial of degree $d\geq 2$
  $$g(X_1,\ldots,X_n)=X_1^d+UX_1X_2^{d-1}+X_2X_3^{d-1}+\cdots+X_{n-1}X_n^{d-1} \in
  \ZZ[U][X_1,\ldots,X_n].$$
  Then, its discriminant contains only one monomial modulo
  $d$. More precisely,
  $$\Disc(g)=U^{(d-1)^{n-1}+(-1)^n} \mod (d) \in \ZZ[U].$$
\end{exmp}
\begin{proof}
To prove this formula, we proceed by induction on the number $n$ of
variables. So, assume first that $n=2$. We have $g=X_1+UX_1X_2^{d-1}$
and we easily compute in the ring $\ZZ[U]$
\begin{align}
  \Res(\partial_1 g,\partial_2 g) &=
  \Res(dX_1^{d-1}+UX_2^{d-1},(d-1)UX_1X_2^{d-2}) \notag \\
  &=(d-1)^{d-1}U^{d-1}\Res(UX_2^{d-1},X_1)\Res(dX_1^{d-1},X_2^{d-2}) \notag \\
  &= (-1)^{d-1}(d-1)^{d-1}d^{d-2}U^{d}.
\end{align}
From \eqref{defeq1} and since $a(2,d)=d-2$, we deduce that
$$\Disc(g) =(-1)^{d-1}(d-1)^{d-1} U^d= U^d \mod (d).$$

Now, fix the integer $n > 2$ and suppose that the claimed formula is
proved at the step $n-1$. Again, an easy computation of resultants in
$\ZZ[U]$  yields
\begin{align}
\lefteqn{\Res(\partial_1g,\ldots,\partial_{n-1}g,g)} \\
 & = 
\Res(UX_2^{d-1},-UX_1X_2^{d-2}+X_3^{d-1},  \ldots ,-X_{n-2}X_{n-1}^{d-2}+X_n^{d-1},g) \mod (d)
\notag \\
&= U^{d(d-1)^{n-2}}\Res(X_2^{d-1},X_3^{d-1},\ldots,X_n^{d-1},X_1^{d}) \mod (d)
\notag \\
&= U^{d(d-1)^{n-2}} \mod (d).\notag
\end{align}
By Proposition \ref{firstfp}, (ii), it follows that, in $\ZZ[U]$,
\begin{align}
U^{d(d-1)^{n-2}} &=
\Disc(\overline{g})\Disc(g) \mod (d) \notag \\
&= U^{(d-1)^{n-2}+(-1)^{n-2}}\Disc(g) \mod(d). \notag
\end{align}
We deduce that
$$\Disc(g)=U^{d(d-1)^{n-2}-(d-1)^{n-2}-(-1)^{n-2}}=U^{(d-1)^{n-1}+(-1)^{n-1}} \mod (d)
\in \ZZ[U].$$
\end{proof}

Next, we provide two formulas that encapsulate the behavior of the discriminant under a linear change of coordinates and under a general base change formula. 

\begin{prop}\label{prop:discinv} Let $k$ be a commutative ring and  $f$
  be a homogeneous polynomial of degree $d\geq 2$
  in $k[X_1,\ldots,X_n]$. Being given a matrix $\varphi=[c_{i,j}]_{1\leq
        i,j \leq n}$ with entries in $k$ and denoting
      $$f\circ
      \varphi(X_1,\ldots,X_n):=f\left(c_{1,1}X_1+\cdots+c_{1,n}X_n,\ldots,\sum_{j=1}^nc_{i,j}X_j,\ldots,\sum_{j=1}^nc_{n,j}X_n\right),$$ 
      we have
      $$\Disc(f\circ\varphi)=\det(\varphi)^{d(d-1)^{n-1}}\Disc(f).$$
\end{prop}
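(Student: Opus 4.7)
The plan is to reduce to the generic case over $\ZZ$ and then compute, through the defining relation
$$d^{a(n,d)}\Disc(f)=\Res(\partial_1 f,\ldots,\partial_n f),$$
the resultant of the partial derivatives of $f\circ\varphi$. Since $d^{a(n,d)}$ is a nonzero integer, hence a nonzero divisor in ${}_\ZZ A[c_{i,j}\,|\,1\leq i,j\leq n]$, it will be enough to establish the corresponding equality between resultants, namely
$$\Res(\partial_1(f\circ\varphi),\ldots,\partial_n(f\circ\varphi))=\det(\varphi)^{d(d-1)^{n-1}}\Res(\partial_1f,\ldots,\partial_nf),$$
the general case then following by specialization.

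The key observation is the chain rule, which says that for all $i=1,\ldots,n$ one has
$$\partial_i(f\circ\varphi)=\sum_{j=1}^n c_{j,i}\, (\partial_j f)\circ\varphi$$
in ${}_\ZZ A[c_{i,j}][X_1,\ldots,X_n]$. The polynomials $(\partial_j f)\circ\varphi$, $j=1,\ldots,n$, are all homogeneous of the same degree $d-1$, so I can first apply the covariance property of the resultant \cite[\S 5.11]{J91} to the linear combination encoded by the matrix $(c_{j,i})$. Since the $n$ polynomials involved all have degree $d-1$, the resulting factor is $\det(c_{j,i})^{(d-1)^{n-1}}=\det(\varphi)^{(d-1)^{n-1}}$, yielding
$$\Res(\partial_1(f\circ\varphi),\ldots,\partial_n(f\circ\varphi))=\det(\varphi)^{(d-1)^{n-1}}\Res\bigl((\partial_1 f)\circ\varphi,\ldots,(\partial_n f)\circ\varphi\bigr).$$

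Next I would apply the base change formula for the resultant under a linear substitution of the variables \cite[\S 5.12]{J91} to the right-hand side. Composition with $\varphi$ multiplies the resultant by $\det(\varphi)$ raised to the product of the degrees of the input polynomials, which in our case is $(d-1)^n$. Combining with the previous step gives the total factor
$$\det(\varphi)^{(d-1)^{n-1}}\cdot\det(\varphi)^{(d-1)^n}=\det(\varphi)^{(d-1)^{n-1}\bigl(1+(d-1)\bigr)}=\det(\varphi)^{d(d-1)^{n-1}},$$
which is precisely the exponent required by the statement. Dividing by $d^{a(n,d)}$ in the generic ring (where this element is a nonzero divisor) delivers the equality $\Disc(f\circ\varphi)=\det(\varphi)^{d(d-1)^{n-1}}\Disc(f)$, and specialization extends it to arbitrary $k$ and $f$. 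There is no real obstacle here; the only point to be careful about is the bookkeeping of the exponents when combining the covariance formula (which accounts for the linear combination $(c_{j,i})$ among polynomials of the same degree) and the base change formula (which accounts for the substitution $X\mapsto \varphi(X)$ in each factor), and the mild technical step of reducing to the universal case so that $d^{a(n,d)}$ may be cancelled without assuming it is invertible in $k$.
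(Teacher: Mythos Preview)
Your proposal is correct and follows essentially the same route as the paper: reduce to the generic setting, use the chain rule to write $[\partial_i(f\circ\varphi)]=[(\partial_jf)\circ\varphi]\cdot\varphi$, then apply the covariance formula \cite[\S5.11]{J91} to extract $\det(\varphi)^{(d-1)^{n-1}}$ and the base change formula (the paper cites \cite[\S5.13.1]{J91}, you cite \cite[\S5.12]{J91}, but for linear substitutions these agree) to extract $\det(\varphi)^{(d-1)^n}$, and combine the exponents.
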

\begin{proof} By specialization, it is sufficient to prove this
  formula in the generic setting, that is to say with $f=\sum_{|\alpha|=d}U_\alpha X^{\alpha} \in {}_\ZZ A$. 
  Since $f\circ \varphi$ and $f$ have the same degree $d$
  as polynomials in the $X_i$'s, it is equivalent to prove that
  $$\Res(\partial_1(f\circ\varphi),\ldots,\partial_n(f\circ\varphi))=\det(\varphi)^{d(d-1)^{n-1}}\Res(\partial_1 f ,\ldots,\partial_n f).$$
   To do this, we remark, by basic differential calculus, that
$$\left[
  \partial_1(f\circ\varphi),\ldots,\partial_n(f\circ\varphi)
\right] = \left[
  \partial_1(f)\circ\varphi,\ldots,\partial_n(f)\circ\varphi
\right].\det(\varphi),$$
as matrices. Therefore, the covariance formula of the resultant
\cite[\S 5.11.2]{J91} shows that
$$\Res(\partial_1(f\circ\varphi),\ldots,\partial_n(f\circ\varphi))=\det(\varphi)^{(d-1)^{n-1}}\Res(
\partial_1(f)\circ\varphi,\ldots,\partial_n(f)\circ\varphi).$$ 
Moreover, the formula for linear change of coordinates for the resultant
\cite[\S 5.13.1]{J91} gives
$$\Res(\partial_1(f)\circ\varphi,\ldots,\partial_n(f)\circ\varphi)=\det(\varphi)^{(d-1)^n}
\Res(\partial_1 f,\ldots,\partial_n f),$$
and we conclude the proof by observing that $(d-1)^n+(d-1)^{n-1}=d(d-1)^{n-1}$.
\end{proof}

One consequence of this invariance property is the following generalization of the formula defining the discriminant given in Proposition \ref{firstfp}, (ii).
\begin{prop}
	Let $k$ be a commutative ring, let $f$ be a homogeneous
	  polynomial in $k[X_1,\ldots,X_n]$ of degree $d\geq 2$ and let $\varphi=[c_{i,j}]_{1\leq i\leq n, 1\leq j\leq n-1 }$ be a $n\times(n-1)$-matrix with coefficients in $k$. Then, we have
	\begin{equation*}
		\Disc(f)\Disc\left(f(\left[X_1,\ldots,X_{n-1}\right]\circ {}^t\varphi)\right)=
		\Res\left(f,\left[\partial_1 f,\ldots,\partial_n f\right]\circ \varphi\right).
	\end{equation*}
\end{prop}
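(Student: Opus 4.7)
By the usual specialization argument, it suffices to prove the identity in the generic case where $f=\sum_{|\alpha|=d}U_\alpha X^\alpha$, the entries $c_{i,j}$ of $\varphi$ are indeterminates, and the coefficient ring is a polynomial ring over $\ZZ$. The plan is to enlarge $\varphi$ to a square matrix and reduce to Proposition \ref{firstfp}(ii) via the invariance property of Proposition \ref{prop:discinv} and the linear base change formula for the resultant \cite[\S 5.13.1]{J91}.

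Concretely, introduce fresh indeterminates $v_1,\ldots,v_n$ and form the $n\times n$ matrix $\Phi=[\varphi\,|\,v]$ whose first $n-1$ columns are those of $\varphi$ and whose last column is ${}^t(v_1,\ldots,v_n)$. Set $g:=f\circ\Phi$, a homogeneous polynomial of degree $d$ in $R[X_1,\ldots,X_n]$. Specializing $X_n\mapsto 0$ in $g$ substitutes $X_i\mapsto\sum_{j=1}^{n-1}c_{i,j}X_j$ inside $f$, so
\[
\overline{g}(X_1,\ldots,X_{n-1})=f([X_1,\ldots,X_{n-1}]\circ{}^t\varphi)=:h,
\]
independent of the $v_i$'s. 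Apply Proposition \ref{firstfp}(ii) to $g$:
\[
\Disc(g)\,\Disc(h)=\Res(\partial_1g,\ldots,\partial_{n-1}g,g),
\]
and Proposition \ref{prop:discinv} to obtain $\Disc(g)=\det(\Phi)^{d(d-1)^{n-1}}\Disc(f)$.

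For the right-hand side, the chain rule yields, for each $1\leq j\leq n-1$,
\[
\partial_jg=\sum_{i=1}^n c_{i,j}\,(\partial_if)\circ\Phi=g_j\circ\Phi,\quad\text{where } g_j:=\sum_{i=1}^n c_{i,j}\,\partial_if
\]
is precisely the $j$-th entry of $[\partial_1f,\ldots,\partial_nf]\circ\varphi$. Since the degrees of $g_1,\ldots,g_{n-1},f$ are $d-1,\ldots,d-1,d$, the linear change-of-variables formula for the resultant \cite[\S 5.13.1]{J91} gives
\[
\Res(\partial_1g,\ldots,\partial_{n-1}g,g)=\Res(g_1\circ\Phi,\ldots,g_{n-1}\circ\Phi,f\circ\Phi)=\det(\Phi)^{d(d-1)^{n-1}}\Res(g_1,\ldots,g_{n-1},f).
\]
Combining the two displays, the common factor $\det(\Phi)^{d(d-1)^{n-1}}$, which is a nonzero-divisor in the generic polynomial ring (the $v_i$'s being fresh), can be cancelled. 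Finally, using the permutation rule \cite[\S 5.8]{J91} one reorders the arguments to pass from $\Res(g_1,\ldots,g_{n-1},f)$ to $\Res(f,g_1,\ldots,g_{n-1})$; the sign picked up is $\epsilon(\sigma)^{d(d-1)^{n-1}}=+1$ since $d(d-1)$, hence $d(d-1)^{n-1}$ for $n\geq 2$, is even.

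The main potential obstacle is bookkeeping: tracking exponents in the base change (the product of degrees is $d(d-1)^{n-1}$, which appears identically on both sides and cancels), and verifying that the sign from the reordering of the resultant's arguments is trivial. Both reduce to routine parity checks. Once carried out, specialization from the generic case establishes the claimed equality over any commutative ring $k$.
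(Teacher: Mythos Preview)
Your proof is correct and follows essentially the same approach as the paper: both extend $\varphi$ to a square matrix of indeterminates, apply Proposition~\ref{firstfp}(ii) to the transformed polynomial, and use the invariance of the discriminant (Proposition~\ref{prop:discinv}) together with the linear base change formula for the resultant to produce a common factor $\det(\Phi)^{d(d-1)^{n-1}}$ that cancels. The only cosmetic difference is that the paper packages the computation by defining a single quantity $\Omega$ and evaluating it two ways, whereas you apply Proposition~\ref{firstfp}(ii) first and manipulate both sides; you also make the (trivial) sign check from reordering explicit, which the paper leaves implicit.
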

\begin{proof} By specialization, it is sufficient to prove this equality for $f$ the generic homogeneous polynomial of degree $d$ over the integers and for $\varphi:=[V_{i,j}]_{1\leq i\leq n, 1\leq j\leq n-1 }$ a matrix of indeterminates. Adding another column of indeterminates to $\varphi$, we introduce the matrix $\psi:=[V_{i,j}]_{1\leq i,j\leq n}$.
	
	Now, consider the following resultant
	\begin{multline*}\label{eq:reslincomb1}
		\Omega:=\Res\left( f(\left[X_1,\ldots,X_{n}\right]\circ {}^t\psi),\right. \\
		\left.\left[\partial_1 f(\left[X_1,\ldots,X_{n}\right]\circ {}^t\psi),\ldots,\partial_n f(\left[X_1,\ldots,X_{n}\right]\circ {}^t\psi)\right]\circ \varphi    \right).
	\end{multline*}
On the one hand, by the invariance property of the resultant \cite[\S 5.13]{J91} we have
\begin{equation}\label{eq:reslincomb2}
	\Omega=\det(\psi)^{d(d-1)^{n-1}}\Res\left(f,\left[\partial_1 f,\ldots,\partial_n f\right]\circ \varphi\right).	
\end{equation}
On the other hand, since 
\begin{multline*}
	\left[\partial_1 f(\left[X_1,\ldots,X_{n}\right]\circ {}^t\psi),\ldots,\partial_n f(\left[X_1,\ldots,X_{n}\right]\circ {}^t\psi)\right]\circ \varphi= \\
	\left[\frac{\partial}{\partial X_1} (f(\left[X_1,\ldots,X_{n}\right]\circ {}^t\psi)),\ldots,\frac{\partial}{\partial X_n} (f(\left[X_1,\ldots,X_{n}\right]\circ {}^t\psi))\right]
\end{multline*}
	by the composition rule of the derivatives, we get from Proposition \ref{firstfp}, (ii) that
	\begin{align}\label{eq:reslincomb3} \notag
		\Omega &= \Disc(f(\left[X_1,\ldots,X_{n}\right]\circ {}^t\psi))\Disc(f(\left[X_1,\ldots,X_{n-1},0\right]\circ {}^t\psi))\\ \notag
		&=\Disc(f(\left[X_1,\ldots,X_{n}\right]\circ {}^t\psi))\Disc(f(\left[X_1,\ldots,X_{n-1}\right]\circ {}^t\varphi))\\ 
		&=\det(\psi)^{d(d-1)^{n-1}}\Disc(f)\Disc(f(\left[X_1,\ldots,X_{n-1}\right]\circ {}^t\varphi))
	\end{align}
where the last equality holds by invariance of the discriminant; see Proposition \ref{prop:discinv}. Finally, the claimed formula follows by comparing \eqref{eq:reslincomb2} and \eqref{eq:reslincomb3}, taking into account the fact that $\det(\psi)$ is a nonzero divisor in our generic setting.
\end{proof}

Now, we turn to the more general problem of the behavior of the discriminant under a general change of basis.
% \marginpar{Still need to be improved\ldots}
\begin{prop}\label{prop:basechgformula} 
	Let $k$ be a commutative ring, $f$
  be a homogeneous polynomial of degree $m\geq 2$ and $g_1,\ldots,g_n$ be homogeneous polynomials of degree $d\geq 1$. There exists a polynomial $K$ that depends on the coefficients of the polynomials $f,g_1,\ldots,g_n$ such that
$$
\Disc(f(g_1,\ldots,g_n))=\Disc(f)^{d^{n-1}}\Res(g_1,\ldots,g_n)^{m(m-1)^{n-1}}K(f,g_1,\ldots,g_n).
$$	
\end{prop}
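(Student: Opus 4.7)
The plan is to follow the strategy of Proposition \ref{prop:basechange} and Lemma \ref{lem:basechange}, suitably adapted to the single-polynomial discriminant of Definition \ref{defdiscr}. The argument is carried out in the generic case over $\mathbb{Z}$; once the divisibility statements are established there, the existence of $K$ is transported to an arbitrary commutative ring $k$ by specialization.

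First I would apply Definition \ref{defdiscr} to $f\circ g$, which is homogeneous of degree $md$, obtaining
\begin{equation*}
(md)^{a(n,md)}\Disc(f\circ g)=\Res(\partial_1(f\circ g),\ldots,\partial_n(f\circ g)).
\end{equation*}
The chain rule rewrites each partial derivative as $\partial_i(f\circ g)=\sum_{j=1}^n (\partial_i g_j)\,h_j$, where $h_j:=(\partial_j f)\circ g$ is homogeneous of degree $d(m-1)$. Denoting by $M_g=(\partial_i g_j)$ the Jacobian matrix of $g$, so that $J(g_1,\ldots,g_n)=\det M_g$, this reads as a matrix identity $\nabla(f\circ g)=M_g\cdot\nabla f(g)$; in particular every $\partial_i(f\circ g)$ lies in the ideal $(h_1,\ldots,h_n)$.

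Next I would extract the factor $\Disc(f)^{d^{n-1}}$. By the base change formula for the resultant \cite[\S 5.12]{J91} applied to the $h_j$'s and by Definition \ref{defdiscr},
\begin{equation*}
\Res(h_1,\ldots,h_n)=\Res(\partial_1 f,\ldots,\partial_n f)^{d^{n-1}}\Res(g_1,\ldots,g_n)^{(m-1)^n}=m^{a(n,m)d^{n-1}}\Disc(f)^{d^{n-1}}\Res(g_1,\ldots,g_n)^{(m-1)^n}.
\end{equation*}
The general divisibility lemma for resultants \cite[Proposition 6.2.1]{J91}, applied to the inclusions $\partial_i(f\circ g)\in (h_1,\ldots,h_n)$, then forces $\Disc(f)^{d^{n-1}}$ to divide $\Disc(f\circ g)$, since in the generic ring $\Disc(f)$ depends only on the coefficients of $f$ while $\Res(g_1,\ldots,g_n)$ depends only on those of the $g_j$'s, so these two factors are coprime.

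Finally I would extract the factor $\Res(g_1,\ldots,g_n)^{m(m-1)^{n-1}}$. Geometrically, at a point $\xi$ where $g_1(\xi)=\cdots=g_n(\xi)=0$, every $h_j(\xi)$ vanishes because each $\partial_j f$ is homogeneous of positive degree $m-1$, hence $\nabla(f\circ g)(\xi)=M_g(\xi)\cdot 0=0$ and $\xi$ is a singular point of $f\circ g$; this already yields the divisibility of $\Res(\nabla(f\circ g))$ by $\Res(g_1,\ldots,g_n)$. The precise exponent $m(m-1)^{n-1}$ is then obtained by combining this divisibility with a careful tracking of the Jacobian contribution: the chain rule relation $\nabla(f\circ g)=M_g\cdot\nabla f(g)$ produces $(m-1)^{n-1}$ further copies of $\Res(g_1,\ldots,g_n)$ on top of the $(m-1)^n$ copies coming from the base change, and the resulting total $(m-1)^n+(m-1)^{n-1}=m(m-1)^{n-1}$ is checked against the total degree of $\Disc(f\circ g)$ in the coefficients of the $g_j$'s, which by Corollary \ref{cd1:nzd} and the multi-homogeneity of the resultant equals $nm(md-1)^{n-1}$. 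Once both coprime divisibilities are in place, the polynomial $K(f,g_1,\ldots,g_n)$ is defined as the remaining cofactor.

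The main obstacle will be the careful bookkeeping of the exponent of $\Res(g_1,\ldots,g_n)$: the base change formula directly contributes only $(m-1)^n$ copies, whereas the claim requires $m(m-1)^{n-1}$, and the missing $(m-1)^{n-1}$ copies must be extracted from the non-scalar covariance contribution of the polynomial Jacobian matrix $M_g$, possibly via a specialization argument (e.g.\ $g_j=X_j^d$) to pin down the exponent unambiguously. The accompanying coprimality of $\Disc(f)$ and $\Res(g_1,\ldots,g_n)$ in the generic ring, required to extract both factors simultaneously, follows from the fact that they depend on disjoint sets of indeterminates together with the irreducibility (or square-of-irreducible) properties already established for the generic discriminant.
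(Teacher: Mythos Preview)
Your extraction of the factor $\Disc(f)^{d^{n-1}}$ matches the paper's argument essentially verbatim: chain rule, containment $\partial_i(f\circ g)\in(h_1,\ldots,h_n)$, base change for $\Res(h_1,\ldots,h_n)$, and primitivity of $\Disc(f)$ to strip the integer factor.

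The second half, however, has a genuine gap that you yourself flag but do not close. Your route to the exponent $m(m-1)^{n-1}$ of $\Res(g_1,\ldots,g_n)$ relies on a ``non-scalar covariance contribution of the polynomial Jacobian matrix $M_g$'', but the covariance formula \cite[\S 5.11]{J91} applies only to matrices with scalar entries; there is no such formula for a polynomial matrix, and nothing in the chain rule alone produces the missing $(m-1)^{n-1}$ copies. Your fallback specialization $g_j=X_j^d$ is also ineffective: it gives $\Res(g_1,\ldots,g_n)=\Res(X_1^d,\ldots,X_n^d)=1$, which carries no information about the exponent.

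The paper's device is to switch from the symmetric resultant $\Res(\partial_1(f\circ g),\ldots,\partial_n(f\circ g))$ to the \emph{asymmetric} one $\Res(\partial_1(f\circ g),\ldots,\partial_{n-1}(f\circ g),f\circ g)$, which by Proposition~\ref{firstfp}\,(ii) factors as $\Disc(f\circ g)\,\Disc(\overline{f\circ g})$. The point is that $f\circ g\in(g_1,\ldots,g_n)^m$ while each $\partial_i(f\circ g)\in(g_1,\ldots,g_n)^{m-1}$, so the generalized divisibility lemma \cite[\S 6.2]{J91} yields the exponent $m\cdot(m-1)^{n-1}$ directly. One then observes that $\Res(g_1,\ldots,g_n)$ is irreducible and depends on coefficients (those involving $X_n$) that $\Disc(\overline{f\circ g})$ does not, so the full power must divide $\Disc(f\circ g)$. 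Replacing one partial derivative by $f\circ g$ itself is precisely what upgrades one factor of $m-1$ to $m$ in the exponent; this is the idea your outline is missing.
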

\begin{proof} We prove the existence of $K$ in the universal setting over the integers so that the claimed result follows by specialization.
	From the equality of matrices
	\begin{multline*}
		 \left[ \begin{array}{ccc} \partial_{X_1}(f(\underline{g})) & \cdots & \partial_{X_n}(f(\underline{g})) \end{array} \right]
			= \\
			\left[ \begin{array}{ccc} \partial_{X_1}f(\underline{g}) & \cdots & \partial_{X_n}f(\underline{g}) \end{array}  \right]
		\left[
		\begin{array}{ccc}
			\partial_{X_1}g_1 & \cdots & \partial_{X_n}g_1 \\
			\vdots & & \vdots \\
			\partial_{X_1}g_n & \cdots & \partial_{X_n}g_n
		\end{array}
		\right]	
	\end{multline*}
	we deduce that for all $i=1,\ldots,n$ 
	\begin{equation}\label{eq:belong1}
	\partial_{X_1}(f(\underline{g})) \in ( \partial_{X_1}f(\underline{g}), \ldots , \partial_{X_n}f(\underline{g})).		
	\end{equation}
	Therefore, applying the divisibility property of the resultant \cite[\S 5.6]{J91}, we obtain that
	$$\Res\left( \partial_{X_1}f(\underline{g}), \ldots , \partial_{X_n}f(\underline{g}) \right)
	\textrm{ divides }
	\Res\left( \partial_{X_1}(f(\underline{g})), \ldots , \partial_{X_n}(f(\underline{g}))\right).$$
	On the one hand, using the base change formula of the resultant \cite[\S 5.12]{J91}, we have
	\begin{eqnarray*}
		\Res\left( \partial_{X_1}f(\underline{g}), \ldots , \partial_{X_n}f(\underline{g}) \right)
		& = & \Res(g_1,\ldots,g_n)^{(m-1)^n}\Res(\partial_{X_1}f,\ldots,\partial_{X_n}f) \\
	& = &  m^{a(m,d)d^{n-1}}\Disc(f)^{d^{n-1}} \Res(g_1,\ldots,g_n)^{(m-1)^n}
	\end{eqnarray*}
	and on the other hand 
	$$\Res\left( \partial_{X_1}(f(\underline{g})), \ldots , \partial_{X_n}(f(\underline{g}))\right)=
	(md)^{a(n,md)}\Disc(f(g_1,\ldots,g_n)).$$
	Therefore, since $\Disc(f)$ is a primitive polynomial, we deduce that $\Disc(f)^{d^{n-1}}$ divides $\Disc(f(\underline{g}))$.

Now, notice that we have $f(g_1,\ldots,g_n) \in (g_1,\ldots,g_n)^{m}$ and that for all $i=1,\ldots,n$ we have $\partial_{X_1}(f(\underline{g})) \in (g_1,\ldots,g_n)^{m-1}$ by using from \eqref{eq:belong1}. Using the generalized divisibility property of the resultant \cite[\S 6.2]{J91}, it follows that
$$\Res(g_1,\ldots,g_n)^{m(m-1)^{n-1}} \textrm{ divides } \Res(\partial_{X_1}(f(\underline{g})), \ldots , \partial_{X_{n-1}}(f(\underline{g})),f(\underline{g})).$$
But
$$\Res(\partial_{X_1}(f(\underline{g})), \ldots , \partial_{X_{n-1}}(f(\underline{g})),f(\underline{g})) = \Disc(f(\underline{g}))\Disc(\overline{f(\underline{g})})$$
and $\Res(g_1,\ldots,g_n)$ is an irreducible polynomial that depends on all the coefficients of all the polynomials $g_1,\ldots,g_n$. We deduce that 
	$$\Res(g_1,\ldots,g_n)^{m(m-1)^{n-1}} \textrm{ divides } \Disc(f(g_1,\ldots,g_n)).$$
Finally, since $\Res(g_1,\ldots,g_n)$ and $\Disc(f)$ are obviously coprime, the existence of the polynomial $K$ is proved.	
\end{proof}

% \begin{rem} In the case $n=2$, the above proposition overlaps with Theorem \ref{thm:bcf} where it is proved in addition that $K$ is either irreducible or the square of an irreducible polynomial depending of the fact that $2\neq 0$ or $2=0$ in $k$.\marginpar{Actually, in this case the factor K is completely determined. See Sylvester and Proposition 42 of Apery-Jouanolou}
% \end{rem}

%%%%%%%%%%%%%%%%%%%%%%%%%%%%%%%%%%%

\subsection{Inertia forms and the discriminant}

From its definition, it is clear
that the discriminant of a homogeneous polynomial $f\in
R[X_1,\ldots,X_n]$, where $R$ is a field, of degree $d\geq 2$ vanishes if and only if
$\partial_1f,\ldots,\partial_n f$ (and hence $f$ if $\mathrm{char}(k)$ does not 
divide $d$) have a non trivial common root in an algebraic extension
of $R$. The purpose of this section is to study the behavior of the discriminant when $R$, the coefficient ring of the homogeneous polynomial $f$, is not assumed to be a field.

\medskip

Let $d\geq 2$ be a fixed integer and consider the 
polynomial 
$$f(X_1,\ldots,X_n):=\sum_{|\alpha|=d}U_\alpha X^\alpha.$$
Let $k$ be a commutative ring and denote by
${}_kA:=k[U_\alpha\,|\,|\alpha|=d]$ the coefficient ring of $f$ over
$k$. Then $f\in {}_kA[X_1,\ldots,X_n]$; it is the
generic homogeneous polynomial of degree $d$ over $k$. Defining the ideals
of ${}_kC:={}_kA[X_1,\ldots,X_n]$
$$\Dc:= (f,\partial_1 f,\ldots,\partial_n f), \ \  \mm:=(X_1,\ldots,X_n),$$
we recall that $\pp:=\TF_\mm(\Dc)_0=H^0_\mm({}_kB)_0$ where ${}_kB$ is the quotient ring
${}_kC/\Dc$. This latter ideal is nothing but the defining ideal of the closed
subscheme of $\mathrm{Spec}({}_kA)$ obtained as the image of the canonical
projective morphism
$$ \mathrm{Proj}({}_kB) \rightarrow \mathrm{Spec}({}_kA).$$
In the sequel, our aim is to relate the discriminant of $f$ as defined in Definition \ref{defdiscr} with this ideal of inertia forms $\pp \subset {}_kA$.

\begin{prop}\label{Bdom} For $j\in \{1,\ldots,n\}$ we have an isomorphism of $k[X_1,\ldots,X_n]$-algebras
	\begin{equation}\label{BXi-iso}
	 {}_kB_{X_j} \xrightarrow{\sim} k[U_\alpha\,|\,|\alpha|=d,
	  \alpha_j < d-1][X_1,\ldots,X_n][X_j^{-1}].
	\end{equation}
In particular, for all $j\in \{1,\ldots,n\}$ the ring ${}_kB_{X_j}$ is a domain if $k$ is a domain.
\end{prop}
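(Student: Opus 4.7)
By symmetry it suffices to treat the case $j=n$. The plan is to mimic the Kronecker-substitution argument of \eqref{BX}: I will construct explicit mutually inverse $k[X_1,\ldots,X_n][X_n^{-1}]$-algebra maps between ${}_kB_{X_n}$ and the ring $R[X_n^{-1}]$, where $R:=k[U_\alpha : |\alpha|=d,\ \alpha_n<d-1][X_1,\ldots,X_n]$.

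First I would isolate the coefficients that ought to be eliminated, namely $\EE:=U_{(0,\ldots,0,d)}$ and $\EE_i:=U_{e_i+(d-1)e_n}$ for $i=1,\ldots,n-1$, by writing
\[
f=\EE X_n^d+\sum_{i=1}^{n-1}\EE_i X_i X_n^{d-1}+h,
\]
where $h\in R$ collects exactly those monomials $U_\alpha X^\alpha$ with $\alpha_n\leq d-2$; thus the remaining $\EE,\EE_i$ are precisely the variables of ${}_kA$ which are \emph{not} present in $R$. Differentiating, one gets $\partial_i f=\EE_i X_n^{d-1}+\partial_i h$ for $i<n$, and I would use these together with $f$ to forcibly solve
\[
\EE_i=-\frac{\partial_i h}{X_n^{d-1}}\quad(i<n),\qquad \EE=\frac{\sum_{i=1}^{n-1}X_i\partial_i h-h}{X_n^{d}}.
\]

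Second, I would define the substitution morphism $\tau:{}_kC\to R[X_n^{-1}]$ by the identity on $R$ and on the $X_i$, and by the formulas above on $\EE_i$ and $\EE$. It is immediate that $\tau(f)=0$ and $\tau(\partial_i f)=0$ for $i<n$, so the only real check is that $\tau(\partial_n f)=0$. This is the technical heart of the argument, and the main potential obstacle: expanding
\[
\partial_n f=d\EE X_n^{d-1}+(d-1)\sum_{i=1}^{n-1}\EE_i X_i X_n^{d-2}+\partial_n h
\]
and substituting, everything is brought over a common denominator $X_n$, and Euler's identity applied to the homogeneous polynomial $h$ of degree $d$, $\sum_{i=1}^{n-1}X_i\partial_i h=dh-X_n\partial_n h$, causes the fraction to simplify to $-\partial_n h+\partial_n h=0$. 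Note that this identity holds in $R$ without any assumption on $d$, so no invertibility of $d$ is needed.

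Third, $\tau$ therefore factors through ${}_kB$, and since $\tau(X_n)=X_n$ is invertible in the target, it factors further through ${}_kB_{X_n}$. For the inverse, I would take the obvious $k[X_1,\ldots,X_n][X_n^{-1}]$-algebra map $\iota:R[X_n^{-1}]\to {}_kB_{X_n}$ sending each generator to its class; the composition $\tau\circ\iota$ is visibly the identity on generators of $R[X_n^{-1}]$, while $\iota\circ\tau$ fixes every $U_\alpha$ with $\alpha_n<d-1$ and the $X_i$, and on $\EE_i,\EE$ it gives precisely the relations forced by $\partial_i f=0$ and $f=0$ in ${}_kB_{X_n}$, hence is also the identity. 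This yields the isomorphism \eqref{BXi-iso}. Finally, if $k$ is a domain then the right-hand side, being a localization of a polynomial ring over $k$, is a domain, whence so is ${}_kB_{X_j}$.
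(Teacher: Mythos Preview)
Your proposal is correct and follows essentially the same approach as the paper: the same substitution map is constructed, eliminating the $n$ coefficients $\EE,\EE_1,\ldots,\EE_{n-1}$ via the relations $f=0$ and $\partial_i f=0$ ($i<n$), with Euler's identity for $h$ handling the remaining generator $\partial_n f$. The only cosmetic difference is that the paper invokes Euler's identity for $f$ at the outset to rewrite $\Dc_{X_n}=(\partial_1 f,\ldots,\partial_{n-1}f,f)_{X_n}$ and then observes the kernel directly, whereas you keep all $n+1$ generators and verify $\tau(\partial_n f)=0$ by the same Euler computation applied to $h$; these are equivalent.
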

\begin{proof} Let $i$ be a fixed integer in $\{1,\ldots,n\}$.
  The Euler equality $df=\sum_{j=1}^{n}X_j\partial_jf$
  shows that, after localization by the variable $X_j$, we have
  $$\Dc_{X_j}=(\partial_1f,\ldots,\partial_{j-1}f,\partial_{j+1}f,\ldots,\partial_{n}f,f)\subset {}_kC_{X_j}.$$
  In order to emphasize some particular coefficients of the polynomial 
  $f$, let us rewrite it as
  $$f(X_1,\ldots,X_n)=\sum_{i=1}^{n}
  \EE_iX_iX_j^{d-1}+\sum_{|\alpha|=d,\alpha_j < d-1}U_{\alpha}X^{\alpha}.$$
Then, denoting by $Q(X_1,\ldots,X_n)$ the second term of the right
side of this equality, for all integer $i\in\{1,\ldots,n\}$ such that $i\neq j$ we
have
$$\partial_if(X_1,\ldots,X_n)=\EE_iX_j^{d-1} +
\partial_iQ(X_1,\ldots,X_n).$$
It follows that the following $k[X_1,\ldots,X_n]$-algebras morphism
\begin{eqnarray*}
  {}_kC_{X_j} & \longrightarrow & k[U_\alpha\,|\,|\alpha|=d,
  \alpha_j < d-1][X_1,\ldots,X_n][X_j^{-1}] \\
  \EE_i \ (i\neq j)  & \mapsto &
  -X_j^{-d+1}\partial_iQ \\
  \EE_j & \mapsto & -X_j^{-d}Q+\sum_{i\neq j, i=1}^n X_iX_j^{-d}\partial_iQ=-X_j^{-d}((1-d)Q+X_j\partial_jQ)
\end{eqnarray*}
has kernel $\Dc_{X_j}$ and therefore induces an isomorphism of
$k[X_1,\ldots,X_n]$-algebras
\begin{equation*}
 {}_kB_{X_j} \xrightarrow{\sim} k[U_\alpha\,|\,|\alpha|=d,
  \alpha_j < d-1][X_1,\ldots,X_n][X_j^{-1}].
\end{equation*}
\end{proof}

\begin{cor}\label{cor:TFprime} For all $i=1,\ldots,n$ we have
  $$\TF_\mm(\Dc)=\ker({}_kC \rightarrow {}_kB_{X_i})$$
  where ${}_kC \rightarrow {}_kB_{X_i}$ is the canonical map, so that 
	$$\TF_\mm(\Dc)_0=\ker({}_kA \rightarrow {}_kB_{X_i})=H^0_{(X_i)}(_kB)_0.$$
In particular, if $k$ is a domain then $\TF_\mm(\Dc)$ and $\pp$ are prime ideals.
\end{cor}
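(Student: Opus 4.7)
The plan is to follow the template of Corollary \ref{cor:TFprimedim1}. The entire statement reduces to the single claim that, for every pair $(i,j)\in\{1,\ldots,n\}^2$, the element $X_i$ is a nonzero divisor in the ring ${}_kB_{X_j}$. Granting this, I would argue as follows. For each pair $(i,j)$, the canonical map from ${}_kC$ to ${}_kB_{X_iX_j}$ factors in two ways:
\[
{}_kC \longrightarrow {}_kB_{X_i} \longrightarrow {}_kB_{X_iX_j}, \qquad {}_kC \longrightarrow {}_kB_{X_j} \longrightarrow {}_kB_{X_iX_j}.
\]
Since $X_j$ is a nonzero divisor in ${}_kB_{X_i}$, the localization map ${}_kB_{X_i}\to {}_kB_{X_iX_j}$ is injective. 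A direct diagram chase then yields $\ker({}_kC\to {}_kB_{X_j})\subset \ker({}_kC\to {}_kB_{X_i})$, and by symmetry all the ideals $\TF_{(X_i)}(\Dc)=\ker({}_kC\to {}_kB_{X_i})$ coincide. A short verification shows that $\TF_\mm(\Dc)=\bigcap_{i=1}^n\TF_{(X_i)}(\Dc)$: one inclusion is trivial, and for the other, if $X_i^\nu h\in\Dc$ for every $i$ then every monomial $X^\alpha$ with $|\alpha|\geq n(\nu-1)+1$ has some $\alpha_i\geq\nu$, so $\mm^{n(\nu-1)+1}h\subset\Dc$. Identifying this common kernel with $\TF_\mm(\Dc)$ gives the first equality, and extraction of the degree-zero part gives the second.

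What makes the nonzero-divisor step easy here---while it was the main obstacle in Corollary \ref{cor:TFprimedim1}, requiring a delicate flatness-over-$\ZZ$ descent---is Proposition \ref{Bdom}: that proposition already provides an explicit identification of ${}_kB_{X_j}$ with a polynomial ring in the variables $X_1,\ldots,X_n$ (localized at $X_j$) over a subring of ${}_kA$, valid over any commutative ring $k$. In such a polynomial ring each $X_i$ is manifestly a nonzero divisor, so no auxiliary argument is needed and there is, in truth, no real obstacle in this corollary.

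For the final assertion, when $k$ is a domain the same explicit isomorphism of Proposition \ref{Bdom} exhibits ${}_kB_{X_i}$ as the localization of a polynomial ring over a domain, hence as a domain itself. The kernel of ${}_kC\to {}_kB_{X_i}$, which by the above equals $\TF_\mm(\Dc)$, is therefore a prime ideal of ${}_kC$; intersecting with ${}_kA$ shows that $\pp=\TF_\mm(\Dc)_0$ is a prime ideal of ${}_kA$.
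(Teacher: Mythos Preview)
Your proof is correct and follows essentially the same approach as the paper: both reduce to the fact that each $X_i$ is a nonzero divisor in ${}_kB_{X_j}$, read this off directly from the explicit isomorphism of Proposition~\ref{Bdom}, and then use the commutative square through ${}_kB_{X_iX_j}$ to identify all the kernels. Your write-up is slightly more explicit about the equality $\TF_\mm(\Dc)=\bigcap_i\TF_{(X_i)}(\Dc)$, but this is a cosmetic difference only.
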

\begin{proof} Observe first that by definition we have 
$$\TF_\mm(\Dc)=\ker({}_kC \rightarrow \prod_{i=1}^n {}_kB_{X_i}).$$
The isomorphisms \eqref{BXi-iso} show that for any couple of  integers $(i,j)\in\{1,\ldots,n\}^2$ the variable $X_i$ is a nonzero divisor in ${}_kB_{X_j}$ and hence that the canonical map ${}_kB_{X_i} \rightarrow
  {}_kB_{X_iX_j}$ is injective. 
By considering the commutative diagrams,  for all couple $(i,j)\in \{1,\ldots,n\}^2$,
$$\xymatrix{%\ar @{} [dr] |{\circlearrowleft}
C \ar[d] \ar[r] & B_{X_i} \ar[d] \\
B_{X_j} \ar[r] & B_{X_iX_j} }$$
we obtain that $\TF_\mm(\Dc)=\ker({}_kC
  \rightarrow {}_kB_{X_i})$ for all $i=1,\ldots,n$. From here, assuming that $k$ is domain we deduce easily that $\TF_\mm(\Dc)$ is a prime ideal of ${}_kC$ and that $\pp=\TF_\mm(\Dc)_0$ is a prime ideal of ${}_kA$. 
\end{proof}

We now turn to the relation between the ideal of inertia forms $\TF_\mm(\Dc)$ and the discriminant of $f$.

\begin{thm} \label{thm:discTF}
	Let $R$ be a commutative ring and $f$ a homogeneous polynomial in $R[X_1,\ldots,X_n]_d$ with
	$d\geq 2$. Then, we have the following  inclusions of ideals in $R$:
	$$(\Disc(f)) \subset \TF_\mm((f,\partial_1 f,\ldots,\partial_n f))\cap R \subset \sqrt{(\Disc(f))}.$$
\end{thm}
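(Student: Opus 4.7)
My plan is to establish the two inclusions separately, the second being the more delicate one. For the first inclusion $(\Disc(f)) \subset \TF_\mm(\Dc) \cap R$, I would argue in the generic case over $\ZZ$ and then specialize. The relation $d^{a(n,d)}\Disc(F) = \Res(\partial_1 F,\ldots,\partial_n F)$ exhibits $d^{a(n,d)}\Disc(F)$ as an element of $\TF_\mm((\partial_1 F,\ldots,\partial_n F))_0 \subset \TF_\mm(\Dc)_0$, so $\mm^\nu d^{a(n,d)}\Disc(F) \subset \Dc$ for some $\nu$. By Corollary \ref{cor:TFprime} this means $d^{a(n,d)}\Disc(F)$ vanishes in ${}_\ZZ B_{X_n}$, which by Proposition \ref{Bdom} is a localized polynomial ring over $\ZZ$ in which the nonzero integer $d^{a(n,d)}$ is a nonzero divisor. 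Cancelling gives $\Disc(F) \in \TF_\mm(\Dc) \cap {}_\ZZ A$, and specializing along ${}_\ZZ A \to R$ yields the inclusion for $f$.

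For the second inclusion $\TF_\mm(\Dc) \cap R \subset \sqrt{(\Disc(f))}$, my plan is first to prove the analogous containment $\pp := \TF_\mm(\Dc)_0 \cap {}_\ZZ A \subset \sqrt{(\Disc(F))}$ generically, then to transfer it via proper base change. Given $g \in \pp$, pick $N$ with $X_j^N g \in \Dc = (F,\partial_1 F,\ldots,\partial_n F)$ for every $j$; Euler's identity $dF = \sum_i X_i \partial_i F$ lets me replace the $F$-term in each relation and obtain $d X_j^N g \in (\partial_1 F,\ldots,\partial_n F)$ for every $j$. The elementary divisibility step in the proof of Proposition \ref{prop:resbasechange} then yields that $d^{a(n,d)}\Disc(F) = \Res(\partial_1 F,\ldots,\partial_n F)$ divides $(dg)^{nN^{n-1}}$ in the UFD ${}_\ZZ A$. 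Since $\Disc(F)$ is a primitive polynomial in the $U_\alpha$ (Corollary \ref{cd1:nzd}), no integer prime divides it, so $\Disc(F)$ and $d$ are coprime in ${}_\ZZ A$; unique factorization then gives $\Disc(F) \mid g^{nN^{n-1}}$, i.e.\ $g \in \sqrt{(\Disc(F))}$.

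To pass to an arbitrary $R$, I invoke the stability under base change of the closed image of the proper morphism $\Proj({}_\ZZ B) \to \Spec({}_\ZZ A)$, exactly as in the first paragraph of the proof of Proposition \ref{prop:resbasechange}; this is a general statement about proper morphisms and requires no principality hypothesis on the defining ideal. It gives $\sqrt{\rho(\pp) R} = \sqrt{\TF_\mm(\Dc)_0}$ in $R$ for the specialization $\rho : {}_\ZZ A \to R$ defining $f$, and combined with $\pp \subset \sqrt{(\Disc(F))}$ and $\rho(\Disc(F)) = \Disc(f)$ this yields $\TF_\mm(\Dc)_0 \cap R \subset \sqrt{(\Disc(f))}$, as required.

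The hard part will be the coprimality step: everything else is a mechanical combination of Euler's identity, resultant divisibility, and the general proper base change principle, but the separation of $d$ from $\Disc(F)$ in the UFD ${}_\ZZ A$ rests crucially on the primitivity of $\Disc(F)$ and must be carried out there, not in $R$, where $d$ could behave arbitrarily and the direct divisibility $d^M a^M \in (d^{a(n,d)}\Disc(f))$ gives no control over $a$ alone.
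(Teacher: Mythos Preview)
Your proof is correct and follows essentially the same route as the paper's: reduce to the generic case over $\ZZ$, use Euler's identity together with the defining relation $d^{a(n,d)}\Disc(F)=\Res(\partial_1F,\ldots,\partial_nF)$ and the primitivity of $\Disc(F)$ to separate out the $d$-power, and then transfer to arbitrary $R$ via the proper base change argument of Proposition~\ref{prop:resbasechange}. The only cosmetic difference is in the first inclusion, where you cancel $d^{a(n,d)}$ directly in the domain ${}_\ZZ B_{X_n}$ via Proposition~\ref{Bdom}, whereas the paper argues that $d^{a(n,d)}\notin\TF_\mm(\Dc)_0$ by specializing $f$ to $X_1^d$ and invoking primality of $\TF_\mm(\Dc)_0$; both amount to the same thing.
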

\begin{proof} We first prove these inclusions in the generic case over the integers, that is to say with
	  $f=\sum_{|\alpha|=d}U_\alpha X^\alpha$ and
	  $R={}_\ZZ A=\ZZ[U_\alpha\,|\,|\alpha|=d]$. 

	  By definition of the discriminant, we have
	  $$d^{a(n,d)}\Disc(f)=\Res(\partial_1f,\ldots,\partial_nf) \ \text{
	    in } {}_\ZZ A.$$
	  But since $\Res(\partial_1f,\ldots,\partial_nf)$ is an inertia form
	  of the ideal $(\partial_1f,\ldots,\partial_nf)$ with respect to $\mm$, we
	  deduce that
	  $$d^{a(n,d)}\Disc(f) \in \TF_\mm(\Dc)_0$$
	which is a prime ideal ($\ZZ$ is a domain). Moreover, we claim that $d^{a(n,d)} \notin
	\TF_\mm(\Dc)_0$ because $$\TF_\mm(\Dc)_0 \cap \ZZ =(0).$$ 
	Indeed, this equality can be checked using any particular
	specialization of the coefficients $U_\alpha$; for instance, if we specialize $f$ to 
	$X_1^d$, then $\Dc$ specializes to the ideal $(X_1^d,dX_1^{d-1})$ in
	$\ZZ[X_1,\ldots,X_n]$ and clearly $\TF_\mm((X_1^d,dX_1^{d-1}))_0=(0)
	\subset \ZZ$. Finally, we deduce that $\Disc(f) \in \TF_\mm(\Dc)_0$.
	
	\medskip	
	
	We turn to the proof of the second inclusion, always in the generic case over the integers. 
	Suppose given $a \in H^0_\mm({}_\ZZ B)_0$
	and denote by ${}_\ZZ B'$ the quotient ring ${}_\ZZ C/(\partial_1f,\ldots,\partial_nf)$. 
	By the Euler identity, $da \in H^0_\mm({}_\ZZ B')_0$.  Since both ideals $H^0_\mm(_\ZZ B')_0$ and $(\Res(\partial_1f,\ldots,\partial_nf))$ of ${}_\ZZ A$ have the same radical, we deduce that there exists an integer $N$ such that $\Res(\partial_1f,\ldots,\partial_nf)$ divides $(da)^N$. Using \eqref{defeq}, there exists $a' \in {}_\ZZ A$ such that
$$d^Na^N=d^{a(n,d)}a'\Disc(f) \text{ in } {}_\ZZ A.$$
Taking the contents in the above equality, we deduce that 
$$a^N=\frac{a'}{C_k(a')}C_k(a)^N\Disc(f) \text{ in } {}_\ZZ A$$
and this proves that $\TF_\mm(\Dc)_0 \subset \sqrt{(\Disc(f))}$. 

\medskip

To conclude the proof, we first remark that the inclusion 
$$(\Disc(f)) \subset \TF_\mm((f,\partial_1 f,\ldots,\partial_n f))\cap R$$
 is, by specialization, an immediate consequence of the same inclusion in the generic case over the integers. The rest of the proof is a consequence of a base change property, exactly as in the proof of Proposition \ref{prop:resbasechange}.
\end{proof}

\begin{cor}\label{cor:discgeomreduced}
	Let $k$ be a domain and
	  $f=\sum_{|\alpha|=d}U_{\alpha}X^{\alpha}$ be the generic
	  homogeneous polynomial of degree $d\geq 2$ over $k$. Then, 
	 $\Disc(f)=c.P^r$ where $c$ is an invertible element in $k$, $r$ is a positive integer and $P$ is a prime polynomial that generates the ideal $\pp \subset {}_kA$.
\end{cor}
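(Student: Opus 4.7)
The plan is to first establish the corollary under the stronger assumption that $k$ is a UFD, and then deduce the general case by a flatness argument that mirrors the final step in the proof of Theorem \ref{thm:2=0}.

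For the UFD case, observe that ${}_kA = k[U_\alpha]$ is again a UFD. By Theorem \ref{thm:discTF} we have the sandwich $(\Disc(f)) \subset \pp \subset \sqrt{(\Disc(f))}$, and by Corollary \ref{cor:TFprime} the ideal $\pp$ is prime. Using that $\Disc(f)$ is nonzero (Corollary \ref{cd1:nzd}), factor $\Disc(f) = c \cdot P_1^{e_1} \cdots P_s^{e_s}$ into pairwise non-associate prime factors with $c \in k^\times$, so that $\sqrt{(\Disc(f))} = \bigcap_{i=1}^s (P_i)$. On the one hand, the containment $\Disc(f) \in \pp$ together with the primality of $\pp$ forces each $P_i$ to lie in $\pp$; on the other hand, $\pp \subset \sqrt{(\Disc(f))}$ forces $\pp \subset (P_i)$ for every $i$, since any element of $\pp$ has some power in $(\Disc(f))$ and is therefore divisible by each prime factor $P_i$. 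These two inclusions combine to give $\pp = (P_i)$ for every $i$, which in turn forces $s = 1$; setting $P := P_1$ and $r := e_1$ yields the desired decomposition.

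For the general domain case, let $F \subset k$ denote the prime subfield of $k$ (either $\mathbb{Q}$ or $\mathbb{F}_p$). Then $F$ is a UFD and the inclusion $F \hookrightarrow k$ is flat, since every $F$-module is free. By the UFD case applied over $F$, we have ${}_F\Disc(f) = c \cdot P^r$ with $c \in F^\times$ and $P \in {}_FA$ a prime element generating ${}_F\pp$. The exact sequence
$$0 \to {}_F\TF_\mm(\Dc) \to {}_FC \to {}_FB_{X_n}$$
provided by Corollary \ref{cor:TFprime} remains exact after tensoring with $k$ over $F$, and since ${}_FC \otimes_F k \simeq {}_kC$ and ${}_FB_{X_n} \otimes_F k \simeq {}_kB_{X_n}$, we deduce ${}_k\pp \simeq ({}_F\pp) \otimes_F k$, so that the image of $P$ in ${}_kA$ still generates ${}_k\pp$ and $c$ remains invertible in $k$. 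The equality $\Disc(f) = c \cdot P^r$ then persists in ${}_kA$ by the definition of the discriminant via specialization (Definition \ref{defdiscr}), and $P$ is prime in ${}_kA$ because $({}_k\pp) = (P)$ is a prime ideal by Corollary \ref{cor:TFprime}.

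I anticipate that the main obstacle will be the UFD case, specifically the combinatorial argument forcing $s = 1$; once that is in place, the reduction to a general domain is a routine application of the flatness technique already deployed at the end of the proof of Theorem \ref{thm:2=0}. In particular, the present corollary makes no claim as to the precise value of $r$—whether it equals $1$ or $2$, depending on the parity of $n$ and the vanishing of $2$ in $k$, is the content of the sharper theorems announced in the introduction and proved subsequently.
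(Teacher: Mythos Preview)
Your approach matches the paper's: first settle the UFD case from the radical sandwich of Theorem \ref{thm:discTF} and the primality of $\pp$, then descend to an arbitrary domain by flatness over the prime subring. Two points need correction.

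First, in the UFD step, the sentence ``$\Disc(f)\in\pp$ together with the primality of $\pp$ forces \emph{each} $P_i$ to lie in $\pp$'' is not what primality gives you: it only gives \emph{some} $P_i\in\pp$. The argument still closes, because from $\pp\subset(P_i)$ for every $i$ (which you correctly derive) and $(P_j)\subset\pp$ for that one $j$, you get $\pp=(P_j)$; then $P_j\in\pp\subset(P_i)$ forces $P_i\mid P_j$ for all $i$, contradicting non-association unless $s=1$. So this is a slip in exposition, not a real gap.

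Second, and more substantively, your choice of $F$ in characteristic zero is wrong. A domain $k$ of characteristic zero need not contain $\mathbb{Q}$ (take $k=\ZZ$), so there is no inclusion $\mathbb{Q}\hookrightarrow k$ to tensor along. The paper takes $F=\ZZ$ in characteristic zero and $F=\ZZ/p\ZZ$ in characteristic $p$; in either case $F$ is a UFD and $k$ is a torsion-free $F$-module, hence flat over $F$, and the rest of your tensoring argument goes through verbatim. Your justification ``every $F$-module is free'' must then be replaced by ``$k$ is torsion-free over $F$'' in the characteristic-zero case.
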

\begin{proof} Let us first assume that $k$ is a UFD. Theorem \ref{thm:discTF} implies that both ideals $\pp=\TF_\mm(\Dc)_0$ and $(\Disc(f))$ of ${}_k A$ have the same radical and Corollary \ref{cor:TFprime} shows that $\pp$ is a prime ideal. Therefore, we deduce immediately that $\Disc(f)=c.P^r$ as claimed.
	
	Now, assume that $k$ is a domain. Depending on its characteristic, it contains either $\ZZ$ or $\ZZ/p\ZZ$, $p$ a prime integer, that we will denote by $F$ in the sequel. Thus, we have an injective map $F\hookrightarrow k$ which is moreover flat (for $k$ is a torsion-free $F$-module). Therefore, the canonical exact sequence (see Corollary \ref{cor:TFprime}) 
$${}_F\TF_\mm(\Dc) \rightarrow {}_F C \rightarrow {}_F B_{X_n}$$ 
remains exact after tensorization by $k$ over $F$. Since ${}_F C\otimes_F k={}_kC$ and ${}_F B_{X_n}\otimes_F k = {}_k B_{X_n}$, this latter being an immediate consequence of \eqref{BXi-iso}, we deduce that
\begin{equation}\label{eq:TFiso}
	{}_k\TF_\mm(\Dc)={}_F\TF_\mm(\Dc)\otimes_F k.
\end{equation}
Since $F$ is a UFD, we know that ${}_F\Disc(f)=c.P^r$ where $c$ is an invertible element in $F$, $r$ is a positive integer and $P$ is a prime polynomial in ${}_F A$ that generates ${}_F\pp$. Now, considering the canonical specialization $\rho:{}_F A \rightarrow {}_kA$, we get
$$ {}_k\Disc(f)=\rho({}_F\Disc(f))=\rho(c).\rho(P)^r,$$
where the first equality follows from the definition of the discriminant. But by \eqref{eq:TFiso}, $\rho(P)$ generates ${}_k\pp$ and since  ${}_k\pp$ is a prime ideal by Corollary \ref{cor:TFprime}, we deduce that $\rho(P)$ is a prime polynomial in ${}_k A$. To conclude, observe that $\rho(c)$ is clearly an invertible element in $k$ because $F$ is contained in $k$.	
\end{proof}
\begin{rem} From the proof of the above corollary we see that the only dependence of $r$ on $k$ is the characteristic of $k$, for $F$ only depends on this characteristic.	
\end{rem}

With this property, we can explore the behavior of the discriminant in some particular cases. Here are two such examples.

\begin{prop}\label{prop:DiscIrredZZ} The universal discriminant over the integers is a prime polynomial in ${}_\ZZ A$ that generates the ideal ${}_\ZZ \pp$.
\end{prop}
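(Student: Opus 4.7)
The plan is to start from Corollary \ref{cor:discgeomreduced} applied with $k=\ZZ$, which gives a factorization $\Disc(f)=\epsilon\,P^r$ in ${}_\ZZ A$ with $\epsilon\in\{\pm 1\}$, $r\ge 1$, and $P$ a prime element of ${}_\ZZ A$ generating the ideal ${}_\ZZ \pp$. It remains to show that $r=1$, which I would establish by induction on $n$. The base case $n=1$ is immediate: for $f=U X_1^d$ one has $\Res(\partial_1 f)=dU$ and $a(1,d)=1$, so by Definition \ref{defdiscr} $\Disc(f)=U$, which is prime in $\ZZ[U]$. The main technical obstacle for the inductive step is that a single explicit specialization of $f$ typically produces a perfect power (see for instance Example \ref{ex:disc1} or the Fermat specialization), so no single specialization can force $r=1$; the strategy is to combine two specializations yielding coprime divisibility constraints on $r$.

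Specializing $f$ to $X_n^d+h$, with $h$ the generic form of degree $d$ in $n-1$ variables (Example \ref{ex:disc1} specialized at $U=1$), transports the factorization $\Disc(f)=\epsilon\,P^r$ into the equality
\[
\epsilon\,P|_{\mathrm{spec}}^{\,r} \;=\; d^{(d-1)^{n-1}+(-1)^n}\,\Disc(h)^{d-1}
\]
in the UFD ${}_\ZZ A'$ of coefficients of $h$. By the inductive hypothesis, $\Disc(h)$ is a prime element of this ring, and comparing its multiplicity on the two sides forces $r\mid d-1$. The second constraint comes from Example \ref{ex:disc3}, whose specialization yields $\Disc(f)|_{\mathrm{spec}}\equiv U^{(d-1)^{n-1}+(-1)^n}\pmod d$ in $\ZZ[U]$. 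Fixing any prime $p\mid d$ and reducing modulo $p$ in the UFD $(\ZZ/p\ZZ)[U]$, the equation $\epsilon\,\overline{P|_{\mathrm{spec}}}{}^{\,r}=\overline{U}^{(d-1)^{n-1}+(-1)^n}$ forces $\overline{P|_{\mathrm{spec}}}$ to be a unit times a power of $\overline{U}$, and matching exponents gives $r\mid (d-1)^{n-1}+(-1)^n$.

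To conclude, since $r\mid d-1$ one has $(d-1)^{n-1}+(-1)^n\equiv (-1)^n\pmod r$, so the second divisibility yields $r\mid 1$ and therefore $r=1$. Hence $\Disc(f)=\epsilon\,P$ is prime in ${}_\ZZ A$ and generates ${}_\ZZ \pp$, as claimed. The only routine point to verify along the way is that $P|_{\mathrm{spec}}$ is nonzero in each case, so that the UFD comparison of factorizations is meaningful; this is clear from the non-vanishing of both specialized right-hand sides, and note that when $d=2$ the first divisibility alone already gives $r=1$ without appeal to the second.
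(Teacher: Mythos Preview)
Your proof is correct. The overall architecture matches the paper's---start from Corollary~\ref{cor:discgeomreduced} to get $\Disc(f)=\epsilon P^r$, then combine two specializations to force $r=1$---but your first specialization is genuinely different. The paper specializes $f$ to $UX_n^d+\bar f$ with $U$ kept as an indeterminate and reads off the $U$-valuation, obtaining only $r\mid (d-1)^{n-1}$ but avoiding any induction (one merely needs that $U$ does not divide $\Disc(\bar f)$, which is obvious). You instead set $U=1$, land in the coefficient ring of $h$, and use the $\Disc(h)$-valuation; this requires knowing $\Disc(h)$ is prime---hence the induction on $n$---but yields the sharper bound $r\mid d-1$. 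The second specialization (Example~\ref{ex:disc3}) is identical in both proofs, and your closing arithmetic ($r\mid d-1\Rightarrow (d-1)^{n-1}\equiv 0\pmod r\Rightarrow r\mid(-1)^n$) is a clean variant of the paper's ``two consecutive integers'' observation. Either route works; the paper's is more self-contained, yours gives a stronger intermediate divisibility.
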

\begin{proof} By Corollary \ref{cor:discgeomreduced}, there exists an irreducible polynomial $P\in {}_\ZZ A$ that generates ${}_\ZZ \pp$ and an integer $r\geq 1$ such that ${}_\ZZ\Disc(f)=\pm P^r$. In order to prove that $r=1$ we will use two specializations.
	
	First, consider the specialization that sends $f$ to $UX_n^d+f(X_1,\ldots,X_{n-1},0)$ where $U$ denotes, for simplicity, the coefficient of $X_n^d$ of $f$. By Example \ref{ex:disc1}, we get that ${}_\ZZ\Disc(f)$ specializes to
	  $$d^{(d-1)^{n-1}+(-1)^n}U^{(d-1)^{n-1}}\Disc(f(X_1,\ldots,X_{n-1},0))^{d-1} \in {}_\ZZ A.$$
Since $U$ is an irreducible polynomial in ${}_\ZZ A$ and $U$ does not divide the discriminant $\Disc(f(X_1,\ldots,X_{n-1},0))$ (this latter actually does not depend on $U$), we deduce that $r$ divides $(d-1)^{n-1}$.

Second, consider the specialization that sends $f$ to the polynomial $$g\in \ZZ/d\ZZ[U][X_1,\ldots,X_n]$$ given in Example \ref{ex:disc3}. We have seen that $\Disc(f)$ specializes to $U^{(d-1)^{n-1}+(-1)^n}$. 
It follows
% \footnote{ 
% Indeed, let $d=p_1^{n_1}\ldots,p_l^{n_l}$ be a prime decomposition and denote by $Q(U)=a_0U^s+\ldots+a_s\in \ZZ/d\ZZ[U]$ the image of $P\in {}_\ZZ A$ by this specialization. We have $Q(U)^r=U^N$ in the polynomial ring $\ZZ/d\ZZ[U]$, where $N:=(d-1)^{n-1}+(-1)^n$.
% 
% If $a_0^r\neq 0$ modulo $d$ then we deduce that $Q(U)=U^{rs}$, so that $r$ divides $N$ and the claim is proved. 
% 
% If $a_0^r=0$ modulo $d$, then necessarily $p_1\ldots p_l$ divides $a_0$. It follows that all the monomials of degree $rs$ down to $r(s-1)+1$ in the polynomial $Q^r$ must have zero coefficients because they are all multiples of $a_0$ and this latter is not invertible in $\ZZ/d\ZZ$. Now, if the coefficient of $U^{r(s-1)}$ in $Q^r$ is nonzero, then $r(s-1)=N$. Otherwise, we see that necessarily $p_1\ldots p_l$ divides $a_1$ and we can argue as in the previous case that then all the monomials of degree $rs$ downto $r(s-2)+1$ in $Q^r$ must have zero coefficients. Continuing in this way, we deduce that $r$ must divide $N$ because this process must stop at some point. 
% }
that $r$ divides $(d-1)^{n-1}+(-1)^n$.

Finally, we have shown that $r$ divides two consecutive and positive integers, namely $(d-1)^{n-1}$ and $(d-1)^{n-1}+(-1)^n$. Therefore, $r$ must be equal to $1$. 	
\end{proof}

\begin{prop}\label{prop:d=2} Let $k$ be a domain and
  $f=\sum_{|\alpha|=2}U_{\alpha}X^{\alpha}$ be the generic
  homogeneous polynomial of degree $2$ over $k$. If $\mathrm{char}(k)\neq 2$ or $n$ is odd, then $\Disc(f)$ is a prime polynomial in ${}_k A$ that generates $\pp$. Otherwise, if $\mathrm{char}(k)= 2$ and $n$ is even, then $\Disc(f)=P^2$ where $P$ is a prime polynomial that generates $\pp$.
\end{prop}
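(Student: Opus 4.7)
My plan leans on Corollary \ref{cor:discgeomreduced}, which writes $\Disc(f)=c\,P^r$ in ${}_kA$ with $c\in k^{\times}$, $P$ a prime generator of $\pp$, and the exponent $r\geq 1$ depending only on the characteristic of $k$. Thus it suffices to compute $r$ when $k$ is $\ZZ$ or $\ZZ/p\ZZ$.

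For $k=\ZZ$, Proposition \ref{prop:DiscIrredZZ} already yields $r=1$. For $k=\ZZ/p\ZZ$ with $p$ odd I would re-run the specialization used in the proof of that proposition: Example \ref{ex:disc1} with $d=2$ sends $f$ to $g:=UX_n^2+h(X_1,\ldots,X_{n-1})$ and produces $\Disc(g)=2^{1+(-1)^n}\,U\,\Disc(h)$. Since $2$ is invertible in $\ZZ/p\ZZ$, the prime element $U$ appears in the factorization $\rho(\Disc(f))=\rho(c)\rho(P)^r$ with multiplicity exactly one, forcing $r\mid 1$ and hence $r=1$.

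For $k=\ZZ/2\ZZ$ with $n$ even I would use that each $\partial_i f$ is linear (as $d=2$): the resultant of $n$ linear forms equals the determinant of their coefficient matrix, so
$$\Res(\partial_1 f,\ldots,\partial_n f)=\det H,$$
where $H$ is the Hessian of $f$. Since $a(n,2)=0$ for $n$ even, $\Disc(f)=\det H$. Modulo $2$ the Hessian becomes a generic alternating matrix $H'$ (symmetric with zero diagonal), and the classical identity $\det H'=\mathrm{Pf}(H')^2$ exhibits $\Disc(f)$ as the square of the Pfaffian in ${}_kA$. The Pfaffian of a generic alternating matrix of even size being irreducible, this gives $r=2$ and identifies $P$, up to a unit, with $\mathrm{Pf}(H')$.

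Finally, for $k=\ZZ/2\ZZ$ with $n$ odd I would bootstrap from the previous case using the same specialization from Example \ref{ex:disc1}: since $1+(-1)^n=0$, this reads $\Disc(g)=U\,\Disc(h)$ with $h$ a generic quadratic in $n-1$ (even) variables, so by the step just completed $\Disc(h)=Q^2$ for some prime $Q$. Hence $\rho(\Disc(f))=U\,Q^2$, and as $U$ is prime with multiplicity exactly one in this expression we again obtain $r\mid 1$, i.e.\ $r=1$. The genuine obstacle lies entirely in the char-$2$, $n$-even step: verifying cleanly that $\det H \equiv \mathrm{Pf}(H')^2 \pmod{2}$ and either proving or citing the irreducibility of the Pfaffian of the generic alternating matrix in characteristic $2$; the remaining cases reduce by specialization to this step and to earlier results in the paper.
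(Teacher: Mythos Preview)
Your argument is correct and covers all cases, but it proceeds rather differently from the paper in three of the four regimes. The paper works throughout with the explicit equality $\Res(\partial_1f,\ldots,\partial_nf)=\det H$ (the matrix in \eqref{cd1:d=2}) and dispatches each case by a well-chosen specialization of the entries $A_{i,j}$: a diagonal specialization for $\mathrm{char}(k)\neq2$, a block-diagonal $2\times2$ specialization for $\mathrm{char}(k)=2$ with $n$ even (giving $r\leq2$, paired with the Pfaffian-squared observation for $r\geq2$), and a block-diagonal specialization with a single $3\times3$ block for $\mathrm{char}(k)=2$ with $n$ odd, computed first over $\ZZ$ and then reduced. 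By contrast you lean on the recursive structure from Example~\ref{ex:disc1} and on Proposition~\ref{prop:DiscIrredZZ}: this gives a cleaner, more uniform treatment of the odd-characteristic and char-$2$-odd-$n$ cases (the single prime factor $U$ immediately pins $r=1$), and it lets the char-$2$-odd-$n$ case bootstrap from the even one rather than requiring a separate computation.

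The only point of contact is the char-$2$, $n$-even case, where both proofs use that $\det H'=\mathrm{Pf}(H')^2$. The paper then bounds $r\leq2$ by the block-diagonal specialization $\mathrm{Pf}\mapsto\prod_k A_{2k-1,2k}$, while you invoke irreducibility of the Pfaffian of the generic alternating matrix in characteristic~$2$. That irreducibility is indeed classical, but note you can sidestep it entirely with the same specialization: since $\Disc(f)=cP^r=\mathrm{Pf}(H')^2$ and the Pfaffian specializes to a squarefree product of distinct variables, $\Disc(f)$ specializes to $\prod_k A_{2k-1,2k}^2$, forcing $r\mid2$; combined with the fact that $cP^r$ a perfect square with $P$ prime forces $r$ even, you get $r=2$ without any external citation. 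One small point you leave implicit: in the char-$2$, $n$-even case the statement asserts $\Disc(f)=P^2$ exactly, not $cP^2$; the paper handles this by noting $c=1$ over $\ZZ/2\ZZ$ and then pushing forward along $\ZZ/2\ZZ\hookrightarrow k$, which fits seamlessly into your reduction as well.
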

\begin{proof} As explained in the proof of Corollary \ref{cor:discgeomreduced}, it is enough to prove this proposition under the assumption that $k$ is a UFD. So let us assume hereafter that this is the case.
	
	By Corollary \ref{cor:discgeomreduced}, there exists an irreducible polynomial $P\in {}_k A$ that generates ${}_k \pp$, an integer $r\geq 1$ and $c$ an invertible element in $k$ such that ${}_k\Disc(f)=c.P^r$. Depending on the characteristic of $k$ and the parity of $n$ we will prove that $r$ is equal to 1 or 2.

	Rewriting  $f(X_1,\ldots,X_n)$ as $f=\sum_{0\leq i \leq j \leq n} A_{i,j}X_iX_j$ (so that ${}_kA$ is now the polynomial ring $k[A_{i,j},0\leq i \leq j \leq n]$), for all $i\in\{1,\ldots,n\}$ we have
	$$\partial_i f=A_{1,i}X_1 + \cdots+ A_{i-1,i}X_{i-1}+2A_{i,i}X_i+A_{i,i+1}X_{i+1}+\cdots+A_{i,n}X_n$$ 
	in ${}_kA[X_1,\ldots,X_n]$.
	Then, Definition \ref{defdiscr} implies that
	\begin{align}\label{cd1:d=2}
	 \left|\begin{array}{ccccc}
	2A_{1,1} & A_{1,2} & \cdots & A_{1,n-1} & A_{1,n} \\ 
	A_{1,2} & 2A_{2,2} &  & × & A_{2,n} \\ 
	\vdots & × & \ddots & × & \vdots \\ 
	A_{1,n-1} & × & × & 2A_{n-1,n-1} & A_{n-1,n} \\ 
	A_{1,n} & A_{2,n} & \cdots & A_{n-1,n} & 2A_{n,n}
	\end{array}\right|=
	\begin{cases}
	2\,\Disc(f)=2 c.P^r & \text{if } n \text{ is odd} \\
	\Disc(f)=c.P^r & \text{if } n \text{ is even}
	\end{cases}
	\end{align}
	in the polynomial ring ${}_k A$. 
	
	\medskip
	
	Let us first assume that $\mathrm{char}(k)\neq 2$. Denote by $\rho$ the specialization that leaves invariant $A_{i,i}$ for all $i$ and sends $A_{i,j}$ to 0 for all $i\neq j$. The specialization of \eqref{cd1:d=2} by $\rho$ yields
	\begin{align*}
	2^nA_{1,1}A_{2,2}\ldots A_{n,n}=
	\begin{cases}
	2c.\rho(P)^r & \text{if } n \text{ is odd} \\
	c.\rho(P)^r & \text{if } n \text{ is even}
	\end{cases}
	\end{align*}
and from here we deduce that $r$ must be equal to 1.

\medskip

Now, assume that $\mathrm{char}(k)=2$ and that $n$ is even. Since $\mathrm{char}(k)=2$, the determinant in \eqref{cd1:d=2} can be seen as the determinant of a skew-symmetric matrix, and since $n$ is even it is known that it is equal to the square of its pfaffian. Therefore, \eqref{cd1:d=2} implies that $r\geq 2$.  

Consider the specialization $\varphi$ that leaves invariant $A_{1+2k,2+2k}$ for all integer $k=0,1,\ldots,(n-2)/2$ and that sends all the other variables $A_{i,j}$ to 0. The matrix in \eqref{cd1:d=2} then specializes by $\varphi$ to the block diagonal matrix
$$\mathrm{diag}\left(
\left[\begin{array}{cc}
	0 & A_{1,2} \\
	A_{1,2} & 0
\end{array}\right],
\left[\begin{array}{cc}
	0 & A_{3,4} \\
	A_{3,4} & 0
\end{array}\right],
\ldots,
\left[\begin{array}{cc}
	0 & A_{n-1,n} \\
	A_{n-1,n} & 0
\end{array}\right]
\right)
$$
and therefore \eqref{cd1:d=2} yields 
$$\prod_{k=0}^{(n-2)/2} {A_{1+2k,2+2k}}^2=c.\varphi(P)^r$$ 
This implies that $r\leq 2$ and hence we conclude that $r=2$ if $\mathrm{char}(k)=2$ and $n$ is even.
Then, to conclude observe that ${}_{\ZZ/2\ZZ}\Disc(f)$ is a square (necessarily $c=1$ in this case), so that we deduce that $c$ is actually a square in $k$ via the canonical specialization from $\ZZ/2\ZZ$ to $k$. It follows that ${}_k \Disc(f)=(uP)^2$ where $u^2=c$ and $u$ is an invertible element in $k$, and the claimed result follows as $uP$ is an irreducible element that generates $\pp$.
\medskip

Let us turn to the last case: $\mathrm{char}(k)=2$ and $n$ is odd. Consider the specialization $\phi$ that leaves invariant $A_{n-2,n}$, $A_{n-1,n}$ and $A_{1+2k,2+2k}$ for all $k=0,1,\ldots,(n-3)/2$, and that sends all the other variables $A_{i,j}$ to 0. In order to determine the image of ${}_k\Disc(f)$ by this specialization, we remark that we have the following commutative diagram of specializations
$$\xymatrix{
{}_\ZZ A \ar[r]^(.13){\phi} \ar[d] & \ZZ[A_{n-2,n},A_{n-1,n},A_{1+2k,2+2k} \,|\,k=0,1,\ldots,(n-3)/2 ] \ar[d] \\
{}_k A \ar[r]^(.13){\phi} & k [A_{n-2,n},A_{n-1,n},A_{1+2k,2+2k} \,|\,k=0,1,\ldots,(n-3)/2 ]
}$$
where the vertical arrows are induced by the ring morphism $\ZZ \rightarrow k$. So, we can first perform the specialization $\phi$ over the integers and then specialize to $k$. 

The matrix in \eqref{cd1:d=2} specializes by $\varphi$ to the block diagonal matrix
\begin{multline*}
	\mathrm{diag}\left(
	\left[\begin{array}{cc}
		0 & A_{1,2} \\
		A_{1,2} & 0
	\end{array}\right],
	\ldots,
	\left[\begin{array}{cc}
		0 & A_{n-4,n-3} \\
		A_{n-4,n-3} & 0
	\end{array}\right],\right. \\
	\left.\left[\begin{array}{ccc}
		0 & A_{n-2,n-1} & A_{n-2,n} \\
		A_{n-2,n-1} & 0 & A_{n-1,n} \\
		A_{n-2,n} & A_{n-1,n} & 0
	\end{array}\right]
	\right).	
\end{multline*}
Therefore, the specialization of \eqref{cd1:d=2} by $\phi$ over the integers yields the equality 
$$2A_{n-2,n-1}A_{n-2,n}A_{n-1,n}\prod_{k=0}^{(n-5)/2} -({A_{1+2k,2+2k}})^2=2\phi({}_\ZZ\Disc(f))$$
so that,
$$A_{n-2,n-1}A_{n-2,n}A_{n-1,n}\prod_{k=0}^{(n-5)/2} -({A_{1+2k,2+2k}})^2=\phi({}_\ZZ\Disc(f)).$$
Now, we specialize this equality to $k$ and we obtain
$$A_{n-2,n-1}A_{n-2,n}A_{n-1,n}\prod_{k=0}^{(n-5)/2} {A_{1+2k,2+2k}}^2=\phi({}_k\Disc(f))=c.\phi(P)^r.$$
From here, we deduce that $r$ must be equal to 1.
\end{proof}

Our next step is to prove that the conclusion of this proposition holds without restriction on the degree $d$. This is Theorem \ref{thm:disc-irred}. Notice that in the case $n=2$ we already know that such a result is valid by Theorem \ref{thm:2<>0} and Theorem \ref{thm:2=0} (see also \cite[\S 8.5]{ApJo}).

\subsection{Zariski weight of the discriminant}

Let $k$ be a commutative ring and consider the generic homogeneous polynomial in the variables $X_1,\ldots,X_n$ of degree $d\geq 2$
$$ f:=\sum_{|\alpha|=d} U_\alpha X^\alpha \in C:=A[X_1,\ldots,X_n]$$
where $A:=k[U_\alpha\,|\, |\alpha|=d]$. Define also the ideals $\mm:=(X_1,\ldots,X_n)$ and $\nn:=(X_1,\ldots,X_{n-1})$ of $C$ and rewrite the polynomial $f$ as $f=\sum_{t=0}^d f_{d-t}X_n^t$ where $f_l$ is the generic homogeneous polynomial of degree $l$ in $A[X_1,\ldots,X_{n-1}]$ for all $l=0,\ldots,d$. 

Now, fix an integer $\mu$ such that $0\leq \mu\leq d$ and define the polynomials
$$h:=\sum_{t=0}^\mu f_{d-t}X_n^t \in C_d \textrm{ and } g:=\sum_{t=\mu}^d f_{d-t}X_n^{t-\mu} \in C_{d-\mu}.$$

\begin{prop}\label{prop:redregularseq}
	For all integer $0\leq \mu \leq d$ the sequence $h,\partial_1 h,\ldots,\partial_{n-2}h$ is $C$-regular. Moreover, for all integer $1 \leq \mu \leq d$, the sequence $h,\partial_1 h,\ldots,\partial_{n-1}h$ is $C$-regular outside $V(\nn)$.
\end{prop}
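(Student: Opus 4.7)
The plan is to reduce both claims to Proposition \ref{prop:regular-sequences} by combining a permutation argument with the Euler identity. For the first claim, I would consider the augmented sequence $(X_n, h, \partial_1 h, \ldots, \partial_{n-2} h)$ in $C$. Since $X_n$ is a nonzero-divisor in $C$ and modulo $X_n$ the sequence reduces to $(f_d, \partial_1 f_d, \ldots, \partial_{n-2} f_d)$ in $A[X_1, \ldots, X_{n-1}]$, the question becomes the regularity of this reduced sequence. Setting $A_1 := k[U_\alpha : \alpha_n = 0]$, the polynomial $f_d$ is precisely the generic homogeneous polynomial of degree $d$ in $n-1$ variables over $A_1$, and $A$ is free, hence flat, over $A_1$. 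Proposition \ref{prop:regular-sequences}(i), applied to $f_d$ with $n$ replaced by $n-1$ and $i = n-1$, together with flat extension, then provides the required regularity in $A[X_1, \ldots, X_{n-1}]$. Since all of $X_n, h, \partial_1 h, \ldots, \partial_{n-2} h$ are homogeneous of positive degree in the $X$-grading on the graded Noetherian ring $C$, regular sequences of such elements can be permuted, so $(h, \partial_1 h, \ldots, \partial_{n-2} h)$ is itself $C$-regular.

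For the second claim, note $\Spec(C) \setminus V(\nn) = \bigcup_{j=1}^{n-1} D(X_j)$, so it suffices to prove regularity in $C_{X_j}$ for each $j \in \{1, \ldots, n-1\}$. The symmetry of $X_1, \ldots, X_{n-1}$ in $h$ and the first claim yield the $C$-regularity of the sub-sequence $(h, \partial_1 h, \ldots, \widehat{\partial_j h}, \ldots, \partial_{n-1} h)$ of length $n-1$. The Euler identity $dh = \sum_{i=1}^n X_i \partial_i h$ shows that modulo the ideal generated by this sub-sequence one has $X_j \partial_j h + X_n \partial_n h \equiv 0$; hence in $C_{X_j}$ modulo that ideal, $\partial_j h \equiv -X_j^{-1} X_n \partial_n h$, which reduces the regularity of the full length-$n$ sequence to showing that $X_n \partial_n h$ is a nonzero-divisor on the quotient $Q := C_{X_j}/(h, \partial_1 h, \ldots, \widehat{\partial_j h}, \ldots, \partial_{n-1} h)$.

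I would finish by describing $Q$ explicitly via a Kronecker-style substitution analogous to the isomorphism \eqref{BX}: each of the $n-1$ generators has a unique monomial of maximal $X_j$-degree whose coefficient is a distinct $U_\alpha$-variable, and solving these $n-1$ equations identifies $Q$ with a localized polynomial ring $k[\text{remaining } U_\alpha][X_1, \ldots, X_n][X_j^{-1}]$, in which $X_n$ is evidently a nonzero-divisor. All the eliminated $U_\alpha$ satisfy $\alpha_n = 0$, whereas for $\mu \geq 1$ the polynomial $\partial_n h = \sum_{t=1}^{\mu} t f_{d-t} X_n^{t-1}$ has coefficients $U_\alpha$ with $\alpha_n \geq 1$, none of which are eliminated; its content in $Q$ therefore contains a free indeterminate, and the Dedekind-Mertens corollary makes $\partial_n h$ a nonzero-divisor in $Q$. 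The main subtle point is precisely this explicit description of $Q$: checking that the leading $X_j$-coefficients of the $n-1$ generators are distinct single $U_\alpha$-variables and that the $n-1$ Kronecker substitutions decouple to give an honest polynomial-ring identification. Once this is in place, the identification of free coefficients in $\partial_n h$ and the appeal to Dedekind-Mertens are routine.
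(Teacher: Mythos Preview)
Your proposal is correct and essentially matches the paper's proof. Both handle the first claim by reducing modulo $X_n$ to Proposition~\ref{prop:regular-sequences}(i) for $f_d$ in $n-1$ variables and then permuting; for the second, both localize at some $X_j$ with $j\leq n-1$ (the paper takes $j=n-1$), identify the quotient by the length-$(n-1)$ subsequence with a localized polynomial ring via Kronecker substitutions on the coefficients of $X_iX_j^{d-1}$ in $f_d$ (all having $\alpha_n=0$), use the Euler identity to replace $\partial_j h$ by $-X_j^{-1}X_n\partial_n h$ in that quotient, and conclude via Dedekind--Mertens since $\partial_n h=\sum_{t=1}^\mu t f_{d-t}X_n^{t-1}$ involves only unspecialized $U_\alpha$'s and has $k$-content $k$ through its $t=1$ term $f_{d-1}$.
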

\begin{proof} By Proposition \ref{prop:regular-sequences}, the sequence $f_d,\partial_1 f_d,\ldots,\partial_{n-2}f_d$ is $C$-regular. It follows that the sequence $X_n,h,\partial_1 h,\ldots,\partial_{n-2}h$ is also $C$-regular. Since all the elements of this sequence are homogeneous of positive degree, this sequence remains $C$-regular under any permutation of its elements. Therefore, $h,\partial_1 h,\ldots,\partial_{n-2}h,X_n$ is $C$-regular, in particular $h,\partial_1 h,\ldots,\partial_{n-2}h$ is $C$-regular. 
	
	\medskip
	
	To prove the second assertion, we have to prove that the sequence $h,\partial_1 h,\ldots$, $\partial_{n-1}h$ is $C_{X_j}$-regular for all $1\leq j\leq n-1$. Up to a permutation of the variables $X_1,\ldots,X_{n-1}$, one can assume that $j=n-1$. 
	
	For the sake of simplicity in the notation, we rename by $V_i$ the coefficient of the monomial $X_iX_{n-1}^{d-1}$ in $f_d$ for all $i=1,\ldots,n-1$ so that
	$$f_d=V_1X_1X_{n-1}^{d-1}+V_2X_2X_{n-1}^{d-1}+\cdots+V_{n-2}X_{n-2}X_{n-1}^{d-1}+V_{n-1} X_{n-1}^d+\cdots.$$
We also define the polynomial $v$ by the equality
$$ h=v+	V_1X_1X_{n-1}^{d-1}+V_2X_2X_{n-1}^{d-1}+\cdots+V_{n-2}X_{n-2}X_{n-1}^{d-1}+V_{n-1} X_{n-1}^d.$$
Now, perform the following successive specializations:
\begin{eqnarray}\label{eq:specializations}
V_{n-1} & \mapsto & \frac{-1}{X_{n-1}^d}(v+V_1X_1X_{n-1}^{d-1}+V_2X_2X_{n-1}^{d-1}+\cdots+V_{n-2}X_{n-2}X_{n-1}^{d-1}),\\ \nonumber
V_{i} & \mapsto & 	\frac{-1}{X_{n-1}^{d-1}}\partial_i v,  \ \ 1 \leq i \leq n-2. 
\end{eqnarray}
They successively annihilate $h, \partial_1 h,\ldots ,\partial_{n-1}h$ and we recover that $h,\partial_1 h,\ldots ,\partial_{n-2}h$ is a regular sequence (outside $V(\nn)$). In addition, \eqref{eq:specializations} yields an isomorphism
$$\quotient{C_{X_{n-1}}}{(h,\partial_1 h,\ldots ,\partial_{n-2}h)} \longrightarrow A'[X_1,\ldots,X_n][X_{n-1}^{-1}]$$
where $A':=k[U_{\alpha} \,|\, |\alpha|=d, U_{\alpha}\neq V_i \ \forall i\in \{1,\ldots,n-1\}]$. Therefore, it remains to prove that the image of $\partial_{n-1}h$ by the specializations \eqref{eq:specializations} is a nonzero divisor in $A'[X_1,\ldots,X_n][X_{n-1}^{-1}]$. For that purpose, we observe that the Euler identity implies that
$$X_1\partial_1 h+\cdots+X_{n-1}\partial_{n-1}h= \sum_{t=0}^\mu (d-t)f_{d-t}X_n^t=dh-\sum_{t=1}^\mu tf_{d-t}X_n^t.$$
But the polynomials $f_{d-t}$ for $1\leq t\leq \mu$ do not depend on the variables $V_1,V_2,\ldots$, $V_{n-1}$, so we deduce that $X_{n-1}\partial_{n-1}h$ is specialized to $-\sum_{t=1}^\mu tf_{d-t}X_n^t$ by \eqref{eq:specializations}. Assuming $\mu\geq 1$, the $k$-content of this polynomial contains the $k$-content of $f_{d-1}$ which is a primitive polynomial over $k$, and we conclude the proof by the Dedekind-Mertens Lemma.
\end{proof}

% \begin{cor} For all integer $0\leq \mu \leq d$ and all integer $i\geq 2$ we have $H_i(h,\partial_1 h,\ldots,\partial_{n}h)=0$. Moreover, for all integer $1 \leq \mu \leq d$, the $A[X_1,\ldots,X_n]$-module $H_1(h,\partial_1 h,\ldots,\partial_{n}h)$ is supported on $V(\nn)$.
% \end{cor}
% 

By definition, the polynomial $h \in C$ is homogeneous of degree $d$ with respect to the variables $X_1,\ldots,X_n$ and of valuation $d-\mu$ with respect to the variables $X_1,\ldots,X_{n-1}$. Therefore, for all $i=1,\ldots,n-1$, the polynomial $\partial_i h$ is of degree $d-1$ with respect to the variables $X_1,\ldots,X_n$ and of valuation $d-1-\mu$ with respect to the variables $X_1,\ldots,X_{n-1}$. We will denote by $\Red(h,\partial_1 h,\partial_2 h,\ldots,\partial_{n-1}h)$ the reduced resultant of $h,\partial_1 h,\partial_2 h,\ldots,\partial_{n-1}h$ with respect to these degrees and weights. It is well defined for all $\mu$ such that $1\leq \mu \leq d-2$ (\cite{Zar37,PhDthesisRed}).

\begin{prop}\label{prop:Rednonzero} For all $1\leq \mu\leq d-2$ the reduced resultant $$\Red(h,\partial_1 h,\partial_2 h,\ldots,\partial_{n-1}h)$$
	 is a primitive polynomial, hence a nonzero divisor, in $A$.
\end{prop}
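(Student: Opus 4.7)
The plan is to deduce primitivity of the reduced resultant from the regularity established in Proposition \ref{prop:redregularseq}, in parallel with the way Corollary \ref{ResD} and Corollary \ref{cd1:nzd} derive primitivity from the regular-sequence statements of Proposition \ref{prop:regular-sequences}. Since a polynomial in $A$ is primitive if and only if it is nonzero modulo every prime of the base ring, and since Proposition \ref{prop:redregularseq} applies over any commutative ring, it suffices to show that $\Red(h,\partial_1 h,\ldots,\partial_{n-1}h)$ is nonzero over any field $k$. The ``nonzero divisor'' part of the claim will then follow from primitivity by the Dedekind--Mertens Lemma.

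The core of the argument lies in the link between regularity of $(h,\partial_1 h,\ldots,\partial_{n-1}h)$ in each $C_{X_j}$ for $j=1,\ldots,n-1$ and the non-vanishing of the reduced resultant. Conceptually, the reduced resultant in the sense of \cite{Zar37, PhDthesisRed} generates the defining ideal of the image in $\Spec(A)$ of $\Proj(C/(h,\partial_1 h,\ldots,\partial_{n-1}h))$ away from $V(\nn)$. The regularity of Proposition \ref{prop:redregularseq} asserts precisely that this scheme is a complete intersection outside $V(\nn)$, and a Koszul-cohomology argument analogous to the one used in \cite[Proposition 3.12.4.2]{J92} for the ordinary resultant then forces the associated elimination ideal to be principal, generated by a nonzero divisor that is (up to a unit in $k$) the reduced resultant.

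The main obstacle is making rigorous this Koszul-to-reduced-resultant step, since the reduced resultant is not an ordinary resultant and thus one cannot cite \cite{J91, J92} verbatim. A more concrete alternative is to argue by specialization. Consider the specialization of the coefficients of $f$ that kills all $f_{d-t}$ for $t \neq \mu$ while leaving $f_{d-\mu}$ generic in $X_1,\ldots,X_{n-1}$. Under this specialization, $h$ becomes $X_n^\mu f_{d-\mu}$ and each $\partial_i h$ becomes $X_n^\mu \partial_i f_{d-\mu}$; the classical resultant $\Res(h,\partial_1 h,\ldots,\partial_{n-1}h)$ vanishes owing to the common factor $X_n^\mu$, but one expects the reduced resultant---which by design removes this trivial-root contribution---to specialize, up to an invertible element of $k$, to $\Disc(f_{d-\mu})$. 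The identification can be read off from the Zariski weight decomposition of Subsection \ref{subsec:ZarWeight} once one checks how the ``leading forms'' $g_i$ (here $g_1 = f_{d-\mu}$ and $g_{i+1} = \partial_i f_{d-\mu}$) contribute to the lowest-weight component of the resultant under this particular specialization. Since $d-\mu \geq 2$, Corollary \ref{cd1:nzd} gives that $\Disc(f_{d-\mu})$ is a primitive polynomial; and as primitivity of a polynomial is inherited from primitivity of any specialization (the $k$-content can only grow under a ring morphism), the reduced resultant itself is primitive, completing the proof.
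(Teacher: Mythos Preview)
Your first approach—deducing that $\Red(h,\partial_1 h,\ldots,\partial_{n-1}h)$ is a nonzero divisor from the regularity of the sequence outside $V(\nn)$ established in Proposition~\ref{prop:redregularseq}, and then passing to primitivity by running the argument over $\ZZ$ and over every $\ZZ/p\ZZ$—is exactly what the paper does. The step you flag as ``the main obstacle'' (linking regularity to non-vanishing of the reduced resultant) is handled in the paper simply by citing the Poisson formula for the reduced resultant, \cite[Theorems 5.1 and 5.2]{Zar37} and \cite[Chapter IV]{PhDthesisRed}; once those references are accepted, no ad hoc Koszul argument is needed.

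Your alternative specialization argument, however, does not work. Under the specialization you propose (kill $f_{d-t}$ for every $t\neq\mu$), one gets $h=f_{d-\mu}X_n^{\mu}$ and $\partial_i h=(\partial_i f_{d-\mu})X_n^{\mu}$, so the ideal $(h,\partial_1 h,\ldots,\partial_{n-1}h)$ is contained in $(X_n^{\mu})$. But the reduced resultant is a reduced inertia form with respect to $\nn=(X_1,\ldots,X_{n-1})$: there is an integer $N$ with
\[
\nn^{N}\cdot\Red(h,\partial_1 h,\ldots,\partial_{n-1}h)\subset(h,\partial_1 h,\ldots,\partial_{n-1}h)
\]
(this is recalled in the proof of Theorem~\ref{thm:discweight}). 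After your specialization the right-hand side sits inside $(X_n^{\mu})$, while the left-hand side has $X_n$-degree zero, since the specialized $\Red$ lies in the coefficient ring of $f_{d-\mu}$ and $\nn^{N}$ involves only $X_1,\ldots,X_{n-1}$. This forces the specialized reduced resultant to be $0$, not $\Disc(f_{d-\mu})$. Geometrically the reason is clear: the common factor $X_n^{\mu}$ gives the specialized system the entire hyperplane $V(X_n)$ as common zeros away from the base point $[0:\cdots:0:1]$, and the reduced resultant is built precisely to detect such extra roots. So this specialization cannot witness primitivity, and you really do need the Poisson-formula input from the reduced-resultant literature.
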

\begin{proof} The reduced resultant is a nonzero divisor by Proposition \ref{prop:redregularseq} and the Poisson formula (\cite[Theorem 5.1 and Theorem 5.2]{Zar37}, \cite[Chapter IV]{PhDthesisRed}). Then, we deduce that it is primitive over the integers, hence over $k$, by applying the previous property with $k=\ZZ$ and $k=\ZZ/p\ZZ$ for all prime integer $p$.  
\end{proof}

 \begin{thm}\label{thm:discweight} 
Assume that the ring $A=k[U_\alpha \,|\, |\alpha|=d]$ is graded by the Zariski weight, i.e.~by setting $\mathrm{weight}(c):=0$ for all $c\in k$ and 
$\mathrm{weight}(U_\alpha):=\max(\alpha_n-\mu,0)$. Then, 
the discriminant $\Disc(f) \in A$ is of valuation $(d-\mu)(d-1-\mu)^{n-1}$. Moreover, its isobaric part $H$ of weight  $(d-\mu)(d-1-\mu)^{n-1}$ satisfies the equality 
$$\Disc(g)\Disc(\bar{g})\Red(h,\partial_1 h,\ldots,\partial_{n-1}h)=H.\Disc(\bar{f}) \in A$$
where $\Disc(\bar{g})=\Disc(f_{d-\mu})$, $\Disc(\bar{f})=\Disc(f_d)$ and $\Red(h,\partial_1 h,\ldots,\partial_{n-1}h)$ are all isobaric polynomials of zero weight.
 \end{thm}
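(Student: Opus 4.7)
The plan is to apply the generalized weight property of \S\ref{subsec:ZarWeight} to the resultant $\Res(\partial_1 f,\ldots,\partial_{n-1}f,f)$ and then convert both sides of the resulting identity into discriminants via Proposition \ref{firstfp}(ii). Note that the hypothesis $1\leq \mu\leq d-2$, implicit in the definition of the reduced resultant (Proposition \ref{prop:Rednonzero}), ensures in particular that $\deg g=d-\mu\geq 2$, so $\Disc(g)$ is defined.

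First I would invoke Proposition \ref{firstfp}(ii) twice, once for $f$ of degree $d$ and once for $g$ of degree $d-\mu$, yielding
\begin{equation*}
\Disc(f)\Disc(\bar f)=\Res(\partial_1 f,\ldots,\partial_{n-1}f,f),\qquad
\Disc(g)\Disc(\bar g)=\Res(\partial_1 g,\ldots,\partial_{n-1}g,g).
\end{equation*}
The polynomial $\Disc(\bar f)$ involves only the coefficients $U_\alpha$ with $\alpha_n=0$, hence it is of Zariski weight zero, and by Corollary \ref{cd1:nzd} it is a nonzero divisor in $A$. The theorem therefore reduces to showing that the isobaric part of minimum weight of $\Res(\partial_1 f,\ldots,\partial_{n-1}f,f)$ equals $\Res(\partial_1 g,\ldots,\partial_{n-1}g,g)\cdot\Red(h,\partial_1 h,\ldots,\partial_{n-1}h)$.

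Setting $h':=\sum_{t=0}^{\mu-1}f_{d-t}X_n^t$, one has $f=X_n^\mu g+h'$ and $\partial_j f=X_n^\mu \partial_j g+\partial_j h'$ for $j<n$. With the common threshold $\mu_i=\mu$ for all $n$ polynomials in the tuple $(\partial_1 f,\ldots,\partial_{n-1}f,f)$, the Zariski weights induced on their coefficients by the rule \eqref{eq:weight} match the weights on $A$. The generalized weight property then says that the minimum weight of the resultant is $\prod_i(d_i-\mu_i)=(d-1-\mu)^{n-1}(d-\mu)$, and that the isobaric part at this weight factors as $\Res(\partial_1 g,\ldots,\partial_{n-1}g,g)\cdot H_1$ for a uniquely determined weight-zero element $H_1\in A$.

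The last, and principal, step is the identification $H_1=\Red(h,\partial_1 h,\ldots,\partial_{n-1}h)$. A direct inspection shows that the weight-zero components of $\partial_j f$ (resp.~$f$) are precisely $\partial_j h$ (resp.~$h$): the coefficients $U_\alpha$ of weight zero are exactly those with $\alpha_n\leq\mu$, and the corresponding monomials in $\partial_j f$ and $f$ assemble into $\partial_j h$ and $h$ (the contributions $X_n^\mu \partial_j f_{d-\mu}$ and $X_n^\mu f_{d-\mu}$, absent from $\partial_j h'$ and $h'$, lie in the weight-zero part because the coefficients $U_\alpha$ with $\alpha_n=\mu$ still have weight zero). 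The equality $H_1=\Red(h,\partial_1 h,\ldots,\partial_{n-1}h)$ then follows by comparing the Poisson-type formula defining the reduced resultant used in Proposition \ref{prop:Rednonzero} (cf.~\cite{Zar37,PhDthesisRed}) with the standard Poisson formula governing the weight-zero part of $\Res(\partial_1 f,\ldots,\partial_{n-1}f,f)$. The main obstacle will be to carry out this comparison cleanly over an arbitrary commutative ring $k$: a practical route is to reduce to the universal case over $\mathbb{Z}$ (using that both sides are primitive) and then to match the two expressions by a specialization argument, e.g.~by specializing $f$ to a product of generic linear forms and invoking the multiplicativity of both the discriminant and the reduced resultant. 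Once the identification is established, combining with the previous paragraphs yields $H\cdot\Disc(\bar f)=\Disc(g)\Disc(\bar g)\cdot\Red(h,\partial_1 h,\ldots,\partial_{n-1}h)$, and the valuation statement for $\Disc(f)$ is immediate.
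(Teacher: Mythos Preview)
Your outline follows the same architecture as the paper's proof: apply the Zariski weight decomposition to $\Res(\partial_1 f,\ldots,\partial_{n-1}f,f)$ and then convert both sides via Proposition~\ref{firstfp}(ii). Where you diverge, and where the real content lies, is the identification $H_1=\Red(h,\partial_1 h,\ldots,\partial_{n-1}h)$.

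You propose to establish this by a comparison of Poisson formulas or by specialization to products of linear forms. Both routes are circuitous, and the first one is not obviously sound over a general base ring. The paper's observation is that you should instead lift one level up: work with $n$ \emph{independent} generic polynomials $f_0,f_1,\ldots,f_{n-1}$ of degrees $d,d-1,\ldots,d-1$, with fresh coefficients $V_{i,\alpha}$. In that universal setting the factor $H_1$ appearing in the Zariski weight decomposition \emph{is} the reduced resultant $\Red(\varphi_0,\ldots,\varphi_{n-1})$, by its very definition in \cite{Zar37,PhDthesisRed}. One then specializes via the map $\rho$ sending $f_0\mapsto f$ and $f_i\mapsto\partial_i f$. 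Your own remark that ``the Zariski weights induced on their coefficients match the weights on $A$'' is exactly the statement that $\rho$ is isobaric, so the weight decomposition \eqref{eq:redresgen} descends intact. Since $\Red(h,\partial_1 h,\ldots,\partial_{n-1}h)$ is by definition $\rho$ applied to the generic reduced resultant, it coincides with your $H_1$ automatically. The only additional check is that the minimum-weight term does not vanish under $\rho$, which is where Corollary~\ref{cd1:nzd} and Proposition~\ref{prop:Rednonzero} enter.

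So the gap is strategic rather than conceptual: by treating $H_1$ and $\Red$ as two independently defined quantities to be compared, you miss that $\Red$ of the specialized tuple is \emph{defined} as the specialization of the generic $H_1$, rendering the Poisson and linear-form arguments you sketch unnecessary.
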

\begin{proof} 
	Let $f_0:=\sum_{|\alpha|=d}V_{0,\alpha}X^\alpha$ and $f_i:=\sum_{|\alpha|=d-1}V_{i,\alpha}X^\alpha$ for $i=1,\ldots,n-1$ be generic homogeneous polynomials of degree $d,d-1,\ldots,d-1$ respectively and let $\varphi_0,\varphi_1,\ldots,\varphi_{n-1}$ be their generic specialization of degree $d,d-1,\ldots,d-1$ and of valuation $d-\mu,d-\mu-1,\ldots,d-\mu-1$ respectively. Notice that we consider here the canonical grading of $k[V_{i,\alpha} \forall i,\alpha]$, so that
$$f_0=\sum_{|\alpha|=d}V_{0,\alpha}X^\alpha, \ \ f_i=\sum_{|\alpha|=d-1}V_{i,\alpha}X^\alpha, \ \ \varphi_0=\mathop{\sum_{|\alpha|=d}}_{\alpha_n\leq \mu} V_{0,\alpha}X^\alpha, \ \ \varphi_i=\mathop{\sum_{|\alpha|=d-1}}_{\alpha_n\leq \mu}V_{0,\alpha}X^\alpha$$
for all $i=1,\ldots,n-1$. Moreover, we also define the polynomials
$$g_0:= \mathop{\sum_{|\alpha|=d}}_{\alpha_n\geq \mu} V_{0,\alpha}X^\alpha /X_n^\mu, \ \ g_i:=\mathop{\sum_{|\alpha|=d-1}}_{\alpha_n\geq \mu}V_{0,\alpha}X^\alpha/X_n^\mu \ \in \ k[V_{i,\alpha},\forall i,\alpha][X_1,\ldots,X_n]$$
for all $i=1,\ldots,n-1$. 
	
	Now, consider the grading of $k[V_{i,\alpha} \, \forall i,\alpha]$ defined in this theorem, namely 
	$$\mathrm{weight}(V_{i,\alpha}):=\max(\alpha_n-\mu,0) \textrm{ for all } i=0,\ldots,n-1.$$ 
	Then, by definition of the reduced resultant of $\varphi_0,\varphi_1,\ldots,\varphi_{n-1}$, we have the equality
	\begin{multline}\label{eq:redresgen}
		\Res(f_0,\ldots,f_{n-1})=\Res(g_0,\ldots,g_n)\Red(\varphi_0,\ldots,\varphi_n) + \\ \textrm{terms of weight} > (d-\mu)(d-1-\mu)^{n-1}	
	\end{multline}
	
Denote by $\rho$ the specialization from $k[V_{i,\alpha} \, \forall i,\alpha]$ to $A=k[U_\alpha\,|\, |\alpha|=d]$ (and also, by abusing notation, its canonical extension to polynomial rings) which is such that $\rho(f_0)=f$ and $\rho(f_i)=\partial_i f$ for all $i=1,\ldots,n-1$. It is easy to check that $\rho(g_0)=g$, $\rho(\varphi_0)=h$ and that $\rho(g_i)=\partial_i g$, $\rho(\varphi_i)=\partial_i h$ for all $î=1,\ldots,n-1$. 	Moreover, $\rho$ is isobaric with respect to the Zariski grading of $k[V_{i,\alpha}\, \forall i,\alpha]$ and $A$ because each variable $U_{\alpha}$ has the same Zariski weight in $f$ and $\partial_{1}f,\ldots,\partial_{n-1}f$. Therefore, the specialization of \eqref{eq:redresgen} yields the equality
\begin{multline*}
\Res(f,\partial_1f,\ldots,\partial_{n-1}f)=\Res(g,\partial_1g,\ldots,\partial_{n-1}g)\Red(h,\partial_1h,\ldots,\partial_{n-1}h) \\ 
+ \textrm{terms of weight} > (d-\mu)(d-1-\mu)^{n-1}.	
\end{multline*}
By Proposition \ref{firstfp}, we deduce that
\begin{multline*}
	\Disc(f)\Disc(\bar{f})=\Disc(g)\Disc(\bar{g}) \Red(h,\partial_1h,\ldots,\partial_{n-1}h)
	+ \\ \textrm{terms of weight} > (d-\mu)(d-1-\mu)^{n-1}.	
\end{multline*}
But $\Disc(g)\neq 0$, $\Disc(\bar{g})\neq 0$ and by Proposition \ref{prop:Rednonzero} $\Red(h,\partial_1h,\ldots,\partial_{n-1}h)\neq 0$. Since $\Disc(\bar{f})$, $\Disc(\bar{g})$ and $\Red(h,\partial_1h,\ldots,\partial_{n-1}h)\neq 0$ have null Zariski weight and $\Disc(g)$ is isobaric of Zariski weight $(d-\mu)(d-1-\mu)^{n-1}$, we deduce that $\Disc(f)\in A$ is of valuation $(d-\mu)(d-1-\mu)^{n-1}$ with respect to Zariski weight as claimed.

Pushing further the computations, we see that 
$$\Disc(\bar{f})=\Disc(\bar{h}) \textrm{ divides } \Red(h,\partial_1h,\ldots,\partial_{n-1}h)$$
 and hence we deduce the formula of the theorem. To see this property, notice that the reduced resultant is a reduced inertia form, that is to say that there exists an integer $N$ such that
$$ (X_1,\ldots,X_{n-1})^N \Red(h,\partial_1h,\ldots,\partial_{n-1}h) \subset (h,\partial_1h,\ldots,\partial_{n-1}h).$$
Specializing $X_n$ to 0, we get
$$ (X_1,\ldots,X_{n-1})^N \Red(h,\partial_1h,\ldots,\partial_{n-1}h) \subset (\bar{h},\partial_1{\bar{h}},\ldots,\partial_{n-1}\bar{h})\subset A[X_1,\ldots,X_{n-1}]$$
from we deduce the claimed property by Proposition \ref{prop:DiscIrredZZ}.
\end{proof}

We are now ready to extend Proposition \ref{prop:d=2} to the generic homogeneous polynomial of arbitrary degree $d\geq 2$.

\begin{thm}\label{thm:disc-irred}
	Let $k$ be a domain and
	  $f=\sum_{|\alpha|=d}U_{\alpha}X^{\alpha}$ be the generic
	  homogeneous polynomial of degree $d\geq 2$ over $k$. If $\mathrm{char}(k)\neq 2$ or $n$ is odd, then $\Disc(f)$ is a prime polynomial in ${}_k A$ that generates $\pp$. Otherwise, if $\mathrm{char}(k)= 2$ and $n$ is even, then $\Disc(f)=P^2$ where $P$ is a prime polynomial that generates $\pp$.
\end{thm}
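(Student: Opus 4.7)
The plan is as follows. By Corollary~\ref{cor:discgeomreduced}, write $\Disc(f) = c P^r$ with $c \in k^\times$, $P \in {}_kA$ a prime generator of $\pp$, and $r \geq 1$; it remains to pin down $r$. The case $d = 2$ is Proposition~\ref{prop:d=2}, so I assume $d \geq 3$. Applying Theorem~\ref{thm:discweight} with $\mu = d - 2$, the Zariski valuation $v$ of $\Disc(f)$ equals $(d-\mu)(d-1-\mu)^{n-1} = 2$; since ${}_kA$ is a domain and $c$ is a unit, $r \cdot v(P) = 2$, hence $r \in \{1, 2\}$.

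Next, I would distinguish $r = 1$ from $r = 2$ by induction on $n \geq 2$ with $d \geq 3$ fixed, using the key identity from Theorem~\ref{thm:discweight} with $\mu = d - 2$:
\[
\Disc(g)\cdot\Disc(f_2)\cdot\Red(h,\partial_1 h,\ldots,\partial_{n-1}h) \;=\; H\cdot\Disc(f_d),
\]
where $H = c P_0^r$ is the weight-$2$ isobaric component of $\Disc(f)$ ($P_0$ being the minimum-weight part of $P$), $g$ is the generic quadratic form in $n$ variables, $f_2$ the generic quadratic in $n-1$ variables, and $f_d$ the generic degree-$d$ form in $n-1$ variables. The base case $n = 2$ is covered by Theorems~\ref{thm:2<>0} and~\ref{thm:2=0}. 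For the inductive step, $\Disc(g)$ and $\Disc(f_2)$ are known from Proposition~\ref{prop:d=2} and $\Disc(f_d)$ from the inductive hypothesis; moreover these three discriminants involve disjoint or nested subsets of the variables $\{U_\alpha\}$ (namely $\alpha_n = 0$ for $\Disc(f_d)$, $\alpha_n = d - 2$ for $\Disc(f_2)$, and $\alpha_n \in \{d-2,d-1,d\}$ for $\Disc(g)$), which yields strong coprimality relations in the UFD ${}_kA$ (after reducing to a prime field via the flatness argument of Corollary~\ref{cor:discgeomreduced}).

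The case analysis on $(\mathrm{char}(k), n \bmod 2)$ then reads $r$ off this identity: when $\mathrm{char}(k)\neq 2$ all three discriminants are primes and a parity-of-multiplicities comparison forces $r=1$; when $\mathrm{char}(k)=2$ and $n$ is odd, $\Disc(g)$ is prime while $\Disc(f_2)$ and $\Disc(f_d)$ are squares of primes, again giving $r=1$; when $\mathrm{char}(k)=2$ and $n$ is even, $\Disc(g)$ is a square of a prime while $\Disc(f_2)$ and $\Disc(f_d)$ are primes, forcing $H$, and hence $P_0^r$, to be a square, so $r=2$. The main obstacle is controlling precisely the multiplicities with which the primes $\Disc(f_d)$, $\Disc(g)$, $\Disc(f_2)$, and $P_0$ occur in the reduced resultant $\Red(h,\partial_1 h,\ldots,\partial_{n-1}h)$: this requires a refined study of $\Red$ as an inertia form (extending Proposition~\ref{prop:Rednonzero} and exploiting the Jacobi-type identities underlying Theorem~\ref{thm:discweight}) and constitutes the most delicate step of the argument.
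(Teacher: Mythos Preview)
Your reduction to $r\in\{1,2\}$ via the Zariski valuation with $\mu=d-2$ is correct and coincides with the paper's opening move. After that, the two approaches diverge.

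For $\mathrm{char}(k)\neq 2$ or $n$ odd, the paper does not set up an induction on $n$. It observes that the weight-$2$ isobaric part $H$ factors as $H = Q_\mu\cdot\Disc(g)$ with $Q_\mu$ isobaric of weight $0$ (this follows from Theorem~\ref{thm:discweight}, since $\Disc(\bar f)$ divides $\Red$). Writing $H = c\,P_s^{\,r}$ with $rs=2$, primality of the weight-$2$ element $\Disc(g)$ (Proposition~\ref{prop:d=2}) forces $r=1$ immediately: if $r=2$ then $P_s$ is isobaric of weight $1$, and the prime $\Disc(g)$ of weight $2$ would have to divide $P_s$, which is impossible in an $\NN$-graded domain. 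The easy case is thus settled in one line, with no analysis of $\Red$ and no inductive hypothesis.

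For $\mathrm{char}(k)=2$ and $n$ even, the paper takes a route entirely different from your inductive scheme. It argues geometrically: the canonical projection $\Proj(B)\to\Spec(A/\pp)$ is \emph{not} birational onto its image, which precludes $r=1$. Non-birationality is established via the module of relative differentials $\Omega_{B_{(X_n)}/A}$, computed in Lemma~\ref{lem:OBA} of the next section as the cokernel of the $(n-1)\times(n-1)$ Hessian $\HH(\tilde f)$. In characteristic $2$ this matrix is skew-symmetric, and since $n-1$ is odd its determinant vanishes identically; hence $\Omega_{B_{(X_n)}/A}$ has positive rank at every point and the map cannot be generically an isomorphism. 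The unit $c$ is then handled by specializing from $\ZZ/2\ZZ$, where necessarily $c=1$.

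The obstacle you flag --- controlling the multiplicities of the various primes in $\Red(h,\partial_1h,\ldots,\partial_{n-1}h)$ --- is thus bypassed entirely: by a weight-counting shortcut in the easy case, and by a Hessian/birationality argument in the hard case. Your inductive plan is not obviously wrong, but completing it would require substantially more information about the reduced resultant than the paper ever develops.
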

\begin{proof} By Corollary \ref{cor:discgeomreduced}, there exists an invertible element $c$ in $k$, a prime polynomial $P$ that generates $\pp$ and an integer $r$ such that $\Disc(f)=c.P^r$. 
	
	Now, grading $A$ with the Zariski weight, for all integer $1\leq \mu\leq d-2$ Theorem \ref{thm:discweight} shows that
	$$\Disc(f)=Q_\mu(f).\Disc(g)+ \textrm{ terms of weight } > (d-\mu)(d-\mu-1)^{n-1}$$ 
	where $Q_\mu(f)$ has weight zero and $\Disc(g)$ is isobaric of weight $(d-\mu)(d-\mu-1)^{n-1}$. Let $P_s$ be the isobaric part of smallest weight $s$ of $P$. Then, we deduce that for all integer $1\leq \mu\leq d-2$
	$$ Q_\mu(f).\Disc(g) = c.(P_s)^r.$$
In particular, if $\mu=d-2$ then $g$ is the generic homogeneous polynomial in $X_1,\ldots,X_n$ of degree 2. But by Proposition \ref{prop:d=2} we know that $\Disc(g)$ is prime if $n$ is odd or $2\neq 0$ in $k$, and that it is equal to the square of a prime polynomial otherwise. We deduce that $r=1$ in the first case and that necessarily $r\leq 2$ in the second case.

\medskip

Assume now that $2=0$ in $k$ and $n$ is even. We have just seen that $r\in \{1,2\}$. We claim that in this case, the canonical projection $\Proj(B)\rightarrow \Spec(A)$ is not birational onto its image $\Spec(A/\pp)$. This implies that $r$ cannot be equal to 1, so $r=2$ and $\Disc(f)=c.P^2$. Then, to conclude observe that ${}_{\ZZ/2\ZZ}\Disc(f)$ is a square (necessarily $c=1$ in this case), so that we deduce that $c$ is actually a square in $k$ via the canonical specialization from $\ZZ/2\ZZ$ to $k$. It follows that ${}_k \Disc(f)=(uP)^2$ where $u^2=c$ and $u$ is an invertible element in $k$, and the claimed result follows as $uP$ is an irreducible element that generates $\pp$.

To prove that $\Proj(B)\rightarrow \Spec(A)$ is not birational, we examine the module of relative differentials $\Omega_{B_{(X_n)}/A}$. In the following section, we will prove in Lemma \ref{lem:Hessnonzero} that it is isomorphic to the cokernel of a Hessian matrix. Moreover, under the assumptions that $2=0$ in $k$ and $n$ is even it turns out that the determinant of this Hessian matrix is equal to zero (see the beginning of Section \ref{sec:inertHessain} below). Consequently, the projection $\Proj(B)\rightarrow \Spec(A)$ can not be birational.
\end{proof}

\subsection{Inertia forms and the Hessian}\label{sec:inertHessain}

Let $k$ be a commutative ring. Given a polynomial $f \in k[X_1,\ldots,X_n]$, we will denote by $\Hess(f)$, and call it the Hessian of $f$, the determinant of the (symmetric) matrix
$$\HH(f):=\left( \frac{\partial ^2 f}{\partial X_i \partial X_j}
\right)_{1\leq i,j \leq n}.$$

When $2=0$ in $k$, the elements on the diagonal of $\HH(f)$ all vanish and $\HH(f)$ is then a skew-symmetric matrix. Consequently, $\Hess(f)=0$ if $n$ is odd and $\Hess(f)$ is the square of a polynomial (its Pfaffian) if $n$ is even. Regarding this behavior, the case where $f$ is a generic polynomial of degree 2 is particularly instructive.

\begin{lem}\label{lem:Hessnonzero} Set $A:=k[U_{i,j} \,|\, 1\leq i<j\leq n]$ and let 
	$$f:=\sum_{1\leq i\leq j\leq n}U_{i,j}X_iX_j \in A[X_1,\ldots,X_n]$$ 
be the generic homogeneous polynomial of degree $2$ over the ring $k$. If $n$ is even or if $2$ is a nonzero divisor in $k$ then $\Hess(f)$ is a nonzero divisor in $A$. 
% In addition, if $2$ is a nonzero divisor in $k$ then $\Hess(f)$ is a nonzero divisor in $A$ and if $2\neq 0$ in $k$ then $\Hess(f)\neq 0$ in $A$.
\end{lem}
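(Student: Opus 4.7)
The plan is to reduce the statement to a concrete content computation via the corollary of the Dedekind-Mertens Lemma stated in Section~\ref{prem}: an element of $A$, viewed as a polynomial in the indeterminates $U_{i,j}$ over $k$, is a nonzero divisor in $A$ as soon as its $k$-content $C_k(\Hess(f))$ contains a nonzero divisor of $k$. Accordingly, in each of the two cases it will suffice to exhibit a specific monomial of $\Hess(f)$ whose coefficient is a nonzero divisor of $k$. I would begin by writing $\HH(f)$ explicitly: its $(i,i)$-entry is $2U_{i,i}$ and its $(i,j)$- and $(j,i)$-entries are $U_{i,j}$ for $i<j$, so each matrix entry is a single $k$-monomial of degree $1$ in the $U$'s.

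If $2$ is a nonzero divisor in $k$, the natural move is to send every off-diagonal $U_{i,j}$ ($i<j$) to $0$; then $\HH(f)$ becomes $\mathrm{diag}(2U_{1,1},\dots,2U_{n,n})$ and $\Hess(f)$ specializes to $2^n\,U_{1,1}\cdots U_{n,n}$. Since the surviving monomial $U_{1,1}\cdots U_{n,n}$ involves only variables preserved by the specialization, its coefficient in $\Hess(f)$ itself is $2^n$, a nonzero divisor of $k$ by hypothesis. If instead $n$ is even (and possibly $2=0$ in $k$), I would specialize every $U_{i,j}$ to zero except $U_{1,2}, U_{3,4},\dots,U_{n-1,n}$; the Hessian then becomes block diagonal with $n/2$ blocks of the shape $\bigl(\begin{smallmatrix}0&U_{2k-1,2k}\\U_{2k-1,2k}&0\end{smallmatrix}\bigr)$, so that $\Hess(f)$ specializes to $(-1)^{n/2}\prod_{k=1}^{n/2}U_{2k-1,2k}^2$. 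One can also see this at the permutation level: the only $\sigma\in\mathfrak{S}_n$ whose Leibniz term falls into the span of the retained variables is the product of the transpositions $(2k{-}1,\,2k)$, of sign $(-1)^{n/2}$. Hence the coefficient of $\prod_k U_{2k-1,2k}^2$ in $\Hess(f)$ equals $(-1)^{n/2}$, a unit of $k$.

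In both cases $C_k(\Hess(f))$ thus contains a nonzero divisor of $k$, and the Dedekind-Mertens corollary immediately concludes. The only mildly delicate point, which I expect to be the main thing one must not gloss over, is the matching between a coefficient of $\Hess(f)$ and the coefficient one reads off the specialization: since each specialization merely sets selected variables to $0$, a monomial specializes to itself when all its variables are preserved and to $0$ otherwise, so no collapsing of distinct monomials onto the target can occur and the identification is unambiguous.
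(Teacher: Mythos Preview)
Your proof is correct and follows essentially the same strategy as the paper: exhibit a monomial of $\Hess(f)$ with a suitable coefficient and invoke the Dedekind--Mertens corollary. The only difference is in the case where $2$ is a nonzero divisor: you go directly to the diagonal monomial $U_{1,1}\cdots U_{n,n}$ with coefficient $2^n$, whereas the paper (treating only the sub-case $n$ odd) specializes $U_{1,j}\mapsto 0$ for $j>1$ to get $2U_{1,1}\Hess(g)$ and appeals to the already-established even case for $g$; your route is slightly more direct.
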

\begin{proof} If $n$ is even, the monomial $U_{1,2}^2U_{3,4}^2\ldots U_{n-1,n}^2$ appears in $\Hess(f)$ with a coefficient $\pm 1$ (to see it, one can for instance specialize all the other variables to zero). We deduce that the $k$-content of $\Hess(f)$ is equal to $k$ and therefore that $\Hess(f)$ is a nonzero divisor in $A$ by  Dedekind-Mertens Lemma.
	
	Now, assume that $n$ is odd and that $2$ is a nonzero divisor in $k$. By specializing $U_{1,j}$ to 0 for all $1<j\leq n$, $\Hess(f)$ specializes to 
	$2U_{1,1}\Hess(g)$
	where $g=\sum_{2\leq i \leq j\leq n} U_{i,j}X_iX_j$. But since $n-1$ is even, $\Hess(g)$ is a nonzero divisor in $A$ and it follows that $\Hess(f)$ is also a nonzero divisor.
\end{proof}

\begin{prop}\label{prop:HessianProp} Set $A:=k[U_{\alpha}\,|\, |\alpha|=d]$ and let 
	$$f:=\sum_{|\alpha|=d}U_{\alpha}X^{\alpha} \in A[X_1,\ldots,X_n]$$
	 be the generic homogeneous polynomial of degree $d$ over the ring $k$. If $n$ is odd or if $2$ is a nonzero divisor in $k$ then the determinant
	\begin{equation}\label{eq:D}
		\det  \left( \frac{\partial ^2 f}{\partial X_i \partial X_j}
		\right)_{1\leq i,j \leq n-1}		
	\end{equation}
is a nonzero divisor in the quotient ring $\quotient{A[X_1,\ldots,X_n]}{\TF_\mm\left(\Dc \right)}$. 
% If $n$ is odd or if $2\neq 0$ in $k$ then
% 	$$\det  \left( \frac{\partial ^2 f}{\partial X_i \partial X_j}
% 	\right)_{1\leq i,j \leq n-1} \notin \TF_\mm\left(f,\partial_1 f, \ldots, \partial_n f \right).$$
\end{prop}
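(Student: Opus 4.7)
The plan is to use the embedding from Corollary \ref{cor:TFprime} to reduce the statement to a nonzero-divisor question in a polynomial ring over $k$, and then to specialize the variables $X_1,\ldots,X_n$ so as to recover the Hessian of a generic quadratic form in $n-1$ variables, to which Lemma \ref{lem:Hessnonzero} applies.

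First, by Corollary \ref{cor:TFprime} the canonical map $A[X_1,\ldots,X_n]/\TF_\mm(\Dc) \hookrightarrow B_{X_n}$ is injective, and Proposition \ref{Bdom} identifies $B_{X_n}$ with $k[U_\alpha \,|\, \alpha_n < d-1, X_1,\ldots,X_n][X_n^{-1}]$, the substitution eliminating precisely the coefficients $\EE_1,\ldots,\EE_n$ of $X_lX_n^{d-1}$ (for $l\le n-1$) and of $X_n^d$. A direct inspection shows that for $i,j\in\{1,\ldots,n-1\}$, the entries $\partial^2 f/\partial X_i\partial X_j$ involve none of the $\EE_l$, so the image in $B_{X_n}$ of the determinant $D$ from \eqref{eq:D} is simply $D$ itself, requiring no substitution. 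Since localization preserves the nonzero-divisor property, it suffices to prove that $D$ is a nonzero divisor in $R_0 := k[U_\alpha \,|\, \alpha_n < d-1, X_1,\ldots,X_n]$; by the Dedekind-Mertens corollary, this in turn is equivalent to showing that the $k$-content of $D$ does not divide zero in $k$.

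To establish this last property, I would specialize $X_1=\cdots=X_{n-1}=0$ and $X_n=1$. A direct computation of second partial derivatives shows that this specialization sends $\partial^2 f/\partial X_i\partial X_j$ to $U_{e_i+e_j+(d-2)e_n}$ for $i\ne j$ and to $2U_{2e_i+(d-2)e_n}$ for $i=j$; thus the specialized matrix is exactly the Hessian matrix $\HH(g)$ of the generic homogeneous polynomial of degree two
$$g(X_1,\ldots,X_{n-1}):=\sum_{1\le i\le j\le n-1}U_{e_i+e_j+(d-2)e_n}X_iX_j$$
in $n-1$ variables over $k$. Our hypothesis ($n$ odd, or $2$ a nonzero divisor in $k$) translates to the hypothesis of Lemma \ref{lem:Hessnonzero} applied with $n$ replaced by $n-1$, which asserts that $\Hess(g)$ is a nonzero divisor, equivalently (by Dedekind-Mertens) that its $k$-content does not divide zero in $k$. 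Since the $k$-content of any specialization is contained in the $k$-content of the original polynomial, the same conclusion holds for $D$, completing the argument.

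The step requiring the most care is verifying that none of the $n$ coefficients $\EE_l$ enter any of the $(n-1)^2$ Hessian entries of the relevant submatrix: this is essential so that the image of $D$ in $B_{X_n}$ can be read off directly, avoiding the rational-function complications that would otherwise be introduced by the substitution of Proposition \ref{Bdom}. Once this combinatorial check is in hand, the rest of the argument is a direct application of Dedekind-Mertens together with Lemma \ref{lem:Hessnonzero}.
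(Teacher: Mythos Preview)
Your argument is correct and takes a genuinely different route from the paper's own proof.

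The paper proceeds in two stages. First it treats the case where $k$ is a domain: then $\TF_\mm(\Dc)$ is prime by Corollary~\ref{cor:TFprime}, and it suffices to show the determinant does not lie in $\TF_\mm(\Dc)$. This is done by specializing the \emph{coefficients} of $f$, sending $f$ to $g\,X_n^{d-2}$ with $g$ a generic quadratic in $X_1,\ldots,X_{n-1}$, and checking that $\Hess(g)$ (a nonzero element of the coefficient ring by Lemma~\ref{lem:Hessnonzero}) is not in the ideal $(g,\partial_1 g,\ldots,\partial_{n-1}g)$ since the latter is contained in $(X_1,\ldots,X_{n-1})$. Second, to pass from domains to arbitrary $k$, the paper runs a torsion-freeness/flatness argument: knowing the result over $\ZZ$ and over each $\ZZ/p\ZZ$ (with $p$ odd when $n$ is even) forces the cokernel of multiplication by $D$ on ${}_\ZZ B_{X_n}$ to be torsion-free, hence flat, and a $\mathrm{Tor}_1$-vanishing argument then transfers the conclusion to any $k$ satisfying the hypothesis.

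Your approach bypasses this two-stage structure entirely. The key observation---that the $(n-1)\times(n-1)$ Hessian minor involves none of the coefficients $\EE_1,\ldots,\EE_n$ eliminated by the isomorphism of Proposition~\ref{Bdom}---reduces the question directly to a nonzero-divisor statement in an honest polynomial ring over $k$, where Dedekind--Mertens applies uniformly. You then specialize the \emph{variables} $X_i$ (rather than the coefficients $U_\alpha$) and land on the same application of Lemma~\ref{lem:Hessnonzero}. This is both shorter and more uniform: the hypothesis on $k$ enters only at the final invocation of Lemma~\ref{lem:Hessnonzero}, and no separate bootstrapping from domains to general rings is needed. The paper's approach, on the other hand, makes the geometric picture (primality of $\TF_\mm(\Dc)$ over a domain, membership versus non-membership) more visible, which is useful elsewhere in the section.
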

\begin{proof} The case $n=1$ being trivially correct, we assume that $n\geq 2$. We first prove the claimed result under the assumption that $k$ is a domain. In this case, $\TF_\mm\left(\Dc \right)$ is a prime ideal by Corollary \ref{cor:TFprime} and hence we have to show that
	\begin{equation}\label{eq:detTFm}
		\det  \left( \frac{\partial ^2 f}{\partial X_i \partial X_j}	\right)_{1\leq i,j \leq n-1} \notin \TF_\mm\left(\Dc \right)
	\end{equation}
But it is enough to exhibit a particular specialization for which this property holds. So consider the specialization the sends $f$ to the polynomial
	$$ h:= \left(\sum_{1\leq i\leq j\leq n-1} U_{i,j} X_iX_j \right)X_n^{d-2} \in k[U_{i,j}\,|\, 1\leq i\leq j \leq n-1][X_1,\ldots,X_n].$$
	Denoting $g:=\sum_{1\leq i\leq j\leq n-1} U_{i,j} X_iX_j$, we have
	$$\det  \left( \frac{\partial ^2 h}{\partial X_i \partial X_j}
	\right)_{1\leq i,j \leq n-1} = \Hess(g)X_n^{(d-2)(n-1)}.$$
	Therefore, specializing further the variable $X_n$ to 1, we see that to prove \eqref{eq:detTFm} it is sufficient to prove that 
	\begin{multline*}
		\Hess(g) \notin (g,\partial_1 g,\ldots,\partial_{n-1}g,(d-2)g) \\ =(g,\partial_1 g,\ldots,\partial_{n-1}g)\subset k[U_{i,j}\,|\, { 1\leq i,j \leq n-1}][X_1,\ldots,X_{n-1}].		
	\end{multline*}
	But this holds because the ideal $(g,\partial_1 g,\ldots,\partial_{n-1}g)$ is nonzero and is contained in the ideal $(X_1,\ldots,X_{n-1})$, whereas $\Hess(g)$ belongs to $k[U_{i,j}\,|\, { 1\leq i,j \leq n-1}]$ and is nonzero by Lemma \ref{lem:Hessnonzero}.
	
	\medskip
	
	We now turn to the proof in the case $k$ is an arbitrary commutative ring. Let $D$ stands for the determinant \eqref{eq:D}. We begin with the case where $n$ is odd. By \eqref{BXi-iso}, ${}_\ZZ B_{X_n}$ is a free abelian group. Moreover, from what we have just proved under the assumption that $k$ is a domain, we deduce that the multiplication by $D$ in ${}_\ZZ B_{X_n}$ and ${}_{\ZZ/p\ZZ} B_{X_n}$, $p$ a prime integer, are all injective maps. Denoting by ${}_\ZZ Q$ the quotient abelian group of the multiplication by $D$ in ${}_\ZZ B_{X_n}$, that is to say we have he exact sequence of abelian groups
\begin{equation*}
	0 \rightarrow {}_\ZZ B_{X_n} \xrightarrow{\times D} {}_\ZZ B_{X_n} \rightarrow {}_\ZZ Q \rightarrow 0,
\end{equation*}	
we deduce that ${}_\ZZ Q$ is torsion free (for $\mathrm{Tor}^\ZZ_1(\ZZ/p\ZZ,{}_\ZZ Q)=0$ for all prime integer $p$) and hence is flat. By a classical property of flatness we obtain that $\mathrm{Tor}_1^{\ZZ}({}_\ZZ Q,k)=0$ and therefore that the multiplication by $D$ in ${}_k B_{X_n}$ is an injective map, i.e.~ $D$ is a nonzero divisor in ${}_k B_{X_n}$. Finally, since 
\begin{equation}\label{eq:TFker}
	\TF_\mm(\Dc)=\ker({}_k C \rightarrow {}_k B_{X_n})	
\end{equation}
by Corollary \ref{cor:TFprime}, it follows that $D$ is a nonzero divisor in $\quotient{{}_k C}{\TF_\mm(\Dc)}$.

We can proceed similarly to prove the claimed result in the case where $n$ is even. The multiplication by $D$ in ${}_\ZZ B_{X_n}$ and ${}_{\ZZ/p\ZZ} B_{X_n}$, $p$ a prime but odd integer, are all injective maps. It follows that after inversion of 2 we obtain the exact sequence 
\begin{equation*}
	0 \rightarrow {}_{\ZZ[\frac{1}{2}]} B_{X_n} \xrightarrow{\times D} {}_{\ZZ[\frac{1}{2}]} B_{X_n} \rightarrow {}_{\ZZ[\frac{1}{2}]} Q \rightarrow 0
\end{equation*}
where the $\ZZ[\frac{1}{2}]$-module ${}_{\ZZ[\frac{1}{2}]} Q$ is torsion free and is hence flat. Consequently, if 2 is a unit in $k$ we immediately deduce by tensorization by $k$ over $\ZZ[\frac{1}{2}]$ that the multiplication by $D$ in ${}_k B_{X_n}$ is an injective map. Now, if $2$ is a nonzero divisor in $k$ then $k$ can be embedded in $k[\frac{1}{2}]$. This induces the inclusion of ${}_k B_{X_n}$ in ${}_{k[\frac{1}{2}]} B_{X_n}$. But we have just proved that $D$ is a nonzero divisor in ${}_{k[\frac{1}{2}]} B_{X_n}$, so we deduce that it is also a nonzero divisor in ${}_k B_{X_n}$ and hence also a nonzero divisor in $\quotient{{}_k C}{\TF_\mm(\Dc)}$ by \eqref{eq:TFker}.
\end{proof}

\begin{thm} Set $A:=k[U_{\alpha}\,|\,|\alpha|=d]$ and let $$f:=\sum_{|\alpha|=d}U_{\alpha}X^{\alpha} \in A[X_1,\ldots,X_n]$$
	 be the generic homogeneous polynomial of degree $d$ over $k$. If  
	$n$ is odd or if $2$ is a nonzero divisor in $k$ then
	$$\TF_\mm (f,\partial_1f, \ldots, \partial_n f)\cap A \subset (\partial_1 \tilde{f},\ldots, \partial_{n-1}\tilde{f})^2 + (\tilde{f}),$$
	where, for all polynomial $P(X_1,\ldots,X_n)$, the notation $\tilde{P}$ stands for the polynomial $P(X_1,\ldots,X_{n-1},1)$.
\end{thm}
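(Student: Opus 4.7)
The plan is to exploit the standard dehomogenization description of $\pp=\TF_\mm(\Dc)\cap A$, differentiate an expression of $a\in\pp$ modulo $(\tilde f)$, and then invoke Proposition \ref{prop:HessianProp} to invert the Hessian obstruction. Write $R:=A[X_1,\ldots,X_{n-1}]$, $J:=(\partial_1\tilde f,\ldots,\partial_{n-1}\tilde f)\subset R$ and $\pp':=J+(\tilde f)\subset R$.

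First I would show $\pp\subset\pp'$: an element $a\in\pp$ satisfies $X_n^Na\in\Dc$ for some $N$, and dehomogenizing by setting $X_n=1$ gives $a\in(\tilde f,\partial_1\tilde f,\ldots,\partial_{n-1}\tilde f,\widetilde{\partial_n f})$; the Euler identity $d\tilde f=\sum_{i<n}X_i\partial_i\tilde f+\widetilde{\partial_n f}$ rewrites $\widetilde{\partial_n f}$ modulo $J+(\tilde f)$, so $a\in\pp'$. Hence we may write $a=b\tilde f+\sum_{i=1}^{n-1}c_i\partial_i\tilde f$ with $b,c_i\in R$.

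Next, since $a\in A$ is independent of $X_1,\ldots,X_{n-1}$, applying $\partial_j$ for each $j\in\{1,\ldots,n-1\}$ gives
\[
0=(\partial_j b)\tilde f+b\partial_j\tilde f+\sum_{i=1}^{n-1}(\partial_j c_i)\partial_i\tilde f+\sum_{i=1}^{n-1}c_i\,\partial_j\partial_i\tilde f,
\]
so the Hessian relation $\mathbb{H}\vec c\equiv 0\pmod{\pp'}$ holds in $R^{n-1}$, where $\mathbb{H}:=(\partial_i\partial_j\tilde f)_{1\le i,j\le n-1}$. Multiplying by the adjugate yields $\det(\mathbb{H})\,c_i\equiv 0\pmod{\pp'}$ for every $i$.

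The main step is therefore to certify that $\tilde D:=\det(\mathbb{H})=\det(\partial_i\partial_j\tilde f)_{1\le i,j\le n-1}$ is a nonzero divisor in $R/\pp'$. For this I would transport Proposition \ref{prop:HessianProp}, which asserts that $D:=\det(\partial_i\partial_j f)_{1\le i,j\le n-1}\in C$ is a nonzero divisor in $C/\TF_\mm(\Dc)$, through the localization at $X_n$: since $\TF_\mm(\Dc)=\ker(C\to B_{X_n})$ (Corollary \ref{cor:TFprime}) and $(\TF_\mm(\Dc))_{X_n}=\Dc_{X_n}$, the ring $(C/\TF_\mm(\Dc))_{X_n}$ is exactly $B_{X_n}$; because $X_n$ is a nonzero divisor in $C/\TF_\mm(\Dc)$ (it is a unit in $B_{X_n}$), $D$ remains a nonzero divisor in $B_{X_n}$. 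Writing $D=\tilde D\cdot X_n^{(d-2)(n-1)}$ and using the Laurent-polynomial decomposition $B_{X_n}\cong(R/\pp')[X_n,X_n^{-1}]$ (valid since $X_n$ is a nonzero divisor of degree $1$ in the graded domain-over-base $B_{X_n}$), the nonzero divisor property descends to $\tilde D$ in $R/\pp'$.

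Finally, putting the pieces together: $\tilde D$ being a nonzero divisor in $R/\pp'$ forces $c_i\in\pp'=J+(\tilde f)$ for each $i$, so
\[
\sum_{i=1}^{n-1}c_i\,\partial_i\tilde f\ \in\ J\bigl(J+(\tilde f)\bigr)=J^2+(\tilde f)J\subset J^2+(\tilde f),
\]
and consequently $a=b\tilde f+\sum c_i\partial_i\tilde f\in J^2+(\tilde f)$. The hypothesis on $n$ or on $2\in k$ is used exclusively through Proposition \ref{prop:HessianProp}; the anticipated obstacle is precisely the graded transfer in the previous paragraph, where one must be careful that passing from ``$D$ is a nonzero divisor in $C/\TF_\mm(\Dc)$'' to ``$\tilde D$ is a nonzero divisor in $R/\pp'$'' really uses the homogeneity and the fact that $X_n$ is a nonzero divisor on $C/\TF_\mm(\Dc)$.
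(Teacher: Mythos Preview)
Your argument is correct and follows essentially the same route as the paper: write $a$ as a combination of $\tilde f$ and the $\partial_i\tilde f$, differentiate in the $X_j$ to obtain the Hessian relation $\mathbb H\vec c\equiv 0$, apply Cramer's rule, and invoke Proposition~\ref{prop:HessianProp} to force the coefficients into $\pp'$. The only organizational difference is that the paper works in $C=A[X_1,\ldots,X_n]$ throughout (writing $X_n^Na=\sum P_i\partial_if+Qf$, concluding $P_i\in\TF_\mm(\Dc)$ directly from Proposition~\ref{prop:HessianProp}, and only dehomogenizing at the very end), whereas you dehomogenize first and must therefore transport the nonzero-divisor statement from $C/\TF_\mm(\Dc)$ to $R/\pp'$ via $B_{X_n}\cong(R/\pp')[X_n^{\pm1}]$. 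That transport is valid, but note two small imprecisions: the equality $D=\tilde D\cdot X_n^{(d-2)(n-1)}$ is not a polynomial identity in $C$ but rather the image of $D$ under the graded isomorphism $B_{X_n}\cong(R/\pp')[X_n^{\pm1}]$; and the passage ``$D$ remains a nonzero divisor in $B_{X_n}$'' follows simply because nonzero-divisors are preserved under localization, not from $X_n$ being a nonzero divisor (though that fact is what guarantees $C/\TF_\mm(\Dc)\hookrightarrow B_{X_n}$). The paper's ordering avoids this detour entirely.
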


\begin{proof} Let $a \in \TF_\mm (f,\partial_1f, \ldots, \partial_n f)\cap A$. There exists an integer $N$ such that $X_n^{N-1}a$ belongs to the ideal $(f, \partial_1f,\ldots,\partial_n f)$. Moreover, using the Euler identity $df=\sum_{i=1}^n X_i\partial_if$, we obtain that
	$X_n^Na$ belongs to the ideal $(f,\partial_1f,\ldots,\partial_{n-1}f)$ and therefore that there exist polynomials $P_1,\ldots,P_{n-1}$ and $Q$ in $A[X_1,\ldots,X_n]$ such that
	\begin{equation}\label{eq:HessianSquare}
	X_n^N a=P_1 \partial_1f+\cdots+P_{n-1}\partial_{n-1}f+Qf.	
	\end{equation}
By applying the derivation $\partial_j(-)$ for all $j=1,\ldots,n-1$, we obtain the following equalities: 
$$ 
\forall j \in \{ 1,\ldots,n-1 \}, \hspace{.5cm} \sum_{i=1}^{n-1} P_i 
\frac{\partial^2 f}{\partial X_i\partial X_j}=0 
\textrm{ mod } (f,\partial_1 f,\ldots,\partial_{n-1}f).
$$
By Cramer's rules, it follows that for all $i=1,\ldots,n-1$ we have
$$P_i . \det  \left( \frac{\partial ^2 f}{\partial X_i \partial X_j}
\right)_{1\leq i,j \leq n-1}  \in (f,\partial_1f,\ldots,\partial_{n}f) \subset \TF_\mm (f,\partial_1f, \ldots, \partial_n f).$$
But by Proposition \ref{prop:HessianProp}, the determinant
$$\det  \left( \frac{\partial ^2 f}{\partial X_i \partial X_j}
\right)_{1\leq i,j \leq n-1}$$
is not a zero divisor in the quotient ring of $A[X_1,\ldots,X_n]$ by the inertia form ideal $\TF_\mm (f,\partial_1f, \ldots, \partial_n f)$. Therefore, we deduce that $P_i \in \TF_\mm (f,\partial_1f, \ldots, \partial_n f)$ for all $i=1,\ldots,n-1$ and hence, using again Euler identity, that
$$\tilde{P}_i \in (\tilde{f},\partial_1 \tilde{f},\ldots, \partial_{n-1} \tilde{f}).$$
Coming back to the definition \eqref{eq:HessianSquare} of the $P_i$'s, the claimed result is proved.
 \end{proof}

An immediate consequence of this theorem is the 

\begin{cor} For any commutative ring $k$ and any homogeneous polynomial $f \in k[X_1,\ldots,X_n]$, we have
	$$\Disc(f) \in (\partial_1 \tilde{f},\ldots, \partial_{n-1}\tilde{f})^2 + (\tilde{f}).$$
\end{cor}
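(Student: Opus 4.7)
The plan is to deduce the corollary from the preceding theorem by the classical specialization argument, using the fact that the hypothesis of the theorem (either $n$ odd or $2$ a nonzero divisor in the base ring) is automatically satisfied in the generic case over $\ZZ$, regardless of the parity of $n$.

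First, I would place myself in the generic setting of \S\ref{sec:def}: take $k=\ZZ$ and let $f=\sum_{|\alpha|=d}U_\alpha X^\alpha \in {}_\ZZ A[X_1,\ldots,X_n]$ be the generic homogeneous polynomial of degree $d\geq 2$. Since $2$ is a nonzero divisor in $\ZZ$, the preceding theorem applies in this context and yields
$$ \TF_\mm(f,\partial_1 f,\ldots,\partial_n f)\cap {}_\ZZ A \;\subset\; (\partial_1 \tilde{f},\ldots,\partial_{n-1}\tilde{f})^2 + (\tilde{f}) \;\subset\; {}_\ZZ A[X_1,\ldots,X_{n-1}].$$
By Theorem \ref{thm:discTF}, the universal discriminant $\Disc(f)$ belongs to $\TF_\mm(f,\partial_1f,\ldots,\partial_nf)\cap {}_\ZZ A$, so we obtain an explicit identity
$$\Disc(f) \;=\; \sum_{i,j=1}^{n-1} c_{i,j} \,\partial_i\tilde{f}\,\partial_j\tilde{f} \;+\; c\,\tilde{f}$$
for some coefficients $c_{i,j},\,c \in {}_\ZZ A[X_1,\ldots,X_{n-1}]$.

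Now I would specialize. Let $k$ be an arbitrary commutative ring and let $g=\sum_{|\alpha|=d}u_\alpha X^\alpha \in k[X_1,\ldots,X_n]$ be any homogeneous polynomial of degree $d$. Consider the specialization morphism $\theta:{}_\ZZ A\to k$ sending $U_\alpha\mapsto u_\alpha$, extended naturally to polynomial rings. By the very definition of the discriminant (Definition \ref{defdiscr}), $\theta(\Disc(f))=\Disc(g)$, and clearly $\theta(\tilde{f})=\tilde{g}$ and $\theta(\partial_i\tilde{f})=\partial_i\tilde{g}$. Applying $\theta$ to the identity above gives
$$\Disc(g) \;=\; \sum_{i,j=1}^{n-1} \theta(c_{i,j})\,\partial_i\tilde{g}\,\partial_j\tilde{g} \;+\; \theta(c)\,\tilde{g} \;\in\; (\partial_1\tilde{g},\ldots,\partial_{n-1}\tilde{g})^2 + (\tilde{g})$$
in $k[X_1,\ldots,X_{n-1}]$, which is the claimed inclusion.

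The only conceptual point is the observation that although the preceding theorem carries a hypothesis on $k$, this hypothesis is harmless at the universal level over $\ZZ$; once the identity is obtained universally, specialization transports it to every coefficient ring with no restriction. There is no real obstacle beyond this bookkeeping.
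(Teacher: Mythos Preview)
Your proof is correct and is exactly the argument the paper has in mind: the paper simply states the corollary as ``an immediate consequence'' of the preceding theorem, and the implicit reasoning is precisely your specialization from the universal case over $\ZZ$ (where $2$ is a nonzero divisor, so the theorem's hypothesis is met) via Definition~\ref{defdiscr}.
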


We end this paragraph with the computation of the module of relative differentials $\Omega_{B_{(X_n)}/A}$ induced by the canonical inclusion $A\rightarrow B_{(X_n)}$. 

\begin{lem}\label{lem:OBA}  For any commutative ring $k$, the module $\Omega_{B_{(X_n)}/A}$ of relative differential of $B_{(X_n)}$ over $A$ is isomorphic to the cokernel of the map 
	$$\bigoplus_{i=1}^{n-1}\frac{A[X_1,\ldots,X_{n-1}]}{(\tilde{f},\partial_1\tilde{f},\ldots,\partial_{n-1}\tilde{f})} \xrightarrow{\Hess(\tilde{f})} 
	\bigoplus_{i=1}^{n-1}\frac{A[X_1,\ldots,X_{n-1}]}{(\tilde{f},\partial_1\tilde{f},\ldots,\partial_{n-1}\tilde{f})}$$
	whose matrix in the canonical basis is given by the Hessian matrix $\HH(\tilde{f})$.
\end{lem}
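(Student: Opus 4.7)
The plan is to first give a concrete presentation of $B_{(X_n)}$ as a quotient of a polynomial ring over $A$, and then compute $\Omega_{B_{(X_n)}/A}$ directly from the standard conormal (second fundamental) exact sequence.

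For the first step, the Euler identity $df = \sum_{i=1}^{n} X_i \partial_i f$ yields $X_n \partial_n f = d f - \sum_{i=1}^{n-1} X_i \partial_i f$, so after inverting $X_n$ we have $\Dc_{X_n} = (f, \partial_1 f, \ldots, \partial_{n-1} f)$ in $C_{X_n}$, exactly as in the proof of Proposition \ref{Bdom}. Taking the degree-zero part amounts to setting $X_n = 1$, and since $\partial_i$ commutes with this specialization for $i \le n-1$, one obtains a canonical isomorphism of $A$-algebras
$$B_{(X_n)} \xrightarrow{\sim} R := \quotient{A[X_1,\ldots,X_{n-1}]}{I}, \qquad I := (\tilde{f},\partial_1 \tilde{f},\ldots,\partial_{n-1}\tilde{f}).$$

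For the second step, I apply the conormal right-exact sequence to the surjection $S := A[X_1,\ldots,X_{n-1}] \twoheadrightarrow R$:
$$I/I^2 \xrightarrow{\ \bar{d}\ } \Omega_{S/A} \otimes_S R = \bigoplus_{i=1}^{n-1} R\,dX_i \longrightarrow \Omega_{R/A} \longrightarrow 0.$$
The module $I/I^2$ is generated over $R$ by the classes of the $n$ generators $\tilde{f},\partial_1 \tilde{f},\ldots,\partial_{n-1}\tilde{f}$, on which $\bar{d}$ acts respectively by
$$\tilde{f}\mapsto \sum_{i=1}^{n-1} \partial_i \tilde{f}\, dX_i, \qquad \partial_j \tilde{f} \mapsto \sum_{i=1}^{n-1} \frac{\partial^2 \tilde{f}}{\partial X_i \partial X_j}\, dX_i \quad (j=1,\ldots,n-1).$$

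The first of these vectors has all its components in $I$, hence reduces to zero in $\bigoplus_{i=1}^{n-1} R\,dX_i$ and contributes nothing to the image of $\bar{d}$. The remaining $n-1$ vectors are precisely the columns of the Hessian matrix $\HH(\tilde{f})$, so the image of $\bar{d}$ coincides with the image of the map $R^{n-1}\to R^{n-1}$ whose matrix in the canonical basis is $\HH(\tilde{f})$; this gives the claimed description of $\Omega_{R/A}$ as a cokernel. The only point that requires any care is the identification $B_{(X_n)}\simeq R$ over an arbitrary commutative ring $k$ (with no hypothesis on the characteristic of $k$ or on $d$); this is handled uniformly by the Euler identity as above.
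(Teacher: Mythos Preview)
Your proof is correct and takes a genuinely different route from the paper's. You work directly with the conormal sequence for the quotient $S=A[X_1,\ldots,X_{n-1}]\twoheadrightarrow R=S/I$, observing that $\bar d(\tilde f)$ vanishes in $\bigoplus R\,dX_i$ because its components $\partial_i\tilde f$ already lie in $I$, so only the $n-1$ generators $\partial_j\tilde f$ contribute and their images are exactly the columns of $\HH(\tilde f)$. The paper instead exploits the isomorphism $B_{(X_n)}\simeq D$ with a \emph{polynomial} ring over $k$ (established in Proposition~\ref{Bdom}) and applies the relative cotangent sequence to the tower $\bar A\to A\to D$, where $\bar A=k[U_\alpha:\alpha_n\le d-2]$ and $A=\bar A[\EE_1,\ldots,\EE_n]$; this produces an $(n-1)\times n$ matrix whose last column ($d\EE_n$) is shown to be a combination of the first $n-1$, yielding the same Hessian cokernel. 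Your argument is shorter and avoids the explicit change of coordinates $\lambda$; the paper's approach, on the other hand, reuses the structural isomorphism \eqref{BXi-iso} already in place and makes transparent which coefficients of $f$ act as ``coordinates'' on the singular locus.
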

\begin{proof} By definition of $B$, it is clear that 
	$$B_{(X_n)}\simeq A[X_1,\ldots,X_{n-1}]/(\tilde{f},\partial_1\tilde{f},\ldots,\partial_{n-1}\tilde{f}).$$ 
	We need to introduce some notation. We can decompose $f$ as a sum
	$$f=f_d+f_{d-1}X_n+\cdots+f_{d-2}X_n^{2}+f_{1}X_n^{d-1}+f_{0}X_n^{d}$$
where the $f_i$'s are homogeneous polynomials in $X_1,\ldots,X_{n-1}$ of degree $d-i$. 	Wet $h:=f-f_{1}X_n^{d-1}-f_{0}X_n^{d}$ and we rename the coefficients $U_{\alpha}$, $\alpha_n\geq d-1$, of $f$ by setting
$$ f_{1}=\EE_1X_1+\EE_2X_2±\cdots+\EE_{n-1}X_{n-1},\ \ f_{0}=\EE_n.$$
Setting $D:=k[X_1,\ldots,X_{n-1}][U_{\alpha}\,|\, \alpha_n\leq d-2]$, we define a $k$-linear map $\lambda$ from $B_{(X_n)}$ to $D$ as follows:
\begin{eqnarray}\label{eq:BD}
	X_i & \mapsto & X_i, \ \ i=1,\ldots,n-1 \\ \nonumber
	U_{\alpha} & \mapsto & U_{\alpha}, \ \ \alpha_n\leq d-2 \\ \nonumber
	\EE_i & \mapsto & -\partial_i\tilde{h}, \ \ i=1,\ldots,n-1 \\ \nonumber
	\EE_n & \mapsto & -\tilde{h}+\sum_{i=1}^{n-1}X_i\partial_i\tilde{h}
\end{eqnarray}
It is clear that $\lambda$ is surjective. Moreover, observe that $\tilde{f}=\tilde{h}+\EE_n+\sum_{i=1}^{n-1}\EE_iX_i$, so that $\partial_i\tilde{f}=\partial_i\tilde{h}+\EE_i$ for all $i=1,\ldots,n-1$, and hence we deduce that $\lambda$ is an isomorphism.

Now, $B_{(X_n)}$ is an $A$-algebra by the canonical inclusion of $A$ in $B_{(X_n)}$. Using the isomorphism $\lambda$, we get that $\Omega_{B_{(X_n)}/A}\simeq \Omega_{D/A}$ and $A\rightarrow D$ is given by \eqref{eq:BD} (without the $X_i$'s that have been removed). 
Setting $\bar{A}=k[U_{\alpha} \,|\, \alpha_n\leq d-2]$, so that $A=\bar{A}[\EE_1,\ldots,\EE_{n}]$,
we get maps of rings $\bar{A}\rightarrow A \rightarrow D$ and the relative cotangent sequence 
$$ D\otimes_A \Omega_{A/\bar{A}} \xrightarrow{can} \Omega_{D/\bar{A}} \rightarrow \Omega_{D/A} \rightarrow 0$$
which is exact. Since $\Omega_{A/\bar{A}}\simeq \oplus_{i=1}^n A \dd \EE_i$ and $\Omega_{D/\bar{A}}\simeq \oplus_{i=1}^{n-1}D\dd X_i$, the map $can$ in this sequence can be represented by a matrix in the basis $d\EE_1,\ldots,d\EE_n$ and $\dd X_1,\ldots \dd X_{n-1}$ respectively. By straightforward computations, we get
$$ can(\dd \EE_i)= - \left( \sum_{j=1}^{n-1} \frac{\partial^2 \tilde{h}}{\partial X_i \partial X_j} \dd X_j \right)=
- \left( \sum_{j=1}^{n-1} \frac{\partial^2 \tilde{f}}{\partial X_i \partial X_j} \dd X_j \right),  \hspace{.2cm} i=1,\ldots,n-1,$$
and
\begin{multline*}
	 can(\dd \EE_n)= \sum_{i=1}^{n-1}\left(  \sum_{j=1}^{n-1} X_j\frac{\partial^2 \tilde{h}}{\partial X_i \partial X_j}   \right)\dd X_i=\sum_{i=1}^{n-1}\left(  \sum_{j=1}^{n-1} X_j\frac{\partial^2 \tilde{f}}{\partial X_i \partial X_j}   \right)\dd X_i \\ 
	=-\sum_{j=1}^{n-1}X_j 
	can(\dd \EE_j)	
\end{multline*}
so that the first $n-1$ columns of this matrix corresponds to $-\Hess(\tilde{f})$ and its last column is the span of the $n-1$ first ones. Therefore,  the image of $can$ is isomorphic to the image of the map $D^{n-1}\rightarrow D^{n-1}$ defined by the matrix $-\Hess(\tilde{f})$, and the claimed result follows.
\end{proof}

The computation done in this lemma shows that the unramified points of $\Proj(B)$ over $\Spec(A)$ are the non-degenerated quadratic points, that is to say the points where the Hessian of $\tilde{f}$ does not vanish. We used it at the end of the proof of Theorem \ref{thm:disc-irred} to show that the canonical projection of $\Proj(B)$ over $\Spec(A)$ is not birational if $\mathrm{char}(k)=2$ and $n$ is even under the assumption that $k$ is a domain. If $n$ is odd or $2$ is a nonzero divisor in $k$ then this projection is birational (without assuming that $k$ is a domain). The purpose of the next section is to prove this fact by providing an explicit blowup structure to $\Proj(B)$.

\subsection{Effective blow-up structure}

For the sake of simplicity in the text, we introduce a particular notation for some coefficients $U_\alpha$ of the generic homogeneous polynomial $f\in {}_k A$ of degree $d\geq 2$~:
$$f(X_1,\ldots,X_n)=\EE_1 X_1X_n^{d-1}+\EE_2 X_2X_n^{d-1}+\cdots +\EE_{n-1} X_{n-1}X_n^{d-1}+\EE_n X_n^d + \cdots$$
Moreover, we introduce $n-1$ polynomials
$$g_i(X_1,\ldots,X_n)=\sum_{|\beta|=d-1} V_{i\beta}X^\beta, \ \ i=1,\ldots,n-1$$
and define the coefficient ring 
$${}_kA'={}_k A[V_{i\beta}\,|\, 1\leq i \leq n-1, |\beta|=d-1]$$ 
so that $f$ and $g_1,\ldots,g_{n-1}$
belong to ${}_k A'[X_1,\ldots,X_n]$. For the sake of simplicity, we will omit the subscript $k$ in the notation whenever there is no possible confusion. 

\medskip

The resultant $S:=\Res(\partial_1f,\ldots,\partial_{n-1}f,f) \in A$ can be obtained by specialization of the resultant $R:=\Res(g_1,\ldots,g_{n-1},f) \in A'$. More precisely, for all integer $i=1,\ldots,n-1$ we have
$$\frac{\partial{f}}{\partial X_i}=\sum_{|\alpha|=d, \alpha_i\geq 1} \alpha_i U_{\alpha} \frac{X^\alpha}{X_i} = \sum_{|\beta|=d-1}(\beta_i+1)U_{\beta+e_i}X^\beta$$
where $e_i$ stands for the multi-index such that $X^{e_i}=X_i$ for all $i=1,\ldots,n-1$. Thus, we define the specialization
\begin{eqnarray*}
	\rho : A' & \rightarrow & A \\
	V_{i\beta} &  \mapsto & (\beta_i+1)U_{\beta+e_i}, \ \ i=1,\ldots,n-1 \\
	U_{\beta} & \mapsto & U_{\beta}
\end{eqnarray*}
so that $\rho(R)=S$. Notice that we also have $\rho( \partial R/\partial \EE_n)=\partial S/\partial \EE_n$. Now, set $D:=\Disc(f)\in A$ and recall that $\bar{f}(X_1,\ldots,X_{n-1}):=f(X_1,\ldots,X_{n-1},0)$.  
\begin{prop} There exist polynomials $\Delta_1(f),\ldots,\Delta_n(f) \in {}_\ZZ A$ such that 
$$\Disc(\bar{f})\Delta_i(f)=\rho\left(\frac{\partial R}{\partial \EE_i}\right) \in {}_\ZZ A.$$	
For any commutative ring $k$, we define the polynomials $\Delta_1(f),\ldots,\Delta_n(f) \in {}_kA$ by change of basis $\ZZ\rightarrow k$. 

Moreover, 
\begin{equation}\label{eq:Dn=partial}
	\Delta_n(f)=\frac{\partial D}{\partial \EE_n} \in {}_kA
\end{equation}
and for all $1\leq i,j \leq n$ we have
\begin{equation}\label{eq:DeltaTF}
\Delta_i(f)X_j -\Delta_j(f)X_i\in \TF_\mm(\partial_1 f, \ldots,\partial_{n-1}f, f) \subset {}_kA[X_1,\ldots,X_n].	
\end{equation}
\end{prop}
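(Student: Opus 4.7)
The argument proceeds in three steps, carried out in the universal setting over $\ZZ$; the statements over an arbitrary commutative ring $k$ then follow by specialization. The chain rule applied to the equality $S=\rho(R)$ plays a central role, together with the formula $\rho(V_{j,\beta})=(\beta_j+1)U_{\beta+e_j}$ valid for $1\leq j\leq n-1$. For the coefficient $\EE_n=U_{d\,e_n}$, no $V_{j,\beta}$ maps to it under $\rho$, so $\partial S/\partial \EE_n=\rho(\partial R/\partial \EE_n)$; combined with the identity $S=\Disc(\overline{f})\,D$ of Proposition~\ref{firstfp}(ii) and the fact that $\overline{f}$ does not involve $\EE_n$, this yields $\rho(\partial R/\partial \EE_n)=\Disc(\overline{f})\,\partial D/\partial \EE_n$, which both establishes the divisibility for $i=n$ and proves \eqref{eq:Dn=partial} upon setting $\Delta_n(f):=\partial D/\partial \EE_n$.

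For $1\leq i\leq n-1$, the chain rule produces $\partial S/\partial \EE_i=\rho(\partial R/\partial V_{i,(d-1)e_n})+\rho(\partial R/\partial \EE_i)$, since $V_{i,(d-1)e_n}$ is the unique $V_{j,\beta}$ with $\rho(V_{j,\beta})=\EE_i$. Because $\overline{f}$ is independent of $\EE_i$, one has $\partial S/\partial \EE_i=\Disc(\overline{f})\,\partial D/\partial \EE_i$, and the existence of $\Delta_i(f)$ reduces to the key divisibility $\Disc(\overline{f})\mid \rho(\partial R/\partial V_{i,(d-1)e_n})$. Since $\Disc(\overline{f})$ is prime in ${}_\ZZ A$ by Proposition~\ref{prop:DiscIrredZZ}, it suffices to show that this partial derivative vanishes in the domain ${}_\ZZ A/(\Disc(\overline{f}))$. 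Passing to the algebraic closure $\overline{K}$ of $K:=\Frac({}_\ZZ A/(\Disc(\overline{f})))$, the polynomial $\overline{f}$ admits a singular point $\xi'\in \overline{K}^{n-1}$; the projective point $\xi:=(\xi',0)$ is then a common zero of $\partial_1 f,\ldots,\partial_{n-1}f$ and $f$. The crucial observation is that $X_n(\xi)=0$, so the perturbed system $\partial_1 f,\ldots,\partial_i f+t X_n^{d-1},\ldots,\partial_{n-1}f,f$ still has $\xi$ as a common zero for every $t$, forcing its resultant to vanish identically in $t$ over $\overline{K}$. Extracting the coefficient of $t$, which equals $\rho(\partial R/\partial V_{i,(d-1)e_n})$, yields the required divisibility, and we set $\Delta_i(f):=\rho(\partial R/\partial \EE_i)/\Disc(\overline{f})$.

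To prove \eqref{eq:DeltaTF}, start from the inertia form structure of $R$: for a suitable integer $N$ and polynomials $h_{l,j},k_l\in A'[X_1,\ldots,X_n]$ one has $X_l^N R=\sum_j h_{l,j}\,g_j+k_l\,f$ for each $l\in\{1,\ldots,n\}$. Differentiating with respect to $\EE_i$ and using $\partial f/\partial \EE_i=X_i X_n^{d-1}$ for $i<n$ (respectively $X_n^d$ for $i=n$), a short case analysis on pairs $(i,j)$ yields the inertia form identity $X_j\,\partial R/\partial \EE_i-X_i\,\partial R/\partial \EE_j \in \TF_\mm(g_1,\ldots,g_{n-1},f)$ in $A'[X_1,\ldots,X_n]$. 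Applying $\rho$, which maps inertia forms of $(g_1,\ldots,g_{n-1},f)$ into those of $(\partial_1 f,\ldots,\partial_{n-1}f,f)$, and substituting $\rho(\partial R/\partial \EE_i)=\Disc(\overline{f})\,\Delta_i(f)$ gives $\Disc(\overline{f})\bigl(X_j\Delta_i-X_i\Delta_j\bigr)\in \TF_\mm(\partial_1 f,\ldots,\partial_{n-1}f,f)$. To cancel the factor $\Disc(\overline{f})$, Euler's identity implies $X_n\,\partial_n f\in (f,\partial_1 f,\ldots,\partial_{n-1}f)$, so the localization at $X_n$ of the quotient by this smaller ideal coincides with $B_{X_n}$, a polynomial ring via \eqref{BXi-iso} in which $\Disc(\overline{f})$ is manifestly a nonzero divisor. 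The analogue of Corollary~\ref{cor:TFprime} identifies $\TF_\mm(\partial_1 f,\ldots,\partial_{n-1}f,f)$ with the kernel of the map into this localization, so one may divide through to obtain $X_j\Delta_i-X_i\Delta_j\in \TF_\mm(\partial_1 f,\ldots,\partial_{n-1}f,f)$ as claimed. The main obstacle is the divisibility step for $i<n$: the chain rule alone reduces the problem but does not solve it, and the geometric argument—exploiting that the perturbation $tX_n^{d-1}$ vanishes at any singular point of $\overline{f}$—is the essential ingredient.
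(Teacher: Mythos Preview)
Your proof is correct, and for $\Delta_n(f)$ as well as for the inertia-form relation \eqref{eq:DeltaTF} you argue essentially as the paper does (chain rule/Taylor expansion for the former; differentiate the inertia-form identity for $R$ with respect to the $\EE_i$, specialize via $\rho$, and cancel $\Disc(\bar f)$ for the latter).

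The genuinely different part is your treatment of the divisibility for $i<n$. The paper does not go through the chain rule at all. Instead it invokes \cite[Lemmas 4.6.1 and 4.6.6]{J91} to obtain, in ${}_\ZZ A'$ modulo $R$, relations of the shape
\[
\left(\frac{\partial R}{\partial \EE_n}\right)^{d-1}\frac{\partial R}{\partial U_\alpha}-\left(\frac{\partial R}{\partial \EE_i}\right)^{d}\in R\cdot{}_\ZZ A',
\]
and then specializes through $\rho$: since $\Disc(\bar f)$ divides both $\rho(R)=S$ and $\rho(\partial R/\partial \EE_n)$ (by the $i=n$ step), it divides $\rho(\partial R/\partial \EE_i)^d$, hence $\rho(\partial R/\partial \EE_i)$ by primality. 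Your route---observing that a singular point of $\bar f$ gives a common zero $(\xi',0)$ that persists under the perturbation $\partial_i f\mapsto\partial_i f+tX_n^{d-1}$ because $X_n^{d-1}$ vanishes there---is more elementary in that it avoids the cited structural lemmas, at the cost of passing to an algebraic closure. Both arguments ultimately rest on the primality of $\Disc(\bar f)$ in ${}_\ZZ A$.

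One small imprecision in your final step: the claim that ``the analogue of Corollary~\ref{cor:TFprime} identifies $\TF_\mm(\partial_1 f,\ldots,\partial_{n-1}f,f)$ with $\ker(C\to B_{X_n})$'' is not correct as stated. That kernel equals $\TF_\mm(\Dc)$, and $\TF_\mm(\partial_1 f,\ldots,\partial_{n-1}f,f)$ is strictly smaller (for instance $\partial_n f$ lies in the former but not the latter: specialize all coefficients of $\bar f$ to $0$ and look at $X_1^N\partial_n f$). So what your cancellation argument actually yields is $\Delta_i X_j-\Delta_j X_i\in\TF_\mm(\Dc)$. This is exactly what the paper's proof establishes as well---it cancels $\Disc(\bar f)$ using the primality of $\TF_\mm(\Dc)$ and the fact that $\Disc(\bar f)\notin\TF_\mm(\Dc)$---and it is all that is used downstream (see the lemma on $\varphi$). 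The appearance of the smaller ideal in the statement of \eqref{eq:DeltaTF} seems to be an oversight in the paper.
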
 
\begin{proof} We begin by proving the claim about $\Delta_n(f)$. For that purpose, introduce a new indeterminate $T$. By Taylor expansion we have
	$$\Res(g_1,\ldots,g_{n-1},f+TX_n^d)-R=T\frac{\partial R}{\partial \EE_n} \mod (T^2) \in {}_kA'[T].$$
Applying the specialization $\rho$ and the definition of the discriminant, we obtain
\begin{equation}\label{eq:Deltan}
	\Disc(\bar{f})(\Disc(f+TX_n^d)-\Disc(f))=T\rho\left(\frac{\partial R}{\partial \EE_n} \right) \mod (T^2) \in 	{}_kA[T].	
\end{equation}
But the Taylor expansion also yields the equality
\begin{equation}\label{eq:DeltanDisc}
	\Disc(f+TX_n^d)-\Disc(f)=T \frac{\partial D}{\partial \EE_n}  \mod (T^2) \in 	{}_kA[T].	
\end{equation}
Therefore, combining \eqref{eq:Deltan} and \eqref{eq:DeltanDisc} we deduce that 
\begin{equation}\label{eq:Deltann}
 \Disc(\bar{f}) \, \frac{\partial D}{\partial \EE_n} =\rho\left(\frac{\partial R}{\partial \EE_n} \right)= \frac{\partial S}{\partial \EE_n}  \in {}_kA
\end{equation}
so that the claim $\Delta_n(f)= \partial D / \partial \EE_n$ in ${}_k A$ is proved since $\Disc(\bar{f})$ is a nonzero divisor by Corollary \ref{cd1:nzd}.

Now, we turn to the polynomials $\Delta_1(f),\ldots,\Delta_{n-1}(f)$ and hence we assume that $k=\ZZ$. From \cite[Lemme 4.6.1]{J91}, we know that for all multi-index $\alpha$ such that $|\alpha|=d$ we have
$$ X_n^{d}\frac{\partial R}{\partial U_\alpha} - X^\alpha  \frac{\partial R}{\partial \EE_n} \in \TF_\mm(g_1,\ldots,g_n).$$
Moreover, \cite[Lemma 4.6.6]{J91} then shows that the specialization of $X_i$ by $\partial R/\partial \EE_i$ for all $i=1,\ldots,n$ yields
$$\left(\frac{\partial R}{\partial \EE_n}\right)^d \frac{\partial R}{\partial U_\alpha} - \left(\frac{\partial R}{\partial \EE_1}\right)^{\alpha_1}\cdots \left(\frac{\partial R}{\partial \EE_n}\right)^{\alpha_n} \frac{\partial R}{\partial \EE_n} \in R.{}_\ZZ A'$$
By the properties of the resultant, $R$ is irreducible, $\partial R/\partial \EE_n\neq 0$ and $\partial R/\partial \EE_n\notin R.{}_\ZZ A'$ so we deduce that
$$\left(\frac{\partial R}{\partial \EE_n}\right)^{d-1} \frac{\partial R}{\partial U_\alpha} - \left(\frac{\partial R}{\partial \EE_1}\right)^{\alpha_1}\cdots \left(\frac{\partial R}{\partial \EE_n}\right)^{\alpha_n} \in R.{}_\ZZ A'.$$
Taking suitable choices for the multi-index $\alpha$, we finally obtain that for all integer $i=1,\ldots,n-1$
\begin{equation}\label{eq:Di1}
	\left(\frac{\partial R}{\partial \EE_n}\right)^{d-1} \frac{\partial R}{\partial U_\alpha} - \left(\frac{\partial R}{\partial \EE_i}\right)^{d} \in R.{}_\ZZ A'.
\end{equation}
Now, since $\Disc(\bar{f})$ divides $S=\rho(R)$, by definition of the discriminant and divides $\rho(\partial R/\partial \EE_n)$ by \eqref{eq:Deltann}, we deduce that it also divides $\rho(\partial R/\partial \EE_i)^d$ for all integer $i=1,\ldots,n-1$ by specialization of \eqref{eq:Di1} under $\rho$. But $\Disc(\bar{f})$ is irreducible in ${}_\ZZ A$, so we finally deduce that $\Disc(\bar{f})$ divides $\rho(\partial R/\partial \EE_i)$ for all $i=1,\ldots,n-1$ and hence the existence of the polynomials $\Delta_1(f),\ldots,\Delta_{n-1}(f) \in {}_\ZZ A$.

It remains to prove \eqref{eq:DeltaTF}. Recall from \cite[Lemma 4.6.1, (4.6.3)]{J91} that for all $1\leq i,j \leq n$ we have
$$\frac{\partial R}{\partial \EE_i}X_j-\frac{\partial R}{\partial \EE_j}X_i \in \TF_\mm(g_1,\ldots,g_{n-1},f) \subset {}_kA'[X_1,\ldots,X_n].$$
Applying the specialization $\rho$, we deduce that
$$\Disc(\bar{f})\left(\Delta_i(f)X_j-\Delta_j(f)X_i\right) \in \TF_\mm(\Dc).$$
Therefore, we deduce that \eqref{eq:DeltaTF} holds if $k=\ZZ$ because in this case $\Disc(\bar{f})$ is irreducible and does not divide $\Disc(f)$, hence does not belong to the prime ideal $\TF_\mm(\Dc)$. Finally, \eqref{eq:DeltaTF} holds for any $k$ by change of basis $\ZZ\rightarrow k$.
\end{proof}

We are now ready to define a map from ${}_kC$ to a Rees algebra. Recall that $\pp:=\TF_\mm({}_k\Dc)_0 \subset {}_k A$ and denote by $\bar{\Delta}_i$ the image of $\Delta_i(f)$ by the canonical map $A\rightarrow A/\pp$ for all $i=1,\ldots,n$. Introducing a new indeterminate $T$, we define the $A$-algebra morphism
\begin{eqnarray*}
	\varphi : {}_k C={}_k A[X_1,\ldots,X_n] & \rightarrow & \Rees_{A/\pp}(\bar{\Delta}_1,\ldots,\bar{\Delta}_n) \subset A/\pp[T] \\
	h=\sum_{\nu\in\NN} h_\nu(X_1,\ldots,X_n) & \mapsto & \sum_{\nu\in\NN} h_\nu(\bar{\Delta}_1,\ldots,\bar{\Delta}_n)T^\nu
\end{eqnarray*}
where the notation $h_\nu$ stands for the homogeneous part of degree $\nu$ of $h\in {}_kC$. Notice that it is a graded and surjective map.

\begin{lem} With the above notation, $\varphi$ vanishes on $\TF_\mm(\Dc)$.
\end{lem}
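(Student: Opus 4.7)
The plan is to exploit the linear relations $\Delta_i X_j - \Delta_j X_i \in \TF_\mm(\Dc)$ recorded in \eqref{eq:DeltaTF} in order to transport the statement ``$h \in \TF_\mm(\Dc)$'' to the statement ``$h(\bar\Delta_1,\ldots,\bar\Delta_n)=0$ in $A/\pp$.'' First, observe that $\Dc=(f,\partial_1f,\ldots,\partial_nf)$ and $\mm=(X_1,\ldots,X_n)$ are both graded ideals of $C$, hence so is $\TF_\mm(\Dc)$; writing $h=\sum_\nu h_\nu$ with $h_\nu$ homogeneous of degree $\nu$, each $h_\nu$ belongs to $\TF_\mm(\Dc)$ as well. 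Since $\varphi$ sends a homogeneous element of degree $\nu$ to a monomial of degree $\nu$ in $T$, it suffices to show, for every homogeneous $h\in\TF_\mm(\Dc)$ of degree $\nu$, that $h(\Delta_1,\ldots,\Delta_n)\in\pp$.

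Fix such an $h$ of degree $\nu$. From \eqref{eq:DeltaTF} we get $X_n\Delta_i\equiv X_i\Delta_n\pmod{\TF_\mm(\Dc)}$ for every $i=1,\ldots,n$. Raising these congruences to powers and multiplying them (which is legal since $\TF_\mm(\Dc)$ is an ideal of $C$), we obtain, for any monomial $X^\alpha$ of degree $\nu$,
\[
X_n^\nu \Delta^\alpha = \prod_{i=1}^n (X_n\Delta_i)^{\alpha_i} \equiv \prod_{i=1}^n (X_i\Delta_n)^{\alpha_i} = \Delta_n^\nu X^\alpha \pmod{\TF_\mm(\Dc)}.
\]
Summing over the monomials of $h$ by linearity yields the key congruence
\[
X_n^\nu\, h(\Delta_1,\ldots,\Delta_n) \equiv \Delta_n^\nu\, h(X_1,\ldots,X_n) \pmod{\TF_\mm(\Dc)}.
\]

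Since $h\in\TF_\mm(\Dc)$ and $\TF_\mm(\Dc)$ is an ideal, the right-hand side belongs to $\TF_\mm(\Dc)$; therefore so does the left-hand side. But $h(\Delta_1,\ldots,\Delta_n)\in A$, and by Corollary~\ref{cor:TFprime} we have $\TF_\mm(\Dc)=\ker(C\to B_{X_n})$, i.e.~a polynomial lies in $\TF_\mm(\Dc)$ if and only if some power of $X_n$ multiplied into it lands in $\Dc$. Consequently $h(\Delta_1,\ldots,\Delta_n)$ itself lies in $\TF_\mm(\Dc)\cap A=\pp$, so $h(\bar\Delta_1,\ldots,\bar\Delta_n)=0$ in $A/\pp$ and $\varphi(h)=h(\bar\Delta_1,\ldots,\bar\Delta_n)T^\nu=0$. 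The only delicate point is checking that the linear congruences $X_n\Delta_i\equiv X_i\Delta_n$ can be combined multiplicatively to yield the quadratic congruence above; this works precisely because everything happens modulo an ideal of $C$, which is exactly the role played by $\TF_\mm(\Dc)$.
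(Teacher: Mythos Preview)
Your proof is correct and follows essentially the same route as the paper: reduce to homogeneous $h$, use the relations \eqref{eq:DeltaTF} to get the key congruence $X_n^\nu h(\Delta_1,\ldots,\Delta_n)\equiv \Delta_n^\nu h(X_1,\ldots,X_n)\pmod{\TF_\mm(\Dc)}$, and conclude that $h(\Delta_1,\ldots,\Delta_n)\in\pp$. The only cosmetic difference is that the paper repeats the argument with each variable $X_j$ in place of $X_n$ and then uses the definition of $\TF_\mm$ directly, whereas you work with $X_n$ alone and invoke Corollary~\ref{cor:TFprime} (equivalently, that $X_n$ is a nonzero divisor modulo $\TF_\mm(\Dc)$) to cancel the factor $X_n^\nu$; both amount to the same thing.
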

\begin{proof} Since $\varphi$ is graded, it is sufficient to check the claimed property on graded parts. 
	Let $h\in C_\nu$. By using \eqref{eq:DeltaTF}, we obtain that
	\begin{equation}\label{eq:birat1}
		X_n^\nu h(\Delta_1(f),\ldots,\Delta_n(f))-\Delta_n(f)^\nu h(X_1,\ldots,X_n) \in \TF_\mm(\Dc).
	\end{equation}
	It follows that if $h\in C_\nu\cap \TF_\mm(\Dc)$ then $X_n^\nu h(\Delta_1(f),\ldots,\Delta_n(f)) \in \TF_\mm(\Dc)$. Similarly, we get that 
	$X_j^\nu h(\Delta_1(f),\ldots,\Delta_n(f)) \in \TF_\mm(\Dc)$ for all $j=1,\ldots,n$ and consequently, we deduce that
	$$h(\Delta_1(f),\ldots,\Delta_n(f)) \in \TF_\mm(\Dc)_0=\pp \subset {}_k A,$$
	hence $h_\nu(\bar{\Delta}_1,\ldots,\bar{\Delta}_n)=0 \in A/\pp$.
\end{proof}

As a consequence of this lemma, the morphism $\varphi$ induces
\begin{equation*}
	\bar{\varphi} : {}_k C/\TF_\mm(\Dc)={}_kB/H^0_\mm(B)  \rightarrow  \Rees_{A/\pp}(\bar{\Delta}_1,\ldots,\bar{\Delta}_n)
\end{equation*}
From a geometric point of view, $\bar{\varphi}$ defines a map from a blow-up variety to the discriminant variety. Below, we will prove that this map is an isomorphism under suitable assumptions. As a consequence, it will follow that the scheme morphism 
$$\Proj(B)=\Proj(B/H^0_\mm(B)) \rightarrow \Spec(A/\pp)$$ 
is birational since $\bar{\varphi}$ identifies $\Proj(B)$ to the blow-up of $\Spec(A/\pp)$ along the ideal $(\bar{\Delta}_1,\ldots,\bar{\Delta}_n)$.

\begin{lem}\label{lem:birat} Assume that $k$ is a domain and that  $n$ is odd or  $2\neq 0$ in $k$. Let $a \in \pp=\TF_\mm(\Dc)_0$. If $\partial a /\partial \EE_n=0$ then $a\in \TF_\mm(\Dc^2)_0$. In particular, if $\partial a /\partial \EE_n=0$ then $\partial a /\partial U_\alpha \in \pp$ for all multi-index $\alpha$ such that $|\alpha|=d$.
\end{lem}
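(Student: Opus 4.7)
The plan is to combine the vanishing of $\partial a/\partial \EE_n$ with the Hessian rank result (Proposition \ref{prop:HessianProp}) in order to show that a power of $X_n$ times $a$ lies in $\TF_\mm(\Dc) \cdot \Dc \subset \TF_\mm(\Dc^2)$. Since $a \in \pp$, write $X_n^N a = Qf + \sum_{i=1}^n P_i\, \partial_i f$ in $C$. Differentiating this identity with respect to $\EE_n$, using $\partial f/\partial \EE_n = X_n^d$, $\partial(\partial_i f)/\partial \EE_n = \delta_{in}\, d X_n^{d-1}$, together with $\partial a/\partial \EE_n = 0$, yields $X_n^{d-1}(QX_n + dP_n) \in \Dc$. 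Since $X_n$ is a nonzero divisor in $R := C/\TF_\mm(\Dc)$ by Corollary \ref{cor:TFprime}, we obtain
\begin{equation}\label{eq:plan-star}
	QX_n + dP_n \in \TF_\mm(\Dc).
\end{equation}

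Next, differentiate the same identity with respect to $X_j$ for $j=1,\ldots,n$: for $j < n$ the left-hand side vanishes, while for $j = n$ it equals $NX_n^{N-1}a$, which already sits in $\TF_\mm(\Dc)$. Reducing modulo $\Dc$ gives $\HH(f)\, P \in \TF_\mm(\Dc)^n$, where $\HH(f)$ is the full $n \times n$ Hessian matrix. Similarly, differentiating Euler's identity $\sum_i X_i\, \partial_i f = df$ once produces $\HH(f)\, X = (d-1)(\partial_i f)_i \in \Dc^n$. Now work in the domain $R$ and in $\Frac(R)$: by Proposition \ref{prop:HessianProp}, a principal $(n-1) \times (n-1)$ minor of $\HH(f)$ is a nonzero divisor in $R$, so $\HH(f)$ has rank at least $n-1$ over $\Frac(R)$. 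Since each $X_i$ is a nonzero divisor in $R$ (Corollary \ref{cor:TFprime}), the image of $X$ in $\Frac(R)^n$ is a nonzero element of $\ker \HH(f)$, forcing the rank to be exactly $n-1$ and the kernel to be the line spanned by it. Thus the image of $P$ is a multiple of the image of $X$ in $\Frac(R)^n$, which, $R$ being a domain, translates to
\begin{equation}\label{eq:plan-prop}
	P_i X_j - P_j X_i \in \TF_\mm(\Dc) \quad \text{for all } 1 \leq i, j \leq n.
\end{equation}

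To conclude, multiply the starting relation by $X_n$ and use \eqref{eq:plan-prop} together with Euler's identity to obtain
\[
	X_n^{N+1} a \equiv X_n Qf + P_n \sum_{i=1}^n X_i\, \partial_i f = (X_n Q + dP_n)\, f
\]
modulo $\TF_\mm(\Dc) \cdot \Dc$. By \eqref{eq:plan-star}, the right-hand side lies in $\TF_\mm(\Dc) \cdot \Dc \subset \TF_\mm(\Dc^2)$; hence $X_n^{N+1} a \in \TF_\mm(\Dc^2)$ and $a \in \TF_\mm(\Dc^2)_0$, as claimed. The final assertion is then a direct consequence: picking $L$ with $X_n^L a \in \Dc^2$ and differentiating with respect to any $U_\alpha$, each term retains a factor in $\Dc$, so $X_n^L\, \partial a/\partial U_\alpha \in \Dc$, whence $\partial a/\partial U_\alpha \in \TF_\mm(\Dc) \cap A = \pp$. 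The main obstacle is the rank-$(n-1)$ claim for $\HH(f)$ over $\Frac(R)$, which is exactly where the hypothesis ``$n$ odd or $2$ a nonzero divisor in $k$'' enters through Proposition \ref{prop:HessianProp}.
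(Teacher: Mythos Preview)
Your argument is correct and takes a genuinely different route from the paper's proof. The paper starts from the shorter expression $X_n^{N}a=\sum_{i=1}^{n-1}P_i\,\partial_i f+Qf$ (no $\partial_n f$), uses the $\EE_n$-derivative to upgrade the $Qf$ term to an $Mf^2$ term, and then differentiates in the $X_j$ ($j<n$) to apply Cramer's rule with the $(n-1)\times(n-1)$ Hessian: this forces each $L_i$ into $(f^2,\partial_1f,\ldots,\partial_{n-1}f)$ once one divides by the nonzero divisor of Proposition~\ref{prop:HessianProp}. A final appeal to the freeness of $I/I^2$ for the regular sequence $I=(\tilde f,\partial_1\tilde f,\ldots,\partial_{n-1}\tilde f)$ then gives $a\in I^2$ outright. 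Your approach instead keeps all $n$ partials, passes to $\Frac(R)$, and observes that the full Hessian has rank exactly $n-1$ there with kernel spanned by $X$; the resulting proportionality \eqref{eq:plan-prop} combined with \eqref{eq:plan-star} lands $X_n^{N+1}a$ in $\TF_\mm(\Dc)\cdot\Dc$. Your version is more geometric (the syzygy vector $P$ is parallel to the position vector $X$ on the incidence variety) and bypasses the regular-sequence step; the paper's version avoids the fraction field and yields the slightly sharper intermediate fact $a\in(\tilde f,\partial_1\tilde f,\ldots,\partial_{n-1}\tilde f)^2$. Both proofs use Proposition~\ref{prop:HessianProp} at the same pivotal point, and both end with the identification $\TF_{(X_n)}(\Dc^2)=\TF_\mm(\Dc^2)$, which follows (as for $\Dc$ itself) from the fact that each $X_j$ is a nonzero divisor in $(C/\Dc^2)_{X_n}$ --- a consequence of the regularity of $f,\partial_1 f,\ldots,\partial_{n-1}f$ in $C_{X_n}$ and the isomorphism~\eqref{BXi-iso}.
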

\begin{proof} Let $a \in \pp=\TF_\mm(\Dc)_0$ such that $\partial a /\partial \EE_n=0$. by Corollary \ref{cor:TFprime}, there exits an integer $N$ and polynomials $P_1,\ldots,P_{n-1},Q \in{}_kA[X_1,\ldots,X_n]$ such that
	\begin{equation}\label{eq:TF1}
		X_n^Na=P_1\partial_1f+\partial_2f+\cdots+\partial_{n-1}f+Qf.
	\end{equation}
Since $\partial_if$ does not depend on $\EE_n$ for all $1\leq i\leq n-1$, by derivation with respect to $\EE_n$ we get
\begin{equation*}
	0=X_n^N\frac{\partial a}{\partial \EE_n}=\frac{\partial P_1}{\partial \EE_n}\partial_1f+\cdots+
	\frac{\partial P_{n-1}}{\partial \EE_n}\partial_{n-1}f+\frac{\partial Q}{\partial \EE_n}f+ QX_n^d.
\end{equation*}
It follows immediately that
$$X_n^dQ \in (f,\partial_1f,\ldots,\partial_{n-1}f)$$
and hence, by comparing with \eqref{eq:TF1}, we deduce that there exits polynomials $L_1,\ldots,L_{n-1},M \in{}_kA[X_1,\ldots,X_n]$ such that
\begin{equation}\label{eq:TF2}
	X_n^{N+d}a= L_1\partial_1f+L_2\partial_2 f+\cdots+L_{n-1}\partial_{n-1}f+Mf^2.
\end{equation}
Computing the derivatives with respect to $X_j$ for all $1\leq j\leq n-1$, we get the equalities
$$0=\sum_{i=1}^{n-1}L_i\frac{\partial^2f}{\partial X_i\partial X_j} + 
\sum_{i=1}^{n-1}\frac{\partial L_i}{\partial X_j}\frac{\partial f}{\partial X_i} +
2Mf\frac{\partial f}{\partial X_j}+\frac{\partial M}{\partial X_j}f^2, \ 1\leq j\leq n-1.$$
Hence, for all $1\leq j\leq n-1$ we have
$$\sum_{i=1}^{n-1}L_i\frac{\partial^2f}{\partial X_i\partial X_j} \in (f^2,\partial_1f,\ldots,\partial_{n-1}f)$$
and Cramer's rules show that for all $1\leq l\leq n-1$  we have
$$ \det \left( \frac{\partial^2f}{\partial X_i\partial X_j}  \right)_{1\leq i,j\leq n-1} L_l \in (f^2,\partial_1f,\ldots,\partial_{n-1}f).$$
Finally, by comparison with \eqref{eq:TF2} we obtain
$$ X_n^{N+d}a\det \left( \frac{\partial^2f}{\partial X_i\partial X_j}  \right)_{1\leq i,j\leq n-1} \in (f,\partial_1 f,\partial_2f,\ldots,\partial_{n-1}f)^2.$$
In other words, using the notation $\tilde{f}(X_1,\ldots,X_{n-1}):=f(X_1,\ldots,X_{n-1},1)$, we obtained that
\begin{equation}\label{eq:TF3}
	\Hess(\tilde{f}).a \in (\tilde{f},\partial_1 \tilde{f},\ldots,\partial_{n-1} \tilde{f})^2.
\end{equation}

Now, Proposition \ref{prop:HessianProp} implies that $\Hess(\tilde{f})$ is a nonzero divisor in the quotient ring 
${}_k\tilde{B}:={}_kA[X_1,\ldots,X_{n-1}]/(\tilde{f},\partial_1 \tilde{f},\ldots,\partial_{n-1} \tilde{f})$. Moreover, Proposition \ref{prop:regular-sequences}, (ii) shows that $\tilde{f},\partial_1 \tilde{f},\ldots,\partial_{n-1} \tilde{f}$ is a regular sequence in ${}_kA[X_1,\ldots,X_{n-1}]$ and hence 
$$\quotient{(\tilde{f},\partial_1 \tilde{f},\ldots,\partial_{n-1} \tilde{f})}{(\tilde{f},\partial_1 \tilde{f},\ldots,\partial_{n-1} \tilde{f})^2}$$
is a free $\tilde{B}$-module. Therefore, this and \eqref{eq:TF3} show that
$$ a \in (\tilde{f},\partial_1 \tilde{f},\ldots,\partial_{n-1} \tilde{f})^2.$$
Finally, using Corollary \ref{cor:TFprime} we conclude that
$a \in \TF_{(X_n)}(\Dc^2)=\TF_\mm(\Dc^2)$.
\end{proof}

\begin{cor}\label{cor:DEndiv} If $n$ is odd or if $2$ is a nonzero divisor in $k$ then $\partial D/\partial \EE_n$ is not a zero divisor in the quotient ring $\quotient{{}_kC}{\TF_\mm(\Dc)}$.
\end{cor}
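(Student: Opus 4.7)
The plan is to use Corollary \ref{cor:TFprime}, which gives the embedding ${}_kC/\TF_\mm(\Dc) \hookrightarrow {}_kB_{X_n}$, to reduce the statement to showing that $\Delta := \partial D/\partial \EE_n$ is a nonzero divisor in ${}_kB_{X_n}$. I would then prove the claim first for the building-block rings $k=\ZZ$ and $k=\ZZ/p\ZZ$ (with $p$ an odd prime, or any prime when $n$ is odd) and propagate to an arbitrary $k$ satisfying the hypothesis by the flatness technique already employed in the proof of Proposition \ref{prop:HessianProp}.

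In each building-block case $k$ is a domain, so by Proposition \ref{Bdom} the ring ${}_kB_{X_n}$ is itself a domain and it suffices to show $\Delta \notin \pp$. Under our hypothesis, Proposition \ref{prop:DiscIrredZZ} (for $k=\ZZ$) and Theorem \ref{thm:disc-irred} (for $k=\ZZ/p\ZZ$) produce a prime element $P$ of $A$ generating $\pp$, with $D=cP$ and $c$ a unit. If $\Delta$ lay in $\pp=(P)$, then $P$ would divide $\partial P/\partial \EE_n$; since the $\EE_n$-degree of $\partial P/\partial \EE_n$ is strictly smaller than that of $P$ whenever the former is nonzero, this would force $\partial P/\partial \EE_n = 0$. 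At this point I would invoke Lemma \ref{lem:birat}, which then gives $\partial P/\partial U_\alpha \in \pp$ for every $\alpha$; the same divisibility-plus-degree argument applied variable by variable yields $\partial P/\partial U_\alpha = 0$ for every $\alpha$. Over $\ZZ$ this forces $P$ to be a nonzero integer, contradicting $\pp \cap \ZZ = (0)$ as already observed in the proof of Theorem \ref{thm:discTF}. Over $\ZZ/p\ZZ$, which is perfect, the vanishing of all first partial derivatives means $P=\sum c_\alpha U^{p\alpha}$, and the Frobenius identity then presents $P=(\sum c_\alpha U^\alpha)^p$ as a nontrivial $p$-th power, contradicting that $P$ is a nonconstant prime element.

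To extend to an arbitrary $k$ satisfying the hypothesis, the $\ZZ$-module $M := {}_\ZZ B_{X_n}$ is free by Proposition \ref{Bdom}, and the building-block step produces an exact sequence
\begin{equation*}
0 \to M \xrightarrow{\times \Delta} M \to Q \to 0
\end{equation*}
together with the vanishing $\mathrm{Tor}_1^\ZZ(\ZZ/p\ZZ, Q) = 0$ for every prime $p$ in the allowed set. When $n$ is odd, this holds for all $p$, so $Q$ is torsion-free over $\ZZ$ and hence $\ZZ$-flat; tensoring the displayed sequence with $k$ preserves exactness and shows that $\times \Delta$ is injective on ${}_kB_{X_n}$. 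When $n$ is even and $2$ is a nonzero divisor in $k$, the same Tor computation instead shows that $Q[1/2]$ is $\ZZ[1/2]$-flat, so $\times \Delta$ is injective on ${}_{k[1/2]}B_{X_n}$; the assumption on $k$ gives $k \hookrightarrow k[1/2]$, and since $M$ is $\ZZ$-flat the canonical map ${}_kB_{X_n} \hookrightarrow {}_{k[1/2]}B_{X_n}$ is injective, so the injectivity of $\times \Delta$ descends to ${}_kB_{X_n}$.

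The main obstacle will lie in the building-block step: drawing a contradiction from the conclusion that all first partial derivatives of $P$ vanish. This is precisely where the perfectness of $\ZZ/p\ZZ$ enters (so that $\sum c_\alpha U^{p\alpha}$ is a genuine $p$-th power) and where Lemma \ref{lem:birat} is essential to promote the assumed vanishing $\partial P/\partial \EE_n = 0$ of a \emph{single} first-order partial to the vanishing of \emph{all} of them; once this step is in place, the reduction via Corollary \ref{cor:TFprime} and the flatness argument from $\ZZ$ to arbitrary $k$ are routine applications of material already developed in the preceding sections.
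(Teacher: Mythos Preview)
Your proposal is correct and follows essentially the same approach as the paper: reduce to the domain case, argue that $\partial D/\partial \EE_n \in \pp$ forces $\partial P/\partial \EE_n=0$, invoke Lemma~\ref{lem:birat} to propagate this to all partials, and derive a contradiction with the primality of $P$ established in Theorem~\ref{thm:disc-irred}; then extend to arbitrary $k$ via the flatness argument of Proposition~\ref{prop:HessianProp}. Your choice to work directly over $\ZZ$ and $\ZZ/p\ZZ$ (rather than passing to the fraction field and then its algebraic closure, as the paper does) is a mild streamlining, since $\ZZ/p\ZZ$ is already perfect, but the underlying argument is the same.
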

\begin{proof} We first assume that $k$ is a domain. Then, observe that we can assume without loss of generality that $k$ is actually a field by extension to the fraction field of $k$. Now, if $\partial D/\partial \EE_n\neq 0$ then Lemma \ref{lem:birat} implies that $D$ divides $\partial D /\partial U_\alpha$ for all multi-index $\alpha$ such that $|\alpha|=d$ and hence that $\partial D /\partial U_\alpha=0$ for all $\alpha$ such that $|\alpha|=d$ by inspecting the degrees. If $k$ has characteristic zero then we deduce that $D=0$, a contradiction with Theorem \ref{thm:disc-irred}. If $k$ has characteristic $p>0$, then passing to the algebraic closure of $k$ (which is a perfect field) we get that $D$ must be some polynomial raised to the power $p$, again a contradiction with Theorem \ref{thm:disc-irred}.

It remains to prove that the claimed property holds for an arbitrary ring $k$, knowing that it is valid for a domain. To do this, we can proceed exactly as in the proof of Proposition \ref{prop:HessianProp}.
\end{proof}

We are now ready to prove the main result of this section.

\begin{thm} If $n$ is odd or $2$ is a nonzero divisor in $k$, then $\bar{\varphi}$ is an isomorphism.
\end{thm}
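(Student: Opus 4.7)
The plan is to show that $\bar\varphi$ is both surjective and injective. Surjectivity is essentially built into the definition: the Rees algebra $\Rees_{A/\pp}(\bar\Delta_1,\ldots,\bar\Delta_n)$ is generated as an $A/\pp$-algebra in degree $1$ by $\bar\Delta_1 T,\ldots,\bar\Delta_n T$, and these are precisely the images $\varphi(X_1),\ldots,\varphi(X_n)$, so $\varphi$ is already surjective and hence so is the induced map $\bar\varphi$.

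For injectivity, the key observation is that the kernel of $\bar\varphi$ can be tested degree by degree because $\bar\varphi$ is a graded map. So let me take $h\in C_\nu$ such that $h(\bar\Delta_1,\ldots,\bar\Delta_n)=0$ in $A/\pp$, which means $h(\Delta_1(f),\ldots,\Delta_n(f))\in\pp=\TF_\mm(\Dc)_0$. I will then invoke the relation \eqref{eq:birat1}, namely
$$X_n^\nu\, h(\Delta_1(f),\ldots,\Delta_n(f))-\Delta_n(f)^\nu\, h(X_1,\ldots,X_n)\in \TF_\mm(\Dc),$$
to deduce immediately that $\Delta_n(f)^\nu\, h(X_1,\ldots,X_n)\in \TF_\mm(\Dc)$.

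The final and decisive step is to cancel $\Delta_n(f)^\nu$. Here I will use \eqref{eq:Dn=partial}, which identifies $\Delta_n(f)$ with $\partial D/\partial\EE_n$, together with Corollary \ref{cor:DEndiv}, which asserts that $\partial D/\partial\EE_n$ is a nonzero divisor in $C/\TF_\mm(\Dc)$ under the standing hypothesis ($n$ odd or $2$ nonzero divisor in $k$). Consequently $\Delta_n(f)$ itself is a nonzero divisor modulo $\TF_\mm(\Dc)$, so $h\in \TF_\mm(\Dc)$, which is exactly what is needed for the class of $h$ in $C/\TF_\mm(\Dc)$ to vanish. This gives injectivity and finishes the proof.

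I do not expect any real obstacle: the heavy lifting has been done already in Lemma \ref{lem:birat} and Corollary \ref{cor:DEndiv} (the nonzero divisor property of $\partial D/\partial \EE_n$) and in the preceding proposition (the relation \eqref{eq:birat1} coming from $\Delta_i X_j-\Delta_j X_i\in\TF_\mm(\Dc)$). The proof itself reduces to combining these two ingredients with the definition of $\bar\varphi$.
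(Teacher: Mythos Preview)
Your proposal is correct and follows essentially the same approach as the paper: reduce to graded parts, use \eqref{eq:birat1} together with \eqref{eq:Dn=partial} to obtain $\left(\partial D/\partial\EE_n\right)^\nu h\in\TF_\mm(\Dc)$, and then invoke Corollary \ref{cor:DEndiv} to cancel and conclude $h\in\TF_\mm(\Dc)$. The only cosmetic difference is that you spell out the surjectivity argument in slightly more detail than the paper does.
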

\begin{proof}	
	Since $\bar{\varphi}$ is graded and surjective, it is sufficient to show that it is injective on graded parts. So let $h \in C_\nu$ and assume that
	$h(\Delta_1(f),\ldots,\Delta_n(f))\in \pp$. Then, \eqref{eq:birat1} and \eqref{eq:Dn=partial} shows that 
	\begin{equation}\label{eq:DEn}
		\left(\frac{\partial D}{\partial \EE_n}  \right)^\nu h(X_1,\ldots,X_n) \in \TF_\mm(\Dc).		
	\end{equation}
But by Corollary \ref{cor:DEndiv}, $\partial D/\partial \EE_n$ is not a zero divisor in the quotient ring $\quotient{{}_kC}{\TF_\mm(\Dc)}$. Therefore \eqref{eq:DEn} implies that $h(X_1,\ldots,X_n) \in \TF_\mm(\Dc)$ and from here we deduce that $\bar{\varphi}$ is an isomorphism. 	
\end{proof}

%%%%%%%%%%%%%%%%%%%%%%%
% \addcontentsline{toc}{section}{References}
% \bibliographystyle{alpha} \bibliography{disc}

\begin{thebibliography}{GKZ94}

\bibitem[AJ06]{ApJo}
Fran\c{c}ois Ap\'ery and Jean-Pierre Jouanolou.
\newblock {\em {\'E}limination: le cas d'une variable}.
\newblock Hermann, Collection M\'ethodes, 2006.

\bibitem[Ben11]{Benoist}
Olivier Benoist.
\newblock Degr{\'e}s d'homog{\'e}n{\'e}it{\'e} de l'ensemble des intersections
  compl{\`e}tes singuli{\`e}res.
\newblock {\em Annales de l'Institut Fourier}, to appear, 2011.

\bibitem[BM09]{BM}
Laurent Bus{\'e} and Bernard Mourrain.
\newblock Explicit factors of some iterated resultants and discriminants.
\newblock {\em Math. Comp.}, 78(265):345--386, 2009.

\bibitem[BV88]{BrVe88}
Winfried Bruns and Udo Vetter.
\newblock {\em Determinantal rings}, volume 1327 of {\em Lecture Notes in
  Mathematics}.
\newblock Springer-Verlag, Berlin, 1988.

\bibitem[DD01]{DD}
Carlos D'Andrea and Alicia Dickenstein.
\newblock Explicit formulas for the multivariate resultant.
\newblock {\em J. Pure Appl. Algebra}, 164(1-2):59--86, 2001.
\newblock Effective methods in algebraic geometry (Bath, 2000).

\bibitem[Dem69]{Bou}
Michel Demazure.
\newblock Resultant, {D}iscriminant (\'etat 2).
\newblock Unpublished Bourbaki manuscript, July 1969.

\bibitem[Dem12]{Dem}
Michel Demazure.
\newblock R\'esultant, discriminant.
\newblock {\em To appear in "L'enseignement math\'ematique" (french)}, 2012.

\bibitem[Est10]{Esterov}
A.~Esterov.
\newblock Newton polyhedra of discriminants of projections.
\newblock {\em Discrete Comput. Geom.}, 44(1):96--148, 2010.

\bibitem[GKZ94]{GKZ}
I.~M. Gel{\cprime}fand, M.~M. Kapranov, and A.~V. Zelevinsky.
\newblock {\em Discriminants, resultants, and multidimensional determinants}.
\newblock Mathematics: Theory \& Applications. Birkh\"auser Boston Inc.,
  Boston, MA, 1994.

\bibitem[Gor87]{Gordan}
Paul Gordan.
\newblock Ueber die bildung der discriminante einer ternaeren form.
\newblock {\em Sitzungsberichte der bayerischen Akademie der Wissenschaften},
  17:477--478, 1887.

\bibitem[Har77]{H}
Robin Hartshorne.
\newblock {\em Algebraic geometry}.
\newblock Springer-Verlag, New York, 1977.
\newblock Graduate Texts in Mathematics, No. 52.

\bibitem[Hen68]{He1868}
Olaus Henrici.
\newblock On certain formul{\ae} concerning the theory of discriminants.
\newblock {\em Proc.~of London Math.~Soc.}, pages 104--116, Nov.~12th 1868.

\bibitem[Hen69]{He1869}
Olaus Henrici.
\newblock On the singularities of curves envelopes.
\newblock {\em Proc.~of London Math.~Soc.}, pages 177--195, May~13th 1869.

\bibitem[Jou91]{J91}
Jean-Pierre Jouanolou.
\newblock Le formalisme du r\'esultant.
\newblock {\em Adv. Math.}, 90(2):117--263, 1991.

\bibitem[Jou92]{J92}
Jean-Pierre Jouanolou.
\newblock R{\'e}sultant et r{\'e}sidu de grothendieck.
\newblock Pr{\'e}publication IRMA Strasbourg, 1992.

\bibitem[Jou97]{J97}
Jean-Pierre Jouanolou.
\newblock Formes d'inertie et r\'esultant: un formulaire.
\newblock {\em Adv. Math.}, 126(2):119--250, 1997.

\bibitem[K{\"o}n03]{Koenig}
J.~K{\"o}nig.
\newblock {\em Einleitung in die allgemeine Theorie der algebraischen
  Groessen}.
\newblock Teubner, Leipzig, 1903.

\bibitem[Kro82]{Kronecker}
L.~Kronecker.
\newblock Grundzuege einer arithmetischen theorie der algebraischen groessen.
\newblock {\em Journal fuer Mathematik}, 92:1--122, 1882.

\bibitem[Kru39]{Krull1}
Wolfgang Krull.
\newblock Funktionaldeterminanten und {D}iskriminanten bei {P}olynomen in
  mehreren {U}nbestimmten.
\newblock {\em Monatsh. Math. Phys.}, 48:353--368, 1939.

\bibitem[Kru42]{Krull2}
Wolfgang Krull.
\newblock Funktionaldeterminanten und {D}iskriminanten bei {P}olynomen in
  mehreren {U}nbestimmten. {II}.
\newblock {\em Monatsh. Math. Phys.}, 50:234--256, 1942.

\bibitem[Mac02]{Mac02}
F.S. Macaulay.
\newblock Some formulae in elimination.
\newblock {\em Proc.\ London Math.\ Soc.}, 1(33):3--27, 1902.

\bibitem[Mer86]{Mertens1886}
Franz Mertens.
\newblock Ueber die bestimmenden eigenschaften der resultante von n formen mit
  n veraenderlichen.
\newblock {\em Sitzungsberichte der Wiener Akademie}, 93:527--566, 1886.

\bibitem[Mer92]{Mertens1892}
Franz Mertens.
\newblock Zur theorie der elimination.
\newblock {\em Sitzungsberichte der Wiener Akademie}, 108:1173--1228 and
  1344--1386, 1892.

\bibitem[Net00]{Netto}
Netto.
\newblock {\em Vorlesungen ueber Algebra}, volume~2.
\newblock 1900.

\bibitem[OM88]{PhDthesisRed}
El~Khalil Ould~Mohamdi.
\newblock {\em \'Elimination R\'eduite}.
\newblock PhD thesis, University of Strasbourg, 1988.

\bibitem[Ost19]{Ostrowski}
Alexander Ostrowski.
\newblock Beweis der {I}rreduzibilit\"at der {D}iskriminante einer
  algebraischen {F}orm von mehreren {V}ariablen.
\newblock {\em Math. Z.}, 4(3-4):314--319, 1919.

\bibitem[Sal85]{Salmon}
G.~Salmon.
\newblock {\em Lessons introductory to the modern higher algebra}.
\newblock Number (see p.98-107 and p.123). 1885.

\bibitem[SS08]{ScSt}
G{\"u}nter Scheja and Uwe Storch.
\newblock Anisotropic discriminants.
\newblock {\em Comm. Algebra}, 36(9):3543--3558, 2008.

\bibitem[SS11]{SaSe}
Takeshi Saito and Jean-Pierre Serre.
\newblock The discriminant and the determinant of a hypersurface of even
  dimension.
\newblock Preprint arXiv:1110.1717v1 [math.AG], 2011.

\bibitem[Syl64a]{SylvesterBis}
Sylvester.
\newblock Addition {\`a} la note sur une extension de la th{\'e}orie des
  r{\'e}sultants alg{\'e}briques.
\newblock {\em Comptes Rendus de l'Acad{\'e}mie des Sciences},
  LVIII:1178--1180, 1864.

\bibitem[Syl64b]{Sylvester}
Sylvester.
\newblock Sur l'extension de la th{\'e}orie des r{\'e}sultants alg{\'e}briques.
\newblock {\em Comptes Rendus de l'Acad{\'e}mie des Sciences},
  LVIII:1074--1079, 1864.

\bibitem[Zar37]{Zar37}
Oscar Zariski.
\newblock Generalized weight properties of the resultant of n + 1 polynomials
  in n indeterminates.
\newblock {\em Transactions of the American Mathematical Society}, 41(2):pp.
  249--265, 1937.

\end{thebibliography}

\def\cprime{$'$}

%%%%%%%%%%%%%%%%%%%%%%%
\appendix

\section*{Appendix - Two formulas of F.~Mertens}\label{mertens}
% \addcontentsline{toc}{section}{\textbf{Appendix - Two formulas of F.~Mertens}}

\setcounter{equation}{0}
\renewcommand{\theequation}{A.\arabic{equation}}

In this appendix, 
we give rigorous proofs of two outstanding formulas that were given by Frantz Mertens around 1890 in its study of the resultant of homogeneous polynomials \cite{Mertens1886}.

\medskip

Let $R$ be a commutative ring and suppose given $n\geq 1$ homogeneous polynomials $f_1,\ldots,f_{n}$ in $R[X_1,\ldots,X_n]$ with positive degree $d_1,\ldots,d_{n}$ respectively, such that $\prod_{i=1}^n d_i>1$. Introducing news indeterminates $U_1,\ldots,U_n$, we define
$$\theta(U_1,\ldots,U_n):=\Res(f_1,\ldots,f_{n-1},\sum_{i=1}^n U_iX_i) \in R[U_1,\ldots,U_n]$$
and $\theta_i(U_1,\ldots,U_n):=\partial \theta/\partial U_i \in R[U_1,\ldots,U_n]$ for all $i=1,\ldots,n$. In addition, let $V_1,\ldots, V_n$, $W_1,\ldots,W_n$, $X,Y$ be a collection of some other new indeterminates and consider the ring morphisms
\begin{eqnarray*}
	\rho : R[U_1,\ldots,U_n] & \rightarrow & R[V_1,\ldots,V_n,W_1,\ldots,W_n][X_1,\ldots,X_n] \\
	U_i & \mapsto & V_i(\sum_{j=1}^nW_jX_j)-W_i(\sum_{j=1}^nV_jX_j).
\end{eqnarray*}
and 
\begin{eqnarray*}
	\overline{\rho} : R[U_1,\ldots,U_n] & \rightarrow & R[V_1,\ldots,V_n,W_1,\ldots,W_n][X,Y] \\
	U_i & \mapsto & V_iX+W_iY 
\end{eqnarray*}

\noindent {\bf First Mertens' formula:} 
$$\Res_{X,Y}(\overline{\rho}(\theta),\overline{\rho}(f_n(\theta_1,\ldots,\theta_n))) =(-1)^{d_1\ldots d_n}\Disc_{X,Y}(\overline{\rho}(\theta))^{d_n}\Res(f_1,\ldots,f_n).$$

\medskip

\noindent {\bf Second Mertens' formula:} 
$$\Res(f_1,\ldots,f_{n-1},\rho(f_n(\theta_1,\ldots,\theta_n))) =(-1)^{d_1\ldots d_n}\Disc_{X,Y}(\overline{\rho}(\theta))^{d_n}\Res(f_1,\ldots,f_n).$$

Notice that the subscript $X,Y$ is written to emphasize that the discriminant, or the resultant, is taken with respect to these two variables.

\medskip

\begin{proof} We begin with the proof of the first formula and then we will deduce the second formula form the first one. Observe that we can assume that $R$ is actually the universal ring of coefficients of the polynomials $f_1,\ldots,f_n$ that we will denote by $A$.
	
\medskip
	
	From definition, $\theta$ is an inertia form of the polynomials  $f_1,\ldots,f_{n-1},\sum_{i=1}^n U_iX_i$ with respect to $(X_1,\ldots,X_n)$: there exists an integer, say $N$, and polynomials $h_1,\ldots,h_{n-1},h$ in the polynomial ring $A[U_1,\ldots,U_n][X_1,\ldots,X_n]$  such that
	$$X_n^N\theta=h_1f_1+\cdots+h_{n-1}f_{n-1}+h(\sum_{i=1}^n U_iX_i).$$
A simple computation then shows that $X_i\theta_j-X_j\theta_i$ is an inertia form of the same polynomials for all couple $(i,j)$. By successive iterations, we deduce that $X_n^{d_n}f_n(\theta_1,\ldots,\theta_n)-\theta_n^{d_n}f_n(X_1,\ldots,X_n)$ is also such an inertia form. Finally, we obtain that $f_n(\theta_1,\ldots,\theta_n)$  is an inertia forms of the polynomials  $$f_1,\ldots,f_{n-1},f_n,\sum_{i=1}^n U_iX_i$$ with respect to $(X_1,\ldots,X_n)$. Obviously, the same holds for $\theta$. 

Set $R:=\Res_{X,Y}(\overline{\rho}(\theta),\overline{\rho}(f_n(\theta_1,\ldots,\theta_n)))$. There exists an integer $N_1$ such that
$$X^{N_1}R \in (\overline{\rho}(\theta),\overline{\rho}(f_n(\theta_1,\ldots,\theta_n))) \subset A[V_1,\ldots,V_n,W_1,\ldots,W_n][X,Y]$$
and therefore we deduce that there exists an integer  $N_2$ such that
\begin{multline*}
	X^{N_1}X_n^{N_2}R \in (f_1,\ldots,f_n,\overline{\rho}(\sum_{i=1}^n U_iX_i)) \\ \subset A[V_1,\ldots,V_n,W_1,\ldots,W_n][X,Y][X_1,\ldots,X_n].	
\end{multline*}
Now, specializing $X$ to $\sum_{i=1}^nW_iX_i$ and $Y$ to $-\sum_{i=1}^nV_iX_i$ we obtain that
$$(\sum_{i=1}^nW_iX_i)^{N_1}X_n^{N_2}R \in (f_1,\ldots,f_n) \subset A[V_1,\ldots,V_n,W_1,\ldots,W_n][X_1,\ldots,X_n].$$
In other words, $(\sum_{i=1}^nW_iX_i)^{N_1} R$ is an inertia form of the polynomials $f_1,\ldots,f_n$ with respect to $(X_1,\ldots,X_n)$. Moreover, since $\sum_{i=1}^nW_iX_i$ is obviously not such an inertia form, we deduce that $R$ is. Consequently, there exists $$M \in A[V_1,\ldots,V_n,W_1,\ldots,W_n]$$ such that
\begin{equation}\label{eq:eq1mertens}
R:=\Res_{X,Y}(\overline{\rho}(\theta),\overline{\rho}(f_n(\theta_1,\ldots,\theta_n)))=M \Res(f_1,\ldots,f_n).	
\end{equation}

Looking at this equation, we see that both $R$ and $\Res(f_1,\ldots,f_n)$	are homogeneous with respect to the coefficients of the polynomial $f_n$ of the same degree $d_1\ldots d_{n-1}$. Therefore, $M$ must be independent of these coefficients, but it could depend on the degree $d_n$. To emphasize this property, we use the notation $M(f_1,\ldots,f_{n-1},d_n)$. If we specialize $f_n$ to $X_n^{d_n}$ in \eqref{eq:eq1mertens}, we obtain
$$ \Res_{X,Y}(\overline{\rho}(\theta),\overline{\rho}(\theta_n))^{d_n}=M(f_1,\ldots,f_{n-1},d_n) \Res(f_1,\ldots,f_{n-1},X_n)^{d_n}.$$
But on the other hand, form the definition of $M$, we have
 	$$ \Res_{X,Y}(\overline{\rho}(\theta),\overline{\rho}(\theta_n))=M(f_1,\ldots,f_{n-1},1) \Res(f_1,\ldots,f_{n-1},X_n).$$
By comparison, it follows that $M(f_1,\ldots,f_{n-1},d_n)=M(f_1,\ldots,f_{n-1},1)^{d_n}$ and hence it remains to determine $M(f_1,\ldots,f_{n-1},1)$. For that purpose, noticing  that
$\partial \overline{\rho}(\theta)/\partial Y= \sum_{i=1}^n W_i \overline{\rho}(\theta_i)$, we choose to specialize $f_n$ to the linear form $\sum_{i=1}^n W_iX_i$.  We obtain
$$ \Res_{X,Y}(\overline{\rho}(\theta),\frac{\partial \overline{\rho}(\theta)}{\partial Y})=M(f_1,\ldots,f_{n-1},1) \Res(f_1,\ldots,f_{n-1},\sum_{i=1}^n W_iX_i).$$
Now, by definition of $\Disc_{X,Y}(\overline{\rho}(\theta))$, we have
\begin{align*}
	\Res_{X,Y}(\overline{\rho}(\theta),\frac{\partial \overline{\rho}(\theta)}{\partial Y}) &= \Disc_{X,Y}(\overline{\rho}(\theta)) \Res_{X,Y}(\overline{\rho}(\theta),X) \\
	&= \Disc_{X,Y}(\overline{\rho}(\theta)). \overline{\rho}(\theta)(0,-1) \\
	&= \Disc_{X,Y}(\overline{\rho}(\theta)) \Res(f_1,\ldots,f_{n-1},-\sum_{i=1}^n W_iX_i) \\
	&= (-1)^{d_1\ldots d_{n-1}} \Disc_{X,Y}(\overline{\rho}(\theta)) \Res(f_1,\ldots,f_{n-1},\sum_{i=1}^n W_iX_i).
\end{align*}
It follows that $M(f_1,\ldots,f_{n-1},1)=(-1)^{d_1\ldots d_{n-1}} \Disc_{X,Y}(\overline{\rho}(\theta))$ and the first formula is proved.

\medskip

We turn to the proof of the second formula. For the sake of simplicity, define $$h:=\rho(f_n(\theta_1,\ldots,\theta_n)) \in A[V_1,\ldots,V_n,W_1,\ldots,W_n][X_1,\ldots,X_n]$$
and denote by $d_h$ its degree with respect to the variables $X_1,\ldots,X_n$. It is not hard to check that 
$d_h=d_n(d_1\ldots d_{n-1}-1)$ which is a positive integer by our assumption $\prod_{i=1}^n d_i>1$.  

 By applying Mertens' first formula, we obtain the equality
\begin{align}\label{eq:eq2mertens}
	\Res_{X,Y}(\overline{\theta},\overline{\rho}(h(\theta_1,\ldots,\theta_n))) & = (-1)^{d_1\ldots d_{n-1}d_h}\Disc_{X,Y}(\overline{\rho}(\theta))^{d_h}\Res(f_1,\ldots,f_{n-1},h) \\ \nonumber
	& = \Disc_{X,Y}(\overline{\rho}(\theta))^{d_h}\Res(f_1,\ldots,f_{n-1},h)
\end{align}	
From the definitions we have
\begin{align*}
	\overline{\rho}(h(\theta_1,\ldots,\theta_n)) & = \overline{\rho}\left( 
	\left[f_n(\theta_1,\ldots,\theta_n)\right](\ldots,V_i(\sum_{j=1}^nW_j\theta_j)-W_i(\sum_{j=1}^nV_j\theta_j),\ldots)
	\right)\\
	& = 	
	\left[f_n(\theta_1,\ldots,\theta_n)\right](\ldots,V_i(\sum_{j=1}^nW_j\overline{\rho}(\theta_j))-W_i(\sum_{j=1}^nV_j\overline{\rho}(\theta_j)),\ldots)\\
	& = 	
	\left[f_n(\theta_1,\ldots,\theta_n)\right](\ldots,V_i\frac{\partial \overline{\rho}(\theta)}{\partial Y}-W_i\frac{\partial \overline{\rho}(\theta)}{\partial X},\ldots).
\end{align*}
Thus, if we define $$F(X,Y):=\overline{\rho}(f_n(\theta_1,\ldots,\theta_n))=\left[f_n(\theta_1,\ldots,\theta_n)\right](\ldots,V_iX+W_iY,\ldots),$$ 
then
$$\overline{\rho}(h(\theta_1,\ldots,\theta_n)) = F\left(\frac{\partial \overline{\rho}(\theta)}{\partial Y},-\frac{\partial \overline{\rho}(\theta)}{\partial X}\right)= (-1)^{d_h}F\left(-\frac{\partial \overline{\rho}(\theta)}{\partial Y},\frac{\partial \overline{\rho}(\theta)}{\partial X}\right)$$
where the last equality holds because $\deg(F)=d_h$. Now, 
from Proposition \ref{F-J}, recall that 
$$\Res_{X,Y}(\overline{\rho}(\theta),F(-\frac{\partial \overline{\rho}(\theta)}{\partial Y},\frac{\partial \overline{\rho}(\theta)}{\partial X}))=\Res_{X,Y}(\overline{\rho}(\theta),F(X,Y)) \Disc_{X,Y}(\overline{\rho}(\theta))^{d_h}.$$
Therefore, we have (observe that $(-1)^{d_1\ldots d_{n-1}d_h}=1$)
$$	\Res_{X,Y}(\overline{\rho}(\theta),\overline{\rho}(h(\theta_1,\ldots,\theta_n))) =
\Res_{X,Y}(\overline{\rho}(\theta),F(X,Y)) \Disc_{X,Y}(\overline{\rho}(\theta))^{d_h}
$$
and using again the first Mertens' formula for $\Res_{X,Y}(\overline{\rho}(\theta),F(X,Y))$, we obtain
\begin{multline}\label{eq:eq3mertens}
	\Res_{X,Y}(\overline{\rho}(\theta),\overline{\rho}(h(\theta_1,\ldots,\theta_n))) = \\
	(-1)^{d_1\ldots d_n}\Disc_{X,Y}(\overline{\rho}(\theta))^{d_n}\Res(f_1,\ldots,f_n) \Disc_{X,Y}(\overline{\rho}(\theta))^{d_h}.	
\end{multline}
Now, the comparison of the equations \eqref{eq:eq2mertens} and \eqref{eq:eq3mertens} yields
\begin{multline*}
	\Disc_{X,Y}(\overline{\rho}(\theta))^{d_h}\Res(f_1,\ldots,f_{n-1},h) \\
			=(-1)^{d_1\ldots d_n}\Disc_{X,Y}(\overline{\rho}(\theta))^{d_n}\Res(f_1,\ldots,f_n) \Disc_{X,Y}(\overline{\rho}(\theta))^{d_h}.	
\end{multline*}
We conclude the proof by observing that $\Disc(\overline{\rho}(\theta)) \in A[V_1,\ldots,V_n,W_1,\ldots,W_n]$ is nonzero, a fact that we show in the following lemma.
\end{proof}

\noindent {\bf Lemma A} {\it
% \begin{lem}\label{lem:spedisc}
$\Disc_{X,Y}(\overline{\rho}(\theta))$ is nonzero in $A[V_1,\ldots,V_n,W_1,\ldots,W_n]$, where $A$ is the universal ring of the coefficients of the polynomials $f_1,\ldots,f_{n-1}$.	
% \end{lem}
}

\medskip

\begin{proof} We exhibit a specialization for which $\Disc(\overline{\rho}(\theta))$ is easily seen to be nonzero. We start by specializing each polynomial $f_i$, $i=1,\ldots,n-1$ to the product of $d_i$ generic linear forms 
	$$l_{i,j}:=U_{i,j,1}X_1+U_{i,j,1}X_2+\cdots+U_{i,j,n}X_n=\sum_{r=1}^{d_i}U_{i,j,r}X_r, \ \ i=1,\ldots,n, \ j=1,\ldots,d_i.$$
Set $A'=\ZZ[U_{i,j,r}: i=1,\ldots,n, \ j=1,\ldots,d_i, \ r=1,\ldots,n]$. After this specialization, we get
$$ 
\theta=\prod_{\stackrel{1\leq j_i\leq d_i}{i=1,\ldots n-1}} \det(l_{1,j_1},l_{2,j_2},\ldots,l_{n-1,j_n-1},U_1X_1+\ldots,U_nX_n) 
$$
in $A'[U_1,\ldots,U_n]$. For each $(n-1)$-uple $\lambda:=(j_1,\ldots,j_{n-1})$ in the above product we denote by $\Delta_\lambda(U_1,\ldots,U_n)$ the corresponding determinant. We deduce that
$$
\frac{\partial \overline{\rho}(\theta)}{\partial Y}= \sum_{\lambda} \left( 
\Delta_\lambda(W_1,\ldots,W_n) \prod_{\mu, \ \mu\neq \lambda} \overline{\rho}(\Delta_\mu)
\right).
$$
Now, on the one hand we have (the resultant and the discriminant are taken with respect to $X,Y$)
\begin{multline*}
	\Res\left(\overline{\rho}(\theta),\frac{\partial \overline{\rho}(\theta)}{\partial Y}\right)=\\
	\Disc(\overline{\rho}(\theta)). \overline{\rho}(\theta)(0,-1)=(-1)^{d_1\ldots d_{n-1}}\Disc(\overline{\rho}(\theta))\prod_\lambda \Delta_\lambda(W_1,\ldots,W_n),	
\end{multline*}
and on the other hand
\begin{align*}
\Res\left(\overline{\rho}(\theta),\frac{\partial \overline{\rho}(\theta)}{\partial Y}\right) & =
	\Res\left( \prod_\lambda \overline{\rho}(\Delta_\lambda), \sum_{\lambda} \left( 
	\Delta_\lambda(W_1,\ldots,W_n) \prod_{\mu, \ \mu\neq \lambda} \overline{\rho}(\Delta_\mu)\right)\right) \\
	&= \prod_\lambda \Res\left(\overline{\rho}(\Delta_\lambda), \sum_{\omega} \left( 
	\Delta_{\omega}(W_1,\ldots,W_n) \prod_{\mu, \ \mu \neq \omega} \overline{\rho}(\Delta_\mu)\right)\right) \\
	&= \prod_\lambda \Res\left(\overline{\rho}(\Delta_\lambda), 
	\Delta_\lambda(W_1,\ldots,W_n) \prod_{\mu, \ \mu\neq \lambda} \overline{\rho}(\Delta_\mu)\right) \\
	&= \left(\prod_\lambda \Delta_\lambda(W_1,\ldots,W_n)\right) 
	\prod_{\stackrel{\lambda, \mu}{\lambda\neq \mu}} \Res\left(\overline{\rho}(\Delta_\lambda), 
	 \overline{\rho}(\Delta_\mu)\right).
\end{align*}
Therefore, choosing an order for the $(n-1)$-uples $\lambda$, we deduce that
$$ \Disc(\overline{\rho}(\theta))=(-1)^{\frac{N^2+N}{2}}\prod_{\lambda<\mu} \Res\left(\overline{\rho}(\Delta_\lambda), 
 \overline{\rho}(\Delta_\mu)\right)^2$$
with $N=d_1\ldots d_{n-1}$. Moreover, for any $(n-1)$-uple $\lambda$, it is easy to see that $$\overline{\rho}(\Delta_\lambda)=\Delta_\lambda(V_1,\ldots,V_n)X+\Delta_\lambda(W_1,\ldots,W_n)Y.$$ 
It follows that in $A'[V_1,\ldots,V_n,W_1,\ldots,W_n]$ we have the equality
\begin{multline}\label{eq:discrhotheta}
	\Disc(\overline{\rho}(\theta))= (-1)^{\frac{N^2+N}{2}}\times \\ \prod_{\lambda<\mu} \left(
	\Delta_\lambda(V_1,\ldots,V_n) \Delta_\mu(W_1,\ldots,W_n)-\Delta_\lambda(W_1,\ldots,W_n)\Delta_\mu(V_1,\ldots,V_n)
	\right)^2.	
\end{multline}
To finish the proof, we specialize a little more our polynomials $f_1,\ldots,f_{n-1}$ by specializing each linear form $l_{i,j}$ to 
$X_i-U_{i,j,n}X_n$. Then, it is not hard to check that 
\begin{equation}\label{eq:eq4mertens}
\Delta_{\lambda=(j_1,\ldots,j_{n-1})}(V_1,\ldots,V_n)=U_{1,j_1}V_1+U_{2,j_2}V_2+\cdots+U_{n-1,j_{n-1}}V_{n-1}+V_n
\end{equation}
and hence that $\Delta_{\lambda}(0,\ldots,0,1)=1$. Therefore, we deduce that for any couple $(\lambda,\mu)$ such that $\lambda \neq \mu$ we have
\begin{multline*}
	\Delta_\lambda(0,\ldots,0,1) \Delta_\mu(W_1,\ldots,W_n)-\Delta_\lambda(W_1,\ldots,W_n)\Delta_\mu(0,\ldots,0,1)=\\ \Delta_\mu(W_1,\ldots,W_n)-\Delta_\lambda(W_1,\ldots,W_n)
\end{multline*}
and this quantity is clearly nonzero in view of \eqref{eq:eq4mertens}.
\end{proof}

\nocite{*}
\end{document}